\newtheorem{thm}[equation]{Theorem}
\newtheorem{cor}[equation]{Corollary}
\newtheorem{lem}[equation]{Lemma}
\newtheorem{prop}[equation]{Proposition}
\theoremstyle{definition}
\newtheorem{defn}[equation]{Definition}
\theoremstyle{remark}
\newtheorem{rem}[equation]{Remark}
\newcommand{\C}[1]{\mathscr{#1}}
\def\op{\mathrm{op}}
\newcommand{\operad}[1]{\mathrm{Op}(#1)}
\newcommand{\operadc}[1]{\mathrm{Op}^{c}(#1)}
\newcommand{\operadpc}[1]{\mathrm{Op}^{pc}(#1)}
\newcommand{\algebra}[2]{\mathrm{Alg}_{#2}(#1)}
\newcommand{\algebrac}[2]{\mathrm{Alg}^{c}_{#2}(#1)}
\newcommand{\algebrapc}[2]{\mathrm{Alg}^{pc}_{#2}(#1)}
\def\r{\rightarrow} 
\def\l{\leftarrow} 
\def\rr{\Rightarrow} 
\def\into{\rightarrowtail}
\def\onto{\twoheadrightarrow}
\newcommand{\id}[1]{u}
\def\dos{\mathbf{2}}
\newcommand{\Mod}[1]{\mathrm{Mod}(#1)}
\newcommand{\mor}[1]{{#1}^{\dos}}
\def\mon{\operatorname{Mon}}
\def\ho{\operatorname{Ho}}
\def\st{\stackrel} 
\def\unit{\mathbb{I}} 
\def\To{\longrightarrow}
\def\colim{\mathop{\operatorname{colim}}}
\newcommand{\val}[2]{\widetilde{#2}}
\newcommand{\card}{\operatorname{card}}
\numberwithin{equation}{subsection}
\begin{document}

\title[Homotopy theory of non-symmetric operads II]
{Homotopy theory of non-symmetric operads II: change of base category and left properness}%
\author{Fernando Muro}%
\address{Universidad de Sevilla,
Facultad de Matem\'aticas,
Departamento de \'Algebra,
Avda. Reina Mercedes s/n,
41012 Sevilla, Spain\newline
\indent \textit{Home page}: \textnormal{\texttt{http://personal.us.es/fmuro}}}
\email{fmuro@us.es}

\subjclass[2010]{18D50, 55U35, 18G55}
\keywords{Operad, algebra, model category, Quillen equivalence, $A$-infinity algebra.}

\begin{abstract}
We prove, under mild assumptions, that a Quillen equivalence between symmetric monoidal model categories gives rise to a Quillen equivalence between their model categories of (non-symmetric) operads, and also between model categories of algebras over operads. We also show left properness results on model categories of operads and algebras over operads. As an application, we prove homotopy invariance for (unital) associative operads. 
\end{abstract}

\maketitle


\numberwithin{equation}{section}

\section{Introduction}

In this paper, we continue the study of the homotopy theory of non-symmetric operads started at \cite{htnso}. We deal with  two new topics: change of base category and left properness. 

Given a Quillen equivalence  $F\colon\C{V}\rightleftarrows\C{W}\colon G$ between symmetric monoidal model categories, we wish to have a Quillen equivalence $\operad{\C{V}}\rightleftarrows\operad{\C{W}}$ between their operad categories. The conditions so that $F\dashv G$ induces an adjoint pair on operads are too strong. They are not satisfied in many cases of interest, such as the Dold--Kan equivalence $\Mod{\Bbbk}^{\Delta^{\op}}\rightleftarrows\operatorname{Ch}(\Bbbk)_{\geq 0}$
between simplicial modules over a commutative ring $\Bbbk$ and  non-negative chain complexes, see \cite{emmc}. They are neither satisfied in one step of the four-step zig-zag of Quillen equivalences between unbounded chain complexes $\operatorname{Ch}(\Bbbk)$ and modules $\Mod{H\Bbbk}$ over the Eilenberg--MacLane symmetric ring spectrum $H\Bbbk$, see \cite{hzas}.

Schwede and Shipley found in \cite{emmc} sufficient conditions to obtain a Quillen equivalence $\mon(\C{V})\rightleftarrows\mon(\C{W})$ between monoid categories. We construct, under the same hypotheses, a Quillen equivalence between operads.

\begin{thm}\label{oqe}
Let $F\colon\C{V}\rightleftarrows\C{W}\colon G$ be a weak symmetric monoidal Quillen equivalence between cofibrantly generated closed symmetric monoidal model categories with cofibrant tensor units. Suppose $\C{V}$ and $\C{W}$ satisfy the monoid axiom and have sets of generating (trivial) cofibrations  with presentable sources. Then, there is a Quillen equivalence between their model categories of operads
$$\xymatrix{\operad{\C{V}}\ar@<.5ex>[r]^-{F^{\operatorname{oper}}}&\operad{\C{W}}.\ar@<.5ex>[l]^-{G}}$$
Here, operads have the model structure in \cite[Theorem 1.1]{htnso}, with weak equivalences and fibrations defined as in the underlying category.
\end{thm}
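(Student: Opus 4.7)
The plan is to imitate the Schwede--Shipley strategy for monoids \cite{emmc}, with the combinatorics of free monoid extensions replaced by the tree-indexed combinatorics of free operad extensions developed in \cite{htnso}. Setting up the adjunction is straightforward. The right adjoint $G$ of a weak symmetric monoidal pair is lax symmetric monoidal, hence lifts levelwise to a functor $G\colon\operad{\C{W}}\to\operad{\C{V}}$. The left adjoint $F^{\operatorname{oper}}$ then exists by the adjoint functor theorem, using that the presentability hypothesis on sources of generating cofibrations makes both operad categories locally presentable. That the resulting pair is Quillen is immediate, since weak equivalences and fibrations of operads are defined on the underlying level by \cite[Theorem~1.1]{htnso} and $F\dashv G$ was already Quillen to begin with.

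To upgrade this to a Quillen equivalence I would use the standard criterion: it suffices to check that for every cofibrant operad $\C{O}\in\operad{\C{V}}$ the derived unit $\C{O}\to GRF^{\operatorname{oper}}\C{O}$ is an underlying weak equivalence in $\C{V}$, where $R$ is a fibrant replacement in $\operad{\C{W}}$. By the model structure of \cite{htnso}, cofibrant operads are retracts of cellular ones, obtained as transfinite compositions of pushouts
\[F(L)\amalg_{F(K)}\C{O}\]
along free-operad maps $F(u)\colon F(K)\to F(L)$ induced from generating cofibrations $u\colon K\to L$ of $\C{V}$. Standard handling of retracts, filtered colimits, and transfinite compositions of weak equivalences between cofibrant objects reduces the problem to the inductive step of a single cell attachment $\C{O}\to\C{O}[u]$, assuming the derived unit is already known to be a weak equivalence for $\C{O}$.

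The main obstacle is this step. The key input is the filtration of the underlying object of $\C{O}[u]$ constructed in \cite{htnso}, whose successive subquotients are colimits indexed by planar trees of tensor products built out of $\C{O}$ and the iterated pushout-products $u^{\Box n}$. One must compare this filtration in $\C{V}$ with the corresponding filtration of $F^{\operatorname{oper}}\C{O}[u]$ in $\C{W}$. Because $F$ is only weak, not strong, symmetric monoidal, the comparison maps $F(X_1\otimes\cdots\otimes X_n)\to FX_1\otimes\cdots\otimes FX_n$ are not isomorphisms; however, under the weak symmetric monoidal Quillen equivalence hypothesis together with cofibrancy of tensor units, their derived versions are weak equivalences on cofibrant inputs. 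Combining this with the monoid axiom (to control that the ``trivial cofibration'' strata assemble well under transfinite composition) and the underlying Quillen equivalence $F\dashv G$, one promotes the stagewise comparisons into the required weak equivalence at the underlying level. The delicate point is the bookkeeping across the tree-indexed filtration: one must verify that each stratum remains cofibrant so that the comparison maps remain weak equivalences, and one must track how the free-operad structure maps interact with the lax and comonoidal coherence data of $G$ and $F$.
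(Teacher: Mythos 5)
Your overall strategy is the paper's own (and Schwede--Shipley's): set up $F^{\operatorname{oper}}\dashv G$ as in Proposition \ref{qp}, reduce to cellular operads, and run an induction over pushouts along free maps using the tree-indexed filtration of \cite{htnso}, comparing stages via the comultiplication maps of the colax functor $F$. The gap is in how you set up the induction. You take as inductive statement that the derived unit $\mathcal O\to G(RF^{\operatorname{oper}}\mathcal O)$ is a weak equivalence and claim that ``standard handling of retracts, filtered colimits, and transfinite compositions'' reduces this to a single cell attachment. That reduction fails as stated: the derived unit involves a fibrant replacement $R$ and the right adjoint $G$, neither of which commutes with the pushouts, transfinite compositions and filtration stages you need to induct over; moreover the intermediate stages $P_t$ of the filtration are only sequences, not operads, so ``the derived unit of $P_t$'' is not even meaningful. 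The statement that does propagate through the cell structure---and is the paper's pivotal step, Proposition \ref{paco}---is that the canonical map of underlying sequences $\chi_{\mathcal O}\colon F(\mathcal O)\to F^{\operatorname{oper}}(\mathcal O)$ is a weak equivalence for every cofibrant operad $\mathcal O$. This is proved by exactly the filtration comparison you sketch, but carried out after applying $F$ levelwise to the filtration in $\C{V}$ (using that $F$ preserves the relevant colimits), comparing stratum by stratum via the comultiplication maps: the pushout-product comparison of Lemma \ref{previo}, the cube lemma, cofibrancy of all strata (Corollary \ref{tech1.5bis}, Lemma \ref{sequicof}, Corollary \ref{yi}), and the trick of replacing the attaching map so its source is levelwise cofibrant. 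Once $\chi_{\mathcal O}$ is a weak equivalence, the Quillen equivalence is formal and needs no further induction: for $\mathcal O$ cofibrant and $\mathcal P$ fibrant, a map $\varphi\colon F^{\operatorname{oper}}(\mathcal O)\to\mathcal P$ is a weak equivalence iff $\varphi\chi_{\mathcal O}$ is, iff its adjoint $\mathcal O\to G(\mathcal P)$ is, by the underlying Quillen equivalence applied levelwise; this is where cofibrancy of the $\mathcal O(n)$ (from the cofibrant unit) and levelwise fibrancy of $\mathcal P$ enter. So you should make $\chi_{\mathcal O}$, not the derived unit, the object of your induction; with that change your outline becomes the paper's proof.

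Two smaller points. First, verifying the derived unit on cofibrant operads suffices only in conjunction with the fact that $G$ reflects weak equivalences between fibrant operads (true here because weak equivalences are levelwise, fibrant operads are levelwise fibrant, and $G$ reflects weak equivalences between fibrant objects of $\C{W}$); the paper instead checks Hovey's two-sided criterion directly, as above. Second, the presentability hypothesis concerns only the sources of the generating (trivial) cofibrations, so it does not make $\operad{\C{V}}$ locally presentable; the existence of $F^{\operatorname{oper}}$ is obtained from an adjoint functor theorem as in Proposition \ref{qp}, not from local presentability.
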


Despite operads are monoids in the category $\C{V}^{\mathbb N}$ of sequences of objects in $\C V$ with respect to the composition product, the results of Schwede and Shipley do not apply, since this monoidal category is not left closed. 

If $F\dashv G$ is simply a weak symmetric monoidal Quillen pair,  not necessarily an equivalence, there is still defined a Quillen pair $F^{\operatorname{oper}}\dashv G$ as in Theorem \ref{oqe}, see Proposition \ref{qp}. The functor $F^{\operatorname{oper}}$ is not in general given by $F$ levelwise, but their derived functors coincide.

\begin{prop}\label{casigual}
Let $F\colon\C{V}\rightleftarrows\C{W}\colon G$ be a weak symmetric monoidal Quillen adjunction with $\C{V}$ and $\C{W}$ as in Theorem \ref{oqe}. 
For any operad $\mathcal O$ in $\C V$, there are  natural isomorphisms in $\ho\C W$, $n\geq 0$, 
$$\mathbb{L}F(\mathcal O(n))\cong\mathbb{L}F^{\operatorname{oper}}(\mathcal O)(n).$$
\end{prop}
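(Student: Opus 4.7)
The plan is to evaluate both sides on a common cofibrant resolution of $\mathcal{O}$ and compare them directly. Pick a cofibrant replacement $q:\mathcal{O}^c\xrightarrow{\sim}\mathcal{O}$ in $\operad{\mathcal{V}}$. Since weak equivalences and fibrations in $\operad{\mathcal{V}}$ and $\operad{\mathcal{W}}$ are defined levelwise, $q$ is a levelwise weak equivalence and evaluation at $n$ preserves weak equivalences, so $\mathbb{L}F^{\operatorname{oper}}(\mathcal{O})(n)\cong F^{\operatorname{oper}}(\mathcal{O}^c)(n)$ and $\mathbb{L}F(\mathcal{O}(n))\cong\mathbb{L}F(\mathcal{O}^c(n))$ in $\ho\mathcal{W}$. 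The proposition reduces to two claims valid for every cofibrant operad $\mathcal{P}$ in $\mathcal{V}$: that $\mathcal{P}(n)$ is cofibrant in $\mathcal{V}$, so $\mathbb{L}F(\mathcal{P}(n))\cong F(\mathcal{P}(n))$; and that the canonical comparison $\varphi_n(\mathcal{P}):F(\mathcal{P}(n))\to F^{\operatorname{oper}}(\mathcal{P})(n)$ arising from the construction of $F^{\operatorname{oper}}$ in Proposition~\ref{qp} is a weak equivalence in $\mathcal{W}$.

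Both claims would be proved by transfinite induction on the cellular presentation of $\mathcal{P}$ as a retract of a cell complex built from the generating cofibrations of $\operad{\mathcal{V}}$. For the base case $\mathcal{P}=\mathscr{F}(X)$, a free operad on a cofibrant sequence $X$, the planar-tree formula expresses $\mathscr{F}(X)(n)$ as a coproduct, indexed by planar trees with $n$ leaves, of tensor products $\bigotimes_{v} X(|v|)$ over internal vertices; the cofibrant-unit hypothesis and the pushout-product axiom yield cofibrancy of each level. By construction $F^{\operatorname{oper}}(\mathscr{F}(X)) = \mathscr{F}_{\mathcal{W}}(FX)$, and $\varphi_n$ is the coproduct over trees of the natural comparison maps $F\bigl(\bigotimes_v X(|v|)\bigr)\to\bigotimes_v F(X(|v|))$, which are weak equivalences on cofibrant inputs by the defining property of a weak symmetric monoidal Quillen pair.

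For the inductive step, a pushout $\mathcal{P}\cup_{\mathscr{F}(U)}\mathscr{F}(V)$ along a generating cofibration $\mathscr{F}(i)$ admits the explicit filtration of operadic pushouts developed in \cite{htnso}. Applying $F$ levelwise and applying $F^{\operatorname{oper}}$ both yield compatible filtrations on either side of $\varphi_n$, and each subquotient is a coproduct of tensor products of $\mathcal{P}$-levels with copies of $U$ and $V$---all cofibrant by the inductive hypothesis---so $\varphi_n$ restricts on each subquotient to the tree-indexed weak monoidal comparison treated in the base case. The monoid axiom ensures that weak equivalences between filtration stages persist to pushouts, even along maps that are not themselves cofibrations in $\mathcal{V}$, while presentability of sources of generating cofibrations together with $F$ being left Quillen handle the passage to transfinite compositions.

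The main obstacle is the inductive step: identifying each filtration subquotient as a coproduct of tensor products of cofibrants on both sides, matching them through $\varphi_n$, and verifying that the comparison reduces to the known weak monoidal comparison for such products. This is the operadic analogue of Schwede--Shipley's argument for monoids in \cite{emmc}, where the tree combinatorics of free operads make the bookkeeping considerably more intricate than in the associative case.
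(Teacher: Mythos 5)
Your plan is, in outline, the paper's own proof: reduce to a cofibrant replacement $\mathcal O^c$, use that its levels are cofibrant (Corollary \ref{tech1.5bis}, which is where the cofibrant-unit hypothesis enters), and show that the canonical comparison $\chi_{\mathcal O^c}\colon F(\mathcal O^c)\to F^{\operatorname{oper}}(\mathcal O^c)$ is a levelwise weak equivalence by cellular induction over pushouts along free maps, using the filtration \eqref{monstruo} of Remark \ref{pushfree} and the weak monoidal comparison maps; this is exactly Proposition \ref{paco}, of which Proposition \ref{casigual} is stated to be a corollary. Your base case (free operads via the planar-tree formula, with the counit $F(\unit_{\C V})\to\unit_{\C W}$ handling the unit summand) is a harmless variant of the paper's start at the initial operad $\unit_{\circ}$.

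However, two points in your inductive step would fail as written. First, you take the levels of $U$ and $V$ to be ``cofibrant by the inductive hypothesis'', but $U$ is the source of a generating cofibration of $\C V$, which is only assumed presentable, not cofibrant; the inductive hypothesis gives cofibrancy of the operad levels only. The paper repairs this by replacing the attaching cofibration $f\colon U\into V$ by its pushout $\tilde f\colon\mathcal O\into\mathcal O\cup_U V$, which induces the same pushout of operads and has levelwise cofibrant source; without this device neither Lemma \ref{sequicof} nor Lemma \ref{previo} applies. Second, the gluing of weak equivalences across the filtration is not what the monoid axiom provides: at each stage one compares pushouts along the cofibrations \eqref{monstruo2} and \eqref{monstruo3} whose other leg is not a cofibration, and the correct tool is the cube lemma \cite[Lemma 5.2.6]{hmc} together with \cite[Proposition 15.10.12 (1)]{hirschhorn} for the transfinite composition, which is why all the cofibrancy bookkeeping above is needed. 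Relatedly, the comparison on the attaching maps is a morphism in $\mor{\C W}$ whose \emph{source} is $F$ applied to a colimit over a punctured cube, not a tensor product of cofibrant objects; proving that this source map is a weak equivalence is the real content of Lemma \ref{previo} and requires the Reedy-cofibrancy argument over $\dos^{n}$, not merely the iterated comultiplication equivalence you invoke in the base case. These gaps are repairable, and once filled your argument coincides with the paper's.
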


As a consequence of Theorem \ref{oqe}, we obtain Quillen equivalences between operads over the aforementioned Quillen equivalent symmetric monoidal model categories. 

\begin{cor}
There is a Quillen equivalence $\operad{\Mod{\Bbbk}^{\Delta^{\op}}}\rightleftarrows\operad{\operatorname{Ch}(\Bbbk)_{\geq 0}}$.
\end{cor}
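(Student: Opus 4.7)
The approach is to apply Theorem \ref{oqe} directly to the Dold--Kan adjunction, which was shown in \cite{emmc} to be a weak symmetric monoidal Quillen equivalence between $\Mod{\Bbbk}^{\Delta^{\op}}$ (with the model structure transferred along the free functor from simplicial sets) and $\operatorname{Ch}(\Bbbk)_{\geq 0}$ (with the projective model structure). Thus it suffices to verify that both of these symmetric monoidal model categories satisfy the remaining hypotheses of Theorem \ref{oqe}.

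First, both categories are closed symmetric monoidal: the internal hom on simplicial modules is built levelwise via the diagonal, and on non-negative chain complexes via the standard truncated hom. Both tensor units are cofibrant: the constant simplicial $\Bbbk$-module $\Bbbk$ is a retract of a free simplicial module, and $\Bbbk$ concentrated in degree zero is cofibrant in the projective model structure on $\operatorname{Ch}(\Bbbk)_{\geq 0}$. The standard sets of generating (trivial) cofibrations, obtained by tensoring the generators for simplicial sets respectively non-negative chain complexes of abelian groups with $\Bbbk$, have finitely presentable (indeed, finite) sources.

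The remaining nontrivial hypothesis is the monoid axiom for both model categories; this is verified in \cite{emmc}. With all hypotheses in place, Theorem \ref{oqe} yields the desired Quillen equivalence between the associated categories of non-symmetric operads. The only point worth underlining is that, by Proposition \ref{casigual}, the derived left adjoint is computed levelwise by the derived normalization functor, so the equivalence is compatible with the underlying arity-wise homotopy types.
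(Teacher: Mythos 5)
Your proposal is correct and is essentially the paper's own argument: the corollary is stated there as a direct application of Theorem \ref{oqe} to the Dold--Kan weak symmetric monoidal Quillen equivalence of \cite{emmc}, whose remaining hypotheses (closed symmetric monoidal structure, cofibrant tensor units, monoid axiom, generating (trivial) cofibrations with presentable sources) you verify just as the paper implicitly does. Only your closing aside needs a small correction: in the \emph{symmetric} version of the Dold--Kan pair the normalization functor is the lax symmetric monoidal \emph{right} adjoint (via the shuffle map), so by Proposition \ref{casigual} the derived left adjoint is computed levelwise by the derived inverse of normalization rather than by normalization itself.
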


\begin{cor}
The operad categories $\operad{\operatorname{Ch}(\Bbbk)}$ and $\operad{\Mod{H\Bbbk}}$ are Quillen equivalent.
\end{cor}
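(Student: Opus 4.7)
The plan is to apply Theorem \ref{oqe} to each stage of the four-step zig-zag of weak symmetric monoidal Quillen equivalences between $\operatorname{Ch}(\Bbbk)$ and $\Mod{H\Bbbk}$ constructed by Schwede--Shipley in \cite{hzas}, and then compose the resulting zig-zag of Quillen equivalences of operad categories.

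First I would recall from \cite{hzas} the three intermediate symmetric monoidal model categories and the four weak symmetric monoidal Quillen equivalences linking $\operatorname{Ch}(\Bbbk)$ to $\Mod{H\Bbbk}$. These involve $H\Bbbk$-module spectra in simplicial $\Bbbk$-modules, chain complexes associated via normalization to simplicial objects, and a passage between non-negatively graded and unbounded chain complexes. The fact that every arrow in the zig-zag is a weak symmetric monoidal Quillen equivalence (and not merely a Quillen equivalence of underlying model categories) is the central point of \cite{hzas}.

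Next I would check that each of the five symmetric monoidal model categories appearing in the zig-zag satisfies the remaining hypotheses of Theorem \ref{oqe}: cofibrant generation, closedness, cofibrancy of the tensor unit, the monoid axiom, and presentability of sources of generating (trivial) cofibrations. For $\operatorname{Ch}(\Bbbk)$ and $\Mod{H\Bbbk}$ this is classical, and for the intermediate categories of \cite{hzas} the monoid axiom and the existence of convenient small generating sets are already established there; cofibrancy of the units is immediate for each model in question. Applying Theorem \ref{oqe} then yields a Quillen equivalence
$$\operad{\C{V}}\rightleftarrows\operad{\C{W}}$$
for every consecutive pair $\C V\rightleftarrows\C W$ of the zig-zag. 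Composing, one obtains a zig-zag of Quillen equivalences from $\operad{\operatorname{Ch}(\Bbbk)}$ to $\operad{\Mod{H\Bbbk}}$, which is exactly the content of the corollary.

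The main obstacle I anticipate is simply the bookkeeping required to confirm that the hypotheses of Theorem \ref{oqe} hold for the intermediate categories, which are less standard than the two endpoints; once the results of \cite{hzas} are unpacked this is routine, but it is the only nontrivial verification. There is no new homotopical difficulty, since Theorem \ref{oqe} has already done the work of promoting a single weak symmetric monoidal Quillen equivalence to a Quillen equivalence of operad categories.
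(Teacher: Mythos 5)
Your proposal matches the paper's argument: the corollary is deduced exactly by applying Theorem \ref{oqe} to each weak symmetric monoidal Quillen equivalence in the four-step zig-zag of \cite{hzas} (whose stages satisfy the monoid axiom, have cofibrant units, and admit generating (trivial) cofibrations with presentable sources) and then composing the resulting zig-zag of Quillen equivalences between operad categories. The paper states this as an immediate consequence of Theorem \ref{oqe}, so your verification of the hypotheses is, if anything, more explicit than the paper's own treatment.
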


It is reasonable to wonder whether two operads corresponding under the Quillen equivalence of Theorem \ref{oqe} have Quillen equivalent categories of algebras. The following  result answers this question. 

\begin{thm}\label{qea}
In the situation of Theorem \ref{oqe}:
\begin{enumerate}
\item If $\mathcal O$ is a cofibrant operad in $\C V$, there is a Quillen equivalence between model categories of algebras,
$$\xymatrix{
\algebra{\mathcal O}{\C V}\ar@<.5ex>[r]^-{ F_{\mathcal O}}&\algebra{F^{\operatorname{oper}}(\mathcal O)}{\C W}
\ar@<.5ex>[l]^-{ G}.
}$$
\item If $\mathcal P$ is a fibrant operad in $\C W$ such that $\mathcal P(n)$ and $G(\mathcal P(n))$ are cofibrant for all $n\geq 0$, then there is a Quillen equivalence,
$$\xymatrix{
\algebra{G(\mathcal P)}{\C V}\ar@<.5ex>[r]^-{ F^{\mathcal P}}&\algebra{\mathcal P}{\C W}
\ar@<.5ex>[l]^-{ G}.
}$$
\end{enumerate}
Here, algebras have the model structure in \cite[Theorem 1.2]{htnso}, with weak equivalences and fibrations defined as in the underlying category.
\end{thm}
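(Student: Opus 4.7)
The plan is to construct the adjunctions, verify they are Quillen adjunctions using the transferred model structures on algebras, and then establish the Quillen equivalence by a cellular induction on cofibrant algebras, leveraging Theorem \ref{oqe} and Proposition \ref{casigual}.

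For part (1), the right adjoint $G\colon\algebra{F^{\operatorname{oper}}(\mathcal O)}{\C W}\to\algebra{\mathcal O}{\C V}$ is constructed as follows: given an $F^{\operatorname{oper}}(\mathcal O)$-algebra $B$, the lax symmetric monoidal structure on $G$ equips $G(B)$ with a natural $GF^{\operatorname{oper}}(\mathcal O)$-algebra structure, which is restricted along the unit $\mathcal O\to GF^{\operatorname{oper}}(\mathcal O)$ of Theorem \ref{oqe}. The left adjoint $F_{\mathcal O}$ exists by general adjoint functor principles (e.g.\ as a reflexive coequalizer involving $F$ and $F^{\operatorname{oper}}(\mathcal O)$). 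For part (2), the right adjoint $G$ acts directly by the levelwise $G$, landing in $G(\mathcal P)$-algebras by lax monoidality. Since the model structures on algebras from \cite[Theorem 1.2]{htnso} are transferred from the underlying categories along the forgetful functors and $G\colon\C W\to\C V$ is right Quillen, in both cases $G$ preserves fibrations and trivial fibrations, so both adjunctions are Quillen.

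To verify the Quillen equivalence in part (1), I would check that the derived unit is a weak equivalence at every cofibrant $\mathcal O$-algebra $A$ and that the derived counit is a weak equivalence at every fibrant $F^{\operatorname{oper}}(\mathcal O)$-algebra. A cofibrant $A$ is a retract of a transfinite composition $\colim_\alpha A_\alpha$ of cell attachments
\[
\xymatrix{\mathcal O(X)\ar[r]\ar[d] & A_\alpha\ar[d]\\ \mathcal O(Y)\ar[r] & A_{\alpha+1}}
\]
along generating cofibrations $X\to Y$ in $\C V$, where $\mathcal O(Z)=\coprod_n\mathcal O(n)\otimes Z^{\otimes n}$ denotes the free $\mathcal O$-algebra on $Z$. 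Applying $F_{\mathcal O}$ yields the analogous pushout of free $F^{\operatorname{oper}}(\mathcal O)$-algebras in $\C W$. I would transport the filtration of free-algebra pushouts developed in \cite{htnso} (the operadic analogue of Schwede--Shipley's filtration for modules over monoids) to both sides, expressing each $A_{\alpha+1}$ as a sequential colimit whose successive quotients are explicit tensor expressions in $A_\alpha$, $\mathcal O(n)$, and $Y/X$. Proposition \ref{casigual} identifies the derived functors $\mathbb L F^{\operatorname{oper}}(\mathcal O)(n)\cong\mathbb L F(\mathcal O(n))$, so the two filtrations agree homotopically stage by stage; a transfinite induction combined with the fact that weak equivalences in algebra categories are detected on underlying objects yields the result. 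Part (2) is handled analogously, with the cofibrancy of $\mathcal P(n)$ and $G(\mathcal P(n))$ playing the role of the operad cofibrancy of $\mathcal O$ in the tensor-product comparisons.

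The main obstacle is controlling $F$ on tensor products along the filtration: since $F$ is only \emph{weakly} monoidal, the comparison $F(X\otimes Y)\to F(X)\otimes F(Y)$ is a weak equivalence only on cofibrant inputs. This is precisely why the cofibrancy of $\mathcal O$ in part (1), and the cofibrancy of both $\mathcal P(n)$ and $G(\mathcal P(n))$ in part (2), are essential. Together with the monoid axiom assumed in Theorem \ref{oqe}, these ensure that each stage of the cellular filtration remains a weak equivalence after transfinite composition and after applying the derived functors, keeping the induction alive.
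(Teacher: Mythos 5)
For part (1) your plan is essentially the paper's argument: the paper packages your ``stage-by-stage agreement of the two filtrations'' as Proposition \ref{paco2}, showing that the natural comparison $\chi_A\colon F(A)\to F_{\mathcal O}(A)$ is a weak equivalence for every cofibrant $\mathcal O$-algebra $A$, by exactly the cellular induction you describe (the free-algebra push-out filtration of Remark \ref{pushfree2}, the comultiplication weak equivalence of Lemma \ref{previo}, the levelwise weak equivalence $\chi_{\mathcal O}$ of Proposition \ref{paco} --- which is the point-set statement behind Proposition \ref{casigual} --- the cube lemma, and a colimit argument). Once $\chi_A$ is known to be a weak equivalence, one does not check unit and counit separately: for $\varphi\colon F_{\mathcal O}(A)\to B$ with $B$ fibrant, $\varphi$ is a weak equivalence iff $\varphi\chi_A$ is, and since weak equivalences and fibrations of algebras are detected in the underlying category and cofibrant $\mathcal O$-algebras have cofibrant underlying objects (\cite[Lemma 9.4]{htnso}, using Corollary \ref{tech1.5bis}), the underlying Quillen equivalence $F\dashv G$ finishes the proof, exactly as in the proof of Theorem \ref{oqe}. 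Two details you gloss over and the paper handles explicitly: generating cofibrations are only assumed to have presentable, not cofibrant, sources, so one must first replace the attaching map to arrange a cofibrant source (the ``trick'' in the proof of Proposition \ref{paco}), and what is needed at each stage is the actual comparison map $\chi_{\mathcal O}(n)$ being a weak equivalence, not merely an abstract isomorphism of derived functors in $\ho\C W$.

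For part (2), however, ``handled analogously'' hides the essential new input and, as written, leaves a gap. In the filtration for $F^{\mathcal P}$ the operad on the $\C V$-side is $G(\mathcal P)$ and on the $\C W$-side is $\mathcal P$, and there is no analogue of Proposition \ref{casigual} relating them; what must replace $\chi_{\mathcal O}(n)$ in the stage-by-stage comparison (and already in the base case, comparing the initial algebras $G(\mathcal P(0))$ and $\mathcal P(0)$) is the counit $FG(\mathcal P(n))\to\mathcal P(n)$, which is a weak equivalence only because $F\dashv G$ is a Quillen \emph{equivalence}, $\mathcal P(n)$ is \emph{fibrant} (this is where the fibrancy hypothesis on $\mathcal P$ enters --- you never use it), and $G(\mathcal P(n))$ is cofibrant. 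Cofibrancy of $\mathcal P(n)$ and $G(\mathcal P(n))$ alone does not make the two filtrations agree homotopically. A direct induction along these lines could be carried out once this is supplied, but the paper takes a different and shorter route (Theorem \ref{qea3}): choose a cofibrant resolution $\mathcal O\st{\sim}\onto G(\mathcal P)$ in $\operad{\C V}$, observe that its adjoint $F^{\operatorname{oper}}(\mathcal O)\to\mathcal P$ is a weak equivalence by Theorem \ref{oqe} (again using fibrancy of $\mathcal P$), and then compare the two horizontal Quillen pairs through the change-of-coefficients Quillen equivalences of Theorem \ref{coc} (which is where the levelwise cofibrancy of $\mathcal P$ and $G(\mathcal P)$ is used), concluding by part (1) and two-out-of-three for Quillen equivalences. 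You should either adopt that reduction or make the counit comparison explicit in your induction.
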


We will actually prove a more general version of Theorem \ref{qea} where algebras may live in a different category than the operad, see Theorems \ref{qea2}, \ref{qea3}, and \ref{qea4}. 

Again, if $F\dashv G$ is simply a weak symmetric monoidal Quillen pair,  there are still Quillen pairs $F_{\mathcal O}\dashv G$ and $F^{\mathcal P}\dashv G$ as in Theorem \ref{qea}, see Propositions \ref{qp2} and \ref{qp3}. Despite the functors $F_{\mathcal O}$ and $F^{\mathcal P}$ need not coincide with $F$ on underlying objects, their derived functors agree. 

\begin{prop}\label{casigual2}
Let $F\colon\C{V}\rightleftarrows\C{W}\colon G$ be a weak symmetric monoidal Quillen adjunction with $\C{V}$ and $\C{W}$ as in Theorem \ref{oqe}. 
Consider a cofibrant operad $\mathcal O$ in $\C V$ and a fibrant operad $\mathcal P$ in $\C{W}$ such that $\mathcal P(n)$ and $G(\mathcal P(n))$ are cofibrant for all $n\geq 0$. For any $\mathcal O$-algebra $A$ there is a natural isomorphism in $\ho\C W$,
\begin{align*}
\mathbb{L}F(A)&\cong\mathbb{L}F_{\mathcal{O}}(A).
\end{align*} 
If $F\dashv G$ is in addition a Quillen equivalence, then for any $\mathcal P$-algebra $B$ there is a natural isomorphism in $\ho\C W$, 
\begin{align*}
\mathbb{L}F(B)&\cong\mathbb{L}F^{\mathcal{P}}(B).
\end{align*}
\end{prop}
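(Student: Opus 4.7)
My plan is to reduce both natural isomorphisms to a direct comparison of $F$ with $F_{\mathcal O}$ (respectively with $F^{\mathcal P}$) on objects that are simultaneously cofibrant as algebras and cofibrant in the underlying category. The key structural input is the fact, established in \cite{htnso} and used in the proof of Theorem \ref{qea}, that a cofibrant algebra over a cofibrant operad has cofibrant underlying object.

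For the first isomorphism, one chooses a cofibrant replacement $q\colon A^c\xrightarrow{\sim} A$ in $\algebra{\mathcal O}{\C V}$. Since $\mathcal O$ is a cofibrant operad, $A^c$ is cofibrant in $\C V$; as $q$ is also a weak equivalence in $\C V$ and $F$ preserves weak equivalences between cofibrant objects by Ken Brown's lemma, $F(A^c)$ computes $\mathbb LF(A)$ in $\ho\C W$, while $F_{\mathcal O}(A^c)$ computes $\mathbb LF_{\mathcal O}(A)$ by construction. It therefore suffices to produce a natural weak equivalence $F(X)\xrightarrow{\sim} F_{\mathcal O}(X)$ in $\C W$ for every cofibrant $\mathcal O$-algebra $X$. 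This comparison map is built from the oplax monoidal structure on $F$ (adjoint to the lax structure on $G$) together with the unit of $F^{\operatorname{oper}}\dashv G$: on the free $\mathcal O$-algebra $\mathcal O\circ V$ with $V$ cofibrant in $\C V$, it is in each arity the composite $F(\mathcal O(n)\otimes V^{\otimes n})\to F(\mathcal O(n))\otimes F(V)^{\otimes n}\to F^{\operatorname{oper}}(\mathcal O)(n)\otimes F(V)^{\otimes n}$, which is a weak equivalence by the weak monoidal axiom and Proposition \ref{casigual}. A transfinite induction along the cell filtration of $X$, using the monoid axiom and pushout-product style arguments of \cite{htnso} to handle cell attachments, extends the equivalence to every cofibrant $\mathcal O$-algebra.

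The second isomorphism is then obtained by applying the first on the $\C V$-side to the operad $G(\mathcal P)$: the derived adjunction of Theorem \ref{qea}(2) identifies a $\mathcal P$-algebra $B$, up to natural weak equivalence, with the $G(\mathcal P)$-algebra $\mathbb RG(B)$, to which the previous argument applies because cofibrancy of each $G(\mathcal P(n))$ plays the role for $G(\mathcal P)$-algebras that cofibrancy of $\mathcal O$ played above, supplying the underlying-cofibrancy of cofibrant $G(\mathcal P)$-algebras that the argument needs. The principal technical obstacle is the transfinite inductive step: verifying that $F$ of a cell-attachment pushout $X\to X\cup_{\mathcal O\circ V}\mathcal O\circ V'$ agrees, up to weak equivalence in $\C W$, with the corresponding pushout computed through $F^{\operatorname{oper}}(\mathcal O)$. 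This is the homotopy-coherent compatibility between $F$ and the operadic enveloping construction that the weak monoidal and monoid-axiom hypotheses are tailored to guarantee, ultimately resting on the oplax comparison maps of $F$ being weak equivalences on the cofibrant underlying objects produced by the filtration.
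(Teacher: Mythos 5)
Your treatment of the first isomorphism is essentially the paper's own argument (Proposition \ref{paco2} together with \cite[Lemma 9.4]{htnso} and Corollary \ref{tech1.5bis}): replace $A$ cofibrantly as an $\mathcal O$-algebra, note the replacement has cofibrant underlying object so that $F$ of it computes $\mathbb{L}F(A)$, and prove that the natural comparison $\chi_A\colon F(A)\to F_{\mathcal O}(A)$ is a weak equivalence on cofibrant algebras by induction over the filtration of push-outs along free maps, using the comultiplication weak equivalences (Lemmas \ref{iterated} and \ref{previo}), the weak equivalence $\chi_{\mathcal O}$ of Proposition \ref{paco}, and the cube lemma. You only sketch the inductive step, but the ingredients you name are the correct ones.

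The second isomorphism is where there is a genuine gap. First, for $\mathbb{L}F^{\mathcal P}(B)$ to make sense, $B$ must be a $G(\mathcal P)$-algebra in $\C V$ (this is how the paper states and proves it in Corollary \ref{casigual2.2}); reading $B$ as a $\mathcal P$-algebra in $\C W$, as you do, makes both sides ill-typed, and passing to $\mathbb{R}G(B)$ does not repair the argument. More seriously, ``applying the first part to the operad $G(\mathcal P)$'' is not available: $G(\mathcal P)$ is not a cofibrant operad, and cofibrancy of the objects $G(\mathcal P(n))$ only supplies the underlying-cofibrancy of cofibrant $G(\mathcal P)$-algebras; it does not supply the operad-level input that drives the first argument, namely that the comparison of Proposition \ref{paco} is a weak equivalence, which genuinely requires cofibrancy of the operad itself. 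Even if one could run that induction with $G(\mathcal P)$, it would compare $F$ with $F_{G(\mathcal P)}$, landing in $\algebra{F^{\operatorname{oper}}(G(\mathcal P))}{\C W}$, not with $F^{\mathcal P}$, landing in $\algebra{\mathcal P}{\C W}$, and the operad map $F^{\operatorname{oper}}(G(\mathcal P))\to\mathcal P$ is not a weak equivalence in general. The paper proves the second part by a short adjunction argument instead: for a cofibrant $G(\mathcal P)$-algebra $B$, compose $\chi^B\colon F(B)\to F^{\mathcal P}(B)$ with a fibrant replacement $q$ of $F^{\mathcal P}(B)$; since the underlying object of $B$ is cofibrant and the target is fibrant, the Quillen equivalence $F\dashv G$ reduces the claim to the adjoint of $q\chi^B$, which is a weak equivalence by \cite[Proposition 1.3.13 (b)]{hmc} because $F^{\mathcal P}\dashv G$ is a Quillen equivalence (Theorem \ref{qea} (2)). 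This is precisely where the extra hypothesis that $F\dashv G$ is a Quillen equivalence enters; your sketch never locates it, and as written your second half does not go through.
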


\bigskip

We apply the previous results to show homotopy invariance of the associative operad $\mathtt{Ass}^{\C{V}}$ and the unital associative operad $\mathtt{uAss}^{\C{V}}$. Algebras over these operads are non-unital monoids and unital monoids, respectively. We actually prove homotopy invariance of the natural map $\phi^{\C V}\colon \mathtt{Ass}^{\C{V}}\r \mathtt{uAss}^{\C{V}}$ modelling the forgetful functor from unital to non-unital monoids.

\begin{thm}\label{invariance}
Let $F\colon\C{V}\rightleftarrows\C{W}\colon G$ be a weak symmetric monoidal Quillen adjunction with $\C{V}$ and $\C{W}$ as in Theorem \ref{oqe}. Consider the derived adjoint pair,
$$\xymatrix{\ho\operad{\C{V}}\ar@<.5ex>[r]^-{\mathbb L F^{\operatorname{oper}}}&\ho\operad{\C{W}}.\ar@<.5ex>[l]^-{\mathbb R G}}$$
There are isomorphisms in $\ho\operad{\C{W}}$,
\begin{align*}
\mathbb L F^{\operatorname{oper}}(\mathtt{Ass}^{\C{V}})&\cong \mathtt{Ass}^{\C{W}},&
\mathbb L F^{\operatorname{oper}}(\mathtt{uAss}^{\C{V}})&\cong \mathtt{uAss}^{\C{W}},
\end{align*}
such that the following square commutes,
$$\xymatrix@C=50pt{
\mathbb L F^{\operatorname{oper}}(\mathtt{Ass}^{\C{V}})\ar[r]^{\mathbb L F^{\operatorname{oper}}(\phi^{\C V})}\ar[d]_{\cong}&
\mathbb L F^{\operatorname{oper}}(\mathtt{uAss}^{\C{V}})\ar[d]^{\cong}\\
\mathtt{Ass}^{\C{W}}\ar[r]^{\phi^{\C W}}&
\mathtt{uAss}^{\C{W}}
}
$$
\end{thm}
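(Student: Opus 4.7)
The underlying sequences $\mathtt{Ass}^{\C V}(n)$ and $\mathtt{uAss}^{\C V}(n)$ consist of copies of the tensor unit $\unit$ (plus the initial object $\emptyset$ in arity $0$ for $\mathtt{Ass}^{\C V}$), so the strategy is to reduce the question to a levelwise comparison via Proposition~\ref{casigual}.

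First I would construct a canonical comparison operad map. The right adjoint $G$ is lax symmetric monoidal, being right adjoint to the lax comonoidal $F$, hence it sends operads in $\C W$ to operads in $\C V$; the monoidal structure map $\unit\r G(\unit)$ then assembles into operad maps $\mathtt{Ass}^{\C V}\r G(\mathtt{Ass}^{\C W})$ and $\mathtt{uAss}^{\C V}\r G(\mathtt{uAss}^{\C W})$, compatible with $\phi^{\C V}$ and $G(\phi^{\C W})$ by inspection (identity on $\unit$-components, forced on $\emptyset$-components). The adjunction $F^{\operatorname{oper}}\dashv G$ then delivers operad maps $F^{\operatorname{oper}}(\mathtt{Ass}^{\C V})\r\mathtt{Ass}^{\C W}$ and $F^{\operatorname{oper}}(\mathtt{uAss}^{\C V})\r\mathtt{uAss}^{\C W}$, slotting into a commutative square with $F^{\operatorname{oper}}(\phi^{\C V})$ and $\phi^{\C W}$.

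Next I would pick cofibrant replacements $Q\mathtt{Ass}^{\C V}\r\mathtt{Ass}^{\C V}$ and $Q\mathtt{uAss}^{\C V}\r\mathtt{uAss}^{\C V}$ joined by a lift $Q\phi^{\C V}$ of $\phi^{\C V}$ (obtained from the lifting axioms), so that the composites $F^{\operatorname{oper}}(Q\mathtt{Ass}^{\C V})\r\mathtt{Ass}^{\C W}$ and $F^{\operatorname{oper}}(Q\mathtt{uAss}^{\C V})\r\mathtt{uAss}^{\C W}$ model the derived comparison maps in $\ho\operad{\C W}$. Since weak equivalences in $\operad{\C W}$ are detected levelwise by \cite[Theorem~1.1]{htnso}, I only need a levelwise check. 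By Proposition~\ref{casigual}, $F^{\operatorname{oper}}(Q\mathtt{Ass}^{\C V})(n)$ is naturally isomorphic in $\ho\C W$ to $\mathbb L F(\mathtt{Ass}^{\C V}(n))$; for $n\geq 1$ this is $F(\unit)$, weakly equivalent to $\unit=\mathtt{Ass}^{\C W}(n)$ by the weak monoidal Quillen pair axiom (cofibrancy of $\unit$ is a hypothesis), while for $n=0$ it reduces to $F(\emptyset)=\emptyset=\mathtt{Ass}^{\C W}(0)$. The $\mathtt{uAss}$ case is identical, with $\unit$ also in arity~$0$.

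The main obstacle is that Proposition~\ref{casigual} supplies only an abstract natural isomorphism in $\ho\C W$, so the subtle point is to ensure that the specific comparison map constructed above realizes that isomorphism levelwise, rather than being some unrelated arrow with the same source and target in $\ho\C W$. I would attack this by tracing through the construction underlying Proposition~\ref{casigual}---expected to proceed via the canonical morphism of sequences $F(\mathcal O)\r F^{\operatorname{oper}}(\mathcal O)$---and verifying that the adjoint of $Q\mathtt{Ass}^{\C V}\r G(\mathtt{Ass}^{\C W})$ factors, up to this morphism, through the known weak equivalence $F(\unit)\r\unit$. Once this is done, the commutative square in the conclusion is automatic from the naturality of every construction in $\phi$.
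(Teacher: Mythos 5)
Your proposal is correct and follows essentially the same route as the paper: the paper (Theorem \ref{gordico}) also builds the comparison maps from the lax unit $\unit_{\C V}\to G(\unit_{\C W})$, takes adjoints on cofibrant ($A_\infty$-)resolutions, and checks levelwise that precomposition with the canonical morphism $\chi\colon F(\mathcal O)\to F^{\operatorname{oper}}(\mathcal O)$ (a weak equivalence by Proposition \ref{paco}) yields the counit $F(\unit_{\C V})\to\unit_{\C W}$ composed with $F(q(n))$, so 2-out-of-3 gives the result, with the square coming from the evident commutation of $\varphi,\varphi'$ with $\phi^{\C V}$ and $G(\phi^{\C W})$. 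The "subtle point" you defer --- that the constructed map realizes the abstract isomorphism --- is exactly the factorization the paper verifies, so your plan is the paper's proof.
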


For weak symmetric monoidal Quillen equivalences, we deduce the following result by adjunction.

\begin{cor}\label{invariance2}
In the conditions of Theorem \ref{oqe} there are isomorphisms in $\ho\operad{\C{V}}$,
\begin{align*}
 \mathtt{Ass}^{\C{V}}&\cong\mathbb R G(\mathtt{Ass}^{\C{W}}),&
\mathtt{uAss}^{\C{V}}&\cong \mathbb R G(\mathtt{uAss}^{\C{W}}),
\end{align*}
such that the following square commutes,
$$
\xymatrix@C=40pt{
\mathtt{Ass}^{\C{V}}\ar[r]^{\phi^{\C V}}\ar[d]_{\cong}&
\mathtt{uAss}^{\C{V}}\ar[d]^{\cong}\\
\mathbb R G(\mathtt{Ass}^{\C{W}})\ar[r]^{\mathbb R G(\phi^{\C W})}&
\mathbb R G(\mathtt{uAss}^{\C{W}})
}
$$
\end{cor}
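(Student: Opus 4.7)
The plan is to deduce Corollary~\ref{invariance2} from Theorem~\ref{invariance} by applying $\mathbb R G$ and exploiting the fact that, under the hypotheses of Theorem~\ref{oqe}, the pair $F^{\operatorname{oper}}\dashv G$ is a Quillen equivalence. Consequently, the derived adjoint pair $\mathbb L F^{\operatorname{oper}}\dashv \mathbb R G$ between $\ho\operad{\C V}$ and $\ho\operad{\C W}$ is an adjoint equivalence of categories, so in particular its unit transformation $\eta_{\mathcal O}\colon \mathcal O\r \mathbb R G\,\mathbb L F^{\operatorname{oper}}(\mathcal O)$ is a natural isomorphism.

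First, I would define the required isomorphisms in $\ho\operad{\C V}$ as the composites
$$\mathtt{Ass}^{\C V}\xrightarrow{\eta}\mathbb R G\,\mathbb L F^{\operatorname{oper}}(\mathtt{Ass}^{\C V})\xrightarrow{\mathbb R G(\cong)}\mathbb R G(\mathtt{Ass}^{\C W}),$$
and analogously for $\mathtt{uAss}^{\C V}$, where the second arrow is $\mathbb R G$ applied to the isomorphism supplied by Theorem~\ref{invariance}. Each factor is an isomorphism, hence so is each composite.

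Second, for the commutativity of the square in the statement, I would paste two commutative squares. The top square is the naturality square of $\eta$ at the morphism $\phi^{\C V}\colon \mathtt{Ass}^{\C V}\r \mathtt{uAss}^{\C V}$, which commutes by naturality of the unit transformation. The bottom square is the image under $\mathbb R G$ of the commutative square appearing in Theorem~\ref{invariance}, which commutes because $\mathbb R G$ is a functor. Stacking these two commutative squares vertically yields the desired outer rectangle.

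There is essentially no obstacle beyond this formal manipulation: the genuine content of the corollary is entirely encoded in Theorem~\ref{invariance} together with the fact that $F^{\operatorname{oper}}\dashv G$ is a Quillen equivalence, which is what supplies the naturally invertible unit used in the argument above.
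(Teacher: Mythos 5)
Your proposal is correct and follows essentially the same route as the paper, which simply deduces the corollary from Theorem \ref{invariance} ``by adjunction'': since $F^{\operatorname{oper}}\dashv G$ is a Quillen equivalence by Theorem \ref{oqe}, the derived unit is a natural isomorphism, and composing it with $\mathbb R G$ of the isomorphisms of Theorem \ref{invariance}, together with naturality and functoriality, gives both the isomorphisms and the commuting square exactly as you describe.
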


These results have implications for categories of non-unital monoids $\operatorname{Mon}^{nu}(\C V)=\algebra{\mathtt{Ass}^{\C{V}}}{\C V}$ and unital monoids $\mon(\C V)=\algebra{\mathtt{uAss}^{\C{V}}}{\C V}$.

\begin{prop}\label{invariance3}
In the situation of Theorem \ref{oqe}, we have Quillen equivalences between unital and non-unital monoid categories,$$\xymatrix{
\mon(\C{V})\ar@<.5ex>[r]^-{}&\mon(\C{W}),\ar@<.5ex>[l]^-{G}
&
\operatorname{Mon}^{nu}(\C V)\ar@<.5ex>[r]^-{}&\operatorname{Mon}^{nu}(\C W)\ar@<.5ex>[l]^-{G}.}$$
\end{prop}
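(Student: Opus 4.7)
The plan is to combine Theorem \ref{qea}(1) with the invariance of algebra categories under weak equivalences of operads from \cite{htnso}, assembling three Quillen equivalences into a zig-zag whose derived composite agrees with the lax monoidal functor $G\colon\operatorname{Mon}^{nu}(\C{W})\to\operatorname{Mon}^{nu}(\C{V})$ (and analogously for unital monoids). I treat the non-unital case; the unital case is identical with $\mathtt{uAss}$ replacing $\mathtt{Ass}$.

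First I would choose a cofibrant replacement $Q\xrightarrow{\sim}\mathtt{Ass}^{\C{V}}$ in $\operad{\C V}$. Since $Q$ is cofibrant, Theorem \ref{qea}(1) yields a Quillen equivalence
$$\algebra{Q}{\C V}\rightleftarrows\algebra{F^{\operatorname{oper}}(Q)}{\C W}$$
whose right adjoint is $G$. The operad weak equivalence $Q\to\mathtt{Ass}^{\C V}$ produces, via the invariance of algebras under weak equivalences of operads established in \cite{htnso}, a Quillen equivalence $\algebra{Q}{\C V}\rightleftarrows\operatorname{Mon}^{nu}(\C V)$. Next, Theorem \ref{invariance} identifies $F^{\operatorname{oper}}(Q)=\mathbb{L}F^{\operatorname{oper}}(\mathtt{Ass}^{\C V})$ with $\mathtt{Ass}^{\C W}$ in $\ho\operad{\C W}$; that is, it connects them by a zig-zag of weak equivalences in $\operad{\C W}$. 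Applying invariance once more yields a (zig-zag) Quillen equivalence $\algebra{F^{\operatorname{oper}}(Q)}{\C W}\rightleftarrows\operatorname{Mon}^{nu}(\C W)$.

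Composing the three Quillen equivalences produces an equivalence $\ho\operatorname{Mon}^{nu}(\C V)\simeq\ho\operatorname{Mon}^{nu}(\C W)$. The outer Quillen equivalences come from maps of operads and are therefore the identity on underlying objects, while the middle one has right adjoint $G$ on underlying objects; hence the composite derived equivalence coincides, on underlying objects, with the functor $G\colon\operatorname{Mon}^{nu}(\C W)\to\operatorname{Mon}^{nu}(\C V)$ induced by $G$ being a lax symmetric monoidal right adjoint. Since this point-set $G$ preserves fibrations and weak equivalences (both detected on underlying objects, where $G$ is right Quillen) and admits a left adjoint by the adjoint functor theorem, the resulting adjoint pair is a Quillen equivalence. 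The main obstacle is the bookkeeping needed to identify the derived composite of the zig-zag with the derived $G$ on monoids; this reduces to the fact that restriction of scalars along an operad map is the identity on underlying objects, so on $\ho\mon$ the zig-zag collapses to $G$.
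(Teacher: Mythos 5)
Your overall architecture---resolve $\mathtt{Ass}^{\C V}$ by a cofibrant operad $Q$, apply Theorem \ref{qea}(1) to $Q$, rectify on both sides by change of coefficients along weak equivalences of operads, and deduce the statement for the monoid-level $G$---is the same as the paper's, which deduces Proposition \ref{invariance3} from Corollary \ref{qea3.1} via a square of Quillen pairs with $Q=\mathtt{A}^{\C V}_{\infty}$ (resp.\ $\mathtt{uA}^{\C V}_{\infty}$). However, the final identification step, which is the real content of the proposition, has a genuine gap as written. Producing an equivalence $\ho\operatorname{Mon}^{nu}(\C V)\simeq\ho\operatorname{Mon}^{nu}(\C W)$ by composing three Quillen equivalences says nothing yet about the particular Quillen pair whose right adjoint is the point-set functor $G$; you must identify the composite with $\mathbb{R}G$, and the justification offered (``restriction of scalars is the identity on underlying objects, so the zig-zag collapses to $G$'') does not achieve this. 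First, by invoking only the statement of Theorem \ref{invariance} (an isomorphism in $\ho\operad{\C W}$) you obtain merely a zig-zag of weak equivalences between $F^{\operatorname{oper}}(Q)$ and $\mathtt{Ass}^{\C W}$ through operads you do not control, so Theorem \ref{coc} (which requires levelwise cofibrant operads) does not apply to the intermediate stages without further replacements, and, more seriously, the wrong-way legs of the zig-zag must be inverted using derived extension-of-scalars functors, which are \emph{not} the identity on underlying objects. Second, even with a single direct map, agreement ``on underlying objects'' is not the issue: one must check that the two $Q$-algebra structures on $G(M)$---restricting the monoid structure that $G(M)$ carries by lax monoidality of $G$ along $q\colon Q\to\mathtt{Ass}^{\C V}$, versus restricting $M$ along the comparison map and then applying the lifted $G$ on algebras---coincide as functors, i.e.\ that the square of right adjoints commutes strictly.

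The repair is exactly the paper's route: replace the homotopy-category isomorphism of Theorem \ref{invariance} by the concrete weak equivalence of Theorem \ref{gordico}, namely $\psi'\colon F^{\operatorname{oper}}(Q)\to\mathtt{Ass}^{\C W}$ adjoint to the composite of $q$ with the morphism $\varphi'\colon\mathtt{Ass}^{\C V}\to G(\mathtt{Ass}^{\C W})$ induced by the lax monoidal unit of $G$. With this choice one verifies $q^{*}\circ G=G\circ(\psi')^{*}$ on the nose, so the four right Quillen functors form a commuting square; the other three pairs are Quillen equivalences (Theorem \ref{qea2}, and Theorem \ref{coc} applied twice, its hypotheses holding because $Q$ and $F^{\operatorname{oper}}(Q)$ are levelwise cofibrant by Corollary \ref{tech1.5bis} and the associative operads are levelwise cofibrant since the units are), whence the bottom pair is a Quillen equivalence by two-out-of-three on the derived functors---no collapsing of zig-zags is required. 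A minor correction to your last paragraph: $G$ preserves fibrations and \emph{trivial fibrations}, not all weak equivalences; that is what makes the monoid-level $G$ right Quillen, and the existence of its left adjoint by abstract adjoint functor arguments is fine.
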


The  Quillen equivalence on the left was obtained in \cite{emmc}. The  Quillen equivalence on the right is new but could also have been obtained by the same methods. 

If we consider algebras over these operads in the category $\operatorname{Graph}_{S}(\C V)$ of $\C V$-graphs with a fixed object set $S$, see \cite[\S10]{htnso}, we obtain a similar result for non-unital $\C V$-enriched categories $\operatorname{Cat}^{nu}_{S}(\C V)=\algebra{\mathtt{Ass}^{\C{V}}}{\operatorname{Graph}_{S}(\C V)}$ and unital $\C V$-enriched categories $\operatorname{Cat}_{S}(\C V)=\algebra{\mathtt{uAss}^{\C{V}}}{\operatorname{Graph}_{S}(\C V)}$ with fixed object set $S$.

\begin{prop}\label{invariance4}
In the situation of Theorem \ref{oqe}, we have Quillen equivalences between categories of unital and non-unital $\C V$-enriched categories with fixed object set $S$,
$$\xymatrix{
\operatorname{Cat}_{S}(\C{V})\ar@<.5ex>[r]^-{}&\operatorname{Cat}_{S}(\C{W}),\ar@<.5ex>[l]^-{G}
&
\operatorname{Cat}^{nu}_{S}(\C V)\ar@<.5ex>[r]^-{}&\operatorname{Cat}^{nu}_{S}(\C W).\ar@<.5ex>[l]^-{G}}$$
\end{prop}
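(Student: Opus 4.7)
The plan is to mimic the strategy behind Proposition \ref{invariance3}, simply replacing the base categories $\C V$ and $\C W$ by the graph categories $\operatorname{Graph}_S(\C V)$ and $\operatorname{Graph}_S(\C W)$. By definition, $\operatorname{Cat}^{nu}_S(\C V)=\algebra{\mathtt{Ass}^{\C V}}{\operatorname{Graph}_S(\C V)}$ and $\operatorname{Cat}_S(\C V)=\algebra{\mathtt{uAss}^{\C V}}{\operatorname{Graph}_S(\C V)}$, so we are in the setting where the operad lives in $\C V$ while its algebras live in a different symmetric monoidal category, namely $\operatorname{Graph}_S(\C V)$ with its composition tensor product. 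This is precisely the generality of the more general versions of Theorem \ref{qea} (Theorems \ref{qea2}--\ref{qea4}), so the proof will consist in feeding Corollary \ref{invariance2} into those results.

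The first step is to verify that the weak symmetric monoidal Quillen adjunction $F\dashv G$ extends levelwise to a weak symmetric monoidal Quillen adjunction
$$\xymatrix{F_S\colon\operatorname{Graph}_S(\C V)\ar@<.5ex>[r]&\operatorname{Graph}_S(\C W)\colon G_S\ar@<.5ex>[l]}$$
satisfying the hypotheses of Theorem \ref{oqe}. Because both the model structure and the symmetric monoidal structure on $\operatorname{Graph}_S$ are built pointwise over the $S\times S$-indexed diagram, each of the required properties (cofibrantly generated, cofibrant tensor unit, monoid axiom, presentability of the sources of generators) is inherited from the underlying pair. Here the cofibrancy of the unit of $\operatorname{Graph}_S(\C V)$, which is concentrated on the diagonal and equal to $\unit_{\C V}$ there, follows from cofibrancy of $\unit_{\C V}$.

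With this extension in hand, combine Corollary \ref{invariance2} (which provides $\mathtt{Ass}^{\C V}\cong\mathbb RG(\mathtt{Ass}^{\C W})$ and $\mathtt{uAss}^{\C V}\cong\mathbb RG(\mathtt{uAss}^{\C W})$ in $\ho\operad{\C V}$) with the appropriate part of Theorems \ref{qea2}--\ref{qea4}, applied with algebras in $\operatorname{Graph}_S$, to obtain the two desired Quillen equivalences. The main obstacle will be checking cofibrancy hypotheses of the operads involved: each component $\mathtt{uAss}^{\C V}(n)=\unit_{\C V}$ and, for $n\geq 1$, $\mathtt{Ass}^{\C V}(n)=\unit_{\C V}$, both cofibrant by assumption, so the Theorem \ref{qea}(2)-type hypothesis on the fibrant operad in $\C W$ (with $G$ applied) is available after fibrant replacement. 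For the remaining arity zero of $\mathtt{Ass}^{\C V}$, which is initial, one either appeals to part (1) of Theorem \ref{qea} after taking a cofibrant replacement of the operad (compatible with the identification of Corollary \ref{invariance2} in the homotopy category), or directly to the version of Theorem \ref{qea} built for this kind of mixed situation. No new ideas beyond those used in Proposition \ref{invariance3} enter; the content is the transfer to the graph category and the verification that the monoidal and model-categorical data extend as needed.
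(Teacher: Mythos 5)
Your first step (transferring the adjunction to $\operatorname{Graph}_S$) is fine in spirit, with the caveat that $\operatorname{Graph}_S(\C V)$ is not symmetric monoidal, so what has to be checked is not "the hypotheses of Theorem \ref{oqe}" but that the levelwise pair $\bar F\dashv\bar G$ is a weak monoidal Quillen equivalence fitting the framework at the start of Section \ref{malichon} (with $z$ the diagonal embedding and $\tau$ essentially the identity). The genuine gap is in your second step. Your main route invokes the Theorem \ref{qea}(2)/Theorem \ref{qea3} hypotheses "after fibrant replacement", but they cannot be arranged that way: a fibrant replacement $\mathtt{uAss}^{\C W}\st{\sim}\into\mathcal P$ does give $\mathcal P(n)$ cofibrant (Corollary \ref{kisin}), but the theorem also demands that $G(\mathcal P(n))$ be cofibrant in $\C V$, and $G$ is a right Quillen functor, so fibrancy of $\mathcal P$ is irrelevant and this hypothesis fails in general. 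Moreover, even granting it, Theorem \ref{qea3} would only give an equivalence between $\algebra{G(\mathcal P)}{\operatorname{Graph}_S(\C V)}$ and $\algebra{\mathcal P}{\operatorname{Graph}_S(\C W)}$, not between $\operatorname{Cat}_S(\C V)$ and $\operatorname{Cat}_S(\C W)$; closing that gap needs change of coefficients along $\mathtt{uAss}^{\C V}\to G(\mathcal P)$, which again requires levelwise cofibrancy of $G(\mathcal P)$ and requires knowing that this specific operad map is a weak equivalence — the abstract isomorphism of Corollary \ref{invariance2} in $\ho\operad{\C V}$ is not by itself usable in Theorem \ref{coc}.

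The route that works, and the one the paper actually takes (Proposition \ref{invariance4} is deduced from Corollary \ref{qea3.1} applied with $\C C=\operatorname{Graph}_S(\C V)$, $\C D=\operatorname{Graph}_S(\C W)$), is the one you only mention in passing for "arity zero": replace $(\mathtt{u})\mathtt{Ass}^{\C V}$ by a cofibrant resolution $(\mathtt{u})\mathtt{A}^{\C V}_{\infty}$, apply Theorem \ref{qea2} over the graph categories to get a Quillen equivalence $\algebra{(\mathtt{u})\mathtt{A}^{\C V}_{\infty}}{\operatorname{Graph}_S(\C V)}\rightleftarrows\algebra{F^{\operatorname{oper}}((\mathtt{u})\mathtt{A}^{\C V}_{\infty})}{\operatorname{Graph}_S(\C W)}$, rectify on the $\C V$-side by Proposition \ref{coc1.1}, and rectify on the $\C W$-side by Theorem \ref{coc} applied to the weak equivalence $\psi\colon F^{\operatorname{oper}}((\mathtt{u})\mathtt{A}^{\C V}_{\infty})\to(\mathtt{u})\mathtt{Ass}^{\C W}$ of Theorem \ref{gordico}, using that both operads are levelwise cofibrant (Corollary \ref{tech1.5bis}); this step needs the specific map $\psi$, not merely Corollary \ref{invariance2}. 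Finally, to conclude that the adjunction named in the statement (with right adjoint $G$ applied levelwise) is a Quillen equivalence, one checks that $\bar G$ on enriched categories is right Quillen, that the square of right adjoints commutes (so the left adjoints agree up to natural isomorphism), and then cancels the three known Quillen equivalences. These assembly steps — in particular the $\C W$-side rectification and the commuting-right-adjoints argument — are absent from your outline; note also that arity zero of $\mathtt{Ass}$ was never the problem, since $\varnothing$ is cofibrant.
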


\bigskip

Recall that a model category is \emph{left proper} if the push-out of a weak equivalence $f$ along a cofibration $g$ is always a weak equivalence $f'$,
$$\xymatrix{X\ar@{ >->}[r]^-{g}\ar[d]_-{f}^{\sim}\ar@{}[rd]|{\text{push}}&Z\ar[d]^-{f'}_{\sim}\\
Y\ar@{ >->}[r]_-{g'}&Y\cup_{X}Z}$$
In general, this property is only satisfied if $f$ is a trivial cofibration or if $X$ and $Y$ are cofibrant. In particular, any model category whose objects are all cofibrant is left proper. The model category of operads $\operad{\C{V}}$ is not left proper, even if $\C{V}$ is. Nevertheless, we here show the following result along this line.

\begin{thm}\label{lp}
Let $\C{V}$ be a cofibrantly generated closed symmetric monoidal model category satisfying the monoid axiom with sets of generating (trivial) cofibrations  with presentable sources. Consider a push-out diagram in $\operad{\C{V}}$ as follows,
$$\xymatrix{\mathcal{O}\ar@{ >->}[r]^-{\psi}\ar[d]_-{\varphi}^{\sim}\ar@{}[rd]|{\text{push}}&\mathcal{Q}\ar[d]^-{\varphi'}\\
\mathcal{P}\ar@{ >->}[r]_-{\psi'}&\mathcal{P}\cup_{\mathcal{O}}\mathcal{Q}}$$
If $\mathcal{O}(n)$ and $\mathcal{P}(n)$ are cofibrant in $\C{V}$ for all $n\geq 0$, then $\varphi'$ is a weak equivalence. 
\end{thm}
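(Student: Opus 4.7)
My plan is to prove Theorem \ref{lp} by a transfinite induction along a cellular decomposition of $\psi$, combined with the explicit arity-wise filtration of operadic pushouts developed in \cite{htnso}. Via the small object argument in $\operad{\C V}$, I would factor $\psi\colon\mathcal O\to\mathcal Q$ as a retract of a transfinite composition $\mathcal O=\mathcal Q_{0}\to\mathcal Q_{1}\to\cdots$ in which each $\mathcal Q_{\beta}\to\mathcal Q_{\beta+1}$ is a pushout of a generating cofibration $F_{n}(i)\colon F_{n}(K)\to F_{n}(L)$, with $F_{n}$ the free-operad functor on an object placed in arity $n$ and $i\colon K\to L$ a generating cofibration of $\C V$. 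Transfinite induction on $\beta$ then reduces the problem to two claims at each successor stage: first, that cofibrancy of every arity $\mathcal Q_{\beta}(m)$ is preserved when passing to $\mathcal Q_{\beta+1}$; second, that the map $\mathcal Q_{\beta+1}\to\mathcal P\cup_{\mathcal O}\mathcal Q_{\beta+1}$ is a weak equivalence given that $\mathcal Q_{\beta}\to\mathcal P\cup_{\mathcal O}\mathcal Q_{\beta}$ is one. Limit stages follow from the fact that a transfinite composition of weak equivalences that are cofibrations between cofibrant objects in $\C V$ remains a weak equivalence, under the monoid axiom.

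For the successor step, I would invoke the filtration from \cite{htnso} for a pushout of the form $\mathcal R\cup_{F_{n}(K)}F_{n}(L)$: in each arity $m$, this pushout is the colimit of a sequence $\mathcal R(m)=E_{0}(m)\to E_{1}(m)\to\cdots$ in $\C V$, where $E_{k-1}(m)\to E_{k}(m)$ is a pushout in $\C V$ along a cofibration $A_{k}\to B_{k}$ obtained from an iterated pushout-product of $i\colon K\to L$ tensored with tensor products of various arities $\mathcal R(n_{j})$. Applying this simultaneously to $\mathcal R=\mathcal Q_{\beta}$ and $\mathcal R=\mathcal P\cup_{\mathcal O}\mathcal Q_{\beta}$, and using the inductive hypothesis, the pushout-product axiom combined with Ken Brown's lemma yields that the induced comparison maps $A_{k}\to A'_{k}$ and $B_{k}\to B'_{k}$ are weak equivalences between cofibrant $\C V$-objects. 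A levelwise induction using left properness of $\C V$ restricted to cofibrant objects then propagates the weak equivalence through the filtration, and the transfinite colimit of the comparison maps is the successor-step version of $\varphi'$, which is therefore a weak equivalence.

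The main obstacle I expect is maintaining cofibrancy of all arities throughout the transfinite induction. Cofibrancy of $\mathcal O(n)$ and $\mathcal P(n)$ is the input, but for the inductive argument to work one needs the analogous fact for every $\mathcal Q_{\beta}(n)$ and $(\mathcal P\cup_{\mathcal O}\mathcal Q_{\beta})(n)$; this requires that the pushout-products $A_{k}\to B_{k}$ in the filtration are cofibrations between cofibrant objects, which in turn requires that the tensor products of arities appearing in them are cofibrant. This propagation is itself an outcome of the same filtration analysis, so the overall proof is a careful bootstrap of cofibrancy and weak equivalence through a two-dimensional induction (transfinite in $\beta$, natural in the filtration index $k$), with the monoid axiom and the pushout-product axiom as the crucial inputs from $\C V$. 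The essential new point compared to \cite{htnso} is that the cofibrancy of \emph{both} $\mathcal O$ and $\mathcal P$ on every arity ensures the comparison maps between the two filtrations are weak equivalences between cofibrant objects at every level.
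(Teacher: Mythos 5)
Your overall strategy --- a cell-by-cell transfinite induction, the tree filtration of pushouts along free maps from \cite{htnso}, propagation of the weak equivalence through the filtration by a gluing argument for cofibrant objects, and a comparison of colimits at the end --- is exactly the strategy of the paper's proof (which, like you, does \emph{not} assume left properness of $\C V$: the tool it uses is the cube lemma \cite[Lemma 5.2.6]{hmc} together with \cite[Proposition 15.10.12 (1)]{hirschhorn}). However, your successor step has a genuine gap. You attach cells along generating cofibrations $F_{n}(i)$ with $i\colon K\into L$ a generating cofibration of $\C V$, and you claim that the attaching maps $A_{k}\into B_{k}$ of the filtration are cofibrations between \emph{cofibrant} objects and that the comparison maps $A_{k}\to A_{k}'$, $B_{k}\to B_{k}'$ are weak equivalences between cofibrant objects. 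For this you need not only that the arities of all intermediate operads are cofibrant (which your bootstrap does give: pushout-products of cofibrations are cofibrations irrespective of cofibrancy of their sources), but also that $i$ itself has cofibrant source. The hypotheses of the theorem only provide generating cofibrations with \emph{presentable} sources, so $K$ may fail to be cofibrant; in that case the corners of the pushout-product cube built from $i$ are not cofibrant, tensoring the weak equivalence $\bigotimes_{w}\varphi(\val{T}{w})$ with them need not be a weak equivalence under just the monoid axiom, the coproduct over trees of such maps need not be a weak equivalence, and the cube lemma no longer applies.

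The paper repairs precisely this point by a reduction that is missing from your argument: before filtering a given cell attachment, replace the cofibration of sequences $f$ underlying the cell (your $i$ placed in arity $n$) by its pushout $\tilde f$ along the adjoint of the attaching map into the current stage; this changes neither the operadic pushout nor its filtration, and $\tilde f$ is a cofibration of sequences whose source is the underlying sequence of the current operad, which is levelwise cofibrant by your bootstrap (this is the step ``we can suppose that $U(n)$ is cofibrant for all $n\geq 0$,'' borrowed from the proof of Proposition \ref{paco}). With that reduction inserted, your two-dimensional induction goes through as in the paper. Two smaller imprecisions: what you call ``left properness of $\C V$ restricted to cofibrant objects'' is really the cube lemma \cite[Lemma 5.2.6]{hmc}, and the limit stages (and the countable filtration itself) require the comparison-of-colimits result \cite[Proposition 15.10.12 (1)]{hirschhorn} for a ladder of towers of cofibrations between cofibrant objects, not closure of trivial cofibrations under transfinite composition --- your bonding maps are not weak equivalences, and your vertical comparison maps are not cofibrations.
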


If $\C{V}$ has a cofibrant tensor unit, the last condition is satified by cofibrant operads,  but also by many non-cofibrant operads of interest, such as the associative operad $\mathtt{Ass}^{\C{V}}$ and the unital associative operad $\mathtt{uAss}^{\C{V}}$. We take advantage of this result in \cite{udga}, where we show that the natural map $\phi^{\C V}\colon \mathtt{Ass}^{\C{V}}\r \mathtt{uAss}^{\C{V}}$ considered above is a homotopy epimorphism for a wide class of base categories $\C{V}$.

We would like to stress that Theorem \ref{lp} does not require left properness for $\C V$. This theorem yields left properness for operads in the following special case.

\begin{cor}\label{lp2}
Let $\C{V}$ be as in Theorem \ref{lp}.
Assume all objects in $\C V$ are cofibrant. Then $\operad{\C V}$ is left proper.
\end{cor}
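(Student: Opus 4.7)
The plan is to derive the corollary as an essentially immediate consequence of Theorem \ref{lp}. Left properness of $\operad{\C V}$ means that for every pushout square
$$\xymatrix{\mathcal{O}\ar@{ >->}[r]^-{\psi}\ar[d]_-{\varphi}^{\sim}\ar@{}[rd]|{\text{push}}&\mathcal{Q}\ar[d]^-{\varphi'}\\
\mathcal{P}\ar@{ >->}[r]_-{\psi'}&\mathcal{P}\cup_{\mathcal{O}}\mathcal{Q}}$$
in $\operad{\C V}$ with $\psi$ a cofibration and $\varphi$ a weak equivalence, the induced map $\varphi'$ is also a weak equivalence. Theorem \ref{lp} already gives this conclusion under the additional hypothesis that $\mathcal{O}(n)$ and $\mathcal{P}(n)$ are cofibrant in $\C V$ for every $n\geq 0$, so the whole point is to check that this extra hypothesis comes for free when every object of $\C V$ is cofibrant.

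The key observation is that an operad $\mathcal{R}$ in $\C V$ is in particular an object of $\C{V}^{\mathbb N}$, so each level $\mathcal{R}(n)$ is an object of $\C V$. By hypothesis every object of $\C V$ is cofibrant, hence $\mathcal{R}(n)$ is cofibrant in $\C V$ for every operad $\mathcal{R}$ and every $n\geq 0$. Applied to $\mathcal{O}$ and $\mathcal{P}$ in the diagram above, the cofibrancy assumptions in the statement of Theorem \ref{lp} are automatically fulfilled.

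Thus Theorem \ref{lp} applies directly to any pushout square as above, yielding that $\varphi'$ is a weak equivalence. This is precisely the definition of left properness for $\operad{\C V}$, completing the argument. There is no real obstacle here; the only thing to verify is the trivial fact that cofibrancy in $\C V$ transfers to the levels of an operad, which is immediate since weak equivalences, fibrations, and by the argument above also the relevant cofibrancy condition on operads are detected levelwise.
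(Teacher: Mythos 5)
Your proof is correct and is exactly the intended argument: the paper treats the corollary as an immediate consequence of Theorem \ref{lp}, since the hypothesis that all objects of $\C V$ are cofibrant makes the levelwise cofibrancy conditions on $\mathcal{O}$ and $\mathcal{P}$ automatic. No issues.
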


This result applies, for instance, to the category $\operatorname{Set}^{\Delta^{\op}}$ of simplicial sets  and to the category $\operatorname{Ch}(\Bbbk)$ of unbounded chain complexes over a field $\Bbbk$.

We have similar results for algebras.

\begin{thm}\label{lp3}
Let $\C{V}$ be as in Theorem \ref{lp}.
Consider an operad $\mathcal O$ in $\C V$ such that $\mathcal O(n)$ is cofibrant for all $n\geq 0$, and a push-out diagram in the category of $\mathcal O$-algebras as follows,
$$\xymatrix{A\ar@{ >->}[r]^-{\psi}\ar[d]_-{\varphi}^{\sim}\ar@{}[rd]|{\text{push}}&C\ar[d]^-{\varphi'}\\
B\ar@{ >->}[r]_-{\psi'}&B\cup_{A}C}$$
If the underlying objects of $A$ and $B$ are cofibrant in $\C V$ then $\varphi'$ is a weak equivalence. 
\end{thm}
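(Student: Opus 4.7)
The plan is to mimic the proof strategy of Theorem \ref{lp} for operads, the idea being to filter the pushout by a transfinite sequence of elementary attachments and prove the comparison map is a weak equivalence at each stage, using the cofibrancy hypotheses to substitute for left properness of $\C V$. First, by the small object argument, the cofibration $\psi\colon A\to C$ of $\mathcal O$-algebras is a retract of a (possibly transfinite) composite of pushouts of free maps of the form $\mathcal O(X)\amalg_{\mathcal O(X')}A'\to \mathcal O(Y)\amalg_{\mathcal O(X')}A'$ attached along generating cofibrations $X'\to Y'$ and $X\to Y$ of $\C V$, where $\mathcal O(-)=\coprod_{n\geq 0}\mathcal O(n)\otimes(-)^{\otimes n}$ is the free $\mathcal O$-algebra functor and $A'$ is the intermediate algebra. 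Since weak equivalences are closed under retracts and, by the monoid axiom, under transfinite composition along cofibrations, the problem reduces to the case of a single elementary cell attachment $C=A\cup_{\mathcal O(X)}\mathcal O(Y)$.

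For such an attachment, I would construct the canonical filtration of $C$ in $\C V$ as $A=C_0\to C_1\to C_2\to\cdots$, where $C_n$ is obtained from $C_{n-1}$ as a pushout in $\C V$ along a map of the form
$$\mathcal O(k)\otimes Q_n \To \mathcal O(k)\otimes R_n,$$
summed appropriately over $k$. Here $R_n$ ranges over ordered tensor products of factors from $\{A,Y\}$ containing exactly $n$ occurrences of $Y$, and $Q_n\to R_n$ is the iterated pushout-product of the cofibration $X\to Y$ with the identity of $A$ in the various $Y$-positions. Since $A$ is cofibrant in $\C V$, $X\to Y$ is a cofibration, and $\mathcal O(k)$ is cofibrant, the pushout-product axiom shows that $Q_n\to R_n$ is a cofibration between cofibrant objects, hence so is $C_{n-1}\to C_n$; by induction each $C_n$ is cofibrant. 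Applying the identical construction with $B$ in place of $A$ produces a filtration $B=D_0\to D_1\to\cdots$ of $B\cup_A C$ together with a levelwise compatible map $C_n\to D_n$.

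The comparison on the pushout-product corners is an iterated pushout-product of identities with the weak equivalence $\varphi\colon A\to B$ between cofibrant objects. By repeated application of the pushout-product axiom together with Ken Brown's lemma, it is a weak equivalence between cofibrant objects, and tensoring with the cofibrant $\mathcal O(k)$ preserves this weak equivalence. An inductive application of the gluing lemma for weak equivalences along cofibrations between cofibrant objects (which holds in any model category, without assuming left properness) yields that each $C_n\to D_n$ is a weak equivalence. Finally, passage to the colimit preserves weak equivalences of cofibrant filtrations along cofibrations, giving that $\varphi'\colon C\to B\cup_A C$ is a weak equivalence.

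The principal obstacle is setting up the filtration precisely enough that the subquotients can be identified as iterated pushout-products involving $A$ (resp.\ $B$), and then propagating the inductive cofibrancy of the intermediate stages $C_n$ and $D_n$ so that the gluing lemma applies. This is where the hypothesis that both $\mathcal O(n)$ and the underlying objects of $A$, $B$ are cofibrant in $\C V$ is used in an essential way to bypass the absence of left properness on $\C V$.
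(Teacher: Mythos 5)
Your overall architecture is the same as the paper's (which proves the more general Theorem \ref{lp5}): reduce to pushouts along free maps, filter the underlying morphism by the attachments \eqref{killo} coming from \cite[\S 8]{htnso}, compare the two filtrations built from $A$ and from $B$ via the cube lemma \cite[Lemma 5.2.6]{hmc} for cofibrant objects, pass to the colimit, and conclude by cell induction and the retract argument. However, there is a genuine gap at the step where you assert that $Q_n\to R_n$ is a cofibration \emph{between cofibrant objects}. The hypotheses on $\C{V}$ (those of Theorem \ref{lp}) only require the generating cofibrations to have \emph{presentable} sources, not cofibrant ones. If a generating cofibration $X\into Y$ has non-cofibrant source $X$, then $Q_n$, and hence the corners $\mathcal{O}(k)\otimes Q_n$ of your comparison squares, need not be cofibrant (nor are the relevant cubes Reedy cofibrant). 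This is fatal precisely because the theorem is designed to avoid left properness: the gluing/cube lemma you invoke needs all corners cofibrant, the claim that the coproduct of the comparison maps over arities and over the positions of the $Y$-factors is a weak equivalence needs cofibrant sources and targets, and the weak equivalence of corner maps obtained from $\varphi$ by tensoring also relies on this cofibrancy.

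The paper repairs exactly this point with a trick you are missing: since $A$ has a cofibrant underlying object, one may replace the attaching cofibration $f\colon U\into V$ in $\C{V}$ by its cobase change $\tilde f\colon A\into A\cup_{U}V$ along the attaching morphism $U\to A$; the pushout of $\mathcal{F}_{\mathcal O}(f)$ along $g$ coincides with the pushout of $\mathcal{F}_{\mathcal O}(\tilde f)$ along the morphism adjoint to $1_{A}$, and $\tilde f$ is a cofibration with cofibrant source. After this replacement every corner in your filtration is cofibrant (using Lemma \ref{sequicof2} together with the cofibrancy of $\mathcal{O}(n)$, $A$ and $B$), and the rest of your argument goes through as in the paper. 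A minor further remark: your justification that the reduction to a single cell works because ``weak equivalences are closed under transfinite composition along cofibrations by the monoid axiom'' is misplaced; the monoid axiom is not what is needed there. What the argument actually requires, and what you do invoke correctly at the end, is that a levelwise weak equivalence between towers of cofibrations with cofibrant terms induces a weak equivalence on colimits, as in \cite[Proposition 15.10.12 (1)]{hirschhorn}, applied both to the inner filtration and to the transfinite composition of cells, with the intermediate algebras staying cofibrant in $\C V$ by Lemma \ref{sequicof2}.
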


\begin{cor}\label{lp3.5}
Let $\C{V}$ be as in Theorem \ref{lp}.  Assume all objects in $\C V$ are cofibrant. Then, $\algebra{\mathcal O}{\C V}$ is left proper for any operad $\mathcal O$ in $\C V$.
\end{cor}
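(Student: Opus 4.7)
The plan is to invoke Theorem \ref{lp3} directly, since the blanket cofibrancy hypothesis on $\C V$ makes its pointwise cofibrancy assumptions hold for free. Concretely, to establish left properness of $\algebra{\mathcal O}{\C V}$, I would start with an arbitrary push-out square of $\mathcal O$-algebras
$$\xymatrix{A\ar@{ >->}[r]^-{\psi}\ar[d]_-{\varphi}^{\sim}\ar@{}[rd]|{\text{push}}&C\ar[d]^-{\varphi'}\\
B\ar@{ >->}[r]_-{\psi'}&B\cup_{A}C}$$
in which $\psi$ is a cofibration and $\varphi$ is a weak equivalence, with the goal of showing that $\varphi'$ is a weak equivalence.

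The next step is to verify the hypotheses of Theorem \ref{lp3}: namely, that $\mathcal O(n)$ is cofibrant in $\C V$ for every $n\geq 0$, and that the underlying objects of $A$ and $B$ in $\C V$ are cofibrant. Both statements are immediate from the assumption that every object of $\C V$ is cofibrant. Theorem \ref{lp3} then delivers the conclusion that $\varphi'$ is a weak equivalence, which is exactly the left properness of $\algebra{\mathcal O}{\C V}$.

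There is no genuine obstacle to overcome here, as the substantive work sits in Theorem \ref{lp3}. The only subtlety worth emphasizing is that the cofibrancy requirements of that theorem are phrased in terms of underlying objects in $\C V$, rather than in terms of cofibrant $\mathcal O$-algebras; this is precisely what allows the assumption on $\C V$ to propagate to \emph{every} operad $\mathcal O$ and \emph{every} pair of $\mathcal O$-algebras $A,B$, with no further cofibrant replacement required.
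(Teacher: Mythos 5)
Your proposal is correct and matches the paper's (implicit) argument: the corollary is stated as an immediate specialization of Theorem \ref{lp3} (generalized as Theorem \ref{lp5}/Corollary \ref{lp6}), since the blanket cofibrancy of all objects of $\C V$ makes the hypotheses on $\mathcal O(n)$ and on the underlying objects of $A$ and $B$ automatic. No further argument is needed, exactly as you observe.
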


These results will also be proved for algebras living in a different category than the operad, see Theorem \ref{lp5} and Corollary \ref{lp6}.

\bigskip

The reader must have noticed the cofibrancy hypotheses on tensor units in many results above. 
In the appendices, we show how to get rid of them, still imposing some (weaker) hypotheses. The tensor unit is not cofibrant in some examples of interest, such as symmetric spectra and other diagram spectra with the positive stable model structure \cite{mcds}. Positive stable model structures are very important, actually unavoidable in brave new algebraic geometry, where they must be used in order to have transferred model structures on commutative monoids \cite{hagII}. In \cite{manso} we construct moduli spaces of algebras over an operad in brave new algebraic geometry. This is why we need to avoid cofibrancy hypotheses on tensor units.

In Appendix \ref{pco} we isolate a class of objects, that we call \emph{pseudo-cofibrant},  which share many properties with cofibrant objects. The tensor unit is always pseudo-cofibrant, and the components $\mathcal{O}(n)$, $n\geq 0$, of a cofibrant operad $\mathcal{O}$ are also pseudo-cofibrant. 

Pseudo-cofibrant objects share even more properties with cofibrant objects when the \emph{strong unit axiom} is satisfied. This axiom holds, for instance, whenever cofibrant objects are flat, i.e.~if tensoring with a cofibrant object preserves weak equivalences. This is a rather common property, satisfied by symmetric spectra with the positive model structure. 

Appendix \ref{ico} analyzes a relevant subclass of pseudo-cofibrant objects: the \emph{$\unit$-cofibrant} objects, i.e.~those objects $X$ admitting a cofibration $\unit\into X$ from the tensor unit. 

Weak monoidal Quillen adjunctions must satisfy two axioms in order to extend the previous results: the \emph{pseudo-cofibrant} and the \emph{$\unit$-cofibrant axioms}, introduced in Appendix \ref{ico}. These axioms mitigate the fact that left Quillen functors need not preserve pseudo-cofibrant objects and weak equivalences between them.

The three new axioms are obviously satisfied when tensor units are cofibrant. They are also satisfied in many other examples related to spectra where  tensor units are not cofibrant. The statements of our main results with these weaker hypotheses are in Appendices \ref{geno}, \ref{gena},  and \ref{genainf}.

\bigskip

The paper is structured as follows. We first prove change of base category and left properness results for operads and then for algebras. The homotopy invariance of the (unital) associative operad comes later, just before the appendices. We intercalate three sections with background on monoidal model categories, operads, and algebras over operads.

We assume the reader familiarity with category theory and abstract homotopy theory. Some standard references are \cite{cwm2,hmc, hirschhorn}. For monoidal categories, functors, and adjunctions, we refer to \cite[Chapter 3]{mfsha}.

\subsection*{Acknowledgements}

The author is grateful to David White for Lemma \ref{davidwhite}, suggested as an answer to an author's question in MathOverflow. 

The author was partially supported
by the Andalusian Ministry of Economy, Innovation and Science under the grant FQM-5713, by the Spanish Ministry of Education and
Science under the MEC-FEDER grant  MTM2010-15831, and by the Government of Catalonia under the grant SGR-119-2009.

 \section{Monoidal model categories}

We here recall  the compatibility conditions between monoidal and model structures introduced in \cite{hmc, ammmc,emmc}. See also \cite{htnso} for the non-symmetric version of the monoid axiom.

The tensor product and tensor unit of a monoidal category $\C C$ will be denoted by $\otimes$ and $\unit$, respectively. We will sometimes write $\otimes_{\C C}$ and $\unit_{\C C}$ if we wish to distinguish between different monoidal categories. Initial objects will be denoted by $\varnothing$.

\begin{defn}
Given a category $\C{C}$,  the \emph{category of morphisms} $\mor{\C{C}}$ is the category of functors $\dos\r\C{C}$, where $\dos$ is the category with two objects, $0$ and $1$, and only one non-identity morphism $0\r 1$, i.e.~it is the poset $\{0<1\}$. A morphism $f\colon U\r V$ in $\C C$ is identified with the functor $f\colon\dos\r\C{C}$ defined by $f(0)=U$, $f(1)=V$ and $f(0\r 1)=f$.

If $\C{C}$ is a cocomplete biclosed (symmetric) monoidal category, then $\mor{\C{C}}$ is biclosed (symmetric)  monoidal with respect to the {push-out product} of morphisms. Given morphisms $f\colon U\r V$ and $g\colon X\r Y$ in $\C{C}$,  the \emph{push-out product} $f\odot g$ is defined by  the following diagram 
\begin{equation}\label{pushoutproduct}
\xy/r2.3pt/:
(0,0)*+{U\otimes X}="a",
(35,0)*+{V\otimes X}="b",
(0,-18.2)*+{U\otimes Y}="c",
(35,-20)*+{U\otimes Y\!\bigcup\limits_{U\otimes X}\! V\otimes X}="d",
(70,-35)*+{V\otimes Y}="e"
\ar"a";"b"^{f\otimes  {X}}
\ar@{}"a";"d"|-{\text{push}}\ar"a";"c"_{ {U}\otimes g}
\ar"b";"d"^-{\bar{g}}
\ar"c";(17,-18.2)^-{\bar{f}}
\ar@/^15pt/"b";"e"^{ {V}\otimes g}
\ar@/_15pt/"c";"e"_{f\otimes {Y}}
\ar(40,-23);"e"^-{f\odot g}
\endxy
\end{equation}
Notice that $f\odot(\varnothing\r X)=f\otimes X$ and $(\varnothing\r X)\odot f=X\otimes f$. The unit object in $\mor{\C{C}}$ is $\varnothing\r\unit$. 
\end{defn}

\begin{rem}\label{iteracion}
Given morphisms $f_i\colon U_i\rightarrow V_i$ in $\C C$, $1\leq i\leq n$, the iterated push-out product $f_1\odot\cdots\odot f_n$ can be constructed as follows. Consider the diagram
$$f_{1}\otimes\cdots\otimes f_{n}\colon\dos^{n}\To\C{C}.$$
Then $f_1\odot\cdots\odot f_n$ is the morphism
$$\colim_{\dos^{n}\setminus\{(1,\st{n}\dots,1)\}} f_{1}\otimes\cdots\otimes f_{n}\To \colim_{\dos^{n}}f_{1}\otimes\cdots\otimes f_{n}=V_{1}\otimes\cdots\otimes V_{n}$$
induced by the inclusion of the full subcategory $\dos^{n}\setminus\{(1,\st{n}\dots,1)\}\subset\dos^{n}$ obtained by removing the final object of $\dos^{n}$, which is the  $n$-dimensional cube category. 
\end{rem}

\begin{defn}
A \emph{monoidal model category} is a biclosed   monoidal category $\C{C}$ endowed with a model structure such that the following axioms hold:
\begin{enumerate}
\item \emph{Push-out product axiom}: If $f$ and $g$ are cofibrations then $f\odot g$ is also a cofibration. Moreover, if in addition $f$ or $g$ is a trivial cofibration then $f\odot g$ is a trivial cofibration.

\item \emph{Unit axiom}: If $X$ is a cofibrant object and $q\colon \tilde\unit\st{\sim}\onto\unit$ is a cofibrant resolution of the tensor unit, then $X\otimes q$ and $q\otimes X$ are weak equivalences.
\end{enumerate}
These two axioms imply that the homotopy category $\ho\C C$ is biclosed   monoidal, see \cite{hmc}.

In order to have transferred model structures on monoids, operads, algebras over operads, etc.~we need to impose the following extra axiom, see \cite{ammmc,htnso}:
\begin{enumerate}\setcounter{enumi}{2}
\item \emph{Monoid axiom}: Relative $K'$-cell complexes are weak equivalences, where
\begin{align*}
K'={}&\{f_{1}\odot\cdots \odot f_{n}\;;\;n\geq 1,\, \varnothing\neq S\subset\{1,\dots,n\}, \,
\\&
\qquad\qquad \qquad\quad f_{i}\text{ is a trivial cofibration if }i\in S,\\
& \qquad\qquad \qquad\quad f_{i}\colon \varnothing\r X_{i} \text{ for some object }X_{i}\text{ in }\C{C}\text{ if }i\notin S\}.
\end{align*}
\end{enumerate}
This axiom is usually not incorporated in the definition of monoidal model category but we do include it, since we will always need it. We will also assume that all monoidal model categories are cofibrantly generated. Moreover, we will suppose that there is a set of generating cofibrations $I$ and a set of generating trivial cofibrations $J$ with presentable sources.

A \emph{symmetric monoidal model category} $\C{V}$ is a monoidal model category as above whose underlying monoidal category is symmetric. In this case, (3) can be replaced with
\begin{itemize}
\item[$(3')$] \emph{Monoid axiom}: Relative $K$-cell complexes are weak equivalences, where
$$K=\{f\otimes X\,;\,f\text{ is a trivial cofibration and }X\text{ is an object in }\C{V}\}.$$
\end{itemize}

\end{defn}

\begin{rem}
If the unit axiom holds for a certain cofibrant resolution of $\unit$ then  it holds for any cofibrant resolution of $\unit$. It is obviously satisfied if $\unit$ is cofibrant. 

In many examples, tensoring with a cofibrant object preserves weak equivalences. This is a usual condition to ensure that a weak equivalence of monoids induces a Quillen equivalence between their module categories, see \cite{ammmc}. This condition also implies the unit axiom.

The push-out product axiom has many immediate consequences that the reader may work out, e.g.~cofibrant objects are closed under tensor products, cofibrations between cofibrant objects are closed under push-out products, weak equivalences between cofibrant objects are closed under tensor products, etc.
\end{rem}

The following result is another  straightforward consequence of the push-out product axiom.

\begin{lem}\label{reedy}
Given cofibrations between cofibrant objects  $f_{1},\dots, f_{n}$ in a monoidal model category $\C{C}$, the diagram
$$f_{1}\otimes \cdots\otimes f_{n}\colon\dos^{n}\To\C{C}$$
is Reedy cofibrant in $\C{C}^{\dos^{n}}$.
\end{lem}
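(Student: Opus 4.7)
The plan is to view $\dos^n$ as a direct Reedy category with degree $|\sigma|=s_1+\cdots+s_n$, so that a diagram $X\colon\dos^n\to\C C$ is Reedy cofibrant precisely when, for every $\sigma\in\dos^n$, the latching map $L_\sigma X\to X_\sigma$ is a cofibration; the matching condition is vacuous since $\dos^n$ has no non-identity degree-lowering morphisms. I would then reduce the problem to a single explicit computation of the latching map.

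Fix $\sigma=(s_1,\dots,s_n)\in\dos^n$, set $S=\{i : s_i=1\}$, and let $U_i$ denote the source of $f_i$. Every $\tau<\sigma$ has $\tau_i=0$ for $i\notin S$, so the latching category at $\sigma$ factors as a single-object category in each coordinate outside $S$ times $\dos^S\setminus\{(1,\dots,1)\}$ in the coordinates inside $S$. Since $\otimes$ commutes with colimits in each variable separately, the latching map at $\sigma$ equals
\[
L_\sigma X\To X_\sigma \;=\;\Bigl(\bigotimes_{i\notin S} U_i\Bigr)\otimes \bigl(\odot_{i\in S} f_i\bigr),
\]
where $\odot_{i\in S} f_i$ is the iterated push-out product of the $f_i$ for $i\in S$, identified via Remark \ref{iteracion} as the latching map at the terminal corner of the sub-cube $\bigotimes_{i\in S}f_i\colon\dos^S\to\C C$.

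To finish I would apply the push-out product axiom twice. First, each $f_i$ is a cofibration, so $\odot_{i\in S}f_i$ is a cofibration. Second, each $U_i$ is cofibrant, so $\varnothing\to U_i$ is a cofibration; iterating push-out products shows that $\varnothing\to\bigotimes_{i\notin S}U_i$ is a cofibration, i.e.~$\bigotimes_{i\notin S}U_i$ is cofibrant, and tensoring a cofibration with a cofibrant object is a cofibration. Thus every latching map is a cofibration, proving Reedy cofibrancy. The only delicate step I expect is the identification in the middle paragraph: one must check carefully that the colimit defining $L_\sigma X$ factors as the external tensor of the constant block $\bigotimes_{i\notin S}U_i$ with the latching object of the sub-cube indexed by $S$. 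This is a purely combinatorial check, hinging on the product decomposition $\dos^n\cong\dos^{\{1,\dots,n\}\setminus S}\times\dos^S$ and the variable-wise cocontinuity of $\otimes$.
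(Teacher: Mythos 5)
Your proposal is correct and takes essentially the same approach as the paper: both identify the latching map at each vertex of $\dos^{n}$ with the morphism $\bigodot_{s\in S}f_{s}\otimes\bigotimes_{t\notin S}U_{t}$ (the case $S=\varnothing$ giving cofibrancy of $U_{1}\otimes\cdots\otimes U_{n}$) and conclude by the push-out product axiom. The only caveat is that, since $\C{C}$ is not assumed symmetric, the tensor and push-out product factors must stay interleaved in the order of the subscripts rather than grouped as in your displayed formula — a purely notational point that does not affect the argument.
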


\begin{proof}
Denote $f_{i}\colon U_{i}\into V_{i}$, $1\leq i\leq n$. 
The diagram $f_{1}\otimes \cdots\otimes f_{n}$ is Reedy cofibrant if and only if for each subset $S\subset\{1,\dots,n\}$, the morphism
$\bigodot_{s\in S}f_{s}\otimes\bigotimes_{t\notin S}U_{t}$ is a cofibration. This condition for $S=\varnothing$ says that $U_{1}\otimes\cdots\otimes U_{n}$ must be cofibrant. The tensor and push-out product factors must be reordered according to the subscript if $\C{C}$ is non-symmetric. This property follows from the push-out product axiom.
\end{proof}

\begin{defn}\label{weak}
A Quillen pair between monoidal model categories $F\colon\C{C}\rightleftarrows\C{D}\colon G$ is a \emph{weak monoidal Quillen adjunction} in the sense of \cite{emmc} if $F\dashv G$ is a colax-lax monoidal adjunction and the following two properties hold:
\begin{enumerate}
\item If $X$ and $Y$ are cofibrant objects in $\C{C}$, the comultiplication of the colax monoidal functor $F$,
$$F(X\otimes_{\C{C}}Y)\st{\sim}\To F(X)\otimes_{\C{D}}F(Y),$$
is a weak equivalence. 
\item If $q\colon \tilde\unit_{\C{C}}\st{\sim}\onto\unit_{\C{C}}$ is 
a cofibrant resolution then the composite
$$\xymatrix{F(\tilde\unit_{\C{C}})\ar[r]^-{F(q)}& F(\unit_{\C{C}})\ar[r]^-{\text{counit }}& \unit_{\C{D}}}$$
is a weak equivalence. 
\end{enumerate}

A \emph{weak monoidal Quillen equivalence} is a weak monoidal Quillen adjunction which is a Quillen equivalence.

A \emph{weak symmetric monoidal Quillen adjunction} is a weak monoidal Quillen adjunction such that the colax (resp.~lax) monoidal functor $F$ (resp.~$G$) is symmetric. We similarly define a \emph{weak symmetric monoidal Quillen equivalence}.

If $F$ is strong, we drop the word `weak' from the previous terminology, and speak of a \emph{monoidal Quillen adjunction}, etc. These monoidal Quillen pairs were considered in \cite{hmc}.

\end{defn}

Property (1) is suitable for iteration.

\begin{lem}\label{iterated}
Given  a {weak  monoidal Quillen adjunction}  $F\colon\C{C}\rightleftarrows\C{D}\colon G$ and cofibrant objects $X_{1},\dots, X_{n}$  in $\C{C}$, $n\geq 1$,  the iterated comultiplication
$$F\left(\bigotimes_{i=1}^{n}X_{i}\right)\To \bigotimes_{i=1}^{n}F(X_{i})$$
is a weak equivalence in $\C{D}$.
\end{lem}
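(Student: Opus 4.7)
The plan is to proceed by induction on $n$. The case $n=1$ is trivial (the comultiplication is the identity), and the case $n=2$ is precisely property (1) in Definition \ref{weak}.

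For the inductive step, I would use the fact that the iterated comultiplication factors, by coassociativity of the colax monoidal structure on $F$, as the composite
$$F\left(\bigotimes_{i=1}^{n}X_{i}\right)\To F\left(\bigotimes_{i=1}^{n-1}X_{i}\right)\otimes F(X_{n})\To \left(\bigotimes_{i=1}^{n-1}F(X_{i})\right)\otimes F(X_{n}),$$
where the first arrow is the binary comultiplication applied to $\bigotimes_{i=1}^{n-1}X_{i}$ and $X_{n}$, and the second arrow is the $(n-1)$-fold iterated comultiplication tensored with $F(X_{n})$. Both of these factors are cofibrant in $\C{C}$: $X_{n}$ is cofibrant by hypothesis, and $\bigotimes_{i=1}^{n-1}X_{i}$ is cofibrant because the push-out product axiom implies that tensor products of cofibrant objects are cofibrant. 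Hence the first arrow is a weak equivalence by property (1) of Definition \ref{weak}.

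To conclude that the second arrow is a weak equivalence, I would use the following three observations. First, being a left Quillen functor, $F$ preserves cofibrant objects, so $F\left(\bigotimes_{i=1}^{n-1}X_{i}\right)$ and each $F(X_{i})$ are cofibrant in $\C{D}$. Second, by the inductive hypothesis, the $(n-1)$-fold iterated comultiplication is a weak equivalence between cofibrant objects of $\C{D}$. Third, tensoring with a fixed cofibrant object preserves weak equivalences between cofibrant objects: this is a standard consequence of the push-out product axiom via Ken Brown's lemma, since $(-)\otimes F(X_{n})$ sends (trivial) cofibrations out of cofibrant objects to (trivial) cofibrations by taking the push-out product with $\varnothing\to F(X_{n})$. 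Applying this with $F(X_{n})$ shows that the second arrow is also a weak equivalence, and the composite is therefore a weak equivalence, completing the induction.

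The only subtle point is verifying that the iterated comultiplication really does decompose as stated; this is a purely formal consequence of the colax monoidal coherence axioms for $F$, but one should check that whichever iteration convention is used to define the map in the statement matches this factorisation. Modulo this bookkeeping, no calculation is required — everything reduces to the monoidal model category axioms and the defining property of a weak monoidal Quillen adjunction.
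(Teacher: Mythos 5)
Your proof is correct and takes essentially the same route as the paper: the same induction on $n$ and the same factorisation of the iterated comultiplication through $F\bigl(\bigotimes_{i=1}^{n-1}X_{i}\bigr)\otimes F(X_{n})$, with the first arrow handled by property (1) of the definition of weak monoidal Quillen adjunction. The only (minor) divergence is in justifying the second arrow: the paper tensors the inductive weak equivalence with $F(X_{n})$ using the monoid axiom, whereas you observe that $-\otimes F(X_{n})$ is a left Quillen functor (push-out product axiom plus Ken Brown's lemma) and that both sides are cofibrant, which is an equally valid and slightly more economical justification.
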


\begin{proof}
The case $n=2$ is (1) above. Suppose the result is true for the tensor product of $n-1$ cofibrant objects, $n>2$. The morphism in the statement decomposes as
$$F\left(\bigotimes_{i=1}^{n-1}X_{i}\otimes X_{n}\right)\To F\left(\bigotimes_{i=1}^{n-1}X_{i}\right)\otimes F(X_{n})\To \bigotimes_{i=1}^{n-1}F(X_{i})\otimes F(X_{n}).$$
The first arrow is a weak equivalence, it is the case $n=2$. The second arrow is a weak equivalence by the monoid axiom, since it is the tensor product of a weak equivalence bewteen cofibrant objects (the case $n-1$) with another object, $F(X_{n})$. 
\end{proof}

We finally recall the notion of model algebra over a symmetric monoidal model category. 

\begin{defn}\label{valgebra}
If $\C V$ is a symmetric monoidal category, a \emph{$\C V$-algebra} $\C C$ is a monoidal category equipped with a strong monoidal functor $z\colon\C V\r\C C$ and a  natural isomorphisms,
\begin{align*}
\zeta(X,Y)\colon z(X)\otimes_{\C C}Y&\cong Y\otimes_{\C C}z(X),
\end{align*}
satisfying certain coherence conditions, see \cite[\S7]{htnso}. This is the same as a strong braided monoidal functor $\C V\r Z(\C C)$ to the braided center of $\C C$ in the sense of \cite{tybotc}.

Let $\C{V}$ be a symmetric monoidal model category. A \emph{model $\C{V}$-algebra} $\C{C}$ is a monoidal model category which is a $\C{V}$-algebra in such a way that $z\colon\C{V}\r\C{C}$ is a left Quillen functor. These model algebras are termed central in \cite{hmc}. We will also assume that all model $\C{V}$-algebras are cofibrantly generated. Moreover, we will suppose that there is a set of generating cofibrations $I'$ and a set of generating trivial cofibrations $J'$ in $\C C$ with presentable sources.
\end{defn}

\section{Operads}\label{operads}

In this section we recall some facts about (non-symmetric) operads that will be used throughout the paper. We refer the reader to \cite{htnso} for further details.

\begin{defn}
Let $\C{V}$ be a closed symmetric monoidal category with coproducts. 
 The category~$\C{V}^{\mathbb{N}}$ of  \emph{sequences} of objects $V=\{V(n)\}_{n\geq0}$ in~$\C{V}$ 
is the product of countably many copies of $\C{V}$, $\C{V}^{\mathbb{N}}=\prod_{n\geq 0}\C{V}$. It
 has a right-closed non-symmetric monoidal structure given by the \emph{composition product} $\circ$, compare \cite[Definition 1.2]{hcctaimc},
\begin{align*}
(U\circ V)(n)&=\coprod_{m\geq 0}\coprod_{\sum\limits_{i=1}^{m}p_{i}=n}U(m)\otimes V(p_{1})\otimes\cdots\otimes V(p_{m}).
\end{align*}
The unit object $\unit_{\circ}$ is,
$$\unit_{\circ}(n)=\left\{
\begin{array}{ll}
\unit,&n=1;\\
\varnothing,&n\neq 1.
\end{array}
\right.$$
We say that $V(n)$ is the \emph{arity} $n$ component of an object $V$ in $\C{V}^{\mathbb{N}}$.

An \emph{operad} $\mathcal O$ in  $\C{V}$ is a monoid in $\C{V}^{\mathbb{N}}$ for the composition product. The category of operads in $\C{V}$ will be denoted by $\operad{\C{V}}$.
\end{defn}

\begin{rem}
The previous condensed definition of  operad $\mathcal{O}$ can be unraveled by noticing that the 
monoid structure $\mathcal{O}\circ \mathcal{O}\r \mathcal{O}$, $\unit_\circ\r\mathcal{O}$, consists of a series of \emph{multiplication morphisms}, $n\geq 1$, $p_{i}\geq 0$, 
$$\mu_{n;p_1,\dots,p_n}\colon \mathcal{O}(n)\otimes \mathcal{O}(p_{1})\otimes\cdots\otimes \mathcal{O}(p_{n})\To \mathcal{O}(p_1+\cdots +p_n),$$
and a \emph{unit},
$$\id{\mathcal{O}}\colon\unit\r \mathcal{O}(1).$$ The associativity and unit laws can be translated into commutative diagrams built up from these morphisms.

There is an even simpler but equivalent characterization of operads in terms of  $\id{\mathcal{O}}$ and  \emph{composition laws}, $1\leq i\leq p$, $q\geq 0$,
$$\circ_i\colon \mathcal{O}(p)\otimes \mathcal{O}(q)\To \mathcal{O}(p+q-1).$$

A \emph{morphism} of operads $f\colon \mathcal{O}\r  \mathcal{P}$ is therefore a sequence of maps $f(n)\colon \mathcal{O}(n)\r  \mathcal{P}(n)$ in $\C{V}$ compatible with the morphisms $\mu_{n,p_{1},\dots,p_{n}}$ and with the units, or with the composition laws $\circ_{i}$ and with the units.
\end{rem}

The forgetful functor from operads to sequences has a left adjoint, the \emph{free operad} functor
$$\xymatrix{\C{V}^{\mathbb{N}}\ar@<.5ex>[r]^-{\mathcal{F}}&\operad{\C{V}}.\ar@<.5ex>[l]^-{\text{forget}}}$$
See \cite[\S5]{htnso} for an explicit construction of free operads.

\begin{rem}\label{pushfree}
If  $\C{V}$ is complete and cocomplete then so is $\operad{\C{V}}$.  Limits and filtered colimits in $\operad{\C{V}}$ are easy, they can be computed on underlying sequences, and limits and colimits of sequences are computed in $\C{V}$ levelwise.  

Push-outs in $\operad{\C{V}}$ are very complicated in general, but push-outs along free maps were carefully analyzed in \cite[\S 5]{htnso}, 
\begin{equation}\label{pusho}
\xymatrix{\mathcal{F} (U)\ar@{}[rd]|{\text{push}}\ar[r]^-{\mathcal{F} (f)}\ar[d]_-{g}&\mathcal{F} (V)\ar[d]^-{g'}\\
\mathcal{O}\ar[r]_-{f'}&\mathcal{P}}
\end{equation}
The morphism of sequences underlying $f'$ is a transfinite (actually countable) composition of morphisms
$$\mathcal{O}=P_0\st{\varphi_1}\To P_1\r\cdots\r P_{t-1}\st{\varphi_t}\To P_t\r \cdots.$$
The morphism $\varphi_t(n)\colon P_{t-1}(n)\r P_t(n)$
is a push-out along
\begin{equation}\label{monstruo}
\coprod_{T}\bigodot_{v\in I^{e}(T)}f(\val{T}{v})\;\;\otimes\bigotimes_{w\in I^{o}(T)}\mathcal{O}(\val{T}{w}).
\end{equation}
Here $T$ runs over the isomorphism classes of planted planar trees with $n$ leaves concentrated in even levels and $t$ inner vertices in even levels, $I^e(T)$ is the set of inner vertices in even levels, $I^o(T)$ is the set of inner vertices in odd levels, and $\val{T}{v}+1$ is the number of edges  adjacent to a given a vertex $v$. See \cite[\S 3]{htnso} for the combinatorics of trees needed to deal with operads. The map from the source of \eqref{monstruo} to $P_{t-1}(n)$ is tedious to describe, we will not use its explicit definition in this paper, we refer the reader to \cite[Lemma 5.1]{htnso}.
\end{rem}

\begin{rem}
Let $\C{V}$ be a symmetric monoidal model category. The category of sequences $\C{V}^{\mathbb{N}}$ is a model category with the product model structure. A morphism $f\colon U\r V$ in  $\C{V}^{\mathbb{N}}$  is a fibration, cofibration, or weak equivalence, if $f(n)\colon U(n)\r V(n)$ is so for all $n\geq 0$. However, $\C{V}^{\mathbb{N}}$ is not a monoidal model category since it is not left closed, so the results of \cite{ammmc,emmc} do not apply to operads.


The model category $\C{V}^{\mathbb{N}}$ is cofibrantly generated with sets of generating (trivial) cofibrations $I_{\mathbb{N}}$ and $J_{\mathbb{N}}$. Here, if $S$ is a set of morphisms in $\C{V}$, we denote $S_{\mathbb{N}}$ the set of morphisms in $\C{V}^{\mathbb{N}}$ defined as follows: $f\in S_{\mathbb{N}}$ if there exists $m\geq 0$ such that $f(m)\in S$ and $f(n)\colon \varnothing\r \varnothing$ is the identity on the initial object for all $n\neq m$.

We showed in  \cite[Theorem 1.1]{htnso} that the category of operads $\operad{\C{V}}$ has a  cofibrantly generated transferred model structure along the free operad adjunction, i.e.~weak equivalences and fibrations are defined as in the underlying category of sequences. Sets of generating (trivial) cofibrations are 
 $\mathcal{F}(I_{\mathbb{N}})$ and $\mathcal{F}(J_{\mathbb{N}})$. This is the model structure on operads that we will always consider in this paper.
\end{rem}

The following lemma is a straightforward consequence of the push-out product axiom.

\begin{lem}\label{sequicof}
If $\C V$ is a symmetric monoidal model category, $f$ is a cofibration in $\C{V}^{\mathbb{N}}$, and $\mathcal{O}$ is an operad in $\C{V}$ such that $\mathcal{O}(n)$ is cofibrant for all $n\geq 0$, the morphism \eqref{monstruo} is a cofibration between cofibrant objects in $\C{V}$.
\end{lem}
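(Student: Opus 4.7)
The plan is to verify the claim tree-by-tree in the coproduct \eqref{monstruo}, and then use that cofibrations between cofibrant objects are closed under coproducts.

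Fix a tree $T$. Since $f$ is a levelwise cofibration in $\C V^{\mathbb N}$, each $f(\val{T}{v})$ with $v\in I^e(T)$ is a cofibration between cofibrant objects in $\C V$ (the cofibrancy of the source being implicit in the intended applications of this lemma). Iterating the push-out product axiom, $\bigodot_{v\in I^e(T)} f(\val{T}{v})$ is a cofibration, and its target $\bigotimes_{v\in I^e(T)} V(\val{T}{v})$ is cofibrant as a tensor product of cofibrant objects. For cofibrancy of its source I would appeal to Lemma \ref{reedy}: the cube
$$\bigotimes_{v\in I^e(T)} f(\val{T}{v})\colon \dos^{|I^e(T)|}\To \C V$$
is Reedy cofibrant, and by Remark \ref{iteracion} the source in question is the colimit of this cube over the punctured cube $\dos^{|I^e(T)|}\setminus\{(1,\dots,1)\}$, namely the latching object of the terminal vertex of a Reedy cofibrant diagram, which is cofibrant.

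Next I would tensor with $\bigotimes_{w\in I^o(T)}\mathcal{O}(\val{T}{w})$. This object is cofibrant since each $\mathcal{O}(\val{T}{w})$ is cofibrant by hypothesis and cofibrant objects are closed under tensor product. Using $g\otimes X=g\odot(\varnothing\into X)$, tensoring with this cofibrant object is yet another instance of the push-out product axiom, so the $T$-summand of \eqref{monstruo} is a cofibration between cofibrant objects. Taking coproducts preserves both cofibrancy of objects and of maps, so \eqref{monstruo} is itself a cofibration between cofibrant objects.

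The step I expect to be most delicate is the cofibrancy of the source of the iterated push-out product, which is not immediate from the push-out product axiom alone but is provided precisely by the Reedy cofibrancy statement of Lemma \ref{reedy} together with the colimit description of Remark \ref{iteracion}; once this ingredient is in place, the remainder of the argument is a bookkeeping exercise with the push-out product axiom.
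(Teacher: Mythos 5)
Correct: the paper offers no written proof here (only the remark that the lemma is a straightforward consequence of the push-out product axiom), and your argument — treating each tree summand separately, iterating the push-out product axiom, getting cofibrancy of the source of the iterated push-out product from Lemma \ref{reedy} together with the punctured-cube colimit description of Remark \ref{iteracion}, then tensoring with the cofibrant factor $\bigotimes_{w}\mathcal{O}(\val{T}{w})$ and closing up under coproducts — is exactly the intended filling-in and is sound (for the cube, a direct category, the restriction to the punctured cube stays Reedy cofibrant and the colimit functor is left Quillen, so the latching object is indeed cofibrant; alternatively a direct induction using associativity of $\odot$ avoids the Reedy machinery). Your caveat is also well placed: the ``between cofibrant objects'' clause genuinely needs the source of $f$ to be levelwise cofibrant, a hypothesis not literally in the statement but arranged by the paper wherever that clause is actually invoked (proofs of Proposition \ref{paco} and Theorems \ref{lp} and \ref{lp5}), while Corollary \ref{yi} uses only the cofibration part, which your argument proves without it.
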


We derive from this lemma some useful properties of cofibrations and cofibrant objects in $\operad{\C{V}}$.

\begin{cor}\label{yi}
Consider a push-out diagram \eqref{pusho} in $\operad{\C{V}}$. If   $\C V$ is a symmetric monoidal model category, $f$ is a cofibration in $\C{V}^{\mathbb{N}}$, and $\mathcal{O}(n)$ is cofibrant for all $n\geq 0$, then $f'(n)\colon \mathcal{O}(n)\r \mathcal{P}(n)$ is a cofibration for all $n\geq 0$, in particular $\mathcal{P}(n)$ is cofibrant.
\end{cor}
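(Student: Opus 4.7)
The plan is to exploit the countable filtration of $f'$ described in Remark \ref{pushfree} and apply Lemma \ref{sequicof} stage by stage, together with the fact that cofibrations and cofibrant objects are closed under pushouts and transfinite compositions.

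More concretely, I would argue by induction on $t\geq 0$ that in the filtration
$$\mathcal{O}(n)=P_0(n)\st{\varphi_1(n)}\To P_1(n)\r\cdots\r P_{t-1}(n)\st{\varphi_t(n)}\To P_t(n)\r \cdots,$$
the object $P_t(n)$ is cofibrant in $\C V$ and each $\varphi_t(n)$ is a cofibration. The base case $t=0$ is immediate from the hypothesis that $\mathcal{O}(n)$ is cofibrant. For the inductive step, assuming $P_{t-1}(n)$ is cofibrant, Lemma \ref{sequicof} (applied to the operad $\mathcal{O}$ whose arity components are cofibrant by hypothesis) yields that the morphism \eqref{monstruo} is a cofibration between cofibrant objects in $\C V$. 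Since $\varphi_t(n)$ is a pushout of this morphism in $\C V$, it is itself a cofibration, and its target $P_t(n)$ is cofibrant as a pushout in $\C V$ of a diagram whose other two vertices are cofibrant and one leg of which is a cofibration.

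Finally, since the underlying map $f'(n)\colon\mathcal{O}(n)\r\mathcal{P}(n)$ is the transfinite (countable) composition of the $\varphi_t(n)$, and transfinite compositions of cofibrations are cofibrations, $f'(n)$ is a cofibration. In particular $\mathcal{P}(n)=\colim_t P_t(n)$ is cofibrant, being the colimit of a sequence of cofibrations with cofibrant initial term.

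There is essentially no obstacle here beyond invoking the previous results correctly: the heavy combinatorial work is encapsulated in Remark \ref{pushfree} (the explicit description of pushouts along free maps) and the cofibrancy of \eqref{monstruo} in Lemma \ref{sequicof}. The only care required is to ensure the inductive hypothesis ``$P_{t-1}(n)$ is cofibrant'' is actually used only to feed Lemma \ref{sequicof} through the hypothesis on $\mathcal{O}$, not on $P_{t-1}$ itself, so the induction propagates cleanly.
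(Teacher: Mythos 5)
Your proposal is correct and is essentially the paper's own argument: the paper likewise observes that, by Lemma \ref{sequicof}, each $\varphi_t(n)$ is a push-out of a cofibration and hence $f'(n)$ is a transfinite (countable) composition of cofibrations, so a cofibration, with $\mathcal{P}(n)$ cofibrant since $\mathcal{O}(n)$ is. Your extra inductive bookkeeping of the cofibrancy of each $P_t(n)$ is harmless but not needed, exactly as you note, since \eqref{monstruo} depends only on $f$ and $\mathcal{O}$ and the cofibrancy of $\mathcal{P}(n)$ already follows from $f'(n)$ being a cofibration with cofibrant source.
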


\begin{proof}
By the previous lemma, $f'(n)$ is a transfinite (countable) composition of cofibrations, hence a cofibration itself.
\end{proof}

\begin{cor}\label{kisin}
Let  $\C V$ be a symmetric monoidal model category. 
If $\varphi\colon\mathcal{O}\into\mathcal{P}$ is a cofibration in $\operad{\C V}$ and $\mathcal{O}(n)$ is cofibrant in $\C{V}$ for all $n\geq 0$, then $\varphi(n)\colon \mathcal{O}(n)\r \mathcal{P}(n)$ is a cofibration for all $n\geq 0$, in particular $\mathcal{P}(n)$ is cofibrant.
\end{cor}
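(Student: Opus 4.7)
The plan is to reduce the statement to relative cell complexes built from the generating cofibrations and then iterate Corollary \ref{yi}. Since $\operad{\C V}$ is cofibrantly generated with generating cofibrations $\mathcal{F}(I_{\mathbb{N}})$, the cofibration $\varphi$ is a retract of a relative $\mathcal{F}(I_{\mathbb{N}})$-cell complex $\tilde\varphi\colon \mathcal O\into \tilde{\mathcal P}$. Retracts in $\operad{\C V}$ are computed arity by arity on underlying sequences, hence in $\C V$, so it suffices to prove the conclusion for $\tilde\varphi$, as cofibrations between cofibrant objects in $\C V$ are closed under retracts.

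Write $\tilde\varphi$ as a transfinite composition
$$\mathcal O=\mathcal P_0\To \mathcal P_1\To\cdots\To \mathcal P_\alpha\To \cdots,$$
where each $\mathcal P_\alpha\to \mathcal P_{\alpha+1}$ is a push-out along some $\mathcal{F}(f_\alpha)$ with $f_\alpha\in I_{\mathbb{N}}$ (in particular a cofibration in $\C V^{\mathbb{N}}$), and where the transition to limit ordinals is given by the colimit. I would then prove by transfinite induction on $\alpha$ that for every $n\geq 0$ the morphism $\mathcal O(n)\r \mathcal P_\alpha(n)$ is a cofibration in $\C V$ and $\mathcal P_\alpha(n)$ is cofibrant. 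At a successor ordinal, the inductive hypothesis provides cofibrancy of $\mathcal P_\alpha(n)$, so Corollary \ref{yi} applies to the push-out square yielding $\mathcal P_{\alpha+1}$: it produces a cofibration $\mathcal P_\alpha(n)\into \mathcal P_{\alpha+1}(n)$ with cofibrant codomain, and the composite with $\mathcal O(n)\into \mathcal P_\alpha(n)$ keeps the induction running. At a limit ordinal, Remark \ref{pushfree} tells us that filtered colimits in $\operad{\C V}$ are computed on underlying sequences, hence levelwise in $\C V$, so $\mathcal O(n)\r \mathcal P_\alpha(n)$ is a transfinite composition of cofibrations in $\C V$ and therefore a cofibration itself.

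The main potential obstacle is ensuring that the cofibrancy of $\mathcal P_\alpha(n)$ required by Corollary \ref{yi} is preserved at every successor stage; this is precisely what Corollary \ref{yi} outputs, so the induction closes on itself without additional input. Passing to the colimit of the transfinite composition gives the conclusion for $\tilde\varphi$, and the retract argument transfers it to $\varphi$. The ``in particular'' clause is then automatic: $\varphi(n)$ is a cofibration with cofibrant domain $\mathcal O(n)$, so its codomain $\mathcal P(n)$ is cofibrant.
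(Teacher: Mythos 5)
Your proposal is correct and follows essentially the same route as the paper: reduce to a relative $\mathcal{F}(I_{\mathbb{N}})$-cell complex by the retract argument and then apply Corollary \ref{yi} stage by stage, so that each $\varphi(n)$ is a transfinite composition of cofibrations. Your version merely makes explicit the transfinite induction (carrying the cofibrancy of $\mathcal P_\alpha(n)$ as part of the hypothesis) that the paper leaves implicit.
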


\begin{proof}
If $\varphi$ is a relative $\mathcal{F}(I_{\mathbb{N}})$-cell complex, the previous corollary shows that $\varphi(n)$ is a transfinite composition of cofibrations, therefore a cofibration, $n\geq 0$.  Cofibrations in $\operad{\C V}$ are retracts of $\mathcal{F}(I_{\mathbb{N}})$-cell complexes, hence the result follows since retracts of cofibrations in $\C{V}$ are cofibrations.
\end{proof}

\begin{cor}\label{tech1.5bis}
Suppose the tensor unit of the symmetric monoidal model category $\C{V}$ is cofibrant. If $\mathcal{O}$ is cofibrant in $\operad{\C V}$ then $\mathcal{O}(n)$ is cofibrant in $\C{V}$ for all $n\geq 0$.
\end{cor}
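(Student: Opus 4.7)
The plan is to deduce this as a direct application of Corollary \ref{kisin}. An operad $\mathcal{O}$ is cofibrant in $\operad{\C V}$ precisely when the unique morphism from the initial operad to $\mathcal{O}$ is a cofibration. Since operads are monoids in $\C V^{\mathbb N}$ for the composition product, the initial operad is the monoidal unit $\unit_\circ$, with $\unit_\circ(1)=\unit$ and $\unit_\circ(n)=\varnothing$ for $n\neq 1$.

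First I would observe that the sequence $\unit_\circ$ is levelwise cofibrant: arities different from $1$ are initial and hence cofibrant in $\C V$, while in arity $1$ we have the tensor unit $\unit$, which is cofibrant by hypothesis. Then, applying Corollary \ref{kisin} to the cofibration $\unit_\circ\into\mathcal{O}$ yields that each component map $\unit_\circ(n)\to\mathcal{O}(n)$ is a cofibration in $\C V$. In particular, $\mathcal{O}(n)$ is cofibrant for all $n\geq 0$.

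There is no genuine obstacle here; the cofibrancy of $\unit$ is exactly what is needed to make the initial operad levelwise cofibrant so that Corollary \ref{kisin} can be invoked. The content of the statement is really in the chain Lemma \ref{sequicof} $\Rightarrow$ Corollary \ref{yi} $\Rightarrow$ Corollary \ref{kisin}, and this corollary is the clean consequence for cofibrant operads.
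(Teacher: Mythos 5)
Your proposal is correct and is exactly the paper's argument: since $\unit$ is cofibrant, the initial operad $\unit_\circ$ is levelwise cofibrant, and Corollary \ref{kisin} applied to the cofibration $\unit_\circ\into\mathcal{O}$ gives levelwise cofibrancy of $\mathcal{O}$. No differences worth noting.
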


\begin{proof}
If $\unit$ is cofibrant then the initial operad $\unit_{\circ}$ is levelwise cofibrant. Hence this corollary follows from the previous one.
\end{proof}

\section{Change of base category for operads}

In this section we prove Theorem \ref{oqe} and Proposition \ref{casigual}. The key step is Proposition \ref{paco}. This result will also be used in the proof of Theorem \ref{qea} and in applications to (unital) $A$-infinity algebras.

A  lax symmetric monoidal functor $\Phi\colon\C{V}\r\C{W}$ between symmetric monoidal categories with coproducts induces a functor between categories of operads,
$$\Phi\colon\operad{\C{V}}\To \operad{\C{W}}.$$
Indeed, $\Phi$ applied levelwise induces a functor between sequences $\Phi\colon \C{V}^{\mathbb{N}}\r \C{W}^{\mathbb{N}}$ which is lax monoidal
for the composition product. The multiplication is given by
$$
\xy
(0,0)*{\displaystyle(\Phi(U)\circ_{\C{W}} \Phi(V))(n)=\coprod_{m\geq 0}\coprod_{\sum\limits_{i=1}^{m}p_{i}=n}\Phi(U(m))\otimes_{\C{W}} \Phi(V(p_{1}))\otimes_{\C{W}}\cdots\otimes_{\C{W}} \Phi(V(p_{m}))},
(11.3,-20)*{\displaystyle \coprod_{m\geq 0}\coprod_{\sum\limits_{i=1}^{m}p_{i}=n}\Phi(U(m)\otimes_{\C{V}} V(p_{1})\otimes_{\C{V}}\cdots\otimes_{\C{V}} V(p_{m}))},
(-1.7,-40)*{\displaystyle\Phi(U\circ_{\C{V}} V)(n)=\Phi(\coprod_{m\geq 0}\coprod_{\sum\limits_{i=1}^{m}p_{i}=n}U(m)\otimes_{\C{V}} V(p_{1})\otimes_{\C{V}}\cdots\otimes_{\C{V}} V(p_{m}))}
\ar(17,0);(17,-13)^-{\coprod_{m\geq 0}\coprod_{\sum\limits_{i=1}^{m}p_{i}=n}\text{mult.}}
\ar(17,-20);(17,-33)
\ar(-45,0);(-45,-33)_{\text{mult.}}
\endxy$$
Here, the bottom right vertical morphism is the canonical map $\coprod\Phi\r\Phi(\coprod)$. 
Moreover, the unit morphism $\unit_{\circ_{\C{W}}}\r \Phi(\unit_{\circ_{\C{V}}})$ 
is the unit morphism $\unit_{\C{W}}\r \Phi(\unit_{\C{V}})$ in arity $1$ and 
the trivial morphism  $\varnothing\r \Phi(\varnothing)$ in arities $n\neq 1$. 

The following result shows that colax-lax symmetric monoidal adjoint pairs which are Quillen pairs give rise to Quillen adjunctions between categories of operads. No further conditions are needed. Problems come when we want to obtain a Quillen equivalence.

\begin{prop}\label{qp}
Let $F\colon\C{V}\rightleftarrows\C{W}\colon G$ be a colax-lax symmetric monoidal pair which is also a Quillen adjunction. Then $F\dashv G$ 
gives rise to a Quillen  pair 
between model categories of operads,
$$\xymatrix{\operad{\C{V}}\ar@<.5ex>[r]^-{F^{\operatorname{oper}}}&\operad{\C{W}}.\ar@<.5ex>[l]^-{G}}$$
If $F$ is strong  then $F^{\operatorname{oper}}=F$. 
\end{prop}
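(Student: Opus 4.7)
The right adjoint of the putative Quillen pair is already produced by the discussion preceding the statement: since $G$ is lax symmetric monoidal, the levelwise functor $G\colon\C{W}^{\mathbb N}\r\C{V}^{\mathbb N}$ is lax monoidal for the composition product, hence sends monoids to monoids, giving a lift $G\colon\operad{\C{W}}\r\operad{\C{V}}$. The three points to establish are therefore the existence of the left adjoint $F^{\operatorname{oper}}$ (which cannot simply be $F$ applied levelwise, as $F$ is only colax), the Quillen property, and the identification $F^{\operatorname{oper}}=F$ in the strong case.

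My strategy for the left adjoint is to invoke the presentable adjoint functor theorem: a limit-preserving accessible functor between locally presentable categories has a left adjoint. Both $\operad{\C V}$ and $\operad{\C W}$ are locally presentable, since by \cite[Theorem 1.1]{htnso} they carry cofibrantly generated transferred model structures with generating sets $\mathcal{F}(I_{\mathbb N})$ and $\mathcal{F}(J_{\mathbb N})$, whose sources are presentable (assembled from the presentable sources of $I$ and $J$ by applying the left adjoint $\mathcal{F}$, tensor products, and small colimits, all of which preserve presentability in a closed symmetric monoidal setting). By Remark \ref{pushfree}, both limits and filtered colimits in $\operad{\C V}$ and $\operad{\C W}$ are computed on underlying sequences levelwise, and the underlying functor $G\colon\C{W}\r\C{V}$ preserves limits and is accessible as a right adjoint between locally presentable categories. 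Hence the lift of $G$ to operads is limit-preserving and accessible, and therefore admits a left adjoint, which I name $F^{\operatorname{oper}}$. The Quillen property is then immediate: fibrations and weak equivalences in $\operad{\C V}$ and $\operad{\C W}$ are detected levelwise in sequences, and the underlying $G\colon\C W\r\C V$ is right Quillen, so the lift preserves fibrations and trivial fibrations.

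For the strong case, if $F$ is strong symmetric monoidal then its comultiplications are isomorphisms and $F$ preserves coproducts as a left adjoint; inspection of the formula for $\circ$ shows that the levelwise functor $F\colon\C V^{\mathbb N}\r\C W^{\mathbb N}$ is then strong monoidal for the composition product, so it restricts to a functor $F\colon\operad{\C V}\r\operad{\C W}$. The natural adjunction bijection $\hom_{\C W^{\mathbb N}}(F(U),V)\cong\hom_{\C V^{\mathbb N}}(U,G(V))$ identifies operad morphisms with operad morphisms on either side, so this levelwise $F$ is left adjoint to $G\colon\operad{\C W}\r\operad{\C V}$, and by uniqueness of adjoints coincides with $F^{\operatorname{oper}}$. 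The main obstacle I anticipate is the abstract existence step, where one relies essentially on Remark \ref{pushfree} to know that filtered colimits in operad categories are as well-behaved as levelwise filtered colimits of sequences; this is precisely what makes the adjoint functor theorem applicable and allows us to bypass any ad hoc coequalizer presentation of $F^{\operatorname{oper}}$.
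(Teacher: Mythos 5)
Your existence argument for $F^{\operatorname{oper}}$ has a genuine gap: it rests on the claim that $\operad{\C V}$ and $\operad{\C W}$ are locally presentable, and that claim is neither among the hypotheses nor a consequence of them. The standing assumptions here are only that $\C V$ and $\C W$ are cofibrantly generated with sets of generating (trivial) cofibrations whose \emph{sources} are presentable; this is much weaker than local presentability of the ambient category, and the inference ``cofibrantly generated transferred model structure with presentable sources of the generating sets $\mathcal{F}(I_{\mathbb N})$, $\mathcal{F}(J_{\mathbb N})$ $\Rightarrow$ locally presentable'' is simply not valid (the sources of the generating cofibrations need not form a strong generator). For the same reason your assertion that the underlying $G\colon\C W\r\C V$ is accessible ``as a right adjoint between locally presentable categories'' is circular: accessibility of $G$ is not known in this generality. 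So the presentable adjoint functor theorem cannot be invoked under the stated hypotheses, and the central step of your proof — the existence of the left adjoint — is unsupported.

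The remaining parts of your proposal are fine and agree with the intended argument: the lift of $G$ to operads exists because $G$ is lax symmetric monoidal and hence lax monoidal for $\circ$; the Quillen property is immediate because fibrations and weak equivalences of operads are detected on underlying sequences; and in the strong case the levelwise $F$ is strong monoidal for $\circ$ (it preserves coproducts and tensors), so the adjunction restricts to operads and $F^{\operatorname{oper}}=F$ by uniqueness of adjoints. The correct ``abstract reason'' for the existence of $F^{\operatorname{oper}}$ in this generality is not an accessibility argument but the adjoint lifting theorem, which is what the paper cites (\cite[Theorem 4.5.6]{borceux2}): the forgetful functors $\operad{\C V}\r\C V^{\mathbb N}$ and $\operad{\C W}\r\C W^{\mathbb N}$ are monadic (algebras over the free operad monads), the lifted $G$ commutes with them, the bottom functor $G\colon\C W^{\mathbb N}\r\C V^{\mathbb N}$ has the levelwise left adjoint $F$, and $\operad{\C W}$ has coequalizers of reflexive pairs since it is cocomplete; these data alone produce the left adjoint $F^{\operatorname{oper}}$, with no presentability of $\C V$ or $\C W$ needed. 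If you replace your appeal to the presentable adjoint functor theorem by this lifting argument (or, equivalently, by the reflexive coequalizer presentation you wanted to bypass), the proof is complete.
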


\begin{proof}
Since the lax symmetric monoidal functor $G$ takes (trivial) fibrations in $\C{W}$ to (trivial) fibrations in $\C{V}$, then so does the induced functor between categories of operads. The left adjoint $F^{\operatorname{oper}}$ exists by abstract reasons, see \cite[Theorem 4.5.6]{borceux2}, therefore $F^{\operatorname{oper}}\dashv G$ is a Quillen pair. In general $F^{\operatorname{oper}}$ is not defined as $F$ on underlying sequences. Nevertheless, if $F$  is strong, then the functors induced by $F$ and $G$ on operads are adjoint. Hence $F^{\operatorname{oper}}=F$ in this special case. 
\end{proof}

The functor $F^{\operatorname{oper}}$ may not coincide with  $F$  on underlying sequences, but how different are they? The following proposition answers this question. On cofibrant operads, they coincide up to homotopy.

In the conditions of the previous proposition, consider the following diagram of adjoint pairs
$$\xymatrix{
\C{V}^{\mathbb{N}}\ar@<.5ex>[r]^-{F}\ar@<.5ex>[d]^{\mathcal{F}_{\C V}}\ar@{<-}@<-.5ex>[d]_{\text{forget}}
&\C{W}^{\mathbb{N}}\ar@<.5ex>[l]^-{G}\ar@<.5ex>[d]^{\mathcal{F}_{\C W}}\ar@{<-}@<-.5ex>[d]_{\text{forget}}\\
\operad{\C{V}}\ar@<.5ex>[r]^-{F^{\operatorname{oper}}}&\operad{\C{W}}\ar@<.5ex>[l]^-{G}
}$$
The subdiagram of right adjoints commutes, hence left adjoints commute up to natural isomorphism. Moreover, given an operad $\mathcal{O}$ in $\C{V}$, there is a natural morphism in $\C{W}^{\mathbb{N}}$
\begin{equation}\label{chio}
\chi_{\mathcal O}\colon F(\mathcal O)\To F^{\operatorname{oper}}(\mathcal O)
\end{equation}
whose adjoint $\mathcal O\r GF^{\operatorname{oper}}(\mathcal O)$ along $F\dashv G$ is the morphism of sequences underlying the unit of $F^{\operatorname{oper}}\dashv G$.

\begin{prop}\label{paco}
Let $F\colon\C{V}\rightleftarrows\C{W}\colon G$ be a weak symmetric monoidal Quillen adjunction. Suppose that the tensor units of $\C{V}$ and $\C{W}$ are cofibrant. If $\mathcal{O}$ is cofibrant in $\operad{\C V}$ then $\chi_ {\mathcal{O}}$ is a weak equivalence in $\C{W}^{\mathbb{N}}$. 
\end{prop}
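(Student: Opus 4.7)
The plan is transfinite induction on a cell decomposition of $\mathcal{O}$. Since $\chi$ is natural and weak equivalences in $\C{W}^{\mathbb{N}}$ are closed under retracts (as are $F$ and $F^{\operatorname{oper}}$), we may assume $\mathcal{O}$ is a relative $\mathcal{F}(I_{\mathbb{N}})$-cell complex $\mathcal{O}=\colim_{\alpha}\mathcal{O}_{\alpha}$, with $\mathcal{O}_0=\unit_\circ$ and $\mathcal{O}_{\alpha}=\mathcal{O}_{\alpha-1}\cup_{\mathcal{F}_{\C V}(U_{\alpha})}\mathcal{F}_{\C V}(V_{\alpha})$ at successors, where $f_{\alpha}\colon U_{\alpha}\to V_{\alpha}$ lies in $I_{\mathbb{N}}$. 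Alongside the weak equivalence of $\chi_{\mathcal{O}_{\alpha}}$, I would carry as inductive hypothesis that $\mathcal{O}_{\alpha}(n)$ is cofibrant in $\C V$ for every $n\geq 0$; this holds at $\alpha=0$ by cofibrancy of $\unit_{\C V}$, and propagates via Corollary \ref{yi}. For the base case, $F^{\operatorname{oper}}(\unit_\circ)=\unit_\circ$ since left adjoints preserve initial objects, so $\chi_{\unit_\circ}$ reduces in arity $1$ to the canonical map $F(\unit_{\C V})\to\unit_{\C W}$, which is a weak equivalence by axiom (2) of a weak monoidal Quillen adjunction applied to the identity cofibrant resolution of $\unit_{\C V}$; it is an isomorphism in other arities.

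For the successor step, apply Remark \ref{pushfree} to $\mathcal{O}_{\alpha-1}\to\mathcal{O}_{\alpha}$: this produces a sequential filtration $P_0=\mathcal{O}_{\alpha-1}\to P_1\to P_2\to\cdots$ whose $t$-th attaching map at arity $n$ is a push-out along the tree-indexed coproduct \eqref{monstruo}. Using $F^{\operatorname{oper}}\circ\mathcal{F}_{\C V}=\mathcal{F}_{\C W}\circ F$ (composition of left adjoints), the analogous push-out in $\operad{\C W}$ defines $F^{\operatorname{oper}}(\mathcal{O}_{\alpha})$, and its own Remark \ref{pushfree}-filtration has $t$-th attaching maps given by the same formula with $F(f_{\alpha})$ and $F^{\operatorname{oper}}(\mathcal{O}_{\alpha-1})$ in place of $f_{\alpha}$ and $\mathcal{O}_{\alpha-1}$. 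Since $F$ preserves colimits and coproducts, the comparison $\chi$ at the $t$-th layer decomposes on each tree summand $T$ as the iterated comultiplication
\[
F\!\left(\bigodot_{v\in I^e(T)} f_{\alpha}(\val{T}{v})\otimes\bigotimes_{w\in I^o(T)}\mathcal{O}_{\alpha-1}(\val{T}{w})\right)\longrightarrow \bigodot_{v\in I^e(T)} F(f_{\alpha})(\val{T}{v})\otimes\bigotimes_{w\in I^o(T)} F(\mathcal{O}_{\alpha-1})(\val{T}{w}),
\]
followed by the map induced on the $F(\mathcal{O}_{\alpha-1})$-factors by the inductive $\chi_{\mathcal{O}_{\alpha-1}}$.

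By Remark \ref{iteracion}, the source and target of the iterated comultiplication above are colimits of tensor cubes which, by Lemma \ref{reedy} and the inductive cofibrancy of all entries, are Reedy cofibrant; thus the iterated comultiplication is a weak equivalence by applying Lemma \ref{iterated} at each cube vertex together with the standard fact that a levelwise weak equivalence between Reedy cofibrant diagrams induces a weak equivalence on colimits. The subsequent map is a weak equivalence by the inductive hypothesis $\chi_{\mathcal{O}_{\alpha-1}}\sim$ together with the push-out product and monoid axioms. Lemma \ref{sequicof} guarantees the attaching maps on both sides are cofibrations between cofibrant objects (on the $\C W$-side, the cofibrancy of $F^{\operatorname{oper}}(\mathcal{O}_{\alpha-1})(n)$ follows from Corollary \ref{tech1.5bis} applied to $\C W$, whose tensor unit is cofibrant, and the fact that $F^{\operatorname{oper}}$ is left Quillen), so the gluing lemma yields a weak equivalence on each $P_t$, and a sequential-colimit argument along cofibrations between cofibrant objects passes this to $\mathcal{O}_{\alpha}$. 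The limit-ordinal step is analogous, using that the forgetful functor to sequences preserves filtered colimits on both sides. The principal obstacle is the bookkeeping in the successor step: identifying $F$ applied to the tree-indexed attachments \eqref{monstruo} with the corresponding attachments in $\C W$ modulo iterated comultiplication, which forces one to unwind the universal cube description of push-out products from Remark \ref{iteracion} and to use the Reedy cofibrancy output of Lemma \ref{reedy} to control how $F$ interacts with the defining colimits.
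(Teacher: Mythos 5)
Your argument follows the paper's proof almost step for step (the inline Reedy-cofibrancy argument you give for the iterated comultiplication is exactly the paper's Lemma \ref{previo}), but it has one genuine gap in the successor step: you invoke ``the inductive cofibrancy of all entries'' of the tensor cubes, yet your inductive hypothesis only gives cofibrancy of the $\mathcal{O}_{\alpha-1}(n)$. The remaining entries are the sources and targets $U_{\alpha}(m)$, $V_{\alpha}(m)$ of the generating cofibration $f_{\alpha}\in I_{\mathbb{N}}$, and the standing hypotheses only require generating cofibrations to have \emph{presentable} sources, not cofibrant ones. Without cofibrancy of $U_{\alpha}$, Lemma \ref{reedy} does not apply to the cubes $\bigotimes f_{\alpha}\otimes\bigotimes\mathcal{O}_{\alpha-1}$ (it needs cofibrations between cofibrant objects), Lemma \ref{iterated} does not give the pointwise weak equivalences at the cube vertices (it needs cofibrant tensor factors), and the sources of the attaching maps \eqref{monstruo} and of their $\C W$-analogues need not be cofibrant, so neither the coproduct over trees of weak equivalences nor the cube/gluing lemma step is justified. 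In short, every place where you quote Lemma \ref{previo}-type input silently assumes $f_{\alpha}$ has cofibrant source and target, which is not part of the hypotheses.

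The paper repairs exactly this point before running the filtration of Remark \ref{pushfree}: replace the attaching cofibration $f_{\alpha}$ by its push-out $\tilde f_{\alpha}\colon \mathcal{O}_{\alpha-1}\into\mathcal{O}_{\alpha-1}\cup_{U_{\alpha}}V_{\alpha}$ along the attaching map $U_{\alpha}\to\mathcal{O}_{\alpha-1}$ in $\C{V}^{\mathbb{N}}$; the same push-out of operads is then presented as a push-out of a free map on $\tilde f_{\alpha}$, which is a cofibration of sequences whose source is the levelwise cofibrant sequence $\mathcal{O}_{\alpha-1}$ (and whose target is then levelwise cofibrant by Corollary \ref{yi}). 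With this single reduction inserted at the start of your successor step, all the cofibrancy requirements you need (for Lemmas \ref{reedy}, \ref{iterated}, \ref{sequicof}, the coproduct over trees, and the cube lemma) are met, and your proof coincides with the one in the paper.
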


Notice that Proposition \ref{casigual} is a corollary of this result. In the proof of Proposition \ref{paco}, we need the following technical result.

\begin{lem}\label{previo}
Let $F\colon\C{C}\rightleftarrows\C{D}\colon G$ be a weak monoidal Quillen adjunction. Consider cofibrations with cofibrant source and target $f_{1},\dots, f_{n}$ and cofibrant objects $X_{1},\dots, X_{m}$ in $\C{C}$. The morphism in $\mor{\C{D}}$
$$F\left(\bigodot_{i=1}^{n}f_{i}\otimes\bigotimes_{j=1}^{m}X_{j}\right)\To \bigodot_{i=1}^{n}F(f_{i})\otimes\bigotimes_{j=1}^{m}F(X_{j})$$
defined by the comultiplication of the colax  monoidal functor $F$ is a weak equivalence in $\mor{\C{D}}$.
\end{lem}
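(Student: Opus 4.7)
The plan is to interpret each cofibrant object $X_j$ as the morphism $\varnothing \to X_j$, so that $\bigodot_i f_i \otimes \bigotimes_j X_j$ equals $\bigodot_{i=1}^n f_i \odot \bigodot_{j=1}^m(\varnothing \to X_j)$ (using that $f \odot (\varnothing \to X) = f \otimes X$). Via Remark \ref{iteracion}, both iterated push-out products (the one in $\C{C}$ underlying $F$ of the source, and the one in $\C{D}$) are presented as colimits of the $(n+m)$-dimensional cube diagrams
\begin{align*}
D &= f_1 \otimes \cdots \otimes f_n \otimes (\varnothing \to X_1) \otimes \cdots \otimes (\varnothing \to X_m) : \dos^{n+m} \to \C{C}, \\
D' &= F(f_1) \otimes \cdots \otimes F(f_n) \otimes (\varnothing \to F(X_1)) \otimes \cdots \otimes (\varnothing \to F(X_m)) : \dos^{n+m} \to \C{D},
\end{align*}
taken over $\dos^{n+m} \setminus \{\mathrm{top}\}$, with the target of the push-out product morphism equal to the value at the top vertex.

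Next I would set up Reedy cofibrancy. Since each $f_i$ and each $\varnothing \to X_j$ is a cofibration between cofibrant objects in $\C{C}$ (respectively $\C{D}$, after applying $F$, which is left Quillen), Lemma \ref{reedy} applies and yields that $D$ is Reedy cofibrant in $\C{C}^{\dos^{n+m}}$ and $D'$ is Reedy cofibrant in $\C{D}^{\dos^{n+m}}$. Because $F$ is a left adjoint, it preserves colimits and therefore preserves latching objects; combined with the fact that $F$ preserves cofibrations, this ensures that $F(D)$ is Reedy cofibrant in $\C{D}^{\dos^{n+m}}$.

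Now I would produce the natural transformation $\eta \colon F(D) \Rightarrow D'$ whose component at a vertex $v \in \dos^{n+m}$ is the iterated comultiplication of $F$ applied to the tensor product $D(v)$. At vertices where at least one $X_j$-coordinate is $0$, both $F(D)(v)$ and $D'(v)$ are $\varnothing$ and $\eta_v$ is the identity. At the remaining vertices, $D(v)$ is a tensor product of cofibrant objects (domains/codomains of the $f_i$ together with the $X_j$), so $\eta_v$ is a weak equivalence by Lemma \ref{iterated}. Hence $\eta$ is a levelwise weak equivalence between Reedy cofibrant diagrams in $\C{D}^{\dos^{n+m}}$.

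The standard Reedy-theoretic conclusion is then that $\eta$ induces a weak equivalence on latching objects at every vertex, and in particular on the latching object at the top, which is the colimit over $\dos^{n+m} \setminus \{\mathrm{top}\}$. Together with the weak equivalence on values at the top vertex (which is the source of the map in $\mor{\C{D}}$'s codomain, but from the push-out product viewpoint the target), this gives a weak equivalence in $\mor{\C{D}}$ between the two push-out product morphisms. The only subtle point is ensuring that $F$ really preserves the relevant latching constructions so that $F(D)$ is Reedy cofibrant; this is immediate from cocontinuity of $F$ and its preservation of cofibrations, and is the main technical check in the argument.
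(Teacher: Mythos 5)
Your proposal is correct and takes essentially the same route as the paper's proof: pointwise iterated comultiplication gives a weak equivalence of cube diagrams (Lemma \ref{iterated}), Reedy cofibrancy of both cubes comes from Lemma \ref{reedy} together with the fact that a cocontinuous left Quillen functor preserves Reedy cofibrancy over the direct category $\dos^{n}$, and the map on sources is then the colimit over the punctured cube, which preserves weak equivalences between Reedy cofibrant diagrams. The only (harmless) difference is bookkeeping: you absorb the $X_{j}$ into the cube as $\varnothing\into X_{j}$, working over $\dos^{n+m}$ and handling the resulting $\varnothing$ vertices by hand, whereas the paper keeps the $X_{j}$ as constant tensor factors in an $n$-dimensional cube, so all vertices are cofibrant and Lemma \ref{iterated} applies directly.
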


\begin{proof}
Denote $f_{i}\colon U_{i}\into V_{i}$, $1\leq i\leq n$. 
The target of the morphism in the statement is the iterated comultiplication
$$F\left(\bigotimes_{i=1}^{n}V_{i}\otimes\bigotimes_{j=1}^{m}X_{j}\right)\To \bigotimes_{i=1}^{n}F(V_{i})\otimes\bigotimes_{j=1}^{m}F(X_{j}),$$
which is a weak equivalence by Lemma \ref{iterated}. The difficult part of this proof is to show that the source is also a weak equivalence. 


The diagram
$$\bigotimes_{i=1}^{n}f_{i}\otimes\bigotimes_{j=1}^{m}X_{j}\colon \dos^{n}\To\C{C}$$
is Reedy cofibrant in $\C{C}^{\dos^{n}}$ by Lemma \ref{reedy}. In particular,
$$F\left(\bigotimes_{i=1}^{n}f_{i}\otimes\bigotimes_{j=1}^{m}X_{j}\right)\colon \dos^{n}\To\C{D}$$
is Reedy cofibrant in $\C{D}^{\dos^{n}}$ by \cite[Proposition 15.4.1 (1)]{hirschhorn}, since left Quillen functors preserve cofibrant objects. Moreover, 
$$\bigotimes_{i=1}^{n}F(f_{i})\otimes\bigotimes_{j=1}^{m}F(X_{j})\colon \dos^{n}\To\C{D}$$
is also Reedy cofibrant in $\C{D}^{\dos^{n}}$ by Lemma \ref{reedy}. The restrictions to $\dos^{n}\setminus\{(1,\st{n}\dots,1)\}$ are also Reedy cofibrant, see \cite[Lemma 15.3.7. (1)]{hirschhorn}. 

Let $$\tau\colon F\left(\bigotimes_{i=1}^{n}f_{i}\otimes\bigotimes_{j=1}^{m}X_{j}\right)\Longrightarrow \bigotimes_{i=1}^{n}F(f_{i})\otimes\bigotimes_{j=1}^{m}F(X_{j})\colon \dos^{n}\To\C{D}$$
be the natural transformation defined pointwise by the iterated comultiplication weak equivalence in Lemma \ref{iterated}. 
The categories $\dos^{n}$ and $\dos^{n}\setminus\{(1,\st{n}\dots,1)\}$ are direct in the sense of \cite[Definition 5.1.1]{hmc}. Therefore,
$\colim_{\dos^{n}\setminus\{(1,\st{n}\dots,1)\}}\tau$ is a weak equivalence by \cite[Corollary 5.1.6]{hmc}, since left Quillen functors preserve weak equivalences between cofibrant objects. This finishes the proof, since $\colim_{\dos^{n}\setminus\{(1,\st{n}\dots,1)\}}\tau$ is the source of the morphism in the statement, compare Remark \ref{iteracion}.
\end{proof}

\begin{rem}
Lemma \ref{previo} is obviously also true if we reorder  the tensor and push-out product factors, i.e.~we may have
$$(X_{1}\otimes f_{1}\odot f_{2})\odot (X_{2}\otimes f_{3}\otimes X_{3}\otimes X_{4})\odot f_{4}\odot f_{5}.$$
\end{rem}

\begin{proof}[Proof of Proposition \ref{paco}]
If  $\mathcal O=\unit_{\circ_{\C{V}}}$ is the initial operad in $\C V$ then 
$F^{\operatorname{oper}}(\unit_{\circ_{\C{V}}})=\unit_{\circ_{\C{W}}}$ is the initial operad in $\C{W}$, since
 $F^{\operatorname{oper}}$ is a left adjoint. Moreover, the morphism 
$\chi_{\unit_{\circ_{\C{V}}}}\colon F(\unit_{\circ_{\C{V}}})\r \unit_{\circ_{\C{W}}}$ 
is the counit 
$F(\unit_{{\C{V}}})\r \unit_{{\C{W}}}$ of $F$ in arity $1$. This counit is a weak equivalence by Definition \ref{weak} (2), since $\unit_{\C{V}}$ is cofibrant. In arities $n\neq 1$, $\chi_{\unit_{\circ_{\C{V}}}}$ is  the identity morphism on the initial object. Hence $\chi_{\unit_{\circ_{\C{V}}}}$  is a weak equivalence.

Consider a push-out in $\operad{\C V}$,
$$\xymatrix{\mathcal{F}_{\C V} (U)\ar@{}[rd]|{\text{push}}\ar@{ >->}[r]^-{\mathcal{F}_{\C V} (f)}\ar[d]_-{g}&\mathcal{F}_{\C V} (V)\ar[d]^-{g'}\\
\mathcal{O}\ar@{ >->}[r]_-{f'}&\mathcal{P}}$$
with $f$ a cofibration in $\C{V}^{\mathbb N}$ and $\mathcal{O}$  a cofibrant operad such that $\chi_{\mathcal O}$ is a weak equivalence. We are going to show that  $\chi_{\mathcal P}$ is also a weak equivalence in $\C{W}^{\mathbb{N}}$.

Let 
$$\mathcal{O}\st{\bar g}\longleftarrow U\st{f}\into V$$
be the diagram adjoint to the left upper corner. 
We can suppose without loss of generality that  $U(n)$ is cofibrant for all $n\geq 0$. This happens, for instance, if $U$ is the underlying sequence of a cofibrant operad, such as $\mathcal{O}$, see Corollary \ref{tech1.5bis}. 
If this condition did not hold, we could replace $f$ with $\tilde f$ in the following push-out diagram in $\C{V}^{\mathbb{N}}$,
$$\xymatrix{ U\ar@{ >->}[r]^-{f}\ar[d]_-{\bar g}\ar@{}[rd]|{\text{push}}& V\ar[d]\\
\mathcal{O}\ar@{ >->}[r]_-{\tilde f}&\mathcal{O}\cup_UV}$$
Indeed, if $\bar g'\colon V\r \mathcal{P}$ is adjoint to $g'$ then
$$\xymatrix{\mathcal{F}_{\C{V}}(\mathcal{O})\ar@{}[rd]|{\text{push}}\ar@{ >->}[r]^-{\mathcal{F}_{\C{V}}(\tilde f)}\ar[d]_-{\text{adjoint to }1_{\mathcal{O}}}&\mathcal{F}_{\C{V}}(\mathcal{O}\cup_UV)\ar[d]^-{\text{adjoint to }(1_{\mathcal{O}},\bar g')}\\
\mathcal{O}\ar@{ >->}[r]_-{f'}&\mathcal{P}}$$
is also a push-out in $\operad{\C{V}}$. 

Recall from Remark \ref{pushfree}   the decomposition of the morphism of sequences underlying $f'$ as a transfinite (countable) composition. This transfinite composition consists of cofibrations by Lemma \ref{sequicof}. Since $F$ is a left Quillen functor, $F(f')$ is the transfinite composition of
$$F(\mathcal{O})=F(P_0)\into\cdots\into F(P_{t-1})\st{F(\varphi_t)}\into F(P_t)\into \cdots,$$
where $F(\varphi_t(n))$, $n\geq 0$,
is a push-out along the cofibration
\begin{equation}\label{monstruo2}
\coprod_{T}F\left(\bigodot_{v\in I^{e}(T)}f(\val{T}{v})\;\;\otimes\bigotimes_{w\in I^{o}(T)}\mathcal{O}(\val{T}{w})\right).
\end{equation}

The cofibrant operad $F^{\operatorname{oper}}(\mathcal{P})$ fits into the following push-out diagram
$$
\xymatrix@C=40pt{\mathcal{F}_{\C{W}}(F(U))\ar[r]^-{\mathcal{F}_{\C{W}}(F(f))}\ar[d]_-{F^{\operatorname{oper}}(g)}&\mathcal{F}_{\C{W}}(F(V))\ar[d]^-{F^{\operatorname{oper}}(g')}\\
F^{\operatorname{oper}}(\mathcal{O})\ar[r]_-{F^{\operatorname{oper}}(f')}&F^{\operatorname{oper}}(\mathcal{P})}
$$
Hence, the morphism of sequences underlying $F^{\operatorname{oper}}(f')$ is the transfinite composition of 
$$F^{\operatorname{oper}}(\mathcal{O})=\tilde P_0\into\cdots\into \tilde P_{t-1}\st{\tilde \varphi_t}\into \tilde P_t\into \cdots,$$
where $\tilde\varphi_t(n)$, $n\geq 0$,
is a push-out along the cofibration
\begin{equation}\label{monstruo3}
\coprod_{T}\bigodot_{v\in I^{e}(T)}F(f(\val{T}{v}))\;\;\otimes\bigotimes_{w\in I^{o}(T)}F^{\operatorname{oper}}(\mathcal{O}(\val{T}{w})).
\end{equation}

The morphism $\chi_{\mathcal P}$ is the colimit of a diagram
$$\xymatrix@C=20pt{
F(\mathcal{O})\ar@{=}[r]\ar[d]^{\sim}_{\chi_{\mathcal{O}}}&F(P_{0})\ar@{ >->}[r]\ar[d]_{\chi^{0}_{\mathcal{O}}}&\cdots\ar@{ >->}[r] &F(P_{t-1})\ar@{ >->}[r]^-{F(\varphi_t)}\ar[d]_{\chi^{t-1}_{\mathcal{O}}}&F(P_{t})\ar@{ >->}[r]\ar[d]_{\chi^{t}_{\mathcal{O}}}&\cdots\\
F^{\operatorname{oper}}(\mathcal{O})\ar@{=}[r]&\tilde P_{0}\ar@{ >->}[r]&\cdots \ar@{ >->}[r]&\tilde P_{t-1}\ar@{ >->}[r]_-{\tilde\varphi_t}&\tilde P_{t}\ar@{ >->}[r]&\cdots
}$$
The morphism $\chi^{t}_{\mathcal{O}}(n)$, $n\geq 0$, is obtained from the previous one by taking push-out of horizontal arrows in the following diagram,
$$\xymatrix{F(P_{t-1})(n)\ar[d]_{\chi^{t-1}_{\mathcal{O}}(n)}&\bullet\ar[l]\ar@{ >->}[r]^{\eqref{monstruo2}}\ar[d]^{\sim}&\bullet\ar[d]^{\sim}\\
\tilde P_{t-1}(n)&\bullet\ar[l]\ar@{ >->}[r]^{\eqref{monstruo3}}&\bullet}$$
The commutative square on the right is a weak equivalence  $\eqref{monstruo2}\st{\sim}\r \eqref{monstruo3}$ in $\mor{\C{W}}$ defined as follows. It is a coproduct $\coprod_{T}$ of weak equivalences between cofibrant objects. Each factor of the coproduct decomposes as
$$\xymatrix{\displaystyle F\left(\bigodot_{v\in I^{e}(T)}     f(\val{T}{v})\otimes     \bigotimes_{w\in I^{o}(T)}     \mathcal{O}(\val{T}{w})  \right)
\ar[d]^-{\sim}\\   
\displaystyle\bigodot_{v\in I^{e}(T)}   F(f(\val{T}{v}))\otimes   \bigotimes_{w\in I^{o}(T)}   F(\mathcal{O}(\val{T}{w}))
\ar[d]^-{\sim}\\   
\displaystyle\bigodot_{v\in I^{e}(T)}   F(f(\val{T}{v}))\otimes   \bigotimes_{w\in I^{o}(T)}    F^{\operatorname{oper}}(\mathcal{O})(\val{T}{w})}
$$
Here, the first arrow is the weak equivalence in Lemma \ref{previo}. The second weak equivalence is 
$$\bigodot_{v\in I^{e}(T)}   F(f(\val{T}{v}))\otimes   \bigotimes_{w\in I^{o}(T)}    \chi_{\mathcal{O}}(\val{T}{w}).$$
All these objects are indeed cofibrant, since  the cofibration $f$ has cofibrant  source and the sequences $F(\mathcal{O})$ and $F^{\operatorname{oper}}(\mathcal{O})$ are cofibrant  by Corollary  \ref{tech1.5bis}.


The cube lemma \cite[Lemma 5.2.6]{hmc} shows that if $\chi_{\mathcal{O}}^{t-1}(n)$ is a weak equivalence then $\chi_{\mathcal{O}}^{t}(n)$ is also a weak equivalence. Since $\chi_{\mathcal{O}}^{0}=\chi_{\mathcal{O}}$ is a weak equivalence by hypothesis, we deduce that $\chi_{\mathcal{O}}^{t}$ is a weak equivalence for all $t\geq 0$. Therefore, the colimit $\chi_{\mathcal{P}}=\colim_{t}\chi_{\mathcal{O}}^{t}$ is a weak equivalence by \cite[Proposition 15.10.12 (1)]{hirschhorn}.

Now, a standard inductive argument shows that $\chi_{\mathcal O}$ is a weak equivalence for any $\mathcal{F}_{\C V}(I_{\mathbb N})$-cell complex $\mathcal O$, and hence for any cofibrant operad $\mathcal O$ by the usual retract argument. Since $\chi_{\mathcal O}$ is a natural morphism of sequences, in order the inductive argument to work we should check that any  $\mathcal{F}(I_{\mathbb N})$-cell complex  is a transfinite composition of cofibrations of sequences with cofibrant starting sequence. The starting sequence is the initial operad, which is cofibrant   
because tensor units are cofibrant, and bonding morphisms are cofibrations of sequences by Corollary \ref{yi}.
\end{proof}

\begin{proof}[Proof of Theorem \ref{oqe}]
Let $\varphi\colon F^{\operatorname{oper}}(\mathcal{O})\r\mathcal{P}$ be a morphism in $\operad{\C{W}}$ with fibrant target such that $\mathcal{O}$ is cofibrant in  $\operad{\C{V}}$. We must show that $\varphi$ is a weak equivalence if and only if its adjoint $\varphi'\colon\mathcal{O}\r G(\mathcal{P})$ is a weak equivalence. 

Consider  the following morphisms of sequences in $\C{W}$,
$$F(\mathcal{O})\mathop{\To}^{\chi_{\mathcal{O}}}_{\sim} F^{\operatorname{oper}}(\mathcal{O})\st{\varphi}\To\mathcal{P}.$$
Here $\chi_{\mathcal{O}}$ is a weak equivalence by Proposition \ref{paco}. Recall that a morphism of operads is a weak equivalence (resp.~fibration)  if and only if the underlying morphism of sequences if a weak equivalence (resp.~fibration). Hence $\varphi$ is a weak equivalence if and only if $\varphi\chi_{\mathcal{O}}$ is a weak equivalence. Moreover,  the components of $\mathcal P$ are fibrant. Furthermore, the components of $\mathcal O$ are cofibrant by Corollary \ref{tech1.5bis}. Therefore,  since $F\dashv G$ is a Quillen equivalence, 
 $\varphi\chi_{\mathcal{O}}$ is a weak equivalence if and only if its adjoint along $F\dashv G$, i.e.~the morphism of sequences underlying $\varphi'\colon \mathcal{O}\r G(\mathcal{P})$, is a weak equivalence. Hence we are done.
\end{proof}

\section{Left properness for operads}\label{lpprueba}

In this section we prove Theorem \ref{lp} and deduce a gluing lemma for weak equivalences in $\operad{\C V}$.

\begin{proof}[Proof of Theorem \ref{lp}]
Assume first that  $\psi$ fits into a push-out diagram in $\operad{\C V}$
$$\xymatrix{\mathcal{F} (U)\ar@{}[rd]|{\text{push}}\ar@{ >->}[r]^-{\mathcal{F} (f)}\ar[d]_-{g}&\mathcal{F} (V)\ar[d]^-{g'}\\
\mathcal{O}\ar@{ >->}[r]_-{\psi}&\mathcal{Q}}$$
where $f$ is a cofibration in $\C{V}^{\mathbb N}$. In this case, $\psi'$ fits into the following push-out diagram
$$\xymatrix{\mathcal{F} (U)\ar@{}[rd]|{\text{push}}\ar@{ >->}[r]^-{\mathcal{F} (f)}\ar[d]_-{\varphi g}&\mathcal{F} (V)\ar[d]^-{\varphi 'g'}\\
\mathcal{P}\ar@{ >->}[r]_-{\psi'}&\mathcal{P}\cup_{\mathcal{O}}\mathcal{Q}}$$
We can suppose that $U(n)$ is cofibrant for all $n\geq 0$, compare the proof of Proposition \ref{paco}.

Recall from Remark \ref{pushfree} that the morphisms of sequences underlying $\psi$ and $\psi'$ decompose as transfinite compositions of
$$\mathcal{O}=Q_0\into\cdots\into Q_{t-1}\st{\psi_t}\into Q_t\into \cdots,\qquad\mathcal{P}=R_0\into\cdots\into R_{t-1}\st{\psi_t'}\into R_t\into \cdots,$$
where $\psi_t(n)$, $n\geq 0$,
is a push-out along
\begin{equation}\label{monstruo4}
\coprod_{T}\bigodot_{v\in I^{e}(T)}f(\val{T}{v})\;\;\otimes\bigotimes_{w\in I^{o}(T)}\mathcal{O}(\val{T}{w}),
\end{equation}
and  $\psi_t'(n)$, $n\geq 0$,
is a push-out along
\begin{equation}\label{monstruo5}
\coprod_{T}\bigodot_{v\in I^{e}(T)}f(\val{T}{v})\;\;\otimes\bigotimes_{w\in I^{o}(T)}\mathcal{P}(\val{T}{w}).
\end{equation}
All these morphisms are cofibrations by Lemma \ref{sequicof}. 

The morphism of sequences underlying $\varphi'$ is  the colimit of
$$\xymatrix@C=20pt{
\mathcal{O}\ar@{=}[r]\ar[d]^{\sim}_{\varphi}&Q_{0} \ar@{ >->}[r]\ar[d]_{\varphi_{0}}&\cdots\ar@{ >->}[r] &Q_{t-1} \ar@{ >->}[r]^-{\psi_t}\ar[d]_{\varphi_{t-1}}&Q_{t} \ar@{ >->}[r]\ar[d]_{\varphi_{t}}&\cdots\\
\mathcal{P}\ar@{=}[r]&R_{0}\ar@{ >->}[r]&\cdots \ar@{ >->}[r]&R_{t-1}\ar@{ >->}[r]_-{\psi_t'}&R_{t}\ar@{ >->}[r]&\cdots
}$$
Here, 
the morphism $\varphi_{t}(n)$, $n\geq 0$, is obtained from the previous one by taking push-out of horizontal arrows in the following diagram,
$$\xymatrix{Q_{t-1}(n)\ar[d]_{\varphi_{t-1}(n)}&\bullet\ar[l]\ar@{ >->}[r]^{\eqref{monstruo4}}\ar[d]^{\sim}&\bullet\ar[d]^{\sim}\\
R_{t-1}(n)&\bullet\ar[l]\ar@{ >->}[r]^{\eqref{monstruo5}}&\bullet}$$
The  square on the right is the following weak equivalence  $\eqref{monstruo4}\st{\sim}\r \eqref{monstruo5}$ in $\mor{\C{V}}$, $$\coprod_{T}\bigodot_{v\in I^{e}(T)}f(\val{T}{v})\;\;\otimes\bigotimes_{w\in I^{o}(T)}\varphi(\val{T}{w}).$$
This is indeed a weak equivalence, since each factor of the coproduct is a weak equivalence with cofibrant source and target. Here we use that the cofibration $f$ has cofibrant source and that the underlying sequences of $\mathcal O$ and $\mathcal P$ are assumed to be cofibrant.


The cube lemma \cite[Lemma 5.2.6]{hmc} shows that if $\varphi_{t-1}(n)$ is a weak equivalence then $\varphi_{t}(n)$ is also a weak equivalence. Since $\varphi_{0}=\varphi$ is a weak equivalence by hypothesis, we deduce that $\varphi_{t}$ is a weak equivalence for all $t\geq 0$. Therefore, the colimit $\varphi'=\colim_{t}\varphi_{t}$ is a weak equivalence by \cite[Proposition 15.10.12 (1)]{hirschhorn}.

Now, an inductive argument as in the proof of Proposition \ref{paco} shows that Theorem \ref{lp} is true if $\psi$ is a relative $\mathcal{F}(I_{\mathbb{N}})$-cell complex. We then deduce that Theorem \ref{lp} is true for any cofibration $\psi$ by the usual retract argument.
\end{proof}

Theorem \ref{lp} is an important ingredient to show that operads with underlying cofibrant sequence satify the axioms of a cofibration category in the sense of Baues \cite[I.1.1]{ah}.

\begin{prop}\label{cofcat}
Let $\C V$ be a symmetric monoidal model category. The full subcategory $\operadc{\C V}\subset \operad{\C V}$ spanned by the operads $\mathcal O$ such that $\mathcal O(n)$ is cofibrant for all $n\geq 0$ is a cofibration category.
\end{prop}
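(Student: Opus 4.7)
The plan is to verify Baues's four axioms for $\operadc{\C V}$ with the classes of cofibrations and weak equivalences inherited from the transferred model structure on $\operad{\C V}$.

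The composition axiom (C1) is inherited for free from the ambient model category, since $\operadc{\C V}\subset\operad{\C V}$ is a full subcategory: isomorphisms are both weak equivalences and cofibrations, both classes are closed under composition, and weak equivalences satisfy 2-out-of-3.

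For the pushout axiom (C2), consider a cofibration $\psi\colon\mathcal O\into\mathcal Q$ in $\operadc{\C V}$ and an arbitrary morphism $\varphi\colon\mathcal O\to\mathcal P$ with $\mathcal P\in\operadc{\C V}$. Form the push-out $\mathcal P\cup_{\mathcal O}\mathcal Q$ in $\operad{\C V}$; the induced map $\psi'\colon\mathcal P\to\mathcal P\cup_{\mathcal O}\mathcal Q$ is a cofibration by the model structure, and Corollary \ref{kisin} applied to $\psi'$ shows that $\mathcal P\cup_{\mathcal O}\mathcal Q$ has cofibrant components, so the push-out lives in $\operadc{\C V}$ and genuinely computes the push-out there. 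Preservation of trivial cofibrations under push-out is automatic from the model category structure on $\operad{\C V}$. The nontrivial content of (C2), namely the gluing condition that $\varphi'$ be a weak equivalence whenever $\varphi$ is, is exactly Theorem \ref{lp}; its cofibrant-components hypothesis is precisely the membership condition in $\operadc{\C V}$, so it applies verbatim.

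For the factorization axiom (C3), factor any morphism $f\colon\mathcal O\to\mathcal P$ in $\operad{\C V}$ as a cofibration $\alpha\colon\mathcal O\into\mathcal R$ followed by a trivial fibration $\beta\colon\mathcal R\to\mathcal P$; then $\beta$ is a weak equivalence, and Corollary \ref{kisin} applied to $\alpha$ gives $\mathcal R\in\operadc{\C V}$. For the fibrant models axiom (C4), factor the canonical morphism from $\mathcal O$ to the terminal operad as a trivial cofibration $\mathcal O\to\mathcal O^{f}$ followed by a fibration $\mathcal O^{f}\to *$; then $\mathcal O^{f}$ is fibrant in $\operad{\C V}$ and hence has the right lifting property with respect to every trivial cofibration in $\operadc{\C V}$, while Corollary \ref{kisin} places $\mathcal O^{f}$ in $\operadc{\C V}$. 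The principal obstacle is the gluing condition in (C2), which is the raison d'\^etre of Theorem \ref{lp}; the remaining axioms follow formally from the model structure together with Corollary \ref{kisin}.
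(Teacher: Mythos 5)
Your proposal is correct and follows essentially the same route as the paper: (C1) is inherited from the ambient model structure on the full subcategory, (C2) combines closure of $\operadc{\C V}$ under push-outs along cofibrations (Corollary \ref{kisin}) with the model-categorical stability of (trivial) cofibrations and with Theorem \ref{lp} for the gluing part, and (C3), (C4) come from the model category factorizations together with Corollary \ref{kisin}. The only difference is that you spell out (C3) and (C4) explicitly where the paper merely says they follow easily, which is harmless.
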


\begin{proof}
Axiom (C1) follows since $\operadc{\C V}$ is a full subcategory of a model category. Given a push-out diagram in $\operad{\C V}$,
$$\xymatrix{\mathcal{O}\ar@{ >->}[r]^-{\psi}\ar[d]_-{\varphi}\ar@{}[rd]|{\text{push}}&\mathcal{Q}\ar[d]^-{\varphi'}\\
\mathcal{P}\ar@{ >->}[r]_-{\psi'}&\mathcal{P}\cup_{\mathcal{O}}\mathcal{Q}}$$
if $\mathcal O$ and $\mathcal P$ are in $\operadc{\C V}$ then so are $\mathcal Q$ and $\mathcal{P}\cup_{\mathcal{O}}\mathcal{Q}$ by Corollary \ref{kisin}. Hence (C2) (b) follows since any push-out of a trivial cofibration in a model category is a trivial cofibration, and (C2) (b) follows from Theorem \ref{lp}. Axioms (C3) and (C4) also follow easily from Corollary \ref{kisin} and well known properties of model categories.
\end{proof}

The following result holds in any cofibration category, see \cite[II.1.2 (b)]{ah}.

\begin{cor}[Gluing lemma]\label{gluing}
Let $\C V$ be a symmetric monoidal model category. 
Consider a commutative diagram in $\operadc{\C V}$ as follows,
$$\xymatrix{\mathcal P\ar[d]^\sim&\mathcal O\ar[l]\ar[r]\ar[d]^\sim&\mathcal Q\ar[d]^\sim\\
\mathcal P'&\mathcal O'\ar[l]\ar[r]&\mathcal Q'}$$
If at least one of the morphisms in each row is a cofibration, then the induced morphism $\mathcal{P}\cup_{\mathcal{O}}\mathcal{Q}\r \mathcal{P}'\cup_{\mathcal{O}'}\mathcal{Q}'$ is a weak equivalence.
\end{cor}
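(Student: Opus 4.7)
The plan is to deduce Corollary \ref{gluing} from the fact that $\operadc{\C V}$ is a cofibration category in the sense of Baues (Proposition \ref{cofcat}); the gluing lemma is a formal consequence of the axioms, proved in \cite[II.1.2(b)]{ah}. For a self-contained argument inside $\operadc{\C V}$, I would proceed in two stages, using Theorem \ref{lp} (left properness) and Corollary \ref{kisin} (cofibrations between levelwise cofibrant operads stay levelwise cofibrant).

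First, I would reduce to the case where \emph{all four} horizontal arrows of the diagram are cofibrations. In the top row, if $f\colon \mathcal O\to\mathcal P$ is not already a cofibration, then by hypothesis $g\colon \mathcal O\into\mathcal Q$ is one. Factor $f$ in the model structure on $\operad{\C V}$ as a cofibration $\mathcal O\into\hat{\mathcal P}$ followed by a weak equivalence $\hat{\mathcal P}\stackrel{\sim}{\to}\mathcal P$. By Corollary \ref{kisin}, $\hat{\mathcal P}$ is in $\operadc{\C V}$, and the natural map $\hat{\mathcal P}\cup_{\mathcal O}\mathcal Q\to\mathcal P\cup_{\mathcal O}\mathcal Q$ is a weak equivalence: indeed, it is the pushout of the weak equivalence $\hat{\mathcal P}\stackrel{\sim}{\to}\mathcal P$ along the cofibration $\hat{\mathcal P}\into\hat{\mathcal P}\cup_{\mathcal O}\mathcal Q$ (which is a pushout of $g$), and Theorem \ref{lp} applies. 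A parallel construction in the bottom row, combined with a standard lift in $\operad{\C V}$ and 2-out-of-3, allows one to replace the entire diagram by an equivalent one in which $f,g,f',g'$ are all cofibrations while the vertical arrows remain weak equivalences.

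Once all four horizontal maps are cofibrations, I introduce the auxiliary pushout $\mathcal T := \mathcal O'\cup_{\mathcal O}\mathcal Q$ and factor the comparison map as
$$\mathcal P\cup_{\mathcal O}\mathcal Q\;\To\;\mathcal P'\cup_{\mathcal O}\mathcal Q\;\cong\;\mathcal P'\cup_{\mathcal O'}\mathcal T\;\To\;\mathcal P'\cup_{\mathcal O'}\mathcal Q',$$
the middle identification being associativity of pushouts. The first arrow is the pushout of the weak equivalence $\mathcal P\stackrel{\sim}{\to}\mathcal P'$ along the cofibration $\mathcal P\into\mathcal P\cup_{\mathcal O}\mathcal Q$ (a pushout of $g$, with source and target levelwise cofibrant by Corollary \ref{kisin}), hence a weak equivalence by Theorem \ref{lp}. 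For the second arrow, Theorem \ref{lp} applied to the pushout defining $\mathcal T$ shows that $\mathcal Q\to\mathcal T$ is a weak equivalence, and 2-out-of-3 then forces $\mathcal T\to\mathcal Q'$ to be one as well; the second arrow is the pushout of this weak equivalence along the cofibration $\mathcal T\into\mathcal P'\cup_{\mathcal O'}\mathcal T$ (a pushout of $f'$), so Theorem \ref{lp} applies once more.

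The main obstacle is the bookkeeping in the first stage: one needs to choose the factorizations coherently so that the vertical arrows remain weak equivalences. This is routine via the lifting axioms in $\operad{\C V}$ and 2-out-of-3, but is the only part that requires care; after the reduction, the two-step comparison in the second stage is straightforward given Theorem \ref{lp} and Corollary \ref{kisin}.
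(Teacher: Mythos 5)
Your first route is exactly the paper's proof: Corollary \ref{gluing} is obtained there as a purely formal consequence of Proposition \ref{cofcat} together with the gluing lemma for cofibration categories, \cite[II.1.2 (b)]{ah}, with no further argument. Your supplementary self-contained proof is also correct and amounts to unpacking what those axioms provide: the reduction to four horizontal cofibrations (factorization, a lift against the trivial-fibration half of the other factorization, 2-out-of-3, and Theorem \ref{lp} to compare the old and new pushouts) is indeed routine, and in the second stage Theorem \ref{lp} together with Corollary \ref{kisin} (which guarantees that the auxiliary operads such as $\hat{\mathcal P}$ and $\mathcal T=\mathcal O'\cup_{\mathcal O}\mathcal Q$ remain levelwise cofibrant) applies exactly as you say. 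One small remark: your second stage only uses that $\mathcal O\to\mathcal Q$ and $\mathcal O'\to\mathcal P'$ are cofibrations, so the full reduction is needed only when the given cofibrations lie on other sides of the rows; in any case the argument is sound.
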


\section{Algebras}

We here recall some facts about algebras over operads. See \cite{htnso} for details.

\begin{defn}
Let $\C{V}$ be a closed symmetric monoidal category with coproducts and $\C{C}$ a biclosed $\C{V}$-algebra with coproducts. An operad $\mathcal O$ in $\C V$ gives rise to a monad
$\mathcal{F}_{\mathcal O}\colon\C C\r \C C$ given by
$$\mathcal{F}_{\mathcal O}(X)=\coprod_{n\geq 0}z(\mathcal O(n))\otimes X^{\otimes n}.$$
The monad operations $\mathcal{F}_{\mathcal O}^{2}\rr \mathcal{F}_{\mathcal O}$ and $1_{\C C}\rr \mathcal{F}_{\mathcal O}$ are defined by the lax monoidal structure of $z$, see Definition \ref{valgebra}, and by the multiplications and the unit of the operad $\mathcal O$. 

An \emph{$\mathcal O$-algebra} in $\C C$ is an algebra over this monad. We denote $\algebra{\mathcal O}{\C C}$ the category of $\mathcal O$-algebras in $\C C$. 
\end{defn}

We also denote $\mathcal{F}_{\mathcal O}$ the free $\mathcal O$-algebra functor, left adjoint to the forgetful functor $\algebra{\mathcal O}{\C C}\r \C C$,
$$\xymatrix{\C{C}\ar@<.5ex>[r]^-{\mathcal{F}_{\mathcal O}}&\algebra{\mathcal O}{\C C}.\ar@<.5ex>[l]^-{\text{forget}}}$$

\begin{rem}
In a more explicit fashion, an $\mathcal O$-algebra is an object $A$ in $\C C$ together with structure morphisms, $n\geq 0$,
$$\nu_{n}^{A}\colon z(\mathcal O(n))\otimes A^{\otimes n}\To A$$
satisfying certain relations. An $\mathcal O$-algebra morphism $f\colon A\r B$ is a morphism in $\C C$ compatible with the structure morphisms. 
\end{rem}

\begin{rem}
The initial $\mathcal O$-algebra is $z(\mathcal O(0))$ with the following structure,
$$\xymatrix@C=40pt{\nu_{n}^{z(\mathcal O(0))}\colon z(\mathcal O(n))\otimes z(\mathcal O(0))^{\otimes n}\ar[r]^-{\text{mult. of }z}&
z(\mathcal O(n)\otimes \mathcal O(0)^{\otimes n})\ar[r]^-{z(\mu_{n;0,\dots,0})}&
z(\mathcal O(0)).
}$$

\end{rem}

\begin{rem}\label{pushfree2}
Suppose that the $\C{V}$-algebra $\C C$ is complete and cocomplete.  Then the category of algebras $\algebra{\mathcal O}{\C C}$ in $\C C$ over an operad $\mathcal O$ in $\C V$ is also complete and cocomplete. Limits and filtered colimits in $\algebra{\mathcal O}{\C{C}}$ are easy, since they are computed in $\C C$. Push-outs in $\algebra{\mathcal O}{\C{C}}$ are very complicated in general, but push-outs along free maps were carefully analyzed in \cite[\S 8]{htnso}, 
\begin{equation}\label{pusho2}
\xymatrix{\mathcal{F}_{\mathcal O} (U)\ar@{}[rd]|{\text{push}}\ar[r]^-{\mathcal{F}_{\mathcal O} (f)}\ar[d]_-{g}&\mathcal{F}_{\mathcal O} (V)\ar[d]^-{g'}\\
A\ar[r]_-{f'}&B}
\end{equation}
The  underlying morphism of $f'$ in $\C C$ is a transfinite (actually countable) composition of morphisms
$$A=B_0\st{\varphi_1}\To B_1\r\cdots\r B_{t-1}\st{\varphi_t}\To B_t\r \cdots,$$
where $\varphi_t\colon B_{t-1}\r B_{t}$
is a push-out along
\begin{equation}\label{killo}
\coprod\limits_{n\geq 1}\coprod\limits_{
\begin{array}{c}
\scriptstyle S\subset\{1,\dots,n\}\vspace{-3pt}\\
\scriptstyle \card(S)=t
\end{array}
}
z(\mathcal{O}(n))\otimes 
\bigodot_{S}f
\otimes\!\!\!\!\!\!
\bigotimes_{\{1,\dots,n\}\setminus S}\!\!\!\!\!\!A.
\end{equation}
Here we are abusing terminology, since the tensor and push-out product factors should be ordered according to the ordering of 
$\{1,\dots,n\}$ if $\C C$ is non-symmetric. Nevertheless, we will keep this notation throughout this paper, so as to simplify the exposition. 
We will not recall the explicit definition of the map from the source of \eqref{killo} to $B_{t-1}$, see \cite[Lemma 8.1]{htnso} for details.
\end{rem}

\begin{rem}
 We showed in \cite[Theorem 1.2]{htnso} that if $\C C$ is a model $\C V$-algebra with sets of generating cofibrations $I'$ and generating trivial cofibrations $J'$ with presentable sources, then $\algebra{\mathcal O}{\C C}$ has a cofibrantly generated model structure transferred along the free $\mathcal O$-algebra adjunction, i.e.~an $\mathcal O$-algebra morphism is a weak equivalence or fibration if and only if the underlying morphism in $\C C$ is so. Moreover,  $\mathcal F_{\mathcal O}(I')$ and $\mathcal F_{\mathcal O}(J')$  are sets of generating (trivial) cofibrations of $\algebra{\mathcal O}{\C C}$.
 \end{rem}
 
 The following result is similar to Lemma \ref{sequicof}.

\begin{lem}\label{sequicof2}
Consider a model $\C V$-algebra $\C C$. 
If $f$ is a cofibration in $\C{C}$, $\mathcal{O}$ is an operad in $\C{V}$ such that $z(\mathcal{O}(n))$ is cofibrant for all $n\geq 0$, and $A$ is an $\mathcal O$-algebra with underlying cofibrant object in $\C C$, then  \eqref{killo} is a cofibration in $\C{C}$.
\end{lem}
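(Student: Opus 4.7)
The plan is to show that each summand of \eqref{killo} is a cofibration in $\C{C}$ and then invoke the stability of cofibrations under coproducts. The proof should closely parallel Lemma \ref{sequicof}, the main tool being the push-out product axiom for the model $\C V$-algebra $\C C$.

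First, I would fix $n\geq 1$ and a subset $S\subset\{1,\dots,n\}$ with $\card(S)=t$, and rewrite the corresponding summand as an iterated push-out product
\[
z(\mathcal{O}(n))\otimes \bigodot_{S} f \otimes\!\!\!\!\!\bigotimes_{\{1,\dots,n\}\setminus S}\!\!\!\!\!A \;=\; (\varnothing\r z(\mathcal{O}(n)))\odot \bigodot_{S} f \odot \!\!\!\!\!\bigodot_{\{1,\dots,n\}\setminus S}\!\!\!\!\!(\varnothing\r A),
\]
using the identity $(\varnothing\r X)\odot g = X\otimes g$ from the definition of the push-out product (with factors ordered appropriately when $\C C$ is non-symmetric). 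The maps $\varnothing\r z(\mathcal{O}(n))$ and $\varnothing\r A$ are cofibrations because $z(\mathcal{O}(n))$ is cofibrant by hypothesis and $A$ is cofibrant in $\C C$. The map $f$ is a cofibration by assumption. Hence the whole expression is an iterated push-out product of cofibrations, and by the push-out product axiom applied inductively it is a cofibration in $\C C$.

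Having established that each summand is a cofibration, the final step is to observe that coproducts of cofibrations are cofibrations. This is immediate from the lifting property characterization of cofibrations: a morphism of the form $\coprod_i f_i$ has the left lifting property against a trivial fibration if and only if each $f_i$ does. Applying this to the double coproduct in \eqref{killo} (over $n\geq 1$ and over subsets $S$ of cardinality $t$) concludes the proof.

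The argument is essentially routine, and no step presents a genuine obstacle; the only subtlety is the bookkeeping of the tensor/push-out product factors in the non-symmetric case, but this is handled by the same convention already used throughout Section 5 and in the statement itself.
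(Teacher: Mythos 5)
Your proof is correct and is exactly the argument the paper has in mind: the paper gives no explicit proof, stating only that the lemma is "similar to Lemma \ref{sequicof}", which in turn is called a straightforward consequence of the push-out product axiom, and your rewriting of each summand as an iterated push-out product (via $(\varnothing\r X)\odot g=X\otimes g$) of the cofibrations $\varnothing\into z(\mathcal O(n))$, $f$, and $\varnothing\into A$, followed by stability of cofibrations under coproducts, is precisely that argument.
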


In practical examples, $z(\mathcal{O}(n))$ is cofibrant because $\mathcal{O}(n)$ is cofibrant, $n\geq 0$.

We recall \cite[Theorem 1.3]{htnso}.

\begin{thm}\label{coc}
Let $\C C$ be a model $\C V$-algebra. Consider a weak equivalence $\varphi\colon \mathcal O\st{\sim}\r\mathcal P$ between operads in $\C V$ such that the objects $ \mathcal O(n)$ and $\mathcal P(n)$ are cofibrant for all $n\geq 0$. Then the change of coefficients Quillen pair
$$\xymatrix{\algebra{\mathcal O}{\C{C}}\ar@<.5ex>[r]^-{\varphi_{*}}&\algebra{\mathcal P}{\C{C}}\ar@<.5ex>[l]^-{\varphi^{*}}}$$
is a Quillen equivalence. 
\end{thm}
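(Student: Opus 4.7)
The plan is to verify the Quillen equivalence by checking the unit criterion. First observe that $\varphi_{*}\dashv \varphi^{*}$ is automatically a Quillen pair, because $\varphi^{*}$ leaves underlying objects in $\C C$ untouched, so it preserves fibrations and weak equivalences (which are detected in $\C C$). Consequently, the pair is a Quillen equivalence if and only if for every cofibrant $\mathcal O$-algebra $A$ the unit $\eta_{A}\colon A\to\varphi^{*}\varphi_{*}(A)$ is a weak equivalence in $\C C$, since a morphism $\varphi_{*}(A)\to B$ in $\algebra{\mathcal P}{\C C}$ is a weak equivalence iff its adjoint $A\to\varphi^{*}(B)$ is (using that the fibrant replacement in $\algebra{\mathcal P}{\C C}$ does not alter the detection of weak equivalences on underlying objects).

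To prove that $\eta_{A}$ is a weak equivalence I would run a transfinite induction along the cell presentation of cofibrant $\mathcal O$-algebras, mirroring the strategy of the proof of Proposition \ref{paco}. The base case is the initial $\mathcal O$-algebra $z(\mathcal O(0))$: its image under $\varphi_{*}$ is $z(\mathcal P(0))$ and $\eta$ reduces to $z(\varphi(0))$, which is a weak equivalence since $z$ is left Quillen and $\varphi(0)\colon\mathcal O(0)\to\mathcal P(0)$ is a weak equivalence between cofibrant objects. For the inductive step, given a push-out along a generating free map
$$\xymatrix{\mathcal{F}_{\mathcal O}(U)\ar[r]^-{\mathcal{F}_{\mathcal O}(f)}\ar[d]&\mathcal{F}_{\mathcal O}(V)\ar[d]\\ A\ar[r]_-{f'}&A'}$$
with $f\colon U\into V$ a cofibration in $\C C$, compare the filtration $A=B_{0}\into B_{1}\into\cdots$ of $f'$ from Remark \ref{pushfree2} with the analogous filtration $\varphi_{*}(A)=B'_{0}\into B'_{1}\into\cdots$ of $\varphi_{*}(f')\colon\varphi_{*}(A)\to\varphi_{*}(A')$. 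The $t$-th stages are push-outs along the cofibrations \eqref{killo} formed with $z(\mathcal O(n))$ and $A$ versus $z(\mathcal P(n))$ and $\varphi_{*}(A)$, respectively. The comparison map
$$\coprod z(\mathcal O(n))\otimes\bigodot_{S}f\otimes\bigotimes_{S^{c}}A \;\longrightarrow\; \coprod z(\mathcal P(n))\otimes\bigodot_{S}f\otimes\bigotimes_{S^{c}}\varphi_{*}(A)$$
is a weak equivalence between cofibrations of cofibrant objects, built out of the weak equivalences $z(\varphi(n))$ (a weak equivalence between cofibrant objects since $z$ is left Quillen) and the inductively known weak equivalence $\eta_{A}$, tensored and push-out-producted with the cofibration $f$; cofibrancy of all terms is supplied by Lemma \ref{sequicof2} after verifying inductively that the underlying objects of $A$ and $\varphi_{*}(A)$ are cofibrant in $\C C$.

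The cube lemma \cite[Lemma 5.2.6]{hmc} then propagates the weak equivalence from stage $t-1$ to stage $t$, and \cite[Proposition 15.10.12]{hirschhorn} passes to the transfinite colimit, yielding $\eta_{A'}$ is a weak equivalence. A further transfinite induction on the cell attachments presenting $A$ itself as a relative $\mathcal{F}_{\mathcal O}(I')$-cell complex over $z(\mathcal O(0))$, followed by the usual retract argument, extends the conclusion to all cofibrant $\mathcal O$-algebras.

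The main obstacle is the careful cofibrancy bookkeeping at each inductive stage: one must simultaneously track that the underlying objects in $\C C$ of all the algebras appearing are cofibrant (so that Lemma \ref{sequicof2} applies and so that the comparison map of push-out data is a weak equivalence between cofibrant objects), and that $\varphi_{*}$ sends the corresponding cell attachments to cell attachments of the expected form. Because $\varphi_{*}$ does not act as the identity on underlying objects, one cannot shortcut via $\varphi^{*}$ reflecting weak equivalences on underlying level without having first identified the two filtrations—this is exactly what the parallel use of Remark \ref{pushfree2} on both sides accomplishes.
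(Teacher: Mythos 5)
Your overall strategy is the right one, and it is essentially the argument the paper relies on: Theorem \ref{coc} is recalled from \cite[Theorem 1.3]{htnso}, and the paper's remarks after the statement describe exactly the proof you reconstruct — reduce to the unit $\eta_A$ being a weak equivalence on cofibrant algebras (legitimate since $\varphi^*$ is the identity on underlying objects, hence preserves and reflects weak equivalences), then run a cellular induction over the filtration of Remark \ref{pushfree2} on both sides of $\varphi_*$, propagate with the cube lemma \cite[Lemma 5.2.6]{hmc} in place of left properness, pass to colimits, and finish with the retract argument. Your use of the cube lemma instead of left properness is precisely the first of the two corrections the paper makes to the proof in \cite{htnso}.

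There is, however, one concrete gap, and it is exactly the second point the paper flags. You assert that ``cofibrancy of all terms is supplied by Lemma \ref{sequicof2}''; that lemma only says the morphism \eqref{killo} is a cofibration — it says nothing about its source or target being cofibrant. The cells in your induction are attached along $\mathcal{F}_{\mathcal O}(f)$ with $f\in I'$, and the sources of the generating cofibrations $I'$ are assumed presentable, not cofibrant. If $U$ is not cofibrant, the sources of the push-out products $\bigodot_S f$, hence of \eqref{killo}, need not be cofibrant; then the comparison map of attaching data is not a coproduct of weak equivalences between cofibrant objects, and the cube lemma (which requires all six objects of the two push-out data to be cofibrant) does not apply, so the step as written fails. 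The missing move is the reduction ``we may assume $U$ is cofibrant'': replace $f\colon U\into V$ by the cofibration $A\into A\cup_U V$ obtained by pushing out $f$ in $\C C$ along the adjoint $U\to A$ of the attaching map, and change the attaching map to the one adjoint to $1_A$; this yields the same $A'$ and now the attaching cofibration has cofibrant source because the cofibrant $\mathcal O$-algebra $A$ has cofibrant underlying object (\cite[Lemma 9.4]{htnso}) — the same trick used in the proofs of Propositions \ref{paco} and \ref{paco2}, and the one the paper's remark after Theorem \ref{coc} says must be inserted into the proof of \cite[Lemma 9.6]{htnso}. With that insertion, your cofibrancy bookkeeping closes and the argument is correct.
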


In  \cite[Theorem 1.3]{htnso} we further assumed $\C C$ to be left proper, but this is actually unnecessary since the cube lemma  \cite[Lemma 5.2.6]{hmc} applies where we invoked left properness, in the proof of \cite[Lemma 9.6]{htnso}. We should also remark that in that proof we can suppose, without loss of generality, that $Y$ is a cofibrant object in $\C C$, applying if necessary the same trick as in the proof of Proposition \ref{paco} above. This trick can be applied since, under the hypotheses of Theorem \ref{coc}, any cofibrant $\mathcal O$-algebra has a cofibrant underlying object in $\C C$, see \cite[Lemma 9.4]{htnso}.

This kind of result goes back to \cite[Theorem 4.4]{ahto}.

\section{Change of base category for algebras}\label{malichon}

In this section we prove Theorem \ref{qea} and Proposition \ref{casigual2}. Actually, we show more general versions of these results.

Throughout this section, we place ourselves in the following context. We have a diagram
$$\xymatrix{
\C{V}\ar@<.5ex>[r]^-{F}\ar[d]_{z_{\C C}}&\C{W}\ar@<.5ex>[l]^-{G}\ar[d]^{z_{\C D}}
\\
\C{C}\ar@<.5ex>[r]^-{\bar F}&\C{D}\ar@<.5ex>[l]^-{\bar G}}$$
where $\C C$ is a model $\C V$-algebra, $\C D$ is a model $\C W$-algebra, the  Quillen pairs  $F\dashv G$ and $\bar F\dashv \bar G$ are  colax-lax   (symmetric) monoidal  adjunctions, and there is a fixed natural transformation
\begin{equation}\label{tau}
\tau(X)\colon\bar Fz_{\C C}(X)\To z_{\C D}F(X)
\end{equation}
which is a weak equivalence for $X$ cofibrant.
This natural transformation induces another one by adjunction,  $$\xy
(-5,0)*+{\tau'(X)\colon z_{\C C}G(X)}="a",
(45,0)*+{\bar Gz_{\C D}FG(X)}="b",
(90,0)*+{\bar Gz_{\C D}(X),}="c"
\ar"a";"b"^-{\text{adjoint to }\tau(G(X))}
\ar"b";"c"^-{\bar Gz_{\C D}(\text{counit of }F\dashv G)}
\endxy$$ that we assume to be monoidal. Moreover, for any $X$ in $\C W$ and any $Y$ in $\C D$ the following diagram must commute:
$$\xymatrix{&z_{\C C}G(X)\otimes_{\C C}\bar G(Y)\ar[rd]^{\qquad\zeta_{\C C}(G(X),\bar G(Y))}\ar[ld]_{\tau'(X)\otimes_{\C C}1\quad}&\\
\bar Gz_{\C D}(X)\otimes\bar G(Y)\ar[d]_{\text{mult.}}&&\bar G(Y)\otimes_{\C C}z_{\C C}G(X)\ar[d]^{1\otimes_{\C C}\tau'(X)}\\
\bar G(z_{\C D}(X)\otimes Y)\ar[rd]_{\bar G(\zeta_{\C D}(X,Y))\qquad}&&\bar G(Y)\otimes_{\C C}\bar Gz_{\C D}(X)\ar[ld]^{\text{mult.}}\\
&\bar G(Y\otimes_{\C D}z_{\C D}(X))&}$$
At the end of this section we consider the special case when $\C V=\C W$ and $F=G$ is the identity functor, which often arises.

Given an operad $\mathcal O$ in $\C V$, the functor $\bar G$ lifts to a functor
$$\bar G\colon \algebra{F^{\operatorname{oper}}(\mathcal O)}{\C D}\To  \algebra{\mathcal O}{\C C}.$$
If $B$ is an $F^{\operatorname{oper}}(\mathcal O)$-algebra in $\C D$, the $\mathcal O$-algebra structure on $\bar G(B)$ is defined as follows. The morphism $\nu_{n}^{\bar G (B)}$, $n\geq 0$, is
$$\xymatrix{z_{\C  C}(\mathcal O(n))\otimes_{\C C} \bar G(B)^{\otimes_{\C C}n}\ar[d]_{z_{\C C}(\text{unit of }F^{\operatorname{oper}}\dashv G)\otimes_{\C C} 1}\\
z_{\C  C}(G(F^{\operatorname{oper}}(\mathcal O)(n)))\otimes_{\C C} \bar G(B)^{\otimes_{\C C}n}
\ar[d]_{\tau'(F^{\operatorname{oper}}(\mathcal O)(n))\otimes_{\C C} 1}\\
\bar G(z_{\C  D}(F^{\operatorname{oper}}(\mathcal O)(n)))\otimes_{\C C} \bar G(B)^{\otimes_{\C C}n}\ar[d]_{\text{mult. of }\bar G}\\
\bar G(z_{\C  D}(F^{\operatorname{oper}}(\mathcal O)(n))\otimes_{\C D}  B^{\otimes_{\C D}n})\ar[d]_{\bar G(\nu_{n}^{B})}\\
\bar G(B)}$$

\begin{prop}\label{qp2}
If $\mathcal O$ is an operad in $\C V$, then the functor $\bar G$ induces a Quillen pair
$$\xymatrix{
\algebra{\mathcal O}{\C C}\ar@<.5ex>[r]^-{\bar F_{\mathcal O}}&\algebra{F^{\operatorname{oper}}(\mathcal O)}{\C D}
\ar@<.5ex>[l]^-{\bar G}.
}$$
\end{prop}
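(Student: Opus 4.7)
The plan is to mimic the proof of Proposition \ref{qp}: establish the existence of the left adjoint $\bar F_{\mathcal O}$ by an abstract adjoint-functor-theorem argument, and then check the Quillen pair conditions by reducing them to the underlying categories via the forgetful functors.

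First I would verify that $\bar G\colon \algebra{F^{\operatorname{oper}}(\mathcal O)}{\C D}\r \algebra{\mathcal O}{\C C}$ is indeed well defined — but this is exactly what the paragraph preceding the statement does, so I may simply invoke that construction. It is also straightforward to check it is functorial and that it commutes with the forgetful functors, i.e.~the square
$$\xymatrix{\algebra{F^{\operatorname{oper}}(\mathcal O)}{\C D}\ar[r]^-{\bar G}\ar[d]_{\text{forget}}&\algebra{\mathcal O}{\C C}\ar[d]^{\text{forget}}\\
\C D\ar[r]_-{\bar G}&\C C}$$
commutes, where the bottom $\bar G$ is the original right adjoint.

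Next, the existence of the left adjoint $\bar F_{\mathcal O}$ follows by an abstract argument, just as in the proof of Proposition \ref{qp}: the categories of algebras over an operad are locally presentable (since they are monadic over a locally presentable category via an accessible monad, using the presentability hypotheses in the definitions of symmetric monoidal model category and model $\C V$-algebra), and $\bar G$ preserves limits because its composition with the forgetful functor does (forgetful functors for algebras over an operad create limits, and the underlying $\bar G$ is a right adjoint). Hence the adjoint functor theorem \cite[Theorem 4.5.6]{borceux2} yields the desired left adjoint $\bar F_{\mathcal O}$.

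Finally, to see that $\bar F_{\mathcal O}\dashv \bar G$ is a Quillen pair, I would check that $\bar G$ preserves fibrations and trivial fibrations. By the transferred model structure on algebras, a morphism $f$ of $F^{\operatorname{oper}}(\mathcal O)$-algebras (resp.~of $\mathcal O$-algebras) is a fibration or weak equivalence precisely when its image under the forgetful functor to $\C D$ (resp.~to $\C C$) is one. Combining this with the commutative square above and the fact that $\bar F\dashv \bar G$ is a Quillen pair (so the underlying $\bar G$ preserves (trivial) fibrations), the lifted $\bar G$ preserves (trivial) fibrations as well. This concludes the proof.

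No serious obstacle is expected: the argument is entirely parallel to that of Proposition \ref{qp}. The only mildly delicate point is the existence of $\bar F_{\mathcal O}$, which relies on the presentability assumptions built into the paper's standing hypotheses, and the compatibility of $\bar G$ with forgetful functors, which is immediate from the explicit definition of the lifted $\bar G$ given just before the statement.
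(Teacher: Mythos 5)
Your overall structure is the paper's: you lift $\bar G$ so that it commutes with the forgetful functors, observe that fibrations and trivial fibrations of algebras are detected in $\C C$ and $\C D$ by the transferred model structures, and conclude from the fact that the underlying $\bar G$ is right Quillen that the lifted $\bar G$ preserves (trivial) fibrations; this part is exactly the paper's argument. The proposition then reduces, as you say, to the existence of the left adjoint $\bar F_{\mathcal O}$, which the paper also dispatches ``by abstract reasons'' with the same citation.

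The gap is in how you justify that existence. Local presentability of $\algebra{\mathcal O}{\C C}$ and $\algebra{F^{\operatorname{oper}}(\mathcal O)}{\C D}$ is not available here: the standing hypotheses only ask that the \emph{sources} of the generating (trivial) cofibrations of the base categories be presentable, not that $\C V$, $\C W$, $\C C$, $\C D$ themselves be locally presentable, so the monadicity argument you sketch has nothing to feed on. Moreover, even in the locally presentable setting, preservation of limits alone does not give a left adjoint; one also needs accessibility of the lifted $\bar G$ (preservation of $\kappa$-filtered colimits for some $\kappa$), which you do not verify. In fact \cite[Theorem 4.5.6]{borceux2} is not an adjoint functor theorem but the \emph{adjoint lifting theorem}, and the intended argument applies it precisely to the commuting square you wrote down: the vertical forgetful functors are monadic, the category $\algebra{\mathcal O}{\C C}$ has (reflexive) coequalizers because it is cocomplete (Remark \ref{pushfree2}), and the bottom functor $\bar G\colon\C D\r\C C$ has the left adjoint $\bar F$; the theorem then lifts $\bar F$ to the desired $\bar F_{\mathcal O}$, with no presentability or accessibility input. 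Replacing your adjoint-functor-theorem step by this lifting argument makes your proof coincide with the paper's.
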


\begin{proof}
Since $\bar G\colon \C D\r \C C$ takes (trivial) fibrations in $\C{D}$ to (trivial) fibrations in $\C{C}$, then so does the induced functor between categories of algebras. The left adjoint $\bar F_{\mathcal O}$ exists by abstract reasons, see \cite[Theorem 4.5.6]{borceux2}, therefore $\bar F_{\mathcal O}\dashv \bar G$ is a Quillen pair. Notice that $\bar F_{\mathcal O}$, in general, is not given by $\bar F$ on underlying objects, unless $F$ and $\bar F$ are strong and \eqref{tau} is a natural isomorphism.
\end{proof}

The following result is our first main theorem on Quillen equivalences between categories of algebras over operads. This result contains Theorem \ref{qea} (1).

\begin{thm}\label{qea2}
Suppose  $\bar F\dashv \bar G$ is a weak monoidal Quillen equivalence, $\C V$ and $\C W$ have cofibrant tensor units, and $\mathcal O$ is a cofibrant operad in $\C V$. Then  the Quillen pair $$\xymatrix{
\algebra{\mathcal O}{\C C}\ar@<.5ex>[r]^-{\bar F_{\mathcal O}}&\algebra{F^{\operatorname{oper}}(\mathcal O)}{\C D}
\ar@<.5ex>[l]^-{\bar G}
}$$
 is a Quillen equivalence. 
\end{thm}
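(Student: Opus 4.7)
The plan is to mirror the proof of Theorem \ref{oqe} line by line, replacing free operads by free $\mathcal{O}$-algebras, the comparison $\chi_{\mathcal O}$ by an analogous $\chi_A\colon\bar F(A)\to\bar F_{\mathcal O}(A)$, and Lemma \ref{sequicof} by Lemma \ref{sequicof2}. The crucial intermediate statement is the algebra analogue of Proposition \ref{paco}: for every cofibrant $\mathcal{O}$-algebra $A$, the natural morphism $\chi_A\colon \bar F(A)\to \bar F_{\mathcal{O}}(A)$ in $\C D$, defined as the adjoint under $\bar F\dashv \bar G$ of the underlying map in $\C C$ of the unit $A\to \bar G\bar F_{\mathcal{O}}(A)$ of $\bar F_{\mathcal O}\dashv \bar G$, is a weak equivalence. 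Granting this, the theorem follows immediately: given $\varphi\colon \bar F_{\mathcal{O}}(A)\to B$ with $A$ cofibrant and $B$ fibrant, the composite $\bar F(A)\xrightarrow{\chi_A}\bar F_{\mathcal O}(A)\xrightarrow{\varphi}B$ is a weak equivalence iff $\varphi$ is, because $\chi_A$ is. Since $\mathcal{O}$ is cofibrant and $\unit_{\C V}$ is cofibrant, $\mathcal{O}(n)$ is cofibrant by Corollary \ref{tech1.5bis}, and then the underlying object of the cofibrant algebra $A$ is cofibrant in $\C C$ by \cite[Lemma 9.4]{htnso}. Since $\bar F\dashv \bar G$ is a Quillen equivalence, $\varphi\chi_A$ is a weak equivalence in $\C D$ iff its adjoint $A\to \bar G(B)$ in $\C C$ is; by construction of $\chi_A$, this adjoint coincides with the underlying morphism of $\varphi'\colon A\to \bar G(B)$ (the adjoint of $\varphi$ under $\bar F_{\mathcal{O}}\dashv \bar G$), and weak equivalences in algebra categories are detected on underlying objects.

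To establish the comparison result for $\chi_A$, I would argue by induction on $\mathcal{F}_{\mathcal{O}}(I')$-cellular attachments, following the scheme of Proposition \ref{paco}. The base case is the initial $\mathcal{O}$-algebra $z_{\C C}(\mathcal O(0))$, whose image under $\bar F_{\mathcal O}$ is $z_{\C D}(F^{\operatorname{oper}}(\mathcal O)(0))$, so $\chi_{z_{\C C}(\mathcal O(0))}$ factors as
$$\bar F z_{\C C}(\mathcal O(0))\xrightarrow{\tau(\mathcal O(0))} z_{\C D}F(\mathcal O(0))\xrightarrow{z_{\C D}(\chi_{\mathcal O}(0))} z_{\C D}(F^{\operatorname{oper}}(\mathcal O)(0)).$$
The first arrow is a weak equivalence by the standing hypothesis on $\tau$ and cofibrancy of $\mathcal{O}(0)$; the second is a weak equivalence by Proposition \ref{paco} combined with the fact that $z_{\C D}$ is a left Quillen functor applied to a weak equivalence between cofibrant objects (both $F(\mathcal O(0))$ and $F^{\operatorname{oper}}(\mathcal O)(0)$ are cofibrant in $\C W$).

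For the inductive step, consider a pushout along a free map $\mathcal{F}_{\mathcal O}(f)$ in $\algebra{\mathcal O}{\C C}$ with $f\colon U\hookrightarrow V$ a cofibration in $\C C$. Following the trick in the proof of Proposition \ref{paco}, I may assume without loss of generality that $U$ itself is cofibrant in $\C C$. The underlying morphisms in $\C C$ and in $\C D$ of the pushouts of $\mathcal{F}_{\mathcal O}(f)$ and $\mathcal{F}_{F^{\operatorname{oper}}(\mathcal O)}(\bar F(f))$ decompose as transfinite compositions of pushouts along cells of the form \eqref{killo}, cofibrations by Lemma \ref{sequicof2}. Applying $\bar F$ to the $\mathcal C$-side cells and comparing them termwise with the $\mathcal{D}$-side cells
$$z_{\C D}(F^{\operatorname{oper}}(\mathcal O)(n))\otimes_{\C D}\bigodot_S \bar F(f)\otimes_{\C D}\bigotimes_{\{1,\dots,n\}\setminus S}\bar F_{\mathcal O}(A),$$
one produces a weak equivalence between them as a coproduct of composites of three weak equivalences: Lemma \ref{previo} on iterated push-out products of $F$, the monoidal comparison $\tau$ applied to $\mathcal{O}(n)$ (which is cofibrant), Proposition \ref{paco} to replace $F(\mathcal O(n))$ by $F^{\operatorname{oper}}(\mathcal O)(n)$, and the inductive hypothesis $\chi_A$. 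All the objects involved are cofibrant by Corollary \ref{tech1.5bis}, \cite[Lemma 9.4]{htnso}, and the push-out product axiom, so the cube lemma \cite[Lemma 5.2.6]{hmc} propagates the weak equivalence through each pushout, and \cite[Proposition 15.10.12 (1)]{hirschhorn} through the transfinite composition. A standard retract argument then extends the result from relative $\mathcal{F}_{\mathcal O}(I')$-cell complexes to arbitrary cofibrant $\mathcal O$-algebras.

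The main obstacle is the inductive step of the comparison result, specifically matching up the complicated cells \eqref{killo} on the two sides. It requires juggling simultaneously the comultiplication of $\bar F$, the natural transformation $\tau$, and the comparison $\chi_{\mathcal O}$ at the operad level, while verifying cofibrancy of every intermediate object so that the cube lemma and Lemma \ref{previo} are applicable. The reduction to cofibrant $U$ is essential, as is the full strength of Proposition \ref{paco} (used both at the operad and the $z_{\C D}$-image level) and of the hypothesis that $\tau$ is a weak equivalence on cofibrant inputs.
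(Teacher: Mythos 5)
Your proposal is correct and is essentially the paper's own argument: the comparison statement you isolate for $\chi_A$ is exactly Proposition \ref{paco2}, proved there by the same induction over $\mathcal{F}_{\mathcal O}(I')$-cells (initial algebra $z_{\C C}(\mathcal O(0))$ via $\tau(\mathcal O(0))$ and $z_{\C D}(\chi_{\mathcal O}(0))$, then the filtration of Remark \ref{pushfree2} with Lemmas \ref{sequicof2} and \ref{previo}, the cube lemma, colimit and retract arguments), and the deduction of the theorem from it is verbatim the adjunction argument of Theorem \ref{oqe}, which is precisely how the paper handles Theorem \ref{qea2}. Your reading of the $\C D$-side cell as having $\bar F_{\mathcal O}(A)$ tensor factors, with the inductive weak equivalence $\chi_A$ entering the cell comparison alongside the comultiplication, $\tau(\mathcal O(n))$ and $z_{\C D}(\chi_{\mathcal O}(n))$, is the intended one (the display \eqref{killo3} in the paper writes $\bar F(A)$ at that spot), so no gap there.
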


The proof of this theorem is formally identical to the proof of Theorem \ref{oqe}, replacing Proposition \ref{paco} with Proposition \ref{paco2} below. Therefore we omit it.

The functor $\bar F_{\mathcal O}$ may not coincide with  $\bar F$  on underlying objects, but how different are they? The following proposition answers this question. On cofibrant algebras, they coincide up to homotopy.

Given an $\mathcal O$-algebra $A$ in $\C C$, denote by 
\begin{equation}\label{chia}
\chi_{A}\colon \bar F(A)\To \bar F_{\mathcal O}(A)
\end{equation}
the natural morphism in $\C D$ which is adjoint along $\bar F\dashv \bar G$ to the unit of $\bar F_{\mathcal O}\dashv \bar G$. 

\begin{prop}\label{paco2}
Suppose $\bar F\dashv \bar G$ is a weak monoidal Quillen adjunction, $\C V$ and $\C W$ have cofibrant tensor units, and $\mathcal O$ is a cofibrant operad in $\C V$. If $A$ is a cofibrant $\mathcal O$-algebra in $\C C$, then $\chi_{A}$ is a weak equivalence.
\end{prop}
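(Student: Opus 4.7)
The plan is to follow the inductive strategy of Proposition \ref{paco}, now at the level of $\mathcal O$-algebras rather than operads: verify $\chi_A$ is a weak equivalence when $A$ is the initial $\mathcal O$-algebra, propagate this property across cell attachments using Remark \ref{pushfree2}, and close with the standard retract argument to reach all cofibrant $\mathcal O$-algebras. Throughout, $\mathcal O(n)$ is cofibrant in $\C V$ for every $n\geq 0$ by Corollary \ref{tech1.5bis}, and since $F^{\operatorname{oper}}$ is left Quillen the same corollary applies to $F^{\operatorname{oper}}(\mathcal O)(n)$ in $\C W$; moreover both $A$ and $\bar F_{\mathcal O}(A)$ have cofibrant underlying objects by \cite[Lemma 9.4]{htnso}.

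For the base case, the initial $\mathcal O$-algebra is $z_{\C C}(\mathcal O(0))$ and $\bar F_{\mathcal O}$ sends it to the initial $F^{\operatorname{oper}}(\mathcal O)$-algebra $z_{\C D}(F^{\operatorname{oper}}(\mathcal O)(0))$. The morphism $\chi_{z_{\C C}(\mathcal O(0))}$ is readily identified with the composite $\bar F z_{\C C}(\mathcal O(0))\st{\tau(\mathcal O(0))}\To z_{\C D}F(\mathcal O(0))\st{z_{\C D}(\chi_{\mathcal O}(0))}\To z_{\C D}F^{\operatorname{oper}}(\mathcal O)(0)$, whose first arrow is a weak equivalence by the standing hypothesis on $\tau$ applied to the cofibrant object $\mathcal O(0)$, and whose second arrow is the image under the left Quillen functor $z_{\C D}$ of the weak equivalence $\chi_{\mathcal O}(0)$ between cofibrant objects produced by Proposition \ref{paco}.

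For the inductive step, I take a push-out in $\algebra{\mathcal O}{\C C}$ along $\mathcal F_{\mathcal O}(f)$ with $f\in I'$ attached to an algebra $A$ having cofibrant underlying object and for which $\chi_A$ is known to be a weak equivalence, reducing as in Proposition \ref{paco} to assume $U$ itself is cofibrant in $\C C$. Remark \ref{pushfree2} then represents $A\to B$ and $\bar F_{\mathcal O}(A)\to\bar F_{\mathcal O}(B)$ as transfinite compositions whose $t$-th stage is a push-out along \eqref{killo} and its $F^{\operatorname{oper}}(\mathcal O)$-algebra counterpart respectively, and $\chi_B$ emerges as the colimit of a ladder $\bar F(B_t)\to \tilde B_t$ starting with $\chi_A$ at $t=0$. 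The central calculation is that, for each pair $(n,S)$, the induced comparison of summands
$$\bar F\!\left(z_{\C C}(\mathcal O(n))\otimes_{\C C}\bigodot_{S}f\otimes_{\C C}\!\!\!\!\bigotimes_{\{1,\dots,n\}\setminus S}\!\!\!\!A\right)\To z_{\C D}(F^{\operatorname{oper}}(\mathcal O)(n))\otimes_{\C D}\bigodot_{S}\bar F(f)\otimes_{\C D}\!\!\!\!\bigotimes_{\{1,\dots,n\}\setminus S}\!\!\!\!\bar F_{\mathcal O}(A)$$
is a weak equivalence between cofibrant objects, and I would exhibit it as the composite of three weak equivalences: the iterated comultiplication of $\bar F$, weak by Lemma \ref{previo} together with the reordering allowed by the remark following it, applied to the cofibration $f$ and the cofibrant objects $z_{\C C}(\mathcal O(n))$ and $A$; the map $z_{\C D}(\chi_{\mathcal O}(n))\circ\tau(\mathcal O(n))$ on the operadic tensor factor, argued as in the base case; and tensoring with $\chi_A$ on the algebra factors, weak by the induction hypothesis. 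The cube lemma \cite[Lemma 5.2.6]{hmc} then propagates weak equivalence from stage $t-1$ to stage $t$ along the ladder, and \cite[Proposition 15.10.12 (1)]{hirschhorn} identifies $\chi_B=\colim_{t}\chi^{t}$ as a weak equivalence.

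The main obstacle I anticipate is the coherence bookkeeping required to recognize the central comparison as the advertised composite through the colax monoidal structure of $\bar F$ and the central structure of $z_{\C C}$ and $z_{\C D}$ encoded by $\tau$ and its adjoint $\tau'$; this is precisely where one needs the commutative pentagon imposed at the start of Section \ref{malichon} together with the variant of Lemma \ref{previo} mixing push-out and tensor factors in arbitrary positions. Once these compatibilities are in place, the transfinite induction over $\mathcal F_{\mathcal O}(I')$-cell complexes and the final retract argument are formally identical to those in the proof of Proposition \ref{paco}.
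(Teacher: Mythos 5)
Your proposal is correct and follows essentially the same route as the paper's proof: the base case at the initial algebra $z_{\C C}(\mathcal O(0))$ via $\tau(\mathcal O(0))$ and $z_{\C D}(\chi_{\mathcal O}(0))$, then induction over cell attachments using the filtration of Remark \ref{pushfree2}, the comparison of attaching cofibrations via Lemma \ref{previo}, the cube lemma, the colimit argument, and the final transfinite-induction-plus-retract step. Your explicit inclusion of the factor $\bigotimes\chi_A$ in that comparison (with target built from $\bar F_{\mathcal O}(A)$ rather than $\bar F(A)$) is the intended reading of \eqref{killo3} and mirrors the role of $\chi_{\mathcal O}$ in Proposition \ref{paco}, so it is a faithful, if slightly more carefully stated, version of the paper's argument.
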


\begin{proof}
If $A=z_{\C C}(\mathcal O(0))$ is the initial $\mathcal O$-algebra in $\C C$, then $\bar F_{\mathcal O}(A)=z_{\C D}(F^{\operatorname{oper}}(\mathcal O)(0))$ is the initial $F^{\operatorname{oper}}(\mathcal O)$-algebra in $\C D$ and $\chi_{A}$ is 
$$
\xymatrix@C=50pt{\bar F z_{\C C}(\mathcal O(0))\ar[r]^{\tau(\mathcal O(0))}& z_{\C D}F(\mathcal O(0))\ar[r]^-{z_{\C D}(\chi_{\mathcal O}(0))}&
z_{\C D}(F^{\operatorname{oper}}(\mathcal O)(0)).}
$$
The object $\mathcal O(0)$ is cofibrant by Corollary \ref{tech1.5bis}, hence $\tau(\mathcal O(0))$ is a weak equivalence. Moreover, the morphism $\chi_{\mathcal O}(0)$  is a weak equivalence  by Proposition \ref{paco}, and its source and target are cofibrant by Corollary \ref{tech1.5bis}. Therefore $z_{\C D}(\chi_{\mathcal O}(0))$ is also a weak equivalence.

Consider a push-out diagram in $\algebra{\mathcal O}{\C C}$
\begin{equation*}
\xymatrix{\mathcal{F}_{\mathcal O} (U)\ar@{}[rd]|{\text{push}}\ar@{ >->}[r]^-{\mathcal{F}_{\mathcal O} (f)}\ar[d]_-{g}&\mathcal{F}_{\mathcal O} (V)\ar[d]^-{g'}\\
A\ar@{ >->}[r]_-{f'}&B}
\end{equation*}
such that $A$ is a cofibrant $\mathcal O$-algebra, $f$ is a cofibration in $\C C$, and $\chi_{A}$ is a weak equivalence. We are going to prove that $\chi_{B}$ is also a weak equivalence.

We can suppose that $U$ is cofibrant in $\C C$. If not, we can apply the same trick as in the proof of Proposition \ref{paco}, since any cofibrant $\mathcal O$-algebra has an underlying cofibrant object in $\C C$, see \cite[Lemma 9.4]{htnso}.

By Remark \ref{pushfree2}, the underlying morphism of $\bar F (f')$ in $\C C$ decomposes as a transfinite (countable) composition of
$$\bar F (A)=\bar F (B_0)\into\cdots\into \bar F (B_{t-1})\st{\bar F (\varphi_t)}\into \bar F (B_t)\into \cdots,$$
where $\bar F(\varphi_t)$
is a push-out along the cofibration
\begin{equation}\label{killo2}
\coprod\limits_{n\geq 1}\coprod\limits_{
\begin{array}{c}
\scriptstyle S\subset\{1,\dots,n\}\vspace{-3pt}\\
\scriptstyle \card(S)=t
\end{array}
}
\bar F (z_{\C C}(\mathcal{O}(n))\otimes 
\bigodot_{S}f
\otimes\!\!\!\!\!\!
\bigotimes_{\{1,\dots,n\}\setminus S}\!\!\!\!\!\!A).
\end{equation}
This morphism is indeed a cofibration by Lemma \ref{sequicof2}. The hypotheses of  Lemma \ref{sequicof2} are satisfied by Corollary \ref{tech1.5bis} and \cite[Lemma 9.4]{htnso}.

The $F^{\operatorname{oper}}(\mathcal O)$-algebra $\bar F_{\mathcal O}(B)$ fits into the following push-out  in $\algebra{F^{\operatorname{oper}}(\mathcal O)}{\C D}$,
\begin{equation*}
\xymatrix@C=50pt{\mathcal{F}_{F^{\operatorname{oper}}(\mathcal O)} \bar F(U)\ar@{}[rd]|{\text{push}}\ar[r]^-{\mathcal{F}_{F^{\operatorname{oper}}(\mathcal O)}\bar F (f)}\ar[d]_-{\bar F_{\mathcal O}(g)}&\mathcal{F}_{F^{\operatorname{oper}}(\mathcal O)}\bar F (V)\ar[d]^-{\bar F_{\mathcal O}(g')}\\
\bar F_{\mathcal O}(A)\ar[r]_-{\bar F_{\mathcal O}(f')}&\bar F_{\mathcal O}(B)}
\end{equation*}
and $\bar F_{\mathcal O} (f')$ decomposes in $\C D$ as the transfinite composition of
$$\bar F_{\mathcal O} (A)=\tilde B_0\into\cdots\into \tilde B_{t-1}\st{\tilde \varphi_t}\into \tilde B_t\into \cdots,$$
where $\tilde\varphi_t$
is a push-out along the cofibration
\begin{equation}\label{killo3}
\coprod\limits_{n\geq 1}\coprod\limits_{
\begin{array}{c}
\scriptstyle S\subset\{1,\dots,n\}\vspace{-3pt}\\
\scriptstyle \card(S)=t
\end{array}
}
z_{\C D}(F^{\operatorname{oper}}(\mathcal{O})(n))\otimes 
\bigodot_{S}\bar F(f)
\otimes\!\!\!\!\!\!
\bigotimes_{\{1,\dots,n\}\setminus S}\!\!\!\!\!\!\bar F(A).
\end{equation}
This morphism is a cofibration by the same reason as \eqref{killo2} above.

The morphism $\chi_{B}$ is the colimit of
$$\xymatrix@C=20pt{
\bar F(A)\ar@{=}[r]\ar[d]^{\sim}_{\chi_{A}}&\bar F(B_{0})\ar@{ >->}[r]\ar[d]_{\chi^{0}_{A}}&\cdots\ar@{ >->}[r] &\bar F(B_{t-1})\ar@{ >->}[r]^-{\bar F(\varphi_t)}\ar[d]_{\chi^{t-1}_{A}}&\bar F(B_{t})\ar@{ >->}[r]\ar[d]_{\chi^{t}_{A}}&\cdots\\
\bar F_{\mathcal{O}}(A)\ar@{=}[r]&\tilde B_{0}\ar@{ >->}[r]&\cdots \ar@{ >->}[r]&\tilde B_{t-1}\ar@{ >->}[r]_-{\tilde\varphi_t}&\tilde B_{t}\ar@{ >->}[r]&\cdots
}$$
All these objects are cofibrant by \cite[Lemma 9.4]{htnso}. The morphism $\chi^{t}_{A}$ is inductively obtained by taking  push-out of horizontal arrows in the following diagram,
$$\xymatrix{\bar F(B_{t-1})\ar[d]_{\chi^{t-1}_{A}}&\bullet\ar[l]\ar@{ >->}[r]^{\eqref{killo2}}\ar[d]^{\sim}&\bullet\ar[d]^{\sim}\\
\tilde B_{t-1}&\bullet\ar[l]\ar@{ >->}[r]^{\eqref{killo3}}&\bullet}$$
The commutative square on the right is a coproduct of weak equivalences in $\mor{\C D}$ defined as follows,
$$\xymatrix{
\displaystyle\bar F (z_{\C C}(\mathcal{O}(n))\otimes 
\bigodot_{S}f
\otimes\!\!\!\!\!\!
\bigotimes_{\{1,\dots,n\}\setminus S}\!\!\!\!\!\!A)\ar[d]_{\text{comult. of }\bar F}^{\sim}\\
\displaystyle\bar F (z_{\C C}(\mathcal{O}(n)))\otimes 
\bigodot_{S}\bar F (f)
\otimes\!\!\!\!\!\!
\bigotimes_{\{1,\dots,n\}\setminus S}\!\!\!\!\!\!\bar F (A)\ar[d]_{\tau(\mathcal O(n))\otimes 1}^{\sim}\\
\displaystyle z_{\C D}F(\mathcal{O}(n))\otimes 
\bigodot_{S}\bar F (f)
\otimes\!\!\!\!\!\!
\bigotimes_{\{1,\dots,n\}\setminus S}\!\!\!\!\!\!\bar F (A)
\ar[d]_{z_{\C D}(\chi_{\mathcal O}(n))\otimes 1}^{\sim}\\
\displaystyle z_{\C D}(F^{\operatorname{oper}}(\mathcal{O})(n))\otimes 
\bigodot_{S}\bar F(f)
\otimes\!\!\!\!\!\!
\bigotimes_{\{1,\dots,n\}\setminus S}\!\!\!\!\!\!\bar F(A)
}$$
In order these morphisms to be weak equivalences, we need to check some cofibrancy hypotheses, see Proposition \ref{paco} and Lemma \ref{previo}. This is also necessary to ensure that the coproduct of these weak equivalences is a weak equivalence. These hypotheses hold since all objects involved in this diagram are cofibrant. Indeed, since tensor units in $\C V$ and $\C W$ are assumed to be cofibrant, the underlying sequence of a cofibrant operad, such as $\mathcal O$ and $F^{\operatorname{oper}}(\mathcal{O})$,  is cofibrant by Corollary \ref{tech1.5bis}, and the underlying object of a cofibrant algebra over a cofibrant operad is cofibrant by \cite[Lemma 9.4]{htnso}. Actually, any cellular algebra over a cofibrant operad is a transfinite composition of cofibrations in the underlying category with cofibrant initial term, by \cite[Proposition 9.2 (2)]{htnso}. These facts will also be implicitly used below.

The cube lemma \cite[Lemma 5.2.6]{hmc} shows that if $\chi_{A}^{t-1}$ is a weak equivalence then $\chi_{A}^{t}$ is also a weak equivalence. Since $\chi_{A}^{0}=\chi_{A}$ is a weak equivalence by hypothesis, we deduce that $\chi_{A}^{t}$ is a weak equivalence for all $t\geq 0$. Therefore, the colimit $\chi_{B}=\colim_{t}\chi_{A}^{t}$ is a weak equivalence by \cite[Proposition 15.10.12 (1)]{hirschhorn}.

Now, a standard inductive argument shows that $\chi_{A}$ is a weak equivalence for any $\mathcal{F}_{\mathcal O}(I')$-cell complex $A$, and hence for any cofibrant $\mathcal O$-algebra $A$ by the usual retract argument. 
\end{proof}

%

Notice that the first part of Proposition \ref{casigual2} is a corollary of the previous result. Actually, the following more general statement holds.

\begin{cor}\label{casigual2.1}
Suppose $\bar F\dashv \bar G$ is a weak monoidal Quillen adjunction and $\C V$ and $\C W$ have cofibrant tensor units. Consider a cofibrant operad $\mathcal O$ in $\C V$. For any $\mathcal O$-algebra $A$ in $\C C$ there is a natural isomorphism in $\ho\C D$,
\begin{align*}
\mathbb{L}\bar F(A)&\cong\mathbb{L}\bar F_{\mathcal{O}}(A).
\end{align*}
\end{cor}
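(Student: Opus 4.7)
The plan is to reduce the statement to Proposition \ref{paco2} by evaluating both derived functors on a common cofibrant replacement. First, I would take a cofibrant replacement $q\colon QA\xrightarrow{\sim} A$ in $\algebra{\mathcal O}{\C C}$, which exists since the category of $\mathcal O$-algebras carries a transferred cofibrantly generated model structure. By definition of the total left derived functor, $\mathbb{L}\bar F_{\mathcal O}(A)$ is represented by $\bar F_{\mathcal O}(QA)$ in $\ho\C D$.

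Next, I would argue that $q$ is simultaneously a cofibrant replacement of $A$ in the underlying category $\C C$, so that $\mathbb{L}\bar F(A)$ is also represented by $\bar F(QA)$. The weak equivalence part is immediate: weak equivalences in $\algebra{\mathcal O}{\C C}$ are defined to be weak equivalences in $\C C$. For cofibrancy of the underlying object, I would invoke Corollary \ref{tech1.5bis}, which uses the cofibrancy of the tensor unit of $\C V$ to conclude that $\mathcal O(n)$ is cofibrant for every $n\geq 0$, and then appeal to \cite[Lemma 9.4]{htnso}, which states that any cofibrant algebra over an operad with cofibrant components has cofibrant underlying object.

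Now Proposition \ref{paco2} applies directly to $QA$: the comparison map
$$\chi_{QA}\colon \bar F(QA)\To \bar F_{\mathcal O}(QA)$$
of \eqref{chia} is a weak equivalence in $\C D$, and thus provides the required isomorphism $\mathbb{L}\bar F(A)\cong\mathbb{L}\bar F_{\mathcal O}(A)$ in $\ho\C D$. Naturality in $A$ follows from the naturality of $\chi$ together with the standard fact that a morphism $A\r A'$ of $\mathcal O$-algebras can be lifted to a morphism $QA\r QA'$ between cofibrant replacements, unique up to homotopy; the resulting commutative square descends to $\ho\C D$.

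There is essentially no obstacle beyond invoking Proposition \ref{paco2}; the only point requiring care is the compatibility between the two notions of cofibrant replacement (in $\algebra{\mathcal O}{\C C}$ versus in $\C C$), which is precisely where the cofibrancy of $\mathcal O$ and of the tensor units is used.
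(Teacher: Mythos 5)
Your proposal is correct and follows exactly the route the paper intends: the corollary is deduced from Proposition \ref{paco2} by evaluating both functors on a cofibrant replacement $QA$ in $\algebra{\mathcal O}{\C C}$, noting (via Corollary \ref{tech1.5bis} and \cite[Lemma 9.4]{htnso}) that its underlying object is also a cofibrant replacement of $A$ in $\C C$, so that $\chi_{QA}$ provides the isomorphism $\mathbb{L}\bar F(A)\cong\mathbb{L}\bar F_{\mathcal O}(A)$ in $\ho\C D$. Your treatment of naturality via lifts of morphisms to cofibrant replacements is the standard argument and is fine.
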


Consider now an operad $\mathcal P$ in $\C W$. The functor $\bar G$ lifts to a functor
$$\bar G\colon \algebra{\mathcal P}{\C D}\To  \algebra{G(\mathcal P)}{\C C}.$$
If $B$ is a $\mathcal P$-algebra in $\C D$, the $G(\mathcal P)$-algebra structure on $\bar G(B)$ is defined as follows. The morphism $\nu_{n}^{\bar G (B)}$, $n\geq 0$, is
$$\xymatrix{
z_{\C C}(G(\mathcal P(n)))\otimes \bar G(B)^{\otimes n}\ar[d]^-{\tau'(\mathcal P(n))\otimes 1}\\
\bar G(z_{\C D}(\mathcal P(n)))\otimes \bar G(B)^{\otimes n}\ar[d]^-{\text{mult.}}\\
\bar G(z_{\C D}(\mathcal P(n))\otimes B)^{\otimes n}\ar[d]^{\bar G(\nu_{n}^{B})}\\
\bar G(B)
}$$

The following result  can be proved as Proposition \ref{qp2}.

\begin{prop}\label{qp3}
If $\mathcal P$ is an operad in $\C W$, then the functor $\bar G$ induces a Quillen pair
$$\xymatrix{
\algebra{G(\mathcal P)}{\C C}\ar@<.5ex>[r]^-{\bar F^{\mathcal P}}&\algebra{\mathcal P}{\C D}
\ar@<.5ex>[l]^-{\bar G}.
}$$
\end{prop}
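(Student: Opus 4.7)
The plan is to proceed in direct parallel with the proof of Proposition \ref{qp2}, which the paper already signposts as the template. First I would check that the explicit formula for $\nu_n^{\bar G(B)}$ given just before the statement really does endow $\bar G(B)$ with a $G(\mathcal P)$-algebra structure, so that $\bar G$ lifts to a functor $\algebra{\mathcal P}{\C D}\to\algebra{G(\mathcal P)}{\C C}$. This is a purely formal verification: the associativity and unit diagrams for $\nu_\bullet^{\bar G(B)}$ reduce, via naturality, to the lax monoidality of $\bar G$, the monoidality of $\tau'$, and the square relating $\tau'$ to the central structures $\zeta_{\C C},\zeta_{\C D}$ fixed at the start of Section \ref{malichon}, together with the fact that $B$ is itself a $\mathcal P$-algebra.

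Next I would show that this lifted functor sends (trivial) fibrations to (trivial) fibrations. By \cite[Theorem 1.2]{htnso}, a morphism in $\algebra{\mathcal P}{\C D}$ (resp.\ in $\algebra{G(\mathcal P)}{\C C}$) is a fibration or weak equivalence if and only if the underlying morphism in $\C D$ (resp.\ in $\C C$) is. Since the forgetful functors commute with $\bar G$ on the nose and $\bar G\colon\C D\to\C C$ is already a right Quillen functor, the claim is immediate.

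Existence of the left adjoint $\bar F^{\mathcal P}$ is then a categorical matter. Both $\algebra{\mathcal P}{\C D}$ and $\algebra{G(\mathcal P)}{\C C}$ are locally presentable: they arise as categories of algebras for an accessible monad on the locally presentable categories $\C D$ and $\C C$, whose smallness is guaranteed by the standing presentability hypotheses on the sources of generating (trivial) cofibrations. The lifted $\bar G$ preserves all limits, since limits in both algebra categories are created by the forgetful functors to $\C D$ and $\C C$, and $\bar G\colon\C D\to\C C$ preserves them as a right adjoint. Hence \cite[Theorem 4.5.6]{borceux2} supplies a left adjoint $\bar F^{\mathcal P}$, and the two properties above make $\bar F^{\mathcal P}\dashv\bar G$ a Quillen pair.

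No step here is a genuine obstacle; the content lies not in existence but in identifying $\bar F^{\mathcal P}$. As in Proposition \ref{qp2}, I would \emph{not} expect $\bar F^{\mathcal P}$ to agree with $\bar F$ on underlying objects unless $F$ and $\bar F$ are strong monoidal and $\tau$ is a natural isomorphism; the homotopical reconciliation of this discrepancy (the analogue of Proposition \ref{paco2} and of the second half of Proposition \ref{casigual2}) is a separate matter to be addressed by an argument modelled on Proposition \ref{paco2}, using that $\mathcal P$ has cofibrant components and $G(\mathcal P(n))$ is cofibrant to invoke the cube lemma step by step along the pushout filtration of Remark \ref{pushfree2}.
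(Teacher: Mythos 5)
Your proposal is correct and follows essentially the same route as the paper, which simply observes that the argument of Proposition \ref{qp2} carries over verbatim: $\bar G$ preserves (trivial) fibrations because these are detected on underlying objects, and the left adjoint $\bar F^{\mathcal P}$ exists by the abstract existence theorem \cite[Theorem 4.5.6]{borceux2}. Your extra remarks on verifying the algebra structure, on presentability, and on the homotopical comparison of $\bar F^{\mathcal P}$ with $\bar F$ are consistent with the paper but not needed for this statement.
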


Our second main theorem on Quillen equivalences between categories of algebras over operads  extends Theorem \ref{qea} (2).

\begin{thm}\label{qea3}
Suppose  $F\dashv G$ and $\bar F\dashv \bar G$ are weak (symmetric) monoidal Quillen equivalences,  $\C V$ and $\C W$ have cofibrant tensor units, $\mathcal P$ is a fibrant operad in $\C W$, and the underlying sequences of $\mathcal P$ and  $G(\mathcal P)$ are cofibrant. Then  $$\xymatrix{
\algebra{G(\mathcal P)}{\C C}\ar@<.5ex>[r]^-{\bar F^{\mathcal P}}&\algebra{\mathcal P}{\C D}
\ar@<.5ex>[l]^-{\bar G}
}$$ is a Quillen equivalence. 
\end{thm}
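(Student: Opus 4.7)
The plan is to follow the formal pattern of the proofs of Theorems \ref{oqe} and \ref{qea2}, substituting the role of the comparison morphism \eqref{chia} by a natural transformation
$$\chi^{\mathcal P}_A \colon \bar F(A) \To \bar F^{\mathcal P}(A), \qquad A \in \algebra{G(\mathcal P)}{\C C},$$
defined in the underlying category $\C D$ as the $\bar F \dashv \bar G$ adjoint of the unit $A \to \bar G \bar F^{\mathcal P}(A)$ of the Quillen pair of Proposition \ref{qp3}. The crux of the argument is the claim that $\chi^{\mathcal P}_A$ is a weak equivalence whenever $A$ is cofibrant in $\algebra{G(\mathcal P)}{\C C}$.

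Granting this, the theorem is immediate. Let $\varphi \colon \bar F^{\mathcal P}(A) \to B$ be a morphism of $\mathcal P$-algebras with $A$ cofibrant and $B$ fibrant. The underlying object of $A$ is cofibrant in $\C C$ by the argument of \cite[Lemma 9.4]{htnso}, which applies because $G(\mathcal P)(n)$ is cofibrant for every $n$; and $\bar G(B)$ is fibrant in $\C C$. The composite $\varphi \circ \chi^{\mathcal P}_A \colon \bar F(A) \to B$ is a weak equivalence if and only if $\varphi$ is (since $\chi^{\mathcal P}_A$ is), and its $\bar F \dashv \bar G$ adjoint coincides with the underlying morphism of the adjoint $\varphi' \colon A \to \bar G(B)$ of $\varphi$ under $\bar F^{\mathcal P} \dashv \bar G$; so the Quillen equivalence $\bar F \dashv \bar G$ yields the desired equivalence of conditions.

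To prove the crucial claim, I would adapt the strategy of Proposition \ref{paco2}. For the initial $G(\mathcal P)$-algebra $A = z_{\C C}(G(\mathcal P)(0))$ one has $\bar F^{\mathcal P}(A) = z_{\C D}(\mathcal P(0))$, and $\chi^{\mathcal P}_A$ factors as
$$\bar F z_{\C C}(G(\mathcal P)(0)) \xrightarrow{\tau(G(\mathcal P)(0))} z_{\C D} F G(\mathcal P)(0) \xrightarrow{z_{\C D}(\varepsilon)} z_{\C D}(\mathcal P(0)),$$
with $\varepsilon$ the counit of $F \dashv G$. The first morphism is a weak equivalence because $G(\mathcal P)(0)$ is cofibrant; for the second, fibrancy of $\mathcal P(0)$ together with cofibrancy of $G(\mathcal P(0))$ and the Quillen equivalence $F \dashv G$ force $\varepsilon$ to be a weak equivalence between cofibrant objects of $\C W$, preserved by the left Quillen functor $z_{\C D}$. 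For the inductive step, the push-out filtration of Remark \ref{pushfree2} applied to both $\bar F(B)$ and $\bar F^{\mathcal P}(B)$ yields parallel transfinite compositions whose stage-wise comparison squares are coproducts of weak equivalences, assembled from Lemma \ref{previo}, the axiom on $\tau$, and $z_{\C D}(\varepsilon)$ tensored with the remaining factors; all required cofibrancy hypotheses are met thanks to cofibrancy of $\mathcal P(n)$ and $G(\mathcal P)(n)$, together with the standard reduction allowing $U$ to be taken cofibrant. The cube lemma \cite[Lemma 5.2.6]{hmc} and \cite[Proposition 15.10.12]{hirschhorn} propagate $\chi^{\mathcal P}$-equivalences through each stage and through the transfinite colimit, and the usual retract argument extends the conclusion to arbitrary cofibrant $A$.

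The main obstacle is precisely the initial-algebra step: since $G(\mathcal P)$ need not be a cofibrant operad, Proposition \ref{paco} does not apply, and one must instead use fibrancy of $\mathcal P$, cofibrancy of $G(\mathcal P)$, and the Quillen equivalence $F \dashv G$ to conclude that each counit $FG(\mathcal P(n)) \to \mathcal P(n)$ is a weak equivalence in $\C W$. Once this input is available, the remainder of the proof is a formal adaptation of the bookkeeping already developed for Proposition \ref{paco2}.
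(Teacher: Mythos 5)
Your argument is correct, but it is not the route the paper takes. You essentially re-run the cell induction of Proposition \ref{paco2} for the (generally non-cofibrant) operad $G(\mathcal P)$, replacing the comparison $\chi_{\mathcal O}(n)$ by the counit $FG(\mathcal P(n))\r\mathcal P(n)$, which is indeed a weak equivalence between cofibrant objects because $G(\mathcal P(n))$ is cofibrant, $\mathcal P(n)$ is fibrant, and $F\dashv G$ is a Quillen equivalence; this adaptation works because the algebra-level filtration results you invoke (Lemma \ref{sequicof2}, \cite[Lemma 9.4]{htnso}) only need levelwise cofibrancy, which $G(\mathcal P)$ and $\mathcal P$ have by hypothesis. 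The paper instead argues formally: choose a cofibrant resolution $\varphi\colon\mathcal O\st{\sim}\onto G(\mathcal P)$ in $\operad{\C V}$; its adjoint $\bar\varphi\colon F^{\operatorname{oper}}(\mathcal O)\r\mathcal P$ is a weak equivalence by Theorem \ref{oqe} (this is where fibrancy of $\mathcal P$ enters), and one forms a square of Quillen pairs in which the vertical change-of-coefficients adjunctions $\varphi_*\dashv\varphi^*$ and $\bar\varphi_*\dashv\bar\varphi^*$ are Quillen equivalences by the rectification Theorem \ref{coc}, the top pair $\bar F_{\mathcal O}\dashv\bar G$ is one by Theorem \ref{qea2}, and the right adjoints commute; two-out-of-three for Quillen equivalences then yields the bottom pair $\bar F^{\mathcal P}\dashv\bar G$. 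The paper's proof is much shorter because it reuses already-established machinery and avoids a second filtration/cube-lemma argument; yours is more self-contained at the algebra level and proves, along the way, that $\bar F(A)\r\bar F^{\mathcal P}(A)$ is a weak equivalence for cofibrant $A$, i.e. the analogue of Proposition \ref{paco2} and hence Corollary \ref{casigual2.2}, using the equivalence hypothesis on $\bar F\dashv\bar G$ only in the final adjunction step, which is slightly more than the paper extracts at this stage (it deduces Corollary \ref{casigual2.2} afterwards from the theorem). The one detail you should still spell out is the identification of the unit of $\bar F^{\mathcal P}\dashv\bar G$ at the initial algebra with $\tau'(\mathcal P(0))$ on underlying objects, parallel to the initial step of Proposition \ref{paco2}; the rest of your bookkeeping follows the paper's established pattern.
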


\begin{proof}
Let $\varphi\colon\mathcal O\st{\sim}\onto G(\mathcal P)$ be a cofibrant resolution in $\operad{\C V}$, and $\bar\varphi\colon F^{\operatorname{oper}}(\mathcal O)\st{\sim}\r\mathcal P$   the adjoint morphism in $\operad{\C W}$, which is a weak equivalence by Theorem \ref{oqe}. Consider the following diagram of Quillen pairs,
$$\xymatrix{
\algebra{\mathcal O}{\C C}\ar@<.5ex>[r]^-{\bar F_{\mathcal O}}\ar@<-.5ex>[d]_{\varphi_{*}}
\ar@{<-}@<.5ex>[d]^{\varphi^{*}}&
\algebra{F^{\operatorname{oper}}(\mathcal O)}{\C D}
\ar@<.5ex>[l]^-{\bar G}
\ar@<-.5ex>[d]_{\bar\varphi_{*}}
\ar@{<-}@<.5ex>[d]^{\bar\varphi^{*}}
\\
\algebra{ G(\mathcal P)}{\C C}\ar@<.5ex>[r]^-{\bar F^{\mathcal P}}&\algebra{\mathcal P}{\C D}\ar@<.5ex>[l]^-{\bar G}}$$
The right adjoints commute, $\varphi^{*}\bar G=\bar\varphi^{*}\bar G$,  hence the left adjoints commute up to natural isomorphism, $\bar F^{\mathcal P}\varphi_{*}\cong\bar\varphi_{*}\bar F_{\mathcal O}$. The vertical Quillen pairs are Quillen  equivalences by Theorem \ref{coc}. Here we use that under our assumptions any cofibrant operad has an underlying cofibrant sequence, see Corollary \ref{tech1.5bis}. Moreover, the upper horizontal Quillen pair is  a Quillen equivalence by Theorem \ref{qea2}. Hence the bottom horizontal Quillen pair is also a Quillen equivalence.
\end{proof}

We finally deduce the following result, which generalizes the second part of Proposition \ref{casigual2}.

\begin{cor}\label{casigual2.2}
Suppose  $F\dashv G$ and $\bar F\dashv \bar G$ are weak (symmetric) monoidal Quillen equivalences and  $\C V$ and $\C W$ have cofibrant tensor units. Consider a fibrant operad $\mathcal P$ in $\C{W}$ such that $\mathcal P(n)$ and $G(\mathcal P(n))$ are cofibrant for all $n\geq 0$. For any $G(\mathcal P)$-algebra $B$ in $\C C$ there is a natural isomorphism in $\ho\C D$, 
\begin{align*}
\mathbb{L}\bar F(B)&\cong\mathbb{L}\bar F^{\mathcal{P}}(B).
\end{align*}
\end{cor}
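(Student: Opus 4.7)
The approach is to reduce the statement to Corollary \ref{casigual2.1} via a cofibrant resolution of operads, mimicking the strategy used in the proof of Theorem \ref{qea3}. Choose a cofibrant resolution $\varphi\colon\mathcal O\stackrel{\sim}{\twoheadrightarrow} G(\mathcal P)$ in $\operad{\C V}$; by Theorem \ref{oqe} the adjoint morphism $\bar\varphi\colon F^{\operatorname{oper}}(\mathcal O)\stackrel{\sim}{\to}\mathcal P$ is a weak equivalence. Since $\C V$ and $\C W$ have cofibrant tensor units, Corollary \ref{tech1.5bis} makes the arity components of $\mathcal O$ and $F^{\operatorname{oper}}(\mathcal O)$ cofibrant. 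Combined with the hypotheses on $\mathcal P$ and $G(\mathcal P)$, Theorem \ref{coc} then supplies Quillen equivalences $\varphi_*\dashv\varphi^*$ and $\bar\varphi_*\dashv\bar\varphi^*$.

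Next, I would pick a cofibrant replacement $A\stackrel{\sim}{\twoheadrightarrow}\varphi^*(B)$ in $\algebra{\mathcal O}{\C C}$. The underlying morphism $A\to B$ in $\C C$ is a weak equivalence, and $A$ is cofibrant in $\C C$ by \cite[Lemma 9.4]{htnso}; hence $A$ computes $\mathbb L\bar F(B)$, giving $\bar F(A)\cong \mathbb L\bar F(B)$ in $\ho\C D$. A standard fibrant-replacement argument, using that fibrant replacements of $B$ in $\algebra{G(\mathcal P)}{\C C}$ are fibrant replacements in $\C C$ and that $\varphi^*$ preserves underlying objects, together with the Quillen equivalence $\varphi_*\dashv \varphi^*$, shows that the adjoint $\varphi_*(A)\to B$ is a weak equivalence in $\algebra{G(\mathcal P)}{\C C}$. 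Since $\varphi_*(A)$ is cofibrant, it computes $\mathbb L\bar F^{\mathcal P}(B)$, so $\bar F^{\mathcal P}(\varphi_*(A))\cong \mathbb L\bar F^{\mathcal P}(B)$ in $\ho\C D$.

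I would then assemble the chain of natural isomorphisms in $\ho\C D$,
\[
\mathbb L\bar F^{\mathcal P}(B)\cong \bar F^{\mathcal P}(\varphi_*(A))\cong \bar\varphi_*(\bar F_{\mathcal O}(A))\cong \bar F_{\mathcal O}(A)\cong \mathbb L\bar F(A)\cong \mathbb L\bar F(B).
\]
The second isomorphism follows from the natural iso $\bar F^{\mathcal P}\varphi_*\cong \bar\varphi_*\bar F_{\mathcal O}$ of left adjoints, itself adjoint to the commutativity of right adjoints $\varphi^*\bar G=\bar\varphi^*\bar G$. The third is the crucial step: since $\bar\varphi^*$ is the identity on underlying $\C D$-objects and $\bar\varphi_*\dashv\bar\varphi^*$ is a Quillen equivalence, applying the derived unit to the cofibrant algebra $\bar F_{\mathcal O}(A)$ and then passing to underlying $\C D$-objects shows that $\bar\varphi_*(\bar F_{\mathcal O}(A))$ and $\bar F_{\mathcal O}(A)$ are isomorphic in $\ho\C D$. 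The fourth is Corollary \ref{casigual2.1} applied to the cofibrant $\mathcal O$-algebra $A$. Naturality in $B$ holds since each construction is natural.

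The main obstacle is the third isomorphism: controlling how the left Quillen functor $\bar\varphi_*$ acts on underlying $\C D$-objects. I plan to handle this by exploiting that $\bar\varphi^*$ strictly preserves underlying objects, so that the derived unit of the Quillen equivalence $\bar\varphi_*\dashv\bar\varphi^*$, evaluated at a cofibrant algebra and followed by a fibrant replacement, provides, after forgetting to $\C D$, the required weak equivalence between $\bar F_{\mathcal O}(A)$ and the underlying object of $\bar\varphi_*(\bar F_{\mathcal O}(A))$.
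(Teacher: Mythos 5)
Your argument is correct, but it takes a genuinely different route from the paper. The paper's proof is short and direct: it considers the natural comparison map $\chi^B\colon\bar F(B)\r\bar F^{\mathcal P}(B)$ (the adjoint along $\bar F\dashv\bar G$ of the unit of $\bar F^{\mathcal P}\dashv\bar G$), reduces to $B$ cofibrant, composes with a fibrant replacement $q\colon\bar F^{\mathcal P}(B)\st{\sim}\into C$ in $\algebra{\mathcal P}{\C D}$, and then plays the two Quillen equivalences against each other: $q\chi^B$ is a weak equivalence iff its adjoint along $\bar F\dashv\bar G$ is, and that adjoint is a weak equivalence by \cite[Proposition 1.3.13 (b)]{hmc} because $\bar F^{\mathcal P}\dashv\bar G$ is a Quillen equivalence (Theorem \ref{qea3}). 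You instead re-run the resolution strategy used to \emph{prove} Theorem \ref{qea3} rather than quoting it: you resolve $G(\mathcal P)$ by a cofibrant operad $\mathcal O$, use Theorem \ref{oqe} to see $\bar\varphi$ is a weak equivalence, transport a cofibrant $\mathcal O$-algebra replacement across the change-of-operad Quillen equivalences of Theorem \ref{coc}, and feed the result into Corollary \ref{casigual2.1}; your key third isomorphism, exploiting that $\bar\varphi^*$ is the identity on underlying objects so the derived unit of $\bar\varphi_*\dashv\bar\varphi^*$ gives $\bar F_{\mathcal O}(A)\cong\bar\varphi_*(\bar F_{\mathcal O}(A))$ in $\ho\C D$, is sound, as is the 2-out-of-3 fibrant-replacement argument showing $\varphi_*(A)\r B$ is a weak equivalence (though the relevant fact there is that $\varphi^*$ preserves all weak equivalences and fibrant objects, not quite the statement about fibrant replacements you cite). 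What each approach buys: the paper's proof is shorter and naturality is automatic because $\chi^B$ is a natural transformation before deriving, whereas your chain needs functorial cofibrant replacements to secure the claimed naturality; on the other hand, your argument never uses that $\bar F\dashv\bar G$ is a Quillen \emph{equivalence} (only a weak monoidal Quillen adjunction, the equivalence hypothesis being needed only for $F\dashv G$ via Theorem \ref{oqe}), so it proves a slightly more general statement than the paper's proof does.
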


\begin{proof}
Let $\chi^B\colon \bar F(B)\r \bar F^{\mathcal{P}}(B)$ be the adjoint of the unit of $\bar F^{\mathcal{P}}\dashv \bar G$ along $\bar F\dashv \bar G$. It is enough to show that $\chi^B$ is a weak equivalence for $B$ a cofibrant $G(\mathcal P)$-algebra in $\C C$. Let $q\colon \bar F^{\mathcal{P}}(B)\st{\sim}\into C$ be a fibrant replacement in $\algebra{\mathcal P}{\C D}$. By the 2-out-of-3 axiom, $\chi^B$ is a weak equivalence if and only if $q\chi^B$ is a weak equivalence. Since $\bar F\dashv\bar G$ is a Quillen equivalence, $q\chi^B$ is a weak equivalence if and only if its adjoint along $\bar F\dashv \bar G$ is a weak equivalence. The adjoint of $q\chi^B$ is a weak equivalence  by \cite[Proposition 1.3.13 (b)]{hmc}, since $\bar F^{\mathcal{P}}\dashv \bar G$ is a Quillen equivalence.
\end{proof}

In the special case that $\C V=\C W$ and $F\dashv G$ is the identity adjunction, Theorems \ref{qea2} and \ref{qea3} admit the following common generalization.

\begin{thm}\label{qea4}
Suppose  $\C V=\C W$ has a cofibrant tensor unit, $F=G=1_{\C V}$,  $\bar F\dashv \bar G$ is a weak monoidal Quillen equivalence, and $\mathcal O$ is an operad in $\C V$ with cofibrant underlying sequence. Then  $$\xymatrix{
\algebra{\mathcal O}{\C C}\ar@<.5ex>[r]^-{\bar F_{\mathcal P}=\bar F^{\mathcal P}}&\algebra{\mathcal O}{\C D}
\ar@<.5ex>[l]^-{\bar G}
}$$ is a Quillen equivalence. 
\end{thm}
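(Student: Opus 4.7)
The plan is to reduce Theorem \ref{qea4} to Theorem \ref{qea2} via a cofibrant resolution of $\mathcal O$, combined with the change-of-operad Quillen equivalences of Theorem \ref{coc}, in the spirit of the proof of Theorem \ref{qea3}. Observe first that when $F=G=1_{\C V}$ one has $F^{\operatorname{oper}}=1_{\operad{\C V}}$ and the induced functor of $G$ on operads is the identity, so Propositions \ref{qp2} and \ref{qp3} both produce left adjoints to the same lifted functor $\bar G\colon\algebra{\mathcal O}{\C D}\r\algebra{\mathcal O}{\C C}$, giving the identification $\bar F_{\mathcal O}=\bar F^{\mathcal O}$ in the statement by uniqueness of adjoints.

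Next I pick a cofibrant resolution $\varphi\colon\tilde{\mathcal O}\xrightarrow{\sim}\mathcal O$ in $\operad{\C V}$. Since the tensor unit of $\C V$ is cofibrant, Corollary \ref{tech1.5bis} ensures that $\tilde{\mathcal O}(n)$ is cofibrant for every $n$, while $\mathcal O(n)$ is cofibrant by hypothesis, so Theorem \ref{coc} applied to $\varphi$ yields vertical Quillen equivalences in the square
$$\xymatrix@C=40pt{
\algebra{\tilde{\mathcal O}}{\C C}\ar@<.5ex>[r]^-{\bar F_{\tilde{\mathcal O}}}\ar@<-.5ex>[d]_-{\varphi_{*}}\ar@{<-}@<.5ex>[d]^-{\varphi^{*}}&\algebra{\tilde{\mathcal O}}{\C D}\ar@<.5ex>[l]^-{\bar G}\ar@<-.5ex>[d]_-{\varphi_{*}}\ar@{<-}@<.5ex>[d]^-{\varphi^{*}}\\
\algebra{\mathcal O}{\C C}\ar@<.5ex>[r]^-{\bar F_{\mathcal O}}&\algebra{\mathcal O}{\C D}\ar@<.5ex>[l]^-{\bar G}
}$$
Since $\tilde{\mathcal O}$ is a cofibrant operad and $F^{\operatorname{oper}}(\tilde{\mathcal O})=\tilde{\mathcal O}$, Theorem \ref{qea2} makes the top horizontal Quillen pair a Quillen equivalence. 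The square of right adjoints commutes on the nose: for an $\mathcal O$-algebra $B$ in $\C D$, both composites $\varphi^{*}\bar G$ and $\bar G\varphi^{*}$ send $B$ to $\bar G(B)$ endowed with the structure maps obtained by precomposing the structure maps of $B$ with $z_{\C D}(\varphi(n))$ and then transporting along the formula of Proposition \ref{qp2}, and these two orders of operations coincide by naturality of $\tau'$ and functoriality of $z_{\C D}$. Therefore the square of left adjoints commutes up to natural isomorphism, and since three sides of the square are Quillen equivalences, the fourth, $\bar F_{\mathcal O}\dashv\bar G$, is a Quillen equivalence as well.

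The only nontrivial step is the commutativity of the square of right adjoints, which amounts to a routine diagram chase using naturality of $\tau'$ and the coherence conditions imposed on it at the beginning of Section \ref{malichon}. Once this bookkeeping is in place, the conclusion follows by the two-out-of-three property for Quillen equivalences in a commuting square of Quillen adjunctions.
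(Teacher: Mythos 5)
Your argument is correct and follows essentially the same route as the paper's proof: take a cofibrant resolution $\varphi\colon\widetilde{\mathcal O}\st{\sim}\onto\mathcal O$, apply Theorem \ref{coc} to get the vertical Quillen equivalences, Theorem \ref{qea2} (with $F^{\operatorname{oper}}=1$) for the top pair, observe that the right adjoints commute so the left adjoints commute up to natural isomorphism, and conclude for the bottom pair. Your extra remarks on the identification $\bar F_{\mathcal O}=\bar F^{\mathcal O}$ and on the commutativity check via $\tau'$ are fine elaborations of points the paper leaves implicit.
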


\begin{proof}
Let $\varphi\colon\widetilde{\mathcal O}\st{\sim}\onto \mathcal O$ be a cofibrant resolution. Consider the following diagram of Quillen pairs,
$$\xymatrix{
\algebra{\widetilde{\mathcal O}}{\C C}\ar@<.5ex>[r]^-{\bar F_{\widetilde{\mathcal O}}=\bar F^{\widetilde{\mathcal O}}}\ar@<-.5ex>[d]_{\varphi_{*}}
\ar@{<-}@<.5ex>[d]^{\varphi^{*}}&
\algebra{\widetilde{\mathcal O}}{\C D}
\ar@<.5ex>[l]^-{\bar G}
\ar@<-.5ex>[d]_{\varphi_{*}}
\ar@{<-}@<.5ex>[d]^{\varphi^{*}}
\\
\algebra{\mathcal O}{\C C}\ar@<.5ex>[r]^-{\bar F_{{\mathcal O}}=\bar F^{\mathcal O}}&\algebra{\mathcal O}{\C D}\ar@<.5ex>[l]^-{\bar G}}$$
The right adjoints commute $\varphi^{*}\bar G=\bar G\varphi^{*}$, hence the left adjoints commute up to natural isomorphism. The vertical Quillen pairs are Quillen equivalences by Theorem \ref{coc}, since under our hypotheses any cofibrant operad has an underlying cofibrant sequence, see Corollary \ref{tech1.5bis}. Moreover, the upper horizontal Quillen pair is  a Quillen equivalence by Theorem \ref{qea2}. Hence the bottom horizontal Quillen pair is also a Quillen equivalence. 
\end{proof}

We omit the proof of the following corollary, which is very similar to the proofs of Corollaries \ref{casigual2.1} and \ref{casigual2.2}.

\begin{cor}\label{casigual2.3}
Under the assumptions of Theorem \ref{qea4}, for any $\mathcal O$-algebra $B$ in $\C C$ there is a natural isomorphism in $\ho\C D$, 
\begin{align*}
\mathbb{L}\bar F(B)&\cong\mathbb{L}\bar F_{\mathcal{O}}(B)=\mathbb{L}\bar F^{\mathcal{O}}(B).
\end{align*}
\end{cor}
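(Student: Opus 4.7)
The plan is to adapt the strategy of Corollary \ref{casigual2.2} almost verbatim, with Theorem \ref{qea4} playing the role of Theorem \ref{qea3}. As a preliminary observation, since $F=G=1_{\C V}$ we have $F^{\operatorname{oper}}=G=1_{\operad{\C V}}$, so both $\bar F_{\mathcal O}$ and $\bar F^{\mathcal O}$ are left adjoints to the same forgetful functor $\bar G\colon\algebra{\mathcal O}{\C D}\to\algebra{\mathcal O}{\C C}$ and hence coincide up to canonical isomorphism; I will denote them simply $\bar F_{\mathcal O}$. The task reduces to showing that the natural comparison map $\chi_B\colon\bar F(B)\to\bar F_{\mathcal O}(B)$ of \eqref{chia} (the adjoint under $\bar F\dashv\bar G$ of the unit $B\to\bar G\bar F_{\mathcal O}(B)$) is a weak equivalence in $\C D$ for every cofibrant $\mathcal O$-algebra $B$ in $\C C$. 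This reduction is legitimate because, under the hypotheses of Theorem \ref{qea4}, a cofibrant $\mathcal O$-algebra has a cofibrant underlying object in $\C C$ by \cite[Lemma 9.4]{htnso} (using that the underlying sequence of $\mathcal O$ is cofibrant), so $\bar F(B)$ represents $\mathbb L\bar F(B)$ and $\bar F_{\mathcal O}(B)$ represents $\mathbb L\bar F_{\mathcal O}(B)=\mathbb L\bar F^{\mathcal O}(B)$.

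Given such a cofibrant $B$, I would choose a fibrant replacement $q\colon\bar F_{\mathcal O}(B)\st{\sim}\into C$ in $\algebra{\mathcal O}{\C D}$. By the 2-out-of-3 axiom, $\chi_B$ is a weak equivalence if and only if the composite $q\chi_B\colon\bar F(B)\to C$ is. Note that $\bar F(B)$ is cofibrant in $\C D$, because $\bar F$ is left Quillen and $B$ is cofibrant in $\C C$ by the preceding paragraph; dually, $C$ is fibrant in $\C D$ because fibrations in $\algebra{\mathcal O}{\C D}$ are detected on underlying objects. Since $\bar F\dashv\bar G$ is assumed to be a weak monoidal Quillen equivalence, $q\chi_B$ is a weak equivalence between a cofibrant source and a fibrant target if and only if its adjoint $B\to\bar G(C)$ under $\bar F\dashv\bar G$ is a weak equivalence.

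By construction of $\chi_B$, this adjoint factors as $B\to\bar G\bar F_{\mathcal O}(B)\st{\bar G(q)}\longrightarrow\bar G(C)$; that is, it is exactly the derived unit of $\bar F_{\mathcal O}\dashv\bar G$ evaluated at the cofibrant object $B$. But Theorem \ref{qea4} asserts that $\bar F_{\mathcal O}\dashv\bar G$ is a Quillen equivalence, and the derived unit of a Quillen equivalence at a cofibrant object is always a weak equivalence by \cite[Proposition 1.3.13 (b)]{hmc}. Therefore $\chi_B$ is a weak equivalence, which establishes the desired isomorphism $\mathbb L\bar F(B)\cong\mathbb L\bar F_{\mathcal O}(B)$ in $\ho\C D$, functorially in $B$.

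The main obstacle — actually rather mild — is the transition between the underlying object of $B$ (at which $\mathbb L\bar F$ is computed) and its $\mathcal O$-algebra structure (at which $\mathbb L\bar F_{\mathcal O}$ is computed). This transition is mediated by the fact that cofibrant $\mathcal O$-algebras have cofibrant underlying objects, which is why we must impose cofibrancy on the underlying sequence of $\mathcal O$ rather than on the operad itself, and also why the cofibrancy of tensor units in $\C V$ is used (via Corollary \ref{tech1.5bis} in the analogous arguments of the preceding section).
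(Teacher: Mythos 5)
Your argument is correct and is essentially the proof the paper intends: the paper omits it with a pointer to Corollaries \ref{casigual2.1} and \ref{casigual2.2}, and your proof reproduces the fibrant-replacement/adjunction argument of Corollary \ref{casigual2.2} verbatim, with Theorem \ref{qea4} supplying the Quillen equivalence $\bar F_{\mathcal O}\dashv\bar G$ and \cite[Lemma 9.4]{htnso} justifying the reduction to cofibrant $\mathcal O$-algebras. Your preliminary identification $\bar F_{\mathcal O}=\bar F^{\mathcal O}$ is likewise exactly what the paper asserts in the statement of Theorem \ref{qea4}.
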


\section{Left properness for algebras}\label{malichon2}

In this section we prove the following stronger version of Theorem \ref{lp3}. 

\begin{thm}\label{lp5}
Let $\C C$ be a model $\C{V}$-algebra. Consider an operad $\mathcal O$ in $\C V$ such that $z(\mathcal O(n))$ is cofibrant for all $n\geq 0$, and a push-out diagram in $\algebra{\mathcal O}{\C{C}}$ as follows,
$$\xymatrix{A\ar@{ >->}[r]^-{\psi}\ar[d]_-{\varphi}^{\sim}\ar@{}[rd]|{\text{push}}&C\ar[d]^-{\varphi'}\\
B\ar@{ >->}[r]_-{\psi'}&B\cup_{A}C}$$
If the underlying objects of $A$ and $B$ are cofibrant in $\C C$ then $\varphi'$ is a weak equivalence. 
\end{thm}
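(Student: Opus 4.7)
The plan is to mimic the proof of Theorem~\ref{lp}, transposing the operadic argument to the algebraic setting by replacing Remark~\ref{pushfree} and Lemma~\ref{sequicof} with their algebra counterparts Remark~\ref{pushfree2} and Lemma~\ref{sequicof2}. First, via the usual retract argument, I would reduce to the case where $\psi$ is a relative $\mathcal{F}_{\mathcal O}(I')$-cell complex, and then, by a standard transfinite induction on the cellular filtration, further reduce to a single push-out along a free map $\mathcal{F}_{\mathcal O}(f)$ for $f\colon U\into V$ a cofibration in $\C C$. Using the same trick as in the proof of Proposition~\ref{paco}, I may assume $U$ is cofibrant in $\C C$: if not, replace $f$ by the cofibration $A\into A\cup_U V$ obtained by pushing out $U\to A$ along $f$, whose source is cofibrant because $A$ has cofibrant underlying object.

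Next I would unpack both push-outs using Remark~\ref{pushfree2}. The underlying morphism of $\psi$ in $\C C$ is a transfinite composition $A=B_0\into B_1\into\cdots$, where $B_{t-1}\into B_t$ is a push-out along the expression~\eqref{killo}; analogously $\psi'$ decomposes as $B=R_0\into R_1\into\cdots$ with push-outs along the same expression but with $A$ replaced by $B$. Lemma~\ref{sequicof2} ensures these are cofibrations, using that $z(\mathcal O(n))$ is cofibrant by hypothesis and that the underlying objects of $A$, $B$, and inductively of each $B_t$ and $R_t$, remain cofibrant in $\C C$. Then $\varphi'$ is the colimit of induced morphisms $\varphi_t\colon B_t\to R_t$ obtained by pushing out, at each stage, the comparison square whose right-hand face is the coproduct of morphisms in $\mor{\C C}$
\[
\coprod_{n\geq 1}\;\coprod_{\substack{S\subset\{1,\dots,n\}\\ \card(S)=t}} z(\mathcal O(n))\otimes\bigodot_{S} f\otimes\bigotimes_{\{1,\dots,n\}\setminus S}\varphi.
\]
Each factor of this coproduct tensors the weak equivalence $\varphi$ between cofibrant objects with cofibrant objects, namely $z(\mathcal O(n))$, the cofibration between cofibrant objects $\bigodot_{S} f$ (push-out product axiom), and several copies of $A$ or $B$. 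By the push-out product axiom and Ken Brown's lemma each factor is therefore a weak equivalence, and so is their coproduct.

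Finally, the cube lemma \cite[Lemma~5.2.6]{hmc} propagates the weak equivalence inductively from $\varphi_{t-1}$ to $\varphi_t$, starting with $\varphi_0=\varphi$; passing to the transfinite colimit via \cite[Proposition~15.10.12~(1)]{hirschhorn} then shows that $\varphi'=\colim_t\varphi_t$ is a weak equivalence, as required. The main obstacle I anticipate is the bookkeeping of cofibrancy throughout: one must verify simultaneously that $U$ may be taken cofibrant, that the iterated push-out products $\bigodot_S f$ have cofibrant source, and that every intermediate stage $B_t$ and $R_t$ retains a cofibrant underlying object, so that Ken Brown's lemma and the cube lemma apply cleanly at every step. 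This is exactly what the hypotheses of the theorem—cofibrancy of $z(\mathcal O(n))$ together with cofibrancy of the underlying objects of $A$ and $B$—are designed to supply.
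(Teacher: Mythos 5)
Your proposal is correct and follows essentially the same route as the paper's proof: reduce to a single push-out along a free map (with $U$ made cofibrant by the same trick as in Proposition \ref{paco}), decompose via Remark \ref{pushfree2} and Lemma \ref{sequicof2}, compare the two filtrations through the coproduct weak equivalence $\coprod z(\mathcal O(n))\otimes\bigodot_S f\otimes\bigotimes\varphi$, and conclude with the cube lemma, the colimit argument, and the cell-complex/retract reduction. The cofibrancy bookkeeping you flag is exactly what the paper's hypotheses supply, and your handling of it matches the published argument.
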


\begin{proof}
This proof follows the same pattern as the proof of Theorem \ref{lp} in Section \ref{lpprueba} above. 
Assume first that  $\psi$ fits into a push-out diagram in $\algebra{\mathcal O}{\C{C}}$
$$\xymatrix{\mathcal{F}_{\mathcal O} (U)\ar@{}[rd]|{\text{push}}\ar@{ >->}[r]^-{\mathcal{F}_{\mathcal O} (f)}\ar[d]_-{g}&\mathcal{F}_{\mathcal O} (V)\ar[d]^-{g'}\\
A\ar@{ >->}[r]_-{\psi}&C}$$
where $f$ is a cofibration in $\C{C}$. In this case, $\psi'$ fits into the following push-out diagram
$$\xymatrix{\mathcal{F}_{\mathcal O} (U)\ar@{}[rd]|{\text{push}}\ar@{ >->}[r]^-{\mathcal{F}_{\mathcal O} (f)}\ar[d]_-{\varphi g}&\mathcal{F}_{\mathcal O} (V)\ar[d]^-{\varphi 'g'}\\
B\ar@{ >->}[r]_-{\psi'}&B\cup_{A}C}$$
We can suppose that $U$ is cofibrant, compare the proof of Proposition \ref{paco}.

Recall from Remark \ref{pushfree2} that the morphisms  underlying $\psi$ and $\psi'$ decompose as transfinite compositions,
$$A=C_0\into\cdots\into C_{t-1}\st{\psi_t}\into C_t\into \cdots,\qquad B=D_0\into\cdots\into D_{t-1}\st{\psi_t'}\into D_t\into \cdots,$$
where $\psi_t$ 
is a push-out along
\begin{equation}\label{killo4}
\coprod\limits_{n\geq 1}\coprod\limits_{
\begin{array}{c}
\scriptstyle S\subset\{1,\dots,n\}\vspace{-3pt}\\
\scriptstyle \card(S)=t
\end{array}
}
z(\mathcal{O}(n))\otimes 
\bigodot_{S}f
\otimes\!\!\!\!\!\!
\bigotimes_{\{1,\dots,n\}\setminus S}\!\!\!\!\!\!A
,
\end{equation}
and  $\psi_t'$ 
is a push-out along
\begin{equation}\label{killo5}
\coprod\limits_{n\geq 1}\coprod\limits_{
\begin{array}{c}
\scriptstyle S\subset\{1,\dots,n\}\vspace{-3pt}\\
\scriptstyle \card(S)=t
\end{array}
}
z(\mathcal{O}(n))\otimes 
\bigodot_{S}f
\otimes\!\!\!\!\!\!
\bigotimes_{\{1,\dots,n\}\setminus S}\!\!\!\!\!\!B
.
\end{equation}
All these morphisms are cofibrations by Lemma \ref{sequicof2}. 

The morphism of sequences underlying $\varphi'$ is  the colimit of
$$\xymatrix@C=20pt{
A\ar@{=}[r]\ar[d]^{\sim}_{\varphi}&C_{0} \ar@{ >->}[r]\ar[d]_{\varphi_{0}}&\cdots\ar@{ >->}[r] &C_{t-1} \ar@{ >->}[r]^-{\psi_t}\ar[d]_{\varphi_{t-1}}&C_{t} \ar@{ >->}[r]\ar[d]_{\varphi_{t}}&\cdots\\
B\ar@{=}[r]&D_{0}\ar@{ >->}[r]&\cdots \ar@{ >->}[r]&D_{t-1}\ar@{ >->}[r]_-{\psi_t'}&D_{t}\ar@{ >->}[r]&\cdots
}$$
Here, 
the morphism $\varphi_{t}$ is obtained from the previous one by taking push-out of the horizontal arrows in the following diagram,
$$\xymatrix{C_{t-1}\ar[d]_{\varphi_{t-1}}&\bullet\ar[l]\ar@{ >->}[r]^{\eqref{killo4}}\ar[d]^{\sim}&\bullet\ar[d]^{\sim}\\
D_{t-1}&\bullet\ar[l]\ar@{ >->}[r]^{\eqref{killo5}}&\bullet}$$
The  square on the right is the following weak equivalence  $\eqref{killo4}\st{\sim}\r \eqref{killo5}$ in $\mor{\C{C}}$, $$\coprod\limits_{n\geq 1}\coprod\limits_{
\begin{array}{c}
\scriptstyle S\subset\{1,\dots,n\}\vspace{-3pt}\\
\scriptstyle \card(S)=t
\end{array}
}
z(\mathcal{O}(n))\otimes 
\bigodot_{S}f
\otimes\!\!\!\!\!\!
\bigotimes_{\{1,\dots,n\}\setminus S}\!\!\!\!\!\!\varphi
.$$
This is indeed a weak equivalence, since each factor of the coproduct is a weak equivalence with cofibrant source and target. 


The cube lemma \cite[Lemma 5.2.6]{hmc} shows that if $\varphi_{t-1}$ is a weak equivalence then $\varphi_{t}$ is also a weak equivalence. Since $\varphi_{0}=\varphi$ is a weak equivalence by hypothesis, we deduce that $\varphi_{t}$ is a weak equivalence for all $t\geq 0$. Therefore, the colimit $\varphi'=\colim_{t}\varphi_{t}$ is a weak equivalence by \cite[Proposition 15.10.12 (1)]{hirschhorn}.

Now, a standard  inductive argument shows that the statement is true for any relative $\mathcal{F}_{\mathcal O}(I')$-cell complex $\psi$, and hence  for any cofibration $\psi$ by the usual retract argument.
\end{proof}

The following corollary generalizes Corollary \ref{lp3.5}.

\begin{cor}\label{lp6}
Let $\C C$ be a model $\C{V}$-algebra. If all objects in $\C C$ are cofibrant then, for any operad $\mathcal O$ in $\C V$, the category $\algebra{\mathcal O}{\C{C}}$ is left proper.
\end{cor}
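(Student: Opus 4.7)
The plan is to deduce this as an immediate consequence of Theorem \ref{lp5}, simply by checking that every hypothesis of that theorem is automatically satisfied when all objects of $\C C$ are cofibrant.

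First I would recall the definition of left properness: we must show that for every push-out diagram in $\algebra{\mathcal O}{\C C}$
$$\xymatrix{A\ar@{ >->}[r]^-{\psi}\ar[d]_-{\varphi}^{\sim}\ar@{}[rd]|{\text{push}}&C\ar[d]^-{\varphi'}\\
B\ar@{ >->}[r]_-{\psi'}&B\cup_{A}C}$$
in which $\psi$ is a cofibration and $\varphi$ is a weak equivalence, the induced morphism $\varphi'$ is also a weak equivalence. So it suffices to verify that both cofibrancy hypotheses of Theorem \ref{lp5} hold automatically under our assumption on $\C C$.

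Next I would observe, for the hypothesis on the operad, that $z(\mathcal O(n))$ is an object of $\C C$ for every $n\geq 0$, hence cofibrant by assumption. For the hypothesis on the algebras, the forgetful functor $\algebra{\mathcal O}{\C C}\to \C C$ sends $A$ and $B$ to objects of $\C C$, which are again cofibrant by assumption.

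With these two verifications in place, Theorem \ref{lp5} directly yields that $\varphi'$ is a weak equivalence, establishing left properness of $\algebra{\mathcal O}{\C C}$. I do not expect any real obstacle here: the statement is a genuine corollary, and the only thing to keep in mind is that the ``underlying object'' in the sense of Theorem \ref{lp5} refers to the image under the forgetful functor to $\C C$, which lies in a category all of whose objects are cofibrant by hypothesis.
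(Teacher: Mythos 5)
Your proposal is correct and is exactly the argument the paper intends (it leaves the proof of this corollary implicit): since every object of $\C C$ is cofibrant, both the hypothesis that $z(\mathcal O(n))$ be cofibrant for all $n\geq 0$ and the hypothesis that the underlying objects of $A$ and $B$ be cofibrant hold automatically, so Theorem \ref{lp5} applies to every push-out of a weak equivalence along a cofibration. Nothing further is needed.
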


The following result can be proved as Proposition \ref{cofcat}, replacing Corollary   \ref{kisin} and Theorem \ref{lp} with \cite[Corollary 9.5]{htnso} and Theorem \ref{lp5}, respectively.

\begin{prop}\label{cofcat2}
Let $\C C$ be a model $\C V$-algebra and $\mathcal O$ an operad in $\C V$ such that $\mathcal O(n)$ is cofibrant for all $n\geq 0$. The full subcategory $\algebrac{\mathcal O}{\C C}\subset \algebra{\mathcal O}{\C C}$ spanned by the $\mathcal O$-algebras whose underlying object in $\C C$ is cofibrant is a cofibration category.
\end{prop}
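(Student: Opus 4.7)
The plan is to follow the proof of Proposition \ref{cofcat} line by line, substituting the operad-level results with their algebra-level counterparts exactly as suggested in the excerpt. Concretely, everywhere that argument invoked Corollary \ref{kisin} I would invoke \cite[Corollary 9.5]{htnso}, and where it invoked Theorem \ref{lp} I would invoke Theorem \ref{lp5}. Since $\algebrac{\mathcal O}{\C C}$ is by definition the full subcategory of $\algebra{\mathcal O}{\C C}$ whose objects have cofibrant underlying object in $\C C$, and since the initial $\mathcal O$-algebra $z(\mathcal O(0))$ is cofibrant in $\C C$ (the hypothesis $z(\mathcal O(n))$ cofibrant for all $n\geq 0$ applies in particular for $n=0$), the category $\algebrac{\mathcal O}{\C C}$ contains the initial object of $\algebra{\mathcal O}{\C C}$ and inherits the composition and two-out-of-three axioms from the ambient model structure. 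This handles Baues's axiom (C1).

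For axiom (C2), I would consider a push-out in $\algebra{\mathcal O}{\C C}$,
$$\xymatrix{A\ar@{ >->}[r]^-{\psi}\ar[d]_-{\varphi}\ar@{}[rd]|{\text{push}}&C\ar[d]^-{\varphi'}\\
B\ar@{ >->}[r]_-{\psi'}&B\cup_{A}C}$$
with $A$ and $B$ in $\algebrac{\mathcal O}{\C C}$. Applying \cite[Corollary 9.5]{htnso} to the cofibration $\psi$ with cofibrant source $A$ shows that $C$ has cofibrant underlying object, hence lies in $\algebrac{\mathcal O}{\C C}$; the same corollary applied to the cofibration $\psi'$ with cofibrant source $B$ shows $B\cup_{A}C\in\algebrac{\mathcal O}{\C C}$. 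Thus $\algebrac{\mathcal O}{\C C}$ is closed under push-outs along cofibrations. The preservation of trivial cofibrations under push-out is automatic from the ambient model structure; the non-trivial ingredient, namely that a push-out of a weak equivalence along a cofibration between objects with cofibrant underlying object is again a weak equivalence, is exactly Theorem \ref{lp5}.

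Axiom (C3), factorization of an arrow in $\algebrac{\mathcal O}{\C C}$ as a cofibration followed by a trivial fibration (or a trivial cofibration followed by a fibration), is inherited from $\algebra{\mathcal O}{\C C}$: if the source lies in $\algebrac{\mathcal O}{\C C}$, then \cite[Corollary 9.5]{htnso} shows that the intermediate term of the factorization again has cofibrant underlying object. Axiom (C4), every object admits a cofibrant model, is trivial in $\algebrac{\mathcal O}{\C C}$ since each of its objects already plays that role for itself.

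The only genuinely non-formal step is (C2) for weak equivalences under cofibrations, and this is entirely supplied by Theorem \ref{lp5}; the remaining parts are routine bookkeeping once \cite[Corollary 9.5]{htnso} is invoked to keep the cofibrancy of the underlying objects under control. In short, I would write essentially a two-line proof of the form ``same as Proposition \ref{cofcat}, using \cite[Corollary 9.5]{htnso} and Theorem \ref{lp5} in place of Corollary \ref{kisin} and Theorem \ref{lp}.''
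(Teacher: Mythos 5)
Your proposal is correct and is exactly the paper's argument: the paper proves Proposition \ref{cofcat2} by repeating the proof of Proposition \ref{cofcat} with Corollary \ref{kisin} replaced by \cite[Corollary 9.5]{htnso} and Theorem \ref{lp} replaced by Theorem \ref{lp5}, which is precisely what you do (your only slip, that the hypothesis is $\mathcal O(n)$ cofibrant rather than $z(\mathcal O(n))$, is harmless since $z$ is left Quillen).
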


In particular, the gluing lemma  also holds for $\algebrac{\mathcal O}{\C C}$ in these circumstances, compare Corollary \ref{gluing}.

\section{Homotopy invariance of the (unital) associative operad}\label{ainf}

Here we prove Theorem \ref{invariance}, Corollary \ref{invariance2}, and Propositions \ref{invariance3} and \ref{invariance4}.

Let $\C V$ be a closed symmetric monoidal category with coproducts. The \emph{unital associative operad}  $\mathtt{uAss}^{\C V}$ is defined by $\mathtt{uAss}^{\C V}(n)=\unit$ for all $n\geq 0$, the unit $\unit\r \mathtt{uAss}^{\C V}(1)$ is the identity morphism in $\unit$, and all composition laws are given by the unit isomorphism $\unit\otimes\unit\cong\unit$. Algebras over this operad are monoids. The \emph{associative operad}  $\mathtt{Ass}^{\C V}$ is identical to $\mathtt{uAss}^{\C V}$  except for arity zero, $\mathtt{Ass}^{\C V}(0)=\varnothing$. The operad structure is determined by the existence of an operad morphism $\phi^{\C V}\colon\mathtt{Ass}^{\C V}\r \mathtt{uAss}^{\C V}$ which is the identity in positive arities.  Algebras over this operad are non-unital monoids. Moreover, the functor $(\phi^{\C V})^{*}$ induced on algebras by the morphism $\phi^{\C V}$ is the usual forgetful functor from unital monoids to non-unital monoids. 

If $\C V$ is a symmetric monoidal model category, a \emph{(unital) $A$-infinity operad} $(\mathtt{u})\mathtt{A}^{\C V}_{\infty}$ is a cofibrant resolution of $(\mathtt{u})\mathtt{Ass}^{\C V}$ in $\operad{\C V}$. Here we mean cofibrant resolution in the following weak sense, we have weak equivalences with cofibrant sources,
\begin{align*}
q\colon \mathtt{uA}^{\C V}_{\infty}&\st{\sim}\To \mathtt{uAss}^{\C V},&
q'\colon \mathtt{A}^{\C V}_{\infty}&\st{\sim}\To \mathtt{Ass}^{\C V},
\end{align*}
we do not ask them to be fibrations. 
A \emph{(unital) $A$-infinity algebra} or \emph{(unital) $A$-infinity monoid} is a $(\mathtt{u})\mathtt{A}^{\C V}_{\infty}$-algebra.

Let $F\colon\C{V}\rightleftarrows\C{W}\colon G$ be a colax-lax symmetric monoidal adjunction between closed symmetric monoidal categories with coproducts. The (unital) associative operad is very special since the unit $\unit_{\C V}\r G(\unit_{\C W})$ of the lax symmetric monoidal functor $G$ induces morphisms in $\operad{\C V}$,
\begin{align*}
\varphi\colon\mathtt{uAss}^{\C V}&\To G(\mathtt{uAss}^{\C W}),&
\varphi'\colon\mathtt{Ass}^{\C V}&\To G(\mathtt{Ass}^{\C W}).
\end{align*}

\begin{thm}\label{gordico}
 Let $F\colon\C{V}\rightleftarrows\C{W}\colon G$ be a weak symmetric monoidal Quillen adjunction. Suppose $\C V$ and $\C W$ have cofibrant tensor units. 
Then the morphisms
\begin{align*}
\psi\colon F^{\operatorname{oper}}(\mathtt{uA}^{\C V}_{\infty})&\To \mathtt{uAss}^{\C W},&
\psi'\colon F^{\operatorname{oper}}(\mathtt{A}^{\C V}_{\infty})&\To \mathtt{Ass}^{\C W},
\end{align*}
adjoint to $\varphi q$ and $\varphi'q'$, respectively,  are weak equivalences in $\operad{\C W}$.
\end{thm}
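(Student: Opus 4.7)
The plan is to use Proposition \ref{paco} to replace $F^{\operatorname{oper}}$ with $F$ on the cofibrant operads $\mathtt{A}^{\C V}_\infty$ and $\mathtt{uA}^{\C V}_\infty$, up to weak equivalence, and then verify the statement arity by arity, where the argument collapses to the basic axioms of a weak monoidal Quillen adjunction.

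Since $\mathtt{uA}^{\C V}_\infty$ is cofibrant in $\operad{\C V}$, Proposition \ref{paco} gives a weak equivalence $\chi_{\mathtt{uA}^{\C V}_\infty}\colon F(\mathtt{uA}^{\C V}_\infty) \To F^{\operatorname{oper}}(\mathtt{uA}^{\C V}_\infty)$ in $\C W^{\mathbb N}$; the same holds for $\mathtt{A}^{\C V}_\infty$. Recall that a morphism of operads is a weak equivalence if and only if its underlying morphism of sequences is. Hence, by the 2-out-of-3 property, it suffices to show that the composites $\psi \circ \chi_{\mathtt{uA}^{\C V}_\infty}$ and $\psi' \circ \chi_{\mathtt{A}^{\C V}_\infty}$ are levelwise weak equivalences in $\C W$.

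The key identification, obtained from the triangle identities for $F\dashv G$ together with the defining property that the adjoint of $\chi_{\mathcal O}$ along $F\dashv G$ is the underlying morphism of the unit of $F^{\operatorname{oper}}\dashv G$, is that $\psi\chi_{\mathtt{uA}^{\C V}_\infty}$ is the morphism of sequences adjoint to $\varphi q$ along $F\dashv G$. Since $\varphi$ is levelwise the unit $\unit_{\C V} \to G(\unit_{\C W})$ of the lax monoidal structure on $G$, this adjoint decomposes in arity $n$ as
$$F(\mathtt{uA}^{\C V}_\infty(n)) \xrightarrow{F(q(n))} F(\unit_{\C V}) \To \unit_{\C W},$$
where the second arrow is the canonical comparison morphism of $F$ on tensor units. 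The first morphism is a weak equivalence because $q(n)$ is a weak equivalence between cofibrant objects (cofibrancy of $\mathtt{uA}^{\C V}_\infty(n)$ comes from Corollary \ref{tech1.5bis} using that $\unit_{\C V}$ is cofibrant, and cofibrancy of $\unit_{\C V}$ is a hypothesis) and $F$ is left Quillen. The second morphism is a weak equivalence by Definition \ref{weak} (2) applied with the identity cofibrant resolution $q = 1_{\unit_{\C V}}$.

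The argument for $\psi'$ is verbatim the same in positive arities. In arity zero it reduces to showing that the composite $F(\mathtt{A}^{\C V}_\infty(0)) \xrightarrow{F(q'(0))} F(\varnothing) = \varnothing = \mathtt{Ass}^{\C W}(0)$ is a weak equivalence, which follows from the same cofibrancy argument applied to $q'(0)\colon \mathtt{A}^{\C V}_\infty(0) \To \varnothing$, both sides being cofibrant. The only step that requires care rather than mere invocation of hypotheses is the formal identification of $\psi\chi_{\mathtt{uA}^{\C V}_\infty}$ with the $F\dashv G$-adjoint of $\varphi q$; this is a routine diagram chase using the adjunction triangles and the compatibility of the two unit morphisms $\mathcal O \to GF^{\operatorname{oper}}(\mathcal O)$ and $\mathcal O \to GF(\mathcal O) \to GF^{\operatorname{oper}}(\mathcal O)$ recorded just before Proposition \ref{paco}.
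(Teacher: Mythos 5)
Your proposal is correct and follows essentially the same route as the paper: both use Proposition \ref{paco} to see that $\chi$ is a weak equivalence, identify $\psi\chi$ (resp.\ $\psi'\chi$) arity by arity with $F(q(n))$ (resp.\ $F(q'(n))$) followed by the counit comparison on units, and conclude by the 2-out-of-3 axiom, treating arity zero of the non-unital case separately. The cofibrancy justifications via Corollary \ref{tech1.5bis} and Definition \ref{weak}(2) match the paper's argument exactly.
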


\begin{proof}
We use the natural morphism \eqref{chio}.
For any $n\geq 0$, the composite morphism 
$$\xymatrix@C=40pt{F(\mathtt{uA}^{\C V}_{\infty}(n))\ar[r]^-{\chi_{\mathtt{uA}^{\C V}_{\infty}}(n)}&
F^{\operatorname{oper}}(\mathtt{uA}^{\C V}_{\infty})(n)\ar[r]^-{\psi(n)}&
\mathtt{uAss}^{\C W}(n)=\unit_{\C W}}$$
coincides with
$$\xymatrix@C=45pt{F(\mathtt{uA}^{\C V}_{\infty}(n))\ar[r]^-{F(q(n))}&
F(\mathtt{uAss}^{\C V}(n))=F(\unit_{\C V})\ar[r]^-{\text{counit of }F}&
\mathtt{uAss}^{\C W}(n)=\unit_{\C W}.}$$
The counit of $F$ is a weak equivalence by Definition \ref{weak} (2), and $F(q(n))$ is a weak equivalence since $q(n)$ is a weak equivalence between cofibrant objects, see Corollary \ref{tech1.5bis}. Moreover, $\chi_{\mathtt{uA}^{\C V}_{\infty}}(n)$ is a weak equivalence by Proposition \ref{paco}. Hence, $\psi(n)$ is a weak equivalence by the 2-out-of-3 axiom.

One can similarly check that $\psi'(n)$ is a weak equivalence for $n>0$. In arity zero, 
$$\xymatrix@C=40pt{F(\mathtt{A}^{\C V}_{\infty}(0))\ar[r]^-{\chi_{\mathtt{A}^{\C V}_{\infty}}(0)}&
F^{\operatorname{oper}}(\mathtt{A}^{\C V}_{\infty})(0)\ar[r]^-{\psi'(0)}&
\mathtt{Ass}^{\C W}(0)=\varnothing}$$
coincides with
$$\xymatrix@C=45pt{F(\mathtt{A}^{\C V}_{\infty}(0))\ar[r]^-{F(q'(0))}&
F(\mathtt{Ass}^{\C V}(0))=F(\varnothing)=\varnothing=
\mathtt{Ass}^{\C W}(0).}$$
This last morphism is a weak equivalence since $q'(0)$ is a weak equivalence between cofibrant objects, and $\chi_{\mathtt{A}^{\C V}_{\infty}}(0)$ is a weak equivalence by Proposition \ref{paco}. Hence, $\psi'(0)$ is also a weak equivalence by the 2-out-of-3 axiom.
\end{proof}

Theorem \ref{invariance} is a corollary of the previous result, since the following square commutes,
$$\xymatrix{
 \mathtt{Ass}^{\C V} \ar[r]^-{\phi^{\C V} }\ar[d]_-{\varphi'}& \mathtt{uAss}^{\C V}\ar[d]^-{\varphi}\\
  G(\mathtt{Ass}^{\C V}) \ar[r]^-{G(\phi^{\C V}) }& G(\mathtt{uAss}^{\C V}) }$$

Monoids and $A$-infinity monoids, unital and non-unital, have Quillen equivalent model categories. In other words, $A$-infinity monoids can be rectified.

\begin{prop}\label{coc1.1}
Let $\C C$ be a model $\C V$-algebra. Assume that the tensor unit of $\C V$ is cofibrant. Then $q$ and $q'$ 
induce Quillen equivalences
$$\xymatrix{\algebra{\mathtt{uA}^{\C V}_{\infty}}{\C{C}}\ar@<.5ex>[r]^-{q_{*}}&\algebra{\mathtt{uAss}^{\C V}}{\C{C}},\ar@<.5ex>[l]^-{q^{*}}}\qquad
\xymatrix{\algebra{\mathtt{A}^{\C V}_{\infty}}{\C{C}}\ar@<.5ex>[r]^-{q_{*}'}&\algebra{\mathtt{Ass}^{\C V}}{\C{C}}.\ar@<.5ex>[l]^-{(q')^{*}}}$$
\end{prop}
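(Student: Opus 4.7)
The proposition is a direct application of Theorem \ref{coc} once the requisite cofibrancy conditions on the operad components are verified, so the plan consists essentially of checking these hypotheses for each of the four operads $\mathtt{uA}^{\C V}_{\infty}$, $\mathtt{uAss}^{\C V}$, $\mathtt{A}^{\C V}_{\infty}$, and $\mathtt{Ass}^{\C V}$.

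For the source operads, $\mathtt{uA}^{\C V}_{\infty}$ and $\mathtt{A}^{\C V}_{\infty}$ are cofibrant objects in $\operad{\C V}$ by definition, since they are introduced as cofibrant resolutions of $\mathtt{uAss}^{\C V}$ and $\mathtt{Ass}^{\C V}$. Because the tensor unit of $\C V$ is assumed cofibrant, Corollary \ref{tech1.5bis} applies and guarantees that the components $\mathtt{uA}^{\C V}_{\infty}(n)$ and $\mathtt{A}^{\C V}_{\infty}(n)$ are cofibrant in $\C V$ for every $n\geq 0$.

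For the target operads, the components are explicitly known. We have $\mathtt{uAss}^{\C V}(n)=\unit$ for all $n\geq 0$, which is cofibrant by hypothesis. Similarly $\mathtt{Ass}^{\C V}(n)=\unit$ for $n\geq 1$, which is cofibrant, and $\mathtt{Ass}^{\C V}(0)=\varnothing$, which is cofibrant since the initial object is always so. Hence the cofibrancy hypotheses of Theorem \ref{coc} are satisfied for both weak equivalences $q$ and $q'$.

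Applying Theorem \ref{coc} to $q$ and $q'$ then immediately yields the two Quillen equivalences claimed. There is no substantial obstacle here: the statement is essentially an instance of the change-of-coefficients theorem, and the only point worth emphasizing is that the hypothesis on cofibrancy of $\unit_{\C V}$ is exactly what is needed to deduce levelwise cofibrancy of the $A$-infinity operads from their operadic cofibrancy, via Corollary \ref{tech1.5bis}.
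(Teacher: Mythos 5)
Your proof is correct and follows exactly the route the paper takes: the paper simply remarks that the proposition is a corollary of Theorem \ref{coc}, and your argument spells out the implicit cofibrancy checks (levelwise cofibrancy of the $A$-infinity operads via Corollary \ref{tech1.5bis} and the cofibrant unit, plus the explicit components $\unit$ and $\varnothing$ of the (unital) associative operads). Nothing is missing.
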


This is a corollary of Theorem \ref{coc}. It allows the proof of the following corollary, from which Propositions \ref{invariance3} and \ref{invariance4} follow. 

\begin{cor}\label{qea3.1}
Let us place ourselves in the situation described in  the first paragraph of Section \ref{malichon}. 
Suppose  that $F\dashv G$ and $\bar F\dashv \bar G$ are weak (symmetric) monoidal Quillen equivalences, and that $\C V$ and $\C W$ have cofibrant tensor units. Then,  there are Quillen equivalences,
$$\xymatrix{
\algebra{\mathtt{uAss}^{\C V} }{\C C}\ar@<.5ex>[r]&\algebra{\mathtt{uAss}^{\C W} }{\C D},
\ar@<.5ex>[l]^-{\bar G}&
\algebra{\mathtt{Ass}^{\C V} }{\C C}\ar@<.5ex>[r]&\algebra{\mathtt{Ass}^{\C W} }{\C D}
\ar@<.5ex>[l]^-{\bar G}.
}$$ 
\end{cor}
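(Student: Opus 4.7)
The plan is to rectify $\mathtt{uAss}^{\C V}$ and $\mathtt{Ass}^{\C V}$ to cofibrant operads $\mathtt{uA}^{\C V}_{\infty}$ and $\mathtt{A}^{\C V}_{\infty}$, where Theorem \ref{qea2} applies directly, and then transport the resulting Quillen equivalence back via change of coefficients. I will handle only the unital case; the non-unital one is entirely parallel, with $\mathtt{A}^{\C V}_{\infty}$, $q'$, and $\psi'$ in place of $\mathtt{uA}^{\C V}_{\infty}$, $q$, and $\psi$.

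Fix a cofibrant resolution $q\colon \mathtt{uA}^{\C V}_{\infty} \st{\sim}\To \mathtt{uAss}^{\C V}$ in $\operad{\C V}$ and let $\psi\colon F^{\operatorname{oper}}(\mathtt{uA}^{\C V}_{\infty}) \st{\sim}\To \mathtt{uAss}^{\C W}$ be the weak equivalence provided by Theorem \ref{gordico}. The argument combines three Quillen equivalences. First, Proposition \ref{coc1.1} yields a Quillen equivalence $q_{*}\dashv q^{*}$ between $\algebra{\mathtt{uA}^{\C V}_{\infty}}{\C C}$ and $\algebra{\mathtt{uAss}^{\C V}}{\C C}$. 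Second, Theorem \ref{qea2}, applied to the cofibrant operad $\mathtt{uA}^{\C V}_{\infty}$, yields a Quillen equivalence $\bar F_{\mathtt{uA}^{\C V}_{\infty}} \dashv \bar G$ between $\algebra{\mathtt{uA}^{\C V}_{\infty}}{\C C}$ and $\algebra{F^{\operatorname{oper}}(\mathtt{uA}^{\C V}_{\infty})}{\C D}$. Third, Theorem \ref{coc}, applied in the model $\C W$-algebra $\C D$ to $\psi$, yields a Quillen equivalence $\psi_{*}\dashv \psi^{*}$ between $\algebra{F^{\operatorname{oper}}(\mathtt{uA}^{\C V}_{\infty})}{\C D}$ and $\algebra{\mathtt{uAss}^{\C W}}{\C D}$; its hypotheses hold because $F^{\operatorname{oper}}(\mathtt{uA}^{\C V}_{\infty})$ is cofibrant as an operad, hence levelwise cofibrant by Corollary \ref{tech1.5bis}, and because $\mathtt{uAss}^{\C W}$ is levelwise the cofibrant unit $\unit_{\C W}$.

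The key step is to identify the composite right adjoint $q^{*}\circ\bar G\circ\psi^{*}$ with the direct functor $\bar G\colon \algebra{\mathtt{uAss}^{\C W}}{\C D} \To \algebra{\mathtt{uAss}^{\C V}}{\C C}$ appearing in the statement, which is induced by the lax symmetric monoidal structure of $\bar G\colon \C D\to\C C$. Both functors assign to a $\mathtt{uAss}^{\C W}$-algebra $B$ the underlying object $\bar G(B)$ in $\C C$, equipped with a $\mathtt{uA}^{\C V}_{\infty}$-algebra structure (obtained, in the direct case, after a further pullback along $q$) by pulling $B$ back along some morphism $\mathtt{uA}^{\C V}_{\infty}\To G(\mathtt{uAss}^{\C W})$. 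For $q^{*}$ composed with the direct $\bar G$ this morphism is $\varphi q$, while for $\bar G\circ\psi^{*}$ it is $G(\psi)\circ\eta_{\mathtt{uA}^{\C V}_{\infty}}$ with $\eta$ the unit of $F^{\operatorname{oper}}\dashv G$; these coincide precisely because $\psi$ is defined in Theorem \ref{gordico} as the $F^{\operatorname{oper}}\dashv G$-adjoint of $\varphi q$.

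To conclude, the direct $\bar G$ is the right adjoint of a Quillen pair, for instance obtained by composing Proposition \ref{qp3} applied to $\mathtt{uAss}^{\C W}$ with the change of coefficients Quillen pair along $\varphi\colon\mathtt{uAss}^{\C V}\To G(\mathtt{uAss}^{\C W})$. The identification above expresses its derived functor $\mathbb{R}\bar G$ as a composition of derived right adjoints of Quillen equivalences, hence as an equivalence of homotopy categories; since a Quillen pair is a Quillen equivalence as soon as the derived right adjoint is an equivalence, the Quillen pair with right adjoint the direct $\bar G$ is the desired Quillen equivalence. I expect the commuting-square verification to be the main obstacle; everything else is a routine application of the previously established theorems.
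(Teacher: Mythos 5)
Your proposal is correct and follows essentially the same route as the paper: rectify to the cofibrant operad $\mathtt{uA}^{\C V}_{\infty}$, combine the change-of-coefficients equivalences from Proposition \ref{coc1.1} and Theorems \ref{gordico} and \ref{coc} with the equivalence of Theorem \ref{qea2}, check that the square of right adjoints commutes (your adjointness argument for $\psi$ being the $F^{\operatorname{oper}}\dashv G$-adjoint of $\varphi q$ is exactly the point the paper leaves implicit), and conclude that the bottom pair with right adjoint the lax-monoidal $\bar G$ is a Quillen equivalence. The only cosmetic slip is the phrase ``$q^{*}\circ\bar G\circ\psi^{*}$'', which does not typecheck as written, but your subsequent comparison of $q^{*}\circ\bar G$ with $\bar G\circ\psi^{*}$ is the intended and correct statement.
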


\begin{proof}
Consider the following diagram of Quillen pairs
$$\xymatrix{
\algebra{\mathtt{uA}^{\C V}_{\infty} }{\C C}\ar@<.5ex>[r]^-{\bar F_{\mathtt{uA}^{\C V}_{\infty}}}
\ar@<.5ex>[d]^{(q_{\C V})_{*}}\ar@<-.5ex>@{<-}[d]_{q_{\C V}^{*}}
&
\algebra{F^{\operatorname{oper}}(\mathtt{uA}^{\C W}_{\infty}) }{\C D}
\ar@<.5ex>[l]^-{\bar G}
\ar@<.5ex>[d]^{\psi_{*}}\ar@<-.5ex>@{<-}[d]_{\psi^{*}}
\\
\algebra{\mathtt{uAss}^{\C V} }{\C C}\ar@<.5ex>[r]&\algebra{\mathtt{uAss}^{\C W} }{\C D}
\ar@<.5ex>[l]^-{\bar G}
}$$ 
The bottom $\bar G$ is the usual functor between categories of monoids induced by a lax monoidal functor.  This functor preserves (trivial) fibrations and has a left adjoint by abstract reasons, hence it is a right Quillen functor. 

The square formed by the right adjoints commutes, so left adjoints commute up to a natural isomorphism. 

The left vertical pair is a  Quillen equivalence by Proposition \ref{coc1.1}. The right vertical pair is a  Quillen equivalence by Theorems \ref{gordico} and \ref{coc}. The upper horizontal pair is a Quillen equivalence by Theorem \ref{qea2}. Hence, the bottom Quillen pair is also a Quillen equivalence. 

The proof of the non-unital case is formally the same.
\end{proof}

\appendix

\section{Pseudo-cofibrant objects}\label{pco}

In the series of appendices that we now start, we develop technical tools to avoid unnecessary cofibrancy hypotheses on tensor units. With this purpose, in this first appendix we isolate a certain property of cofibrant objects. The objects satisfying this property are called pseudo-cofibrant. We show that there are many pseudo-cofibrant objects, including the tensor unit, which are possibly non-cofibrant. Moreover, we prove that pseudo-cofibrant objects share many other features with cofibrant objects. 

\begin{defn}
A \emph{pseudo-cofibrant object} $X$ in a monoidal model category $\C{C}$ is an object such that
given a cofibration $f\colon U\into V$, the morphisms $f\otimes X$ and $X\otimes f$ are cofibrations.
\end{defn}

\begin{rem}\label{ejemplos}
Some examples of pseudo-cofibrant objects are:
\begin{enumerate}
\item Cofibrant objects,  by the push-out product axiom.
\item The tensor unit $\unit$.
\item Coproducts of pseudo-cofibrant objects.
\item Tensor products of pseudo-cofibrant objects.
\end{enumerate}
Tensoring with pseudo-cofibrant objects also preserves trivial cofibrations, by the monoid axiom. Therefore, pseudo-cofibrant objects are those objects $X$ such that $-\otimes X$ and $X\otimes -$ are left Quillen functors.
\end{rem}

As it usually happens in cofibrantly generated model categories, it is enough to check the property defining pseudo-cofibrant objects on generating cofibrations.

\begin{lem}\label{sufise}
An object $X$ is pseudo-cofibrant if and only if $f\otimes X$ and $X\otimes f$ are cofibrations for any generating cofibration $f$.
\end{lem}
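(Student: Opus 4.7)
The forward direction is immediate, since generating cofibrations are in particular cofibrations. For the converse, my plan is to exploit biclosedness of $\C{C}$: the functors $-\otimes X$ and $X\otimes -$ have right adjoints (the two internal homs out of $X$), hence preserve all small colimits, in particular pushouts and transfinite compositions. This is the only structural ingredient I need.

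From there, the argument is a standard cell-complex induction. Since $\C{C}$ is cofibrantly generated with generating cofibrations $I$, any cofibration $g$ is a retract of a relative $I$-cell complex, i.e.\ a transfinite composition of pushouts of morphisms in $I$. Applying the left adjoint $-\otimes X$ to such a presentation turns it into a retract of a transfinite composition of pushouts of the morphisms $\{f\otimes X : f\in I\}$. By hypothesis each $f\otimes X$ is a cofibration, and cofibrations in any model category form a saturated class, closed under pushouts, transfinite compositions, and retracts. Therefore $g\otimes X$ is a cofibration. The argument for $X\otimes g$ is symmetric, using that $X\otimes -$ is also a left adjoint by biclosedness.

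There is no real obstacle here; the lemma is a direct application of the small object / cell-complex machinery combined with biclosedness of $\C{C}$. The one point worth flagging is that biclosedness (not merely right-closedness on one side) is essential, since otherwise only one of the two tensor factor variance directions would preserve the relevant colimits and the argument would yield only half of the conclusion.
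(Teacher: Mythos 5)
Your argument is correct, but it takes a more elementary route than the paper. You establish the defining property of pseudo-cofibrancy directly: since $\C{C}$ is biclosed, $-\otimes X$ and $X\otimes-$ are left adjoints, so they carry a retract of a relative $I$-cell complex to a retract of a relative $\{f\otimes X: f\in I\}$-cell complex, and saturation of the class of cofibrations finishes the job. The paper instead argues at the level of Quillen functors: from the hypothesis it deduces, via the monoid axiom and the recognition criterion \cite[Lemma 2.1.20]{hmc}, that $X\otimes-$ and $-\otimes X$ are left Quillen functors, and then invokes the characterization in Remark \ref{ejemplos} of pseudo-cofibrant objects as exactly those $X$ for which these two functors are left Quillen. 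What your approach buys is economy of hypotheses: you never use the monoid axiom, only cofibrant generation and biclosedness, since pseudo-cofibrancy concerns preservation of cofibrations alone; in effect you reprove the cofibration half of the cited recognition lemma. What the paper's approach buys is brevity on the page and a slightly stronger output in one stroke, namely that tensoring with $X$ also preserves trivial cofibrations (the left Quillen statement), which is the form in which the property is used elsewhere. Your closing remark about biclosedness is well taken: in this paper monoidal model categories are biclosed by definition, and that is precisely what makes both tensor functors cocontinuous, whereas a category closed on only one side (such as $\C{V}^{\mathbb{N}}$ with the composition product) would not support the argument on both sides.
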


\begin{proof}
The `only if' part is trivial. For the `if' part, assume $X\otimes-$ takes generating cofibrations to cofibrations. The monoid axiom shows that it also takes generating trivial cofibrations to cofibrations. Hence it is a left Quillen functor \cite[Lemma 2.1.20]{hmc}. Similarly for $-\otimes X$. Therefore $X$ is pseudo-cofibrant by the previous remark.
\end{proof}

The following result is a source of examples of pseudo-cofibrant objects.

\begin{lem}\label{cascajo}
If $g\colon X\into Y$ is a cofibration and $X$ is pseudo-cofibrant then $Y$ is also pseudo-cofibrant.
\end{lem}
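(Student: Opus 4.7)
The plan is to combine Lemma~\ref{sufise} with the push-out product axiom. By Lemma~\ref{sufise}, it suffices to show that $f\otimes Y$ and $Y\otimes f$ are cofibrations for every generating cofibration $f\colon U\into V$ of $\C{C}$.

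For the first, I would contemplate the push-out product diagram \eqref{pushoutproduct} for the pair $(f,g)$. Since $X$ is pseudo-cofibrant, the morphism $f\otimes X\colon U\otimes X\into V\otimes X$ is a cofibration; hence its push-out along $U\otimes g$, namely $\bar f\colon U\otimes Y\to U\otimes Y\cup_{U\otimes X}V\otimes X$, is also a cofibration. On the other hand, $f$ and $g$ are both cofibrations, so $f\odot g\colon U\otimes Y\cup_{U\otimes X}V\otimes X\to V\otimes Y$ is a cofibration by the push-out product axiom. The factorisation
$$f\otimes Y=(f\odot g)\circ\bar f$$
coming from \eqref{pushoutproduct} then exhibits $f\otimes Y$ as a composite of two cofibrations, so it is a cofibration.

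The argument that $Y\otimes f$ is a cofibration is formally identical, using the push-out product $g\odot f$ (in this order, to accommodate the biclosed, non-necessarily-symmetric setting) in place of $f\odot g$: the morphism $X\otimes f$ is a cofibration because $X$ is pseudo-cofibrant, its push-out $\bar f$ along $g\otimes U$ is a cofibration, $g\odot f$ is a cofibration by the push-out product axiom, and the composite realises $Y\otimes f$.

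There is no real obstacle here beyond reading off the factorisation $f\otimes Y=(f\odot g)\circ\bar f$ from the diagram \eqref{pushoutproduct} and being careful to apply the push-out product axiom on both sides, since the monoidal structure is only assumed biclosed, not symmetric.
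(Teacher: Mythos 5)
Your proof is correct and follows essentially the same argument as the paper: the factorisation $f\otimes Y=(f\odot g)\circ\bar f$ read off from diagram \eqref{pushoutproduct}, with $\bar f$ a cofibration as a push-out of $f\otimes X$ and $f\odot g$ a cofibration by the push-out product axiom, and the symmetric case handled analogously. The initial reduction to generating cofibrations via Lemma \ref{sufise} is superfluous (the paper runs the same argument for an arbitrary cofibration $f$) but harmless.
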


\begin{proof}
Let $f\colon U\into V$ be a   cofibrantion. We are going to prove that $f\otimes Y$ is a   cofibration. One can similarly check that $Y\otimes f$ is also a   cofibration.

Consider the diagram \eqref{pushoutproduct} for the construction of $f\odot g$. The morphism $f\otimes X$ is a   cofibration since $X$ is pseudo-cofibrant, hence $\bar f$ is also a   cofibration. Moreover, $f\odot g$ is a   cofibration by the push-out product axiom, so $f\otimes Y=(f\odot g)\bar{f}$ is a   cofibration.
\end{proof}

\begin{cor}\label{cascajo2}
Given morphisms $Y\l X\into Z$, if $Y$ is pseudo-cofibrant then so is $Y\cup_{X}Z$.
\end{cor}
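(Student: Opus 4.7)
The plan is to deduce this directly from Lemma \ref{cascajo} by exhibiting the canonical map $Y \to Y\cup_X Z$ as a cofibration from a pseudo-cofibrant object. Concretely, I would form the push-out square
$$\xymatrix{X\ar@{ >->}[r]\ar[d] & Z\ar[d]^{g}\\ Y\ar[r] & Y\cup_X Z}$$
and invoke the general fact that cofibrations are stable under push-out in any model category; this shows that the induced morphism $g\colon Y \to Y\cup_X Z$ is a cofibration.

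Since by hypothesis $Y$ is pseudo-cofibrant and $g$ is a cofibration with source $Y$, Lemma \ref{cascajo} applies verbatim, delivering the pseudo-cofibrancy of the target $Y\cup_X Z$. No further computation is required.

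There is no real obstacle here: the only subtlety worth flagging is the orientation of the arrows, namely that we apply Lemma \ref{cascajo} to the \emph{pushed-out} cofibration $Y \rightarrowtail Y\cup_X Z$ rather than to the original cofibration $X \rightarrowtail Z$ (the latter would only give pseudo-cofibrancy of $Z$, which is not what we want, and anyway would require $X$ to be pseudo-cofibrant, which is not assumed). In particular the hypothesis that $X \to Y$ be a cofibration is \emph{not} needed; it suffices that $X \rightarrowtail Z$ be a cofibration.
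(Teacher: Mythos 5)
Your proof is correct and is exactly the paper's argument: the push-out of the cofibration $X\into Z$ along $X\to Y$ is a cofibration $Y\into Y\cup_X Z$, and Lemma \ref{cascajo} then gives pseudo-cofibrancy of $Y\cup_X Z$. Your remark about the orientation of the arrows is accurate and consistent with the statement, which only requires $X\into Z$ to be a cofibration.
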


\begin{proof}
Simply recall that the canonical morphism $Y\into Y\cup_{X}Z$ is a cofibration.
\end{proof}

\begin{cor}\label{soncof2}
If  $f\colon U\into V$ and $g\colon X\into Y$ are  cofibrations  between pseudo-cofibrant objects, then the cofibration $f\odot g$ has pseudo-cofibrant source, and hence target.
\end{cor}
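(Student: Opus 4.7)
The plan is to apply Corollary \ref{cascajo2} to the explicit push-out square defining the source of $f\odot g$, and then apply Lemma \ref{cascajo} to pass from the source to the target along the cofibration $f\odot g$ itself.

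First I would unpack the construction \eqref{pushoutproduct}: the source of $f\odot g$ is the push-out $U\otimes Y\cup_{U\otimes X}V\otimes X$ of the span
$$U\otimes Y\xleftarrow{\,U\otimes g\,} U\otimes X\xrightarrow{\,f\otimes X\,}V\otimes X.$$
Since $U$, $V$, $X$, $Y$ are all pseudo-cofibrant by hypothesis, Remark \ref{ejemplos} (4) gives that $U\otimes X$, $U\otimes Y$, and $V\otimes X$ are pseudo-cofibrant. Moreover, pseudo-cofibrancy of $U$ together with the cofibration $g$ makes $U\otimes g$ a cofibration, and similarly $f\otimes X$ is a cofibration. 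Thus we are in the situation of Corollary \ref{cascajo2}, with pseudo-cofibrant apex $U\otimes Y$ and cofibration $f\otimes X$, so the push-out $U\otimes Y\cup_{U\otimes X}V\otimes X$ is pseudo-cofibrant. This is exactly the source of $f\odot g$.

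For the target, I would invoke Lemma \ref{cascajo} applied to the cofibration $f\odot g$ itself (which is a cofibration by the push-out product axiom, using that $f$ and $g$ are cofibrations): its source has just been shown to be pseudo-cofibrant, so its target $V\otimes Y$ is pseudo-cofibrant as well. There is essentially no obstacle here; the whole argument is a direct assembly of Remark \ref{ejemplos}, Lemma \ref{cascajo}, and Corollary \ref{cascajo2}, and the only mild point to watch is the verification that $U\otimes g$ and $f\otimes X$ are cofibrations, which needs pseudo-cofibrancy of $U$ and $X$ (not merely of $V$ and $Y$).
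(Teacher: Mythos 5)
Your proof is correct and is essentially the paper's own argument: identify the source of $f\odot g$ as the push-out of $U\otimes Y\leftarrow U\otimes X\rightarrowtail V\otimes X$, note all three objects are pseudo-cofibrant, apply Corollary \ref{cascajo2}, and then pass to the target of the cofibration $f\odot g$ via Lemma \ref{cascajo}. The only quibble is terminological (calling $U\otimes Y$ the ``apex''); mathematically everything matches.
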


\begin{proof}
The source of $f\odot g$ is the push-out of  $U\otimes Y\st{U\otimes g}\leftarrowtail U\otimes X\st{f\otimes X}\into V\otimes X$. These three objects are pseudo-cofibrant, therefore the previous corollary applies.
\end{proof}

Pseudo-cofibrant objects become cofibrant if we tensor them with an honestly cofibrant object.

\begin{lem}\label{soncof}
Given two objects $U$ and $X$, if one of them is cofibrant and the other one is pseudo-cofibrant then $U\otimes X$ is cofibrant.
\end{lem}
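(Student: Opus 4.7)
The plan is to reduce the statement directly to the definition of pseudo-cofibrancy applied to the generating cofibration $\varnothing \into U$. By symmetry, I may assume $U$ is cofibrant and $X$ is pseudo-cofibrant (the other case is handled identically using that pseudo-cofibrancy is also defined via the right tensor action $X \otimes -$).

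First I would record that because $\C C$ is biclosed monoidal, the functor $-\otimes X \colon \C C \to \C C$ is a left adjoint (its right adjoint is the internal hom), so it preserves all colimits, in particular the initial object: $\varnothing \otimes X \cong \varnothing$. Then, since $U$ is cofibrant, the unique morphism $\varnothing \into U$ is a cofibration. Applying the functor $-\otimes X$ and using that $X$ is pseudo-cofibrant, the morphism $\varnothing \otimes X \to U \otimes X$ is a cofibration. Rewriting the source via the isomorphism above yields a cofibration $\varnothing \into U \otimes X$, which is precisely the statement that $U \otimes X$ is cofibrant.

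There is essentially no obstacle here; the argument is a one-line application of the definition of pseudo-cofibrant combined with the fact that left adjoints preserve initial objects. The only point worth mentioning explicitly in the write-up is the symmetric case, where one uses the analogous property that $X \otimes -$ preserves cofibrations (this is part of the definition of pseudo-cofibrant, as recalled in Remark \ref{ejemplos}).
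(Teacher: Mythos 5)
Your proposal is correct and is essentially the paper's own argument: apply the pseudo-cofibrancy of $X$ to the cofibration $\varnothing\into U$ and identify $\varnothing\otimes X$ with $\varnothing$, the symmetric case being analogous. The extra remark that $-\otimes X$ preserves the initial object because $\C C$ is biclosed is a sensible explicit justification of a step the paper treats as immediate.
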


\begin{proof}
If $U$ is the cofibrant object then $(\varnothing\into U)\otimes X=\varnothing\into U\otimes X$ is a cofibration, i.e.~$U\otimes X$ is cofibrant, and similarly if $X$ is the cofibrant object.
\end{proof}

\begin{cor}\label{soncof3}
If  $f\colon U\into V$ and $g\colon X\into Y$ are  cofibrations  between pseudo-cofibrant objects and $U$ or $X$ is cofibrant, then the cofibration $f\odot g$ has cofibrant source, and hence target.
\end{cor}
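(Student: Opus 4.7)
The plan is to reduce everything to the already-established Lemma \ref{soncof} together with the closure of cofibrations under push-out. Recall from Corollary \ref{soncof2} that the source of $f\odot g$ is the push-out of
$$U\otimes Y\;\st{U\otimes g}\longleftarrow\; U\otimes X\;\st{f\otimes X}\into\; V\otimes X,$$
and that the three corner objects are pseudo-cofibrant. Under the additional assumption that $U$ or $X$ is cofibrant, I plan to upgrade each of these three objects from pseudo-cofibrant to cofibrant, and then invoke the fact that a push-out of a cofibration along any morphism is a cofibration.

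By symmetry, assume $U$ is cofibrant (the argument for $X$ cofibrant is identical, using the symmetric half of Lemma \ref{soncof}). Then Lemma \ref{soncof} applied to the pairs $(U,X)$ and $(U,Y)$ shows that $U\otimes X$ and $U\otimes Y$ are cofibrant. Since $X$ is pseudo-cofibrant, the map $f\otimes X\colon U\otimes X\into V\otimes X$ is a cofibration; its source $U\otimes X$ is cofibrant, so its target $V\otimes X$ is cofibrant as well. Thus all three corners of the span above are cofibrant, and the horizontal leg $f\otimes X$ is a cofibration between them.

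Taking the push-out, the canonical morphism $U\otimes Y\into U\otimes Y\cup_{U\otimes X}V\otimes X$ is a cofibration (as a push-out of the cofibration $f\otimes X$), and its source $U\otimes Y$ is cofibrant, whence the push-out is cofibrant. This push-out is precisely the source of $f\odot g$, so the source is cofibrant. The final clause (``and hence target'') then follows at once: $f\odot g$ is a cofibration (by the push-out product axiom) whose source is now known to be cofibrant, so its target $V\otimes Y$ is cofibrant too.

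There is no real obstacle here; the only point to keep track of is the asymmetry in Lemma \ref{soncof}, which is why we must split into the two cases ``$U$ cofibrant'' and ``$X$ cofibrant'' and use whichever side of the lemma applies. Both cases are completely parallel once the reduction to the push-out span is made.
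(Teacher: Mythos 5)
Your proof is correct and follows essentially the same route as the paper: identify the source of $f\odot g$ as the push-out of $U\otimes Y\leftarrow U\otimes X\rightarrow V\otimes X$, show the corners are cofibrant via Lemma \ref{soncof}, and conclude by pushing out a cofibration with cofibrant source. You merely make explicit the final push-out step (and obtain $V\otimes X$ via the cofibration $f\otimes X$ rather than by first noting $V$ is cofibrant), which the paper leaves implicit.
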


\begin{proof}
The source of $f\odot g$ is the push-out of  $U\otimes Y\st{U\otimes g}\leftarrowtail U\otimes X\st{f\otimes X}\into V\otimes X$, and these three objects are cofibrant by the previous lemma.
\end{proof}

\begin{defn}\label{mmc}
The following axiom may or may not be satisfied by a  {monoidal model category}:
\begin{itemize}
\item \emph{Strong unit axiom}: If $X$ is a pseudo-cofibrant object and $q\colon \tilde\unit\st{\sim}\onto\unit$ is a cofibrant resolution of the tensor unit then $X\otimes q$ and $q\otimes X$ are weak equivalences.

\end{itemize}
\end{defn}

\begin{rem}
If this axiom holds for a certain cofibrant resolution of $\unit$ then it holds for any cofibrant resolution of $\unit$. In particular, it is satisfied when $\unit$ is cofibrant. The following result shows that it also holds in monoidal model categories where tensoring with a cofibrant object preserves weak equivalences. This property is rather common.
\end{rem}

The proof of the following lemma is due to David White.

\begin{lem}\label{davidwhite}
Suppose that, for $q\colon\tilde\unit\st{\sim}\onto\unit$ a cofibrant resolution of the tensor unit, the functors $\tilde\unit\otimes -$ and $-\otimes\tilde\unit$ preserve weak equivalences between pseudo-cofibrant objects.  Then the strong unit axiom holds.
\end{lem}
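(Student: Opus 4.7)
The plan is to reduce the strong unit axiom to the ordinary unit axiom by applying a cofibrant replacement of $X$. Let $X$ be a pseudo-cofibrant object, and choose a cofibrant replacement $p\colon \tilde X\st{\sim}\onto X$ obtained by factoring $\varnothing\to X$ as a cofibration followed by a trivial fibration; in particular $\tilde X$ is cofibrant.

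The key diagram is the naturality square for the natural transformation $-\otimes q$, namely
$$\xymatrix{
\tilde X\otimes\tilde\unit \ar[r]^-{\tilde X\otimes q}\ar[d]_-{p\otimes\tilde\unit} & \tilde X\ar[d]^{p}\\
X\otimes\tilde\unit \ar[r]_-{X\otimes q} & X.
}$$
I plan to show that three of the four sides are weak equivalences, so two-out-of-three gives the remaining one. The top horizontal is a weak equivalence by the ordinary unit axiom applied to the cofibrant object $\tilde X$. The right vertical is a weak equivalence because $p$ is a trivial fibration. For the left vertical, the map $p\colon \tilde X\to X$ is a weak equivalence between pseudo-cofibrant objects ($\tilde X$ is cofibrant, hence pseudo-cofibrant by Remark~\ref{ejemplos}, and $X$ is pseudo-cofibrant by assumption), so the hypothesis that $-\otimes\tilde\unit$ preserves weak equivalences between pseudo-cofibrant objects yields that $p\otimes\tilde\unit$ is a weak equivalence. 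Two-out-of-three applied to the commutative square then gives that $X\otimes q$ is a weak equivalence.

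The argument for $q\otimes X$ is strictly symmetric, using the analogous naturality square with the $\tilde\unit$ factor on the left and invoking the hypothesis on $\tilde\unit\otimes -$ instead. I do not foresee any real obstacle; the only point deserving attention is recognising that the left vertical map of the square is exactly $-\otimes\tilde\unit$ applied to a weak equivalence between pseudo-cofibrant objects, which is what triggers the hypothesis. The whole reduction is therefore formal once the right cofibrant replacement is in place.
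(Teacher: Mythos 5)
Your proof is correct and is essentially the paper's own argument: the same naturality square comparing the cofibrant replacement $p\colon\tilde X\st{\sim}\onto X$ with $q$, with three sides identified as weak equivalences (unit axiom for the cofibrant $\tilde X$, the hypothesis applied to $p$ tensored with $\tilde\unit$, and $p$ itself), followed by two-out-of-three. The only difference is a cosmetic mirroring of which side of the square carries $q$ versus $p\otimes\tilde\unit$, which changes nothing.
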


\begin{proof}
Let $X$ be a pseudo-cofibrant object. Take 
a cofibrant resolution $p\colon \tilde X\st{\sim}\onto X$. Consider the following commutative diagram,
$$\xymatrix{\tilde\unit\otimes\tilde X\ar[r]^{\tilde\unit \otimes p}_{\sim}\ar[d]^{\sim}_{q\otimes\tilde X}&\tilde\unit\otimes X\ar[d]^{q\otimes X}\\
\tilde X\ar@{->>}[r]^{p}_{\sim}& X}$$
Here $q\otimes\tilde X$ is a weak equivalence by the unit axiom, and $\tilde\unit\otimes p$ is a weak equivalence by hypothesis. Hence $q\otimes X$ is a weak equivalence by the 2-out-of-3 axiom. One can similarly check that $X\otimes q$ is a weak equivalence.
\end{proof}

The converse  is also true, even something stronger holds.

\begin{lem}\label{chari}
If the strong unit axiom holds, a morphism between pseudo-cofibrant objects $f\colon U\r V$ is a weak equivalence if and only if $f\otimes\tilde\unit$ is a weak equivalence for some cofibrant replacement $\tilde\unit$ of the tensor unit. The same is true replacing $f\otimes\tilde\unit$ with $\tilde\unit\otimes f$.
\end{lem}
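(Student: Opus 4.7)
The plan is to set up a naturality square for the unit cofibrant resolution $q\colon\tilde\unit\st{\sim}\onto\unit$ tensored with $f$, and then deduce the equivalence of conditions from the strong unit axiom and the 2-out-of-3 property.

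More concretely, I would begin by contemplating the commutative diagram
$$\xymatrix{U\otimes\tilde\unit\ar[r]^-{f\otimes\tilde\unit}\ar[d]_{U\otimes q}&V\otimes\tilde\unit\ar[d]^{V\otimes q}\\
U\otimes\unit\ar[r]_-{f\otimes\unit}&V\otimes\unit}$$
obtained from the naturality of $\otimes$. Since $U$ and $V$ are pseudo-cofibrant, the strong unit axiom applies and tells us that both vertical arrows $U\otimes q$ and $V\otimes q$ are weak equivalences. The bottom horizontal arrow $f\otimes\unit$ is, up to the unit isomorphisms $U\otimes\unit\cong U$ and $V\otimes\unit\cong V$, simply $f$ itself.

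By the 2-out-of-3 axiom for weak equivalences, the top map $f\otimes\tilde\unit$ is a weak equivalence if and only if the bottom map $f\otimes\unit$ is a weak equivalence, which in turn holds if and only if $f$ is a weak equivalence. This gives both implications. The argument for $\tilde\unit\otimes f$ is formally identical, using the symmetric naturality square with $q\otimes U$ and $q\otimes V$ as vertical maps, which are again weak equivalences by the strong unit axiom. There is no real obstacle here; the statement is essentially a packaging of the strong unit axiom together with 2-out-of-3.
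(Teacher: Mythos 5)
Your argument is correct and is essentially identical to the paper's proof: the same naturality square for $q\otimes-$ (equivalently $-\otimes q$) with vertical arrows weak equivalences by the strong unit axiom, followed by 2-out-of-3. Nothing to add.
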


\begin{proof}
Let $q\colon\tilde\unit\st{\sim}\onto\unit$ be a cofibrant resolution of the tensor unit. Consider the following commutative diagram
$$\xymatrix{U\otimes \tilde\unit\ar[r]^{f\otimes \tilde\unit}\ar[d]_{U\otimes q}^\sim&V\otimes \tilde\unit\ar[d]^{V\otimes q}_\sim\\
 U\ar[r]^{f}& V}$$
The vertical morphisms are weak equivalences by the strong unit axiom. Hence, by the 2-out-of-3 axiom,  $f$ is a weak equivalence if and only if $f\otimes\tilde\unit$ is. Similarly for $\tilde\unit\otimes f$.
\end{proof}

\begin{lem}\label{pseudowe}
Let $\C{C}$ be a monoidal model category satisfying the strong unit axiom.
If $f\colon U\st{\sim}\r V$ is a weak equivalence with pseudo-cofibrant source and target and $X$ is a pseudo-cofibrant object, then $f\otimes X$ and $X\otimes f$ are weak equivalences.
\end{lem}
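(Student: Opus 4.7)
My plan is to reduce to a weak equivalence between cofibrant objects by tensoring with a cofibrant replacement $q\colon\tilde\unit\st{\sim}\onto\unit$ of the tensor unit, and then invoke Ken Brown's lemma together with the characterization of weak equivalences supplied by Lemma \ref{chari}.

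First, note that $f\otimes X\colon U\otimes X\r V\otimes X$ has pseudo-cofibrant source and target, by Remark \ref{ejemplos} (4) (tensor products of pseudo-cofibrant objects are pseudo-cofibrant). Since the strong unit axiom is in force, Lemma \ref{chari} reduces the problem to showing that $\tilde\unit\otimes(f\otimes X)$ is a weak equivalence. Using the associator, this morphism is naturally isomorphic to $(\tilde\unit\otimes f)\otimes X$, so it suffices to prove the latter is a weak equivalence.

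Now I handle the morphism $\tilde\unit\otimes f\colon \tilde\unit\otimes U\r\tilde\unit\otimes V$. Its source and target are cofibrant by Lemma \ref{soncof}, since $\tilde\unit$ is cofibrant and $U, V$ are pseudo-cofibrant. Moreover, applying Lemma \ref{chari} directly to $f$ itself (which has pseudo-cofibrant source and target by hypothesis), we see that $\tilde\unit\otimes f$ is a weak equivalence because $f$ is. Hence $\tilde\unit\otimes f$ is a weak equivalence between cofibrant objects.

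Finally, since $X$ is pseudo-cofibrant, the functor $-\otimes X$ is a left Quillen functor (Remark \ref{ejemplos}), so by Ken Brown's lemma it preserves weak equivalences between cofibrant objects. Thus $(\tilde\unit\otimes f)\otimes X$ is a weak equivalence, which via the associator gives that $\tilde\unit\otimes(f\otimes X)$ is a weak equivalence, and then Lemma \ref{chari} yields that $f\otimes X$ is itself a weak equivalence. The argument for $X\otimes f$ is identical, tensoring on the right by $\tilde\unit$ instead. There is no real obstacle here — the only point to be careful about is that the paper works with not-necessarily-symmetric monoidal model categories, so associativity (rather than symmetry) is what lets us move the factor $\tilde\unit$ past $X$.
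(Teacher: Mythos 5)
Your proof is correct and follows essentially the same route as the paper's: reduce to the cofibrant case by tensoring with a cofibrant replacement $\tilde\unit$ of the unit, using Lemma \ref{soncof} and both directions of Lemma \ref{chari}, and then apply Ken Brown's lemma to the left Quillen functor $-\otimes X$ (resp.\ $X\otimes-$) guaranteed by Remark \ref{ejemplos}. The only difference is the cosmetic choice of which side $\tilde\unit$ is placed on, handled by associativity exactly as in the paper.
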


\begin{proof}
If $U$ and $V$ are cofibrant the result follows from the monoid axiom and Ken Brown's lemma  \cite[Lemma 1.1.12]{hmc}. In general, if $\tilde\unit$ is a cofibrant replacement of the tensor unit, $f\otimes\tilde\unit$ is a weak equivalence between cofibrant objects by Lemmas \ref{soncof} and \ref{chari}. Hence $X\otimes (f\otimes\tilde\unit)\cong(X\otimes f)\otimes\tilde\unit$ is a weak equivalence, so $X\otimes f$ is a weak equivalence between pseudo-cofibrant objects again by Lemma \ref{chari}, see Remark \ref{ejemplos} (4). One can similarly check that $f\otimes X$ is a weak equivalence.
\end{proof}

\begin{cor}\label{pseudowe1.2}
In a monoidal model category satisfying the strong unit axiom, weak equivalences between pseudo-cofibrant objects are closed under tensor products.
\end{cor}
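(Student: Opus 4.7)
The plan is to reduce the statement to Lemma \ref{pseudowe} by the standard factorization of a tensor product of two morphisms. Given weak equivalences $f\colon U\xrightarrow{\sim} V$ and $g\colon X\xrightarrow{\sim} Y$ between pseudo-cofibrant objects, I would write
$$f\otimes g = (V\otimes g)\circ (f\otimes X) = (f\otimes Y)\circ (U\otimes g),$$
and argue that each factor is a weak equivalence by applying Lemma \ref{pseudowe} once.

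More precisely, since $U$ is pseudo-cofibrant and $g$ is a weak equivalence between pseudo-cofibrant objects, Lemma \ref{pseudowe} gives that $U\otimes g\colon U\otimes X\to U\otimes Y$ is a weak equivalence. Next, $Y$ is pseudo-cofibrant and $f$ is a weak equivalence between pseudo-cofibrant objects, so Lemma \ref{pseudowe} gives that $f\otimes Y\colon U\otimes Y\to V\otimes Y$ is a weak equivalence. The composite is $f\otimes g$, which is therefore a weak equivalence by the 2-out-of-3 axiom.

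No real obstacle is expected, since the corollary is essentially a formal consequence of Lemma \ref{pseudowe}; the only point to keep in mind is that the intermediate object $U\otimes Y$ is indeed pseudo-cofibrant (needed implicitly if one wanted to iterate), which follows from Remark \ref{ejemplos}(4). There is nothing more subtle to handle.
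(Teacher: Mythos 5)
Your proof is correct and is exactly the intended argument: the paper states this as an immediate corollary of Lemma \ref{pseudowe}, and your factorization $f\otimes g=(f\otimes Y)\circ(U\otimes g)$ with two applications of that lemma plus the 2-out-of-3 axiom is the standard way to see it. The only cosmetic addition would be to note that the source $U\otimes X$ and target $V\otimes Y$ are themselves pseudo-cofibrant by Remark \ref{ejemplos} (4), so the class is genuinely closed under $\otimes$.
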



Lemma \ref{pseudowe} is a typical example of the following situation. We want to show that a certain morphism $\phi$ is a weak equivalence if some objects are pseudo-cofibrant, and the result happens to be known if those objects are honestly cofibrant. 
The pseudo-cofibrancy hypotheses and some of the previous results and remarks show that $\phi$ has pseudo-cofibrant source and target. We tensor everything with a cofibrant replacement $\tilde \unit$ of the tensor unit, to turn pseudo-cofibrant  objects into cofibrant objects by Lemma \ref{soncof}. The `only if' part of Lemma \ref{chari} shows that we are in the same situation than at the beginning, but now objects are cofibrant. Then we apply the known result to conclude that $\phi\otimes\tilde\unit$ or $\tilde\unit\otimes\phi$ is a weak equivalence, and we use the `if' part of Lemma \ref{chari}  to deduce that $\phi$ is indeed a weak equivalence. 

This technique can be used to prove the rest of the results in this section. Therefore we leave proofs as an excercise for the reader and content ourselves with indicating why the corresponding result is true when objects are cofibrant.

\begin{lem}\label{pseudowe1.5}
If $S$ is a set and $f_{s}$ are weak equivalences between pseudo-cofibrant objects in a monoidal model category satisfying the strong unit axiom, $s\in S$, then $\coprod_{s\in S}f_{s}$ is a weak equivalence between pseudo-cofibrant objects.
\end{lem}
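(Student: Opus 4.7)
I would follow exactly the recipe that the author outlines in the paragraph preceding the lemma: reduce to the known cofibrant case by tensoring with a cofibrant replacement $\tilde\unit \xrightarrow{\sim} \unit$ of the tensor unit.

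First, I would check the easy half of the statement: each $f_s$ has pseudo-cofibrant source and target, and pseudo-cofibrant objects are closed under coproducts by Remark \ref{ejemplos}~(3), so the source and target of $\coprod_{s\in S} f_s$ are pseudo-cofibrant. By Lemma~\ref{chari}, to show that $\coprod_{s\in S} f_s$ is a weak equivalence it suffices to show that $(\coprod_{s\in S} f_s)\otimes \tilde\unit$ is a weak equivalence. Since $\C{C}$ is left (and right) closed, the functor $-\otimes\tilde\unit$ preserves colimits, so this morphism is canonically identified with $\coprod_{s\in S}(f_s\otimes \tilde\unit)$.

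Next, I would verify that each $f_s\otimes \tilde\unit$ is a weak equivalence between cofibrant objects. It is a weak equivalence by the `only if' direction of Lemma~\ref{chari}, applied to $f_s$. Its source and target are cofibrant by Lemma~\ref{soncof}, since $\tilde\unit$ is cofibrant and the source and target of $f_s$ are pseudo-cofibrant.

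Finally I would invoke the standard fact that in a cofibrantly generated model category the coproduct of a family of weak equivalences between cofibrant objects is again a weak equivalence; this is Ken Brown's lemma applied to the left Quillen functor $\coprod_{s\in S}\colon \prod_{s\in S}\C{C}\to\C{C}$, where the product carries the product model structure. This gives that $\coprod_{s\in S}(f_s\otimes \tilde\unit)$ is a weak equivalence, and then the `if' direction of Lemma~\ref{chari} finishes the argument. The only mildly delicate point is ensuring that coproducts of weak equivalences between cofibrant objects are weak equivalences, but this is standard and built into the ambient cofibrantly generated setup assumed throughout the paper.
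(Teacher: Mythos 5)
Your proof is correct and is essentially the argument the paper intends: the lemma is left as an exercise, with the stated technique being exactly your reduction via Lemma \ref{chari} and Lemma \ref{soncof} to the cofibrant case, which the paper handles by \cite[Proposition 7.2.5]{hirschhorn} together with Ken Brown's lemma — the same standard fact you invoke by viewing $\coprod_{s\in S}$ as a left Quillen functor from the product model structure. Your observation that $-\otimes\tilde\unit$ preserves coproducts (being a left adjoint in a biclosed monoidal category) correctly justifies the identification $\bigl(\coprod_{s}f_{s}\bigr)\otimes\tilde\unit\cong\coprod_{s}(f_{s}\otimes\tilde\unit)$, so no gap remains.
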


The result for cofibrant objects follows from \cite[Proposition 7.2.5]{hirschhorn} and Ken Brown's lemma  \cite[Lemma 1.1.12]{hmc}.


The following result is a pseudo-cofibrant version of the cube lemma \cite[Lemma 5.2.6]{hmc}.

\begin{lem}[Cube lemma]\label{cube}
Let $\C{C}$ be a monoidal model category satisfying the strong unit axiom. Consider a commutative diagram as follows
$$\xymatrix{Y\ar[d]^\sim&X\ar[l]\ar@{ >->}[r]\ar[d]^\sim&Z\ar[d]^\sim\\
Y'&X'\ar[l]\ar@{ >->}[r]&Z'}$$
If all objects are pseudo-cofibrant then the induced morphism $\phi\colon Y\cup_XZ\r Y'\cup_{X'}Z'$ is a weak equivalence.
\end{lem}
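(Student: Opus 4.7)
The plan is to reduce to the classical cube lemma \cite[Lemma 5.2.6]{hmc} using the ``tensor with $\tilde\unit$'' technique described just after Lemma \ref{pseudowe}. Fix once and for all a cofibrant resolution $q\colon\tilde\unit\st{\sim}\onto\unit$ of the tensor unit.

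First, I would verify that both the source and target of $\phi$ are pseudo-cofibrant. Since $X\into Z$ is a cofibration and $Y$ is pseudo-cofibrant, Corollary \ref{cascajo2} applied to the span $Y\leftarrow X\into Z$ yields pseudo-cofibrancy of $Y\cup_X Z$; the same reasoning gives pseudo-cofibrancy of $Y'\cup_{X'}Z'$. In particular $\phi$ is a morphism between pseudo-cofibrant objects, which is the precise setting in which Lemma \ref{chari} is applicable.

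Second, I would tensor the whole cube with $\tilde\unit$. Since $\tilde\unit$ is cofibrant (hence pseudo-cofibrant), Lemma \ref{soncof} turns all six objects of the diagram into cofibrant ones, while $-\otimes\tilde\unit$, being a left Quillen functor (Remark \ref{ejemplos}), preserves the two horizontal cofibrations. The three vertical weak equivalences also survive: each of them is a weak equivalence between pseudo-cofibrant objects, and Lemma \ref{pseudowe} precisely guarantees that tensoring such a morphism with the pseudo-cofibrant object $\tilde\unit$ yields a weak equivalence. The resulting diagram now satisfies all hypotheses of the classical cube lemma \cite[Lemma 5.2.6]{hmc}, which therefore produces a weak equivalence on the induced morphism of pushouts. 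Because $-\otimes\tilde\unit$ commutes with colimits, this induced morphism is canonically identified with $\phi\otimes\tilde\unit$.

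Finally, $\phi$ has pseudo-cofibrant source and target (step one) and its tensor with $\tilde\unit$ is a weak equivalence (step two), so Lemma \ref{chari} concludes that $\phi$ itself is a weak equivalence. The only delicate points in the argument are bookkeeping: checking that tensoring with $\tilde\unit$ simultaneously preserves all three structural ingredients of the cube (cofibrancy of objects, cofibrancy of the two horizontal maps, weak equivalence of the three vertical maps) and that the comparison map between the tensored pushouts is genuinely $\phi\otimes\tilde\unit$. None of these presents a real obstacle, since each is covered by an explicit earlier result in this appendix, so the whole proof is essentially a mechanical application of the pseudo-cofibrant-to-cofibrant reduction scheme set up before the statement.
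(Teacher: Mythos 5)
Your proposal is correct and is exactly the argument the paper intends: the author leaves this proof as an exercise, describing precisely the reduction scheme you use — establish pseudo-cofibrancy of the pushouts (Corollary \ref{cascajo2}), tensor the cube with $\tilde\unit$ so Lemma \ref{soncof}, the push-out product axiom and Lemma \ref{pseudowe} put you in the hypotheses of the classical cube lemma \cite[Lemma 5.2.6]{hmc}, and conclude with Lemma \ref{chari}. All the lemmas you invoke are applied within their stated hypotheses (in particular the strong unit axiom), so there is nothing to add.
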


The following lemma allows to weaken cofibrancy hypotheses in some inductive proofs, where we need to show that a certain natural morphism is a weak equivalence when evaluated at cofibrant objects.

\begin{lem}\label{colimit}
Let $\C{C}$ be a monoidal model category satisfying the strong unit axiom.
Consider an ordinal $\alpha$, two continuous functors $F,G\colon\alpha\r\C{C}$ and a natural transformation $\tau\colon F\r G$. Suppose that $F(0)$ and $G(0)$ are pseudo-cofibrant and, for $\beta<\alpha$, $\tau(\beta)$ is a weak equivalence and the morphisms $F(\beta)\r F(\beta+1)$ and $G(\beta)\r G(\beta+1)$ are cofibrations.
Then $\colim_{\beta<\alpha}\tau(\beta)$ is a  weak equivalence between pseudo-cofibrant objects.
\end{lem}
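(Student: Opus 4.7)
The plan is to reduce to the already-known cofibrant case by the technique described just before Lemma \ref{pseudowe1.5}: tensor everything with a cofibrant resolution $q\colon\tilde\unit\st{\sim}\onto\unit$ of the tensor unit, use the standard version of the statement for cofibrant objects, and then descend back using Lemma \ref{chari}.

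First I would establish the pseudo-cofibrancy claim. I would argue by transfinite induction that $F(\beta)$ and $G(\beta)$ are pseudo-cofibrant for every $\beta\leq \alpha$. The base case is hypothesis. At a successor $\beta+1$, the bonding map $F(\beta)\into F(\beta+1)$ is a cofibration by assumption, so Lemma \ref{cascajo} applies. At a limit ordinal $\beta$, continuity of $F$ gives $F(\beta)=\colim_{\gamma<\beta}F(\gamma)$, so $F(0)\r F(\beta)$ is a transfinite composition of cofibrations, hence itself a cofibration; again Lemma \ref{cascajo} applies. The same argument (taking $\beta=\alpha$) shows $\colim F$ and $\colim G$ are pseudo-cofibrant, giving the source/target half of the conclusion.

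Next I would reduce to the cofibrant case. Since $\C{C}$ is biclosed monoidal, the functor $-\otimes\tilde\unit$ is a left adjoint, so it preserves colimits; in particular it takes the diagrams $F$ and $G$ to continuous diagrams $F\otimes\tilde\unit$ and $G\otimes\tilde\unit$. Now $F(\beta)\otimes\tilde\unit$ and $G(\beta)\otimes\tilde\unit$ are \emph{cofibrant} by Lemma \ref{soncof}, the bonding maps $F(\beta)\otimes\tilde\unit\r F(\beta+1)\otimes\tilde\unit$ and $G(\beta)\otimes\tilde\unit\r G(\beta+1)\otimes\tilde\unit$ are cofibrations (since $\tilde\unit$ is cofibrant, hence pseudo-cofibrant by Remark \ref{ejemplos}(1)), and each $\tau(\beta)\otimes\tilde\unit$ is a weak equivalence by Lemma \ref{pseudowe} applied to the weak equivalence $\tau(\beta)$ between pseudo-cofibrant objects. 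Therefore the classical result \cite[Proposition 15.10.12 (1)]{hirschhorn} applies and shows that
$$\colim_{\beta<\alpha}\bigl(\tau(\beta)\otimes\tilde\unit\bigr)\colon \colim_{\beta<\alpha}F(\beta)\otimes\tilde\unit\To \colim_{\beta<\alpha}G(\beta)\otimes\tilde\unit$$
is a weak equivalence.

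Finally I would descend. Since $-\otimes\tilde\unit$ commutes with colimits, this last map is canonically identified with $(\colim_{\beta<\alpha}\tau(\beta))\otimes\tilde\unit$. Having already shown that $\colim F$ and $\colim G$ are pseudo-cofibrant, the `if' direction of Lemma \ref{chari} then yields that $\colim_{\beta<\alpha}\tau(\beta)$ itself is a weak equivalence, completing the proof. I do not foresee a genuine obstacle; the only point requiring care is the transfinite induction establishing pseudo-cofibrancy at limit stages, which is what legitimises the use of $-\otimes\tilde\unit$ and of Lemma \ref{chari} at the end.
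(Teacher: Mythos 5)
Your proof is correct and is essentially the paper's intended argument: the paper leaves this as an exercise, indicating precisely the reduction you carry out (tensor with $\tilde\unit$, invoke the cofibrant case, descend via Lemma \ref{chari}), together with the observation that in the cofibrant case the diagrams are Reedy cofibrant over the direct category $\alpha$. The only cosmetic difference is that you quote \cite[Proposition 15.10.12 (1)]{hirschhorn} for the cofibrant case where the paper points to \cite[Corollary 5.1.6]{hmc} plus Ken Brown's lemma; these give the same statement, so nothing is missing.
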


If $F(0)$ and $G(0)$ are cofibrant, the result follows from \cite[Corollary 5.1.6]{hmc} and Ken Brown's lemma  \cite[Lemma 1.1.12]{hmc}, since in this case $F$ and $G$ are cofibrant in the Reedy model category $\C{M}^\alpha$.

%

\section{$\unit$-cofibrant objects}\label{ico}

In the previous appendix, we proved that pseudo-cofibrant objects share many properties with cofibrant objects. However, there is an important property which is not shared: left Quillen functors preserve cofibrant objects, but they need not preserve pseudo-cofibrant objects. This property of cofibrant objects is constantly (often implicitly) used in the proofs of our results on change of base category. In general, we do not know how all pseudo-cofibrant objects look like in a monoidal model category. Therefore, it is not reasonable that we restrict to work with Quillen pairs whose left adjoint preserves pseudo-cofibrant objects. We will rather work with left Quillen functors which send a smaller class of pseudo-cofibrant objects, easier to handle with and enough for our purposes, to pseudo-cofibrant objects.

\begin{defn}
An object $X$ in a monoidal model category $\C{C}$ is \emph{$\unit$-cofibrant} if there exists a cofibration $\unit\into X$.
\end{defn}

\begin{rem}
Notice that $\unit$-cofibrant objects are pseudo-cofibrant by Remark \ref{ejemplos} (2) and Lemma \ref{cascajo}. If $X\into Y$ is a cofibration and $X$ is $\unit$-cofibrant then so is $Y$. If $\unit$ is cofibrant, $\unit$-cofibrant objects are cofibrant.
\end{rem}

The following lemma admits the same proof as Corollary \ref{cascajo2}.

\begin{lem}\label{cascajo2.5}
Given morphisms $Y\l X\into Z$, if $Y$ is $\unit$-cofibrant then so is $Y\cup_{X}Z$.
\end{lem}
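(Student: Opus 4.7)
The plan is to imitate exactly the proof of Corollary \ref{cascajo2}, replacing the appeal to Lemma \ref{cascajo} by the analogous (and easier) observation for $\unit$-cofibrant objects recorded in the remark following the definition: \emph{if $X\into Y$ is a cofibration and $X$ is $\unit$-cofibrant, then $Y$ is $\unit$-cofibrant}. This observation is immediate, because composing a cofibration $\unit\into X$ with the cofibration $X\into Y$ yields a cofibration $\unit\into Y$.

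Granting this, the proof of Lemma \ref{cascajo2.5} reduces to a one-line argument. First I would observe that, since cofibrations are stable under push-out in any model category, the canonical morphism $Y\into Y\cup_{X}Z$ obtained as the push-out of the cofibration $X\into Z$ along the map $X\r Y$ is itself a cofibration. Then, since $Y$ is $\unit$-cofibrant by hypothesis, applying the observation above to this cofibration $Y\into Y\cup_{X}Z$ immediately shows that $Y\cup_{X}Z$ is $\unit$-cofibrant as well.

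There is no real obstacle: the argument is formally identical to that of Corollary \ref{cascajo2}, and the only substantive input is the stability of $\unit$-cofibrancy under cofibrations, which is tautological from the definition. Consequently the proof can be written in a single sentence, exactly parallel to the pseudo-cofibrant version.
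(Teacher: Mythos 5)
Your proof is correct and is exactly the paper's argument: the paper proves this lemma "as Corollary \ref{cascajo2}", i.e.\ by noting that $Y\into Y\cup_{X}Z$ is a cofibration (push-out of the cofibration $X\into Z$) and then using that $\unit$-cofibrancy passes along cofibrations, which as you say is immediate by composing $\unit\into Y\into Y\cup_{X}Z$.
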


\begin{lem}\label{unos}
The tensor product of $\unit$-cofibrant objects is $\unit$-cofibrant. 
\end{lem}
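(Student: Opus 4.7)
The plan is to combine the pseudo-cofibrancy of $\unit$-cofibrant objects (noted in the remark just before the lemma) with the closure of cofibrations under composition. Concretely, suppose $f\colon\unit\into X$ and $g\colon\unit\into Y$ are cofibrations witnessing the $\unit$-cofibrancy of $X$ and $Y$.

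First I would observe that, since $Y$ is $\unit$-cofibrant it is in particular pseudo-cofibrant. By the definition of a pseudo-cofibrant object, tensoring the cofibration $f\colon\unit\into X$ on the right with $Y$ yields a cofibration $f\otimes Y\colon \unit\otimes Y\into X\otimes Y$. Identifying $\unit\otimes Y\cong Y$ via the unit isomorphism, this gives a cofibration $Y\into X\otimes Y$.

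Then I would compose this with $g\colon\unit\into Y$ to obtain a cofibration $\unit\into Y\into X\otimes Y$, which exhibits $X\otimes Y$ as $\unit$-cofibrant.

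There is no real obstacle: the proof is a one-line application of the pseudo-cofibrancy half of the preceding remark, together with the fact that the composition of two cofibrations is a cofibration. (Alternatively, one could invoke the push-out product axiom to conclude that $f\odot g$ is a cofibration and then glue with the canonical cofibration $\unit\into X\cup_{\unit}Y$ obtained from Lemma \ref{cascajo2.5}, but this is needlessly roundabout.)
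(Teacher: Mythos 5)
Your proof is correct and is essentially the paper's own argument: the paper factors $f\otimes g$ as $(f\otimes X)\circ(\unit\otimes g)$, which is exactly your composite $\unit\into Y\cong\unit\otimes Y\into X\otimes Y$ using the pseudo-cofibrancy of the second factor. Nothing to add.
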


\begin{proof}
Let $f\colon \unit\into U$ and $g\colon \unit\into X$ be cofibrations. The map $f\otimes g\colon \unit\otimes \unit\into U\otimes X$ is a cofibration since it decomposes as $f\otimes g=(f\otimes X)(\unit\otimes g)$, and $\unit\cong\unit\otimes \unit$, so $U\otimes X$ is $\unit$-cofibrant.
\end{proof}

The following corollary admits the same proof as Corollary \ref{soncof2}.

\begin{cor}\label{soncof2.5}
If  $f\colon U\into V$ and $g\colon X\into Y$ are  cofibrations  between $\unit$-cofibrant objects, then the cofibration $f\odot g$ has $\unit$-cofibrant source, and hence target.
\end{cor}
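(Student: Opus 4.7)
The plan is to mimic the proof of Corollary \ref{soncof2} almost verbatim, replacing pseudo-cofibrancy with $\unit$-cofibrancy throughout, and invoking Lemma \ref{cascajo2.5} (the $\unit$-cofibrant version of Corollary \ref{cascajo2}) together with Lemma \ref{unos} in place of Lemma \ref{soncof} style tensor-preservation statements.

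First I would recall the construction of the push-out product $f\odot g$ from diagram \eqref{pushoutproduct}: its source is the push-out $U\otimes Y\cup_{U\otimes X}V\otimes X$ of the span
$$U\otimes Y\;\st{U\otimes g}{\longleftarrow}\;U\otimes X\;\st{f\otimes X}{\longrightarrow}\;V\otimes X.$$
Since $U$, $V$, $X$, $Y$ are all $\unit$-cofibrant by hypothesis, Lemma \ref{unos} implies that each of the three objects $U\otimes X$, $U\otimes Y$, $V\otimes X$ is $\unit$-cofibrant. Moreover, the arrow $U\otimes g\colon U\otimes X\r U\otimes Y$ is a morphism out of an $\unit$-cofibrant object into an $\unit$-cofibrant object, so Lemma \ref{cascajo2.5} applies to the push-out and shows that the source $U\otimes Y\cup_{U\otimes X}V\otimes X$ of $f\odot g$ is $\unit$-cofibrant.

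For the target, one observes that $f\odot g$ is itself a cofibration (by the push-out product axiom) from the $\unit$-cofibrant source just identified to $V\otimes Y$; since a cofibration out of an $\unit$-cofibrant object has $\unit$-cofibrant target (a composition of cofibrations $\unit\into\text{source}\into V\otimes Y$ is a cofibration), this gives the conclusion. Alternatively, $V\otimes Y$ is $\unit$-cofibrant directly by Lemma \ref{unos}.

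There is no real obstacle here; the statement is a purely formal transcription, and the only thing to double-check is that Lemma \ref{cascajo2.5} genuinely applies, i.e.~that $U\otimes g$ is indeed a cofibration so that the push-out inherits $\unit$-cofibrancy via a cofibration from $U\otimes Y$. This follows because $U$ is $\unit$-cofibrant (hence pseudo-cofibrant, by the remark preceding Lemma \ref{cascajo2.5}) and $g$ is a cofibration, so tensoring with $U$ preserves the cofibration $g$.
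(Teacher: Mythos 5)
Your proof is correct and is essentially the paper's own argument, which simply reruns the proof of Corollary \ref{soncof2}, replacing its ingredients by Lemma \ref{unos} and Lemma \ref{cascajo2.5}. One small bookkeeping slip: pushing out the cofibration $U\otimes g$ yields a cofibration out of $V\otimes X$ into the push-out, so that route inherits $\unit$-cofibrancy from $V\otimes X$; to inherit it from $U\otimes Y$ as you state, you should instead check that $f\otimes X$ is a cofibration (true, since $X$ is $\unit$-cofibrant, hence pseudo-cofibrant) --- either way the conclusion holds, since all three corners $U\otimes X$, $U\otimes Y$, $V\otimes X$ of the span are $\unit$-cofibrant by Lemma \ref{unos}.
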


\begin{defn}\label{icofibrantax}
Let $F\colon\C{C}\r\C{D}$ be a functor between monoidal model categories. We consider the following axioms that $F$ may satisfy:
\begin{enumerate}
\item \emph{Pseudo-cofibrant axiom}: The object $F(\unit_{\C{C}})$ is pseudo-cofibrant in $\C{D}$.

\item \emph{$\unit$-cofibrant axiom}: If $p\colon \tilde X\st{\sim}\onto X$ is a cofibrant resolution of an $\unit_{\C{C}}$-cofibrant object $X$ in $\C{C}$, the morphism $F(p)$ is a weak equivalence in $\C{D}$.
\end{enumerate}
We say that a Quillen pair $F\colon\C{C}\rightleftarrows\C{D}\colon G$ satisfies one these axioms if the left adjoint $F$ does.
\end{defn}

\begin{rem}\label{hastio}
The pseudo-cofibrant axiom and the $\unit$-cofibrant axiom obviously hold when $\unit_{\C{C}}$ is cofibrant. The pseudo-cofibrant axiom also holds if $F$ is (co)lax monoidal and the (co)unit of $F$ is an isomorphism, e.g.~if $F$ is strong monoidal. If the $\unit$-cofibrant axiom holds for a certain cofibrant resolution of an $\unit$-cofibrant object $X$, then it holds for any cofibrant resolution of $X$. The $\unit$-cofibrant axiom is obviously satisfied if $F$ preserves weak equivalences.

These axioms may hold even for adjunctions $F\dashv G$ which do not satisfy compatibility properties with the monoidal structures, e.g. $\C C=\C D$ and $F=Y\otimes-$ the tensor product with a pseudo-cofibrant object $Y$ which is neither a monoid nor a comonoid, see Lemma \ref{pseudowe}.
\end{rem}

The following result is the motivation to introduce the pseudo-cofibrant axiom.

\begin{lem}\label{motivo}
If  $F\colon\C{C}\rightleftarrows\C{D}\colon G$ satisfies the pseudo-cofibrant axiom and $X$ is an $\unit_{\C{C}}$-cofibrant object in $\C{C}$ then $F(X)$ is pseudo-cofibrant in $\C{D}$.
\end{lem}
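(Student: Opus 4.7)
The plan is to derive this directly from the pseudo-cofibrant axiom together with Lemma \ref{cascajo}, which states that the target of a cofibration with pseudo-cofibrant source is again pseudo-cofibrant. The argument is essentially a one-line chase, so I do not expect any real obstacle; the only point to verify is that the relevant cofibration in $\C{D}$ is produced by applying $F$ to a cofibration in $\C{C}$.

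Concretely, since $X$ is $\unit_{\C{C}}$-cofibrant, I would begin by choosing, from the definition, a cofibration $i\colon\unit_{\C{C}}\into X$ in $\C{C}$. Next, because $F$ is the left adjoint of a Quillen pair, it preserves cofibrations, and hence $F(i)\colon F(\unit_{\C{C}})\to F(X)$ is a cofibration in $\C{D}$. By the pseudo-cofibrant axiom (Definition \ref{icofibrantax}(1)), the object $F(\unit_{\C{C}})$ is pseudo-cofibrant in $\C{D}$.

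Finally, I would invoke Lemma \ref{cascajo}: any object receiving a cofibration from a pseudo-cofibrant object is itself pseudo-cofibrant. Applied to $F(i)\colon F(\unit_{\C{C}})\into F(X)$, this immediately yields that $F(X)$ is pseudo-cofibrant in $\C{D}$, completing the proof.
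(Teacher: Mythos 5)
Your proof is correct and is exactly the argument the paper gives: apply the left Quillen functor $F$ to a cofibration $\unit_{\C{C}}\into X$, use the pseudo-cofibrant axiom for $F(\unit_{\C{C}})$, and conclude by Lemma \ref{cascajo}. No gaps.
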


\begin{proof}
Since $F$ is a left Quillen functor, it sends a cofibration $\unit_{\C{C}}\into X$ in $\C{C}$ to a cofibration $F(\unit_{\C{C}})\into F(X)$  in $\C{D}$, hence $F(X)$ is pseudo-cofibrant by the pseudo-cofibrant axiom and  Lemma \ref{cascajo}.
\end{proof}

\begin{rem}
As we have previously said, left Quillen functors need not preserve pseudo-cofibrant objects in general. However, there is a specific kind of left Quillen functor which does, and we should record this fact here. 

We say that a Quillen adjunction $F\colon\C{C}\rightleftarrows\C{D}\colon G$, with $\C C$ cofibrantly generated by sets of generating (trivial) cofibrations  with presentable sources, \emph{creates} the model structure on $\C D$ if $G$ not only preserves, but also reflects fibrations and trivial fibrations. In this case, $\C D$ is cofibrantly generated and the image by $F$ of the set of generating (trivial) cofibrations of $\C C$ is a set of generating (trivial) cofibrations of~$\C D$.

Let $F\dashv G$ be a monoidal Quillen adjunction which creates the model structure on $\C D$. Then $F$ preserves pseudo-cofibrant objects. This follows from Lemma \ref{sufise}. Indeed, it is enough to check that $F(X)\otimes_{\C D} F(f)$ is a cofibration for any pseudo-cofibrant object $X$ and any generating cofibration $f$ in $\C C$. Since $F$ is strong, $F(X)\otimes_{\C D} F(f)\cong F(X\otimes_{\C C} f)$. Moreover,  $X\otimes_{\C C} f$ is a cofibration because $X$ is pseudo-cofibrant and $f$ is a cofibration. Hence, $F(X\otimes_{\C C} f)$ is a cofibration since $F$ is a left Quillen functor. 

\end{rem}


\begin{lem}
Let  $F\colon\C{C}\rightleftarrows\C{D}\colon G$ be a Quillen equivalence satisfying the $\unit$-cofibrant  axiom. Consider a morphism $\phi\colon F(X)\r Y$ in $\C D$ where $Y$ is fibrant and $X$ is $\unit_{\C C}$-cofibrant. Then $\phi$ is a weak equivalence if and only if its adjoint $\phi'\colon X\r G(Y)$ is a weak equivalence.
\end{lem}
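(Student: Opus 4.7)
The plan is to reduce to the standard Quillen equivalence characterization for cofibrant sources by replacing $X$ with a cofibrant resolution. The $\unit$-cofibrant axiom is precisely what ensures this reduction works when $X$ itself fails to be cofibrant.

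First, I would choose a cofibrant resolution $p\colon\tilde X\stackrel{\sim}{\twoheadrightarrow} X$ in $\C C$. Since $X$ is $\unit_{\C C}$-cofibrant, the $\unit$-cofibrant axiom (Definition \ref{icofibrantax} (2)) guarantees that $F(p)\colon F(\tilde X)\r F(X)$ is a weak equivalence in $\C D$. Consider then the composite
$$\phi\circ F(p)\colon F(\tilde X)\To Y.$$
By the 2-out-of-3 axiom applied to $\phi$ and $F(p)$, the morphism $\phi$ is a weak equivalence if and only if $\phi\circ F(p)$ is one.

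Next, I identify the adjoint of $\phi\circ F(p)$ under $F\dashv G$. By naturality of the unit $\eta$ of the adjunction, $G(F(p))\circ\eta_{\tilde X}=\eta_{X}\circ p$, so the adjoint of $\phi\circ F(p)$ is
$$G(\phi)\circ G(F(p))\circ\eta_{\tilde X}\;=\;G(\phi)\circ \eta_{X}\circ p\;=\;\phi'\circ p\colon\tilde X\To G(Y).$$
Now $\tilde X$ is cofibrant in $\C C$ and $Y$ is fibrant in $\C D$, so by the defining property of a Quillen equivalence \cite[Proposition 1.3.13]{hmc}, the map $\phi\circ F(p)$ is a weak equivalence in $\C D$ if and only if its adjoint $\phi'\circ p$ is a weak equivalence in $\C C$.

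Finally, since $p$ is itself a weak equivalence, a further application of the 2-out-of-3 axiom shows $\phi'\circ p$ is a weak equivalence if and only if $\phi'$ is. Chaining these three equivalences yields the statement. There is no genuine obstacle here: the content of the argument is entirely carried by the $\unit$-cofibrant axiom, which lets us replace $X$ by a cofibrant resolution without losing the information that $F$ applied to that resolution is still a weak equivalence, thereby placing us in the standard setting where Quillen equivalences detect weak equivalences via adjunction.
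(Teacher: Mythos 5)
Your proposal is correct and follows essentially the same route as the paper's own proof: replace $X$ by a cofibrant resolution $p\colon\tilde X\st{\sim}\onto X$, use the $\unit$-cofibrant axiom plus 2-out-of-3 to reduce to $\phi F(p)$, apply the Quillen-equivalence criterion to the cofibrant source $\tilde X$ and fibrant target $Y$, and finish with 2-out-of-3 on $\phi'p$. The only difference is that you spell out the adjointness computation $(\phi F(p))'=\phi'p$ explicitly, which the paper leaves implicit.
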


\begin{proof}
This would be true if $X$ were cofibrant. Take then a cofibrant resolution $p\colon\tilde X\st{\sim}\onto X$. By the $\unit$-cofibrant and the 2-out-of-3 axioms, $\phi$ is a weak equivalence if and only if $\phi F(p)$ is a weak equivalence. Since $F\dashv G$ is a Quillen equivalence, $\phi F(p)$ is a weak equivalence if and only if its adjoint, which is $\phi'p$, is a weak equivalence. Finally, by the 2-out-of-3 axiom, $\phi'p$ is a weak equivalence if and only if $\phi'$ is a weak equivalence.
\end{proof}

\begin{lem}\label{left}
A left Quillen functor $F$ satisfying the $\unit$-cofibrant axiom preserves weak equivalences between ($\unit$-)cofibrant objects.
\end{lem}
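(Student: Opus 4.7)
The plan is to split into two cases according to the parenthetical. For honestly cofibrant objects, this is Ken Brown's lemma \cite[Lemma 1.1.12]{hmc} applied to $F$, which is a left Quillen functor by hypothesis; the $\unit$-cofibrant axiom is not needed here. The substantial content is therefore the $\unit$-cofibrant case.

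Let $f\colon X\st{\sim}\r Y$ be a weak equivalence between $\unit$-cofibrant objects in $\C C$. I would choose cofibrant resolutions $p\colon \tilde X\st{\sim}\onto X$ and $q\colon \tilde Y\st{\sim}\onto Y$ in $\C C$ and lift $f$ to a morphism $\tilde f\colon \tilde X\r \tilde Y$ with $q\tilde f = fp$; this lift exists because $\tilde X$ is cofibrant and $q$ is a trivial fibration. By the 2-out-of-3 axiom applied to the commutative square, $\tilde f$ is a weak equivalence between cofibrant objects in $\C C$, and therefore $F(\tilde f)$ is a weak equivalence by Ken Brown's lemma as in the first case.

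Now apply $F$ to the square $q\tilde f = fp$ to get $F(q)F(\tilde f) = F(f)F(p)$. By the $\unit$-cofibrant axiom (Definition \ref{icofibrantax} (2)), both $F(p)$ and $F(q)$ are weak equivalences in $\C D$, since $X$ and $Y$ are $\unit$-cofibrant. Combining this with the weak equivalence $F(\tilde f)$ and two applications of the 2-out-of-3 axiom, $F(f)$ is a weak equivalence, which is the desired conclusion.

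No serious obstacle is anticipated: the argument is a standard cofibrant-resolve-and-lift manoeuvre, and the only non-formal ingredient is the $\unit$-cofibrant axiom itself, which is tailor-made to compare $F(X)$ with $F(\tilde X)$ up to weak equivalence. The only mild subtlety is that the lift $\tilde f$ is defined on the nose (not merely up to homotopy), which is precisely what one gets from the lifting property against the trivial fibration $q$; this makes the second commutative square involving $F$ literally commute, so the two-out-of-three applications are unproblematic.
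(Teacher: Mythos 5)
Your proof is correct and follows essentially the same route as the paper: resolve both objects cofibrantly, lift $f$ to a map of resolutions (the paper draws this square without comment, whereas you justify it via lifting against the trivial fibration, which is exactly the implicit construction), apply Ken Brown's lemma to the lifted map, use the $\unit$-cofibrant axiom on the two resolution maps, and conclude by 2-out-of-3. No gaps.
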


\begin{proof}
Let $f\colon U\st{\sim}\r V$ be such a weak equivalence. The result is known if $U$ and $V$ are cofibrant. In general, consier a commutative diagram
$$\xymatrix{
\tilde U\ar[r]^{\tilde f}_{\sim}\ar@{->>}[d]_{\sim}^{p}&\tilde V\ar@{->>}[d]^{\sim}_{q}\\
U\ar[r]^{f}_{\sim}&V}$$
where the vertical arrows are cofibrant resolutions. Take the image by $F$
$$\xymatrix{
F(\tilde U)\ar[r]^{F(\tilde f)}_{\sim}\ar[d]_{\sim}^{F(p)}&F(\tilde V)\ar[d]^{\sim}_{F(q)}\\
F(U)\ar[r]^{F(f)}&F(V)}$$
The vertical arrows are weak equivalences by the $\unit$-cofibrant axiom and $F(\tilde f)$ is a weak equivalence because $\tilde f$ has cofibrant source and target. Hence $F(f)$ is a weak equivalence by the 2-out-of-3 axiom.
\end{proof}

%
%
%

\begin{lem}\label{comult}
Let $F\colon\C{C}\rightleftarrows\C{D}\colon G$ be a weak monoidal Quillen adjunction  satisfying the pseudo-cofibrant and the $\unit$-cofibrant axioms. Suppose $\C{C}$ and $\C{D}$ satisfy the strong  unit axiom. The comultiplication 
$F(U\otimes X)\r F(U)\otimes F(X)$ is a weak equivalence if $U$ and $X$ are ($\unit$-)cofibrant.
\end{lem}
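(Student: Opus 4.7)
The case when both $U$ and $X$ are cofibrant is Definition \ref{weak}(1), so the real content is the $\unit$-cofibrant case. My plan is to reduce it to the cofibrant case by choosing cofibrant resolutions $p_U\colon\tilde U\st{\sim}\onto U$ and $p_X\colon\tilde X\st{\sim}\onto X$ in $\C{C}$, and exploiting the naturality square of the comultiplication,
$$\xymatrix{
F(\tilde U\otimes\tilde X)\ar[r]^-{\sim}\ar[d]_{F(p_U\otimes p_X)}&F(\tilde U)\otimes F(\tilde X)\ar[d]^{F(p_U)\otimes F(p_X)}\\
F(U\otimes X)\ar[r]&F(U)\otimes F(X).
}$$
The top horizontal arrow is a weak equivalence by Definition \ref{weak}(1), since $\tilde U$ and $\tilde X$ are cofibrant. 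By the 2-out-of-3 axiom, it then suffices to prove that both vertical arrows are weak equivalences.

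For the right vertical arrow, Lemma \ref{left} (which is where the $\unit$-cofibrant axiom enters) shows that $F(p_U)$ and $F(p_X)$ are weak equivalences. Their sources $F(\tilde U),F(\tilde X)$ are cofibrant, and their targets $F(U),F(X)$ are pseudo-cofibrant by Lemma \ref{motivo} (which is where the pseudo-cofibrant axiom enters, together with the fact that $U$ and $X$ are $\unit$-cofibrant). Hence these are weak equivalences between pseudo-cofibrant objects in $\C{D}$, and Corollary \ref{pseudowe1.2} (using the strong unit axiom in $\C{D}$) shows that $F(p_U)\otimes F(p_X)$ is a weak equivalence.

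For the left vertical arrow, observe that $p_U\otimes p_X$ is a weak equivalence between pseudo-cofibrant objects in $\C{C}$: its source $\tilde U\otimes\tilde X$ is cofibrant and its target $U\otimes X$ is $\unit$-cofibrant by Lemma \ref{unos}, while it is a weak equivalence by two applications of Lemma \ref{pseudowe} (using the strong unit axiom in $\C{C}$), writing $p_U\otimes p_X=(U\otimes p_X)\circ(p_U\otimes\tilde X)$. Applying Lemma \ref{left} once more, $F(p_U\otimes p_X)$ is a weak equivalence, which completes the argument.

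The main obstacle is essentially bookkeeping: each invocation of Lemma \ref{left} requires the source and target to be cofibrant or $\unit$-cofibrant (not merely pseudo-cofibrant), so one must verify at every step that the relevant objects lie in this smaller class, using Lemma \ref{unos} to propagate $\unit$-cofibrancy through tensor products. Once these hypotheses are in place, the rest is a formal diagram chase combined with the technology developed in Appendix \ref{pco}.
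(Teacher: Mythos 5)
Your proposal is correct and follows essentially the same route as the paper: the same naturality square for the comultiplication against cofibrant resolutions, with the top arrow handled by Definition \ref{weak}(1), the right vertical arrow by Lemma \ref{left}, Lemma \ref{motivo} and Corollary \ref{pseudowe1.2}, and the left vertical arrow by showing $p_U\otimes p_X$ is a weak equivalence with ($\unit$-)cofibrant source and target (Lemmas \ref{soncof}/\ref{unos}) and applying Lemma \ref{left}, then concluding by 2-out-of-3. Your factorization $p_U\otimes p_X=(U\otimes p_X)\circ(p_U\otimes\tilde X)$ via two uses of Lemma \ref{pseudowe} is just the proof of Corollary \ref{pseudowe1.2}, which the paper cites directly.
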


\begin{proof}
Take cofibrant resolutions $p\colon\tilde U\st{\sim}\onto U$ and $q\colon\tilde X\st{\sim}\onto X$. Consider the following commutative diagram,
$$\xymatrix@C=30pt{F(\tilde U\otimes\tilde  X)\ar[d]_{F(p\otimes q)}
\ar[r]^-{\text{comult.}}&
F(\tilde U)\otimes F(\tilde X)\ar[d]^{F(p)\otimes F(q)}\\
F(U\otimes X)\ar[r]^-{\text{comult.}}&F(U)\otimes F(X)}$$
The upper horizontal arrow is a weak equivalence since $F\dashv G$ is a weak monoidal Quillen adjunction and $\tilde U$ and $\tilde X$ are cofibrant. Moreover, $p\otimes q$ is a weak equivalence by Corollary \ref{pseudowe1.2}. The source and target of $p\otimes q$ are ($\unit$-)cofibrant by Lemmas \ref{soncof} and \ref{unos}, hence $F(p\otimes q)$ is a weak equivalence by Lemma \ref{left}. That lemma also shows that $F(p)$ and $F(q)$ are weak equivalences. These weak equivalences have pseudo-cofibrant source and target by Lemma \ref{motivo}, hence $F(p)\otimes F(q)$ is a weak equivalence by  Corollary \ref{pseudowe1.2}. Finally, the bottom horizontal arrow is a weak equivalence by the 2-out-of-3 axiom.
%
%
%
\end{proof}

The following iteration of the previous lemma can be proved along the lines of Lemma \ref{iterated} using Lemma \ref{pseudowe}.

\begin{lem}\label{iterated2}
Let be $F\colon\C{C}\rightleftarrows\C{D}\colon G$  be in the conditions of Lemma \ref{comult}. Given ($\unit$-)cofibrant objects $X_{1},\dots, X_{n}$  in $\C{C}$, $n\geq 1$,  the iterated comultiplication
$$F\left(\bigotimes_{i=1}^{n}X_{i}\right)\To \bigotimes_{i=1}^{n}F(X_{i})$$
is a weak equivalence in $\C{D}$.
\end{lem}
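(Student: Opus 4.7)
My plan is to mimic the proof of Lemma \ref{iterated} by induction on $n$, substituting the pseudo-cofibrant analogues of the two key facts used there (the $n=2$ base case and the monoid axiom) by Lemma \ref{comult} and Lemma \ref{pseudowe}, respectively. The only real work is to verify the cofibrancy hypotheses needed at each step.

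The case $n=1$ is trivial and the case $n=2$ is precisely Lemma \ref{comult}. For the inductive step with $n > 2$, I would factor the iterated comultiplication as
\begin{equation*}
F\!\left(\bigotimes_{i=1}^{n-1}X_{i}\otimes X_{n}\right)\To F\!\left(\bigotimes_{i=1}^{n-1}X_{i}\right)\otimes F(X_{n})\To \bigotimes_{i=1}^{n-1}F(X_{i})\otimes F(X_{n}).
\end{equation*}
To apply Lemma \ref{comult} to the first arrow I need both $\bigotimes_{i=1}^{n-1}X_{i}$ and $X_{n}$ to be ($\unit$-)cofibrant: this holds by Lemma \ref{unos} in the $\unit$-cofibrant case, and by the push-out product axiom in the cofibrant case. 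The second arrow is obtained by tensoring the inductive weak equivalence $F(\bigotimes_{i=1}^{n-1}X_{i})\to\bigotimes_{i=1}^{n-1}F(X_{i})$ with $F(X_{n})$, so I would invoke Lemma \ref{pseudowe}, which requires the source and target of that weak equivalence, and also $F(X_{n})$, to be pseudo-cofibrant.

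The pseudo-cofibrancy checks go as follows. Each $F(X_{i})$ is pseudo-cofibrant: in the $\unit$-cofibrant case this is Lemma \ref{motivo}, while in the cofibrant case $F$ preserves cofibrancy (being left Quillen) and cofibrant objects are pseudo-cofibrant by Remark \ref{ejemplos}(1). Therefore the target $\bigotimes_{i=1}^{n-1}F(X_{i})$ is pseudo-cofibrant by Remark \ref{ejemplos}(4), and so is $F(X_{n})$. For the source, $\bigotimes_{i=1}^{n-1}X_{i}$ is ($\unit$-)cofibrant as above, and then $F(\bigotimes_{i=1}^{n-1}X_{i})$ is pseudo-cofibrant by the same dichotomy (Lemma \ref{motivo} or left Quillen preservation of cofibrancy).

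I do not anticipate a genuine obstacle: the induction is formally identical to that of Lemma \ref{iterated}, and the substitutions (Lemma \ref{comult} for the base case, Lemma \ref{pseudowe} for the ``tensor a weak equivalence with an object'' step) are exactly designed to replace the role played in the original argument by axiom (1) of a weak monoidal Quillen adjunction and the monoid axiom. The main thing to be careful about is simply tracking that all objects involved are pseudo-cofibrant, which is handled uniformly by Lemmas \ref{unos}, \ref{motivo} and Remark \ref{ejemplos}(4).
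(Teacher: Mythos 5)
Your proposal is correct and follows exactly the route the paper intends: the paper only remarks that the lemma ``can be proved along the lines of Lemma \ref{iterated} using Lemma \ref{pseudowe}'', and your induction, with Lemma \ref{comult} as base case and Lemma \ref{pseudowe} (valid since $\C D$ satisfies the strong unit axiom by hypothesis) for the second arrow, is precisely that argument. The pseudo-cofibrancy bookkeeping via Lemmas \ref{unos}, \ref{motivo} and Remark \ref{ejemplos}(4) is the right way to fill in the details the paper leaves implicit.
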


We also generalize Lemma \ref{previo}.

\begin{lem}\label{previo2}
Let $F\colon\C{C}\rightleftarrows\C{D}\colon G$ be in the conditions of Lemma \ref{comult}. Consider cofibrations with ($\unit$-)cofibrant source and target $f_{1},\dots, f_{n}$ and ($\unit$-)cofibrant objects $X_{1},\dots, X_{m}$ in $\C{C}$. The morphism in $\mor{\C{D}}$
$$F\left(\bigodot_{i=1}^{n}f_{i}\otimes\bigotimes_{j=1}^{m}X_{j}\right)\To \bigodot_{i=1}^{n}F(f_{i})\otimes\bigotimes_{j=1}^{m}F(X_{j})$$
defined by the comultiplications of the colax  monoidal functor $F$ is a weak equivalence in $\mor{\C{D}}$.
\end{lem}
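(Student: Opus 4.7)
My plan is to adapt the proof of Lemma \ref{previo}, replacing cofibrancy arguments with the pseudo-cofibrant machinery developed in Appendix \ref{pco}. Write $f_i\colon U_i\into V_i$. Iterating Corollary \ref{soncof2.5} and Lemma \ref{unos} shows that the source and target of $\bigodot_{i=1}^{n} f_i\otimes\bigotimes_{j=1}^{m} X_j$ are $\unit_{\C C}$-cofibrant in $\C C$, hence pseudo-cofibrant in $\C D$ after applying $F$, by Lemma \ref{motivo}. Likewise, $F(U_i), F(V_i), F(X_j)$ are pseudo-cofibrant in $\C D$ by Lemma \ref{motivo}; iterating Corollary \ref{soncof2} and Remark \ref{ejemplos}(4) shows that the source and target of $\bigodot_i F(f_i)\otimes\bigotimes_j F(X_j)$ are also pseudo-cofibrant.

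To verify that the natural transformation is a weak equivalence in $\mor{\C D}$, I would check its components at the source and target vertices of $\dos$ separately. The target component is the iterated comultiplication $F(\bigotimes V_i\otimes\bigotimes X_j)\to \bigotimes F(V_i)\otimes \bigotimes F(X_j)$, which is a weak equivalence by Lemma \ref{iterated2}.

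The source component I would handle by induction on $n$. For $n=1$ the source component is itself the iterated comultiplication $F(U_1\otimes \bigotimes X_j)\to F(U_1)\otimes \bigotimes F(X_j)$, again a weak equivalence by Lemma \ref{iterated2}. For $n\geq 2$, the source of $\bigodot_{i\leq n} f_i \otimes \bigotimes_j X_j$ is the push-out of the span $S_{n-1}\otimes V_n \longleftarrow S_{n-1}\otimes U_n \longrightarrow T_{n-1}\otimes U_n$, where $S_{n-1}$ and $T_{n-1}$ denote the source and target of $\bigodot_{i<n} f_i\otimes \bigotimes_j X_j$. Applying $F$ and comparing with the analogous span on the $\C D$ side, each of the three vertex comparisons is a weak equivalence: the two vertices involving $S_{n-1}$ are covered by the inductive hypothesis (treating $U_n$ and $V_n$ as additional $X$-factors), and the vertex $T_{n-1}\otimes U_n$ gives an iterated comultiplication handled by Lemma \ref{iterated2}. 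All vertices of both spans are pseudo-cofibrant by the bookkeeping from the first paragraph, and the horizontal map $S_{n-1}\otimes U_n \to S_{n-1}\otimes V_n$ (and its $F$-image) is a cofibration, since it is the push-out product of the cofibration $f_n$ with a pseudo-cofibrant object. The cube lemma Lemma \ref{cube} then produces the desired weak equivalence on push-outs.

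The main obstacle I foresee is the bookkeeping needed to certify at each inductive step that every intermediate object is pseudo-cofibrant and every intermediate morphism is a cofibration, all without the luxury of a cofibrant tensor unit; this is precisely where the pseudo-cofibrant and $\unit$-cofibrant axioms, together with the strong unit axiom feeding Lemma \ref{cube}, must be combined carefully. Once the cofibrancy hypotheses for the cube lemma are in place, the induction runs smoothly.
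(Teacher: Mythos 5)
Your argument is essentially correct, but it takes a genuinely different route from the paper. The paper simply reruns the proof of Lemma \ref{previo}: the source of the comparison map is the colimit over $\dos^{n}\setminus\{(1,\dots,1)\}$ of the pointwise iterated comultiplication $\tau$, whose components are weak equivalences by Lemma \ref{iterated2}; the only obstruction is that the two cubical diagrams fail to be Reedy cofibrant at the initial vertex $(0,\dots,0)$, where the objects are merely pseudo-cofibrant (pseudo-cofibrant axiom plus Lemma \ref{motivo}), and this is repaired by the standard trick of Appendix \ref{pco} — tensoring with a cofibrant replacement $\tilde\unit$ of the unit and invoking Lemmas \ref{soncof} and \ref{chari}. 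You instead avoid the Reedy machinery altogether: you realize the source of the $n$-fold push-out product as a push-out of a span, induct on $n$ (with the statement quantified over all $m$, so that $U_n$, $V_n$ can be absorbed as extra tensor factors), and conclude with the pseudo-cofibrant cube lemma, Lemma \ref{cube}, whose hypotheses you verify via Lemma \ref{unos}, Corollaries \ref{soncof2}/\ref{soncof2.5}, Lemma \ref{motivo}, and the fact that tensoring a cofibration with a pseudo-cofibrant object is a cofibration. What the paper's route buys is brevity, since Lemma \ref{previo} has already done the colimit argument and only the initial vertex needs patching; what yours buys is that all the strong-unit-axiom content is encapsulated in Lemma \ref{cube}, at the cost of more bookkeeping. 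Two small points you should make explicit: (i) the identification of the two vertex comparisons involving $S_{n-1}$ with the source component of the lemma for the family $(f_1,\dots,f_{n-1};X_1,\dots,X_m,U_n)$ (resp.\ $V_n$) is a naturality check, using that $F$ and $-\otimes U_n$ preserve the relevant colimits; (ii) since $\C C$ and $\C D$ need not be symmetric, the decomposition placing $f_n$ as the last push-out product factor while the $X_j$ stay attached to $S_{n-1}$ involves a reordering of factors, which is harmless (compare the remark after Lemma \ref{previo}) or can be avoided by taking $f_n\otimes\bigotimes_j X_j$ as the last factor instead.
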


\begin{proof}
The target is a weak equivalence by Lemma \ref{iterated2}. The difficult part, as in the proof of Lemma \ref{previo}, is to show that the source is a weak equivalence.

The the source is the comit of the restriction to $\dos^{n}\setminus\{(1,\st{n}\dots,1)\}$ of the natural transformation
 $$\tau\colon F\left(\bigotimes_{i=1}^{n}f_{i}\otimes\bigotimes_{j=1}^{m}X_{j}\right)\To \bigotimes_{i=1}^{n}F(f_{i})\otimes\bigotimes_{j=1}^{m}F(X_{j})\colon \dos^{n}\To\C{D}$$
defined by the comultiplication of $F$, which consists of weak equivalences by  Lemma \ref{iterated2}.  We cannot directly conclude that the colimit is a weak equivalence since the source and target of $\tau$ are not Reedy cofibrant. The problem is that $\tau(0,\st{n}\dots,0)$ need not have  cofibrant source and target, only pseudo-cofibrant by the pseudo-cofibrant axiom and Lemma \ref{motivo}. This problem can be overcome by applying Lemma \ref{chari} as indicated in the previous section.
\end{proof}

\section{Generalized results for operads}\label{geno}

In this appendix we present our main results on the homotopy theory of operads without cofibrancy hypotheses on tensor units. We rather assume (some of) the weaker axioms introduced in Appendices \ref{pco} and \ref{ico}. The proofs of these results go along the same lines as their analogues, using here the theory developed in the two previous appendices.

Lemma \ref{sequicof} is also true when the operad has a pseudo-cofibrant underlying sequence. 

\begin{lem}\label{sequicof10}
If $\C V$ is a symmetric monoidal model category, $f$ is a cofibration in $\C{V}^{\mathbb{N}}$, and $\mathcal{O}$ is an operad in $\C{V}$ such that $\mathcal{O}(n)$ is pseudo-cofibrant for all $n\geq 0$, then the morphism \eqref{monstruo} is a cofibration in $\C{V}$.
\end{lem}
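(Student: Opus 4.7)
The plan is to mimic the proof of Lemma \ref{sequicof} but replace the appeal to the push-out product axiom's ``preserves cofibrations when tensoring with cofibrant objects'' with the corresponding property of pseudo-cofibrant objects (essentially the defining property), and also replace tensor products of cofibrant objects with tensor products of pseudo-cofibrant objects, which are still pseudo-cofibrant by Remark \ref{ejemplos} (4). Concretely, I would argue factor-by-factor in the coproduct \eqref{monstruo}.

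First, since $f$ is a cofibration in $\C V^{\mathbb N}$, each component $f(\val Tv)\colon U(\val Tv)\r V(\val Tv)$ is either a cofibration in $\C V$ or the identity on $\varnothing$. By the push-out product axiom, for each tree $T$ the iterated push-out product $\bigodot_{v\in I^e(T)}f(\val Tv)$ is a cofibration in $\C V$. Next, the object $\bigotimes_{w\in I^o(T)}\mathcal O(\val Tw)$ is a tensor product of pseudo-cofibrant objects, hence pseudo-cofibrant by Remark \ref{ejemplos} (4) (if $I^o(T)=\varnothing$ this tensor product is the unit $\unit$, which is pseudo-cofibrant by Remark \ref{ejemplos} (2)).

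Now, by the very definition of pseudo-cofibrant object, tensoring the cofibration $\bigodot_{v\in I^e(T)}f(\val Tv)$ with the pseudo-cofibrant object $\bigotimes_{w\in I^o(T)}\mathcal O(\val Tw)$ yields a cofibration
$$\bigodot_{v\in I^{e}(T)}f(\val{T}{v})\otimes\bigotimes_{w\in I^{o}(T)}\mathcal{O}(\val{T}{w})$$
in $\C V$. Finally, a coproduct of cofibrations is a cofibration in any model category, so the coproduct \eqref{monstruo} over isomorphism classes of trees $T$ is a cofibration. No obstacle is anticipated; the whole point of isolating the notion of pseudo-cofibrant object in Appendix \ref{pco} is precisely to make this argument go through verbatim. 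Note that, unlike Lemma \ref{sequicof}, we can no longer conclude that the source and target are cofibrant, since tensoring pseudo-cofibrant objects together only gives back a pseudo-cofibrant object, not a cofibrant one; this is reflected in the weaker conclusion of the statement.
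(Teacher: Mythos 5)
Your proof is correct and is exactly the argument the paper intends: the paper states Lemma \ref{sequicof10} without proof as the evident pseudo-cofibrant analogue of Lemma \ref{sequicof}, and your factor-by-factor reasoning (push-out product axiom for the $f(\val{T}{v})$, the defining property of pseudo-cofibrant objects together with Remark \ref{ejemplos} (2) and (4) for the tensor factor $\bigotimes_{w}\mathcal{O}(\val{T}{w})$, and closure of cofibrations under coproducts) is precisely how that assertion is justified. Your closing observation that one can no longer conclude cofibrancy of source and target correctly accounts for the weaker conclusion compared with Lemma \ref{sequicof}.
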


We derive straightforward generalizations of Corollaries \ref{kisin} and \ref{tech1.5bis}.

\begin{cor}
Let $\C V$ be a symmetric monoidal model category. 
If $\phi\colon\mathcal{O}\into\mathcal{P}$ is a cofibration in $\operad{\C V}$ and $\mathcal{O}(n)$ is pseudo-cofibrant in $\C{V}$ for all $n\geq 0$, then $\phi(n)\colon \mathcal{O}(n)\r \mathcal{P}(n)$ is a cofibration for all $n\geq 0$, in particular $\mathcal{P}(n)$ is pseudo-cofibrant.
\end{cor}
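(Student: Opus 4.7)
The plan is to mimic the proof of Corollary \ref{kisin} with pseudo-cofibrancy replacing cofibrancy throughout, relying on Lemma \ref{sequicof10} in place of Lemma \ref{sequicof}. The first step is to establish the pseudo-cofibrant analogue of Corollary \ref{yi}: in a push-out diagram \eqref{pusho} with $f$ a cofibration in $\C V^{\mathbb N}$ and $\mathcal O(n)$ pseudo-cofibrant for every $n\geq 0$, the morphism $f'(n)\colon \mathcal O(n)\r \mathcal P(n)$ is a cofibration and $\mathcal P(n)$ is pseudo-cofibrant. This is immediate from Remark \ref{pushfree}: $f'(n)$ is a countable composition of push-outs of the morphisms \eqref{monstruo}, each of which is a cofibration by Lemma \ref{sequicof10}; the pseudo-cofibrancy of $\mathcal P(n)$ then follows from Lemma \ref{cascajo} applied to $f'(n)$.

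Next, I would handle the case where $\phi$ is a relative $\mathcal F(I_{\mathbb N})$-cell complex, writing it as a transfinite composition $\mathcal O = \mathcal O_0\r \mathcal O_1\r \cdots \r \mathcal O_\alpha = \mathcal P$ whose successor stages are push-outs of maps in $\mathcal F(I_{\mathbb N})$. A transfinite induction on $s\leq \alpha$ shows that $\mathcal O_s(n)$ is pseudo-cofibrant and that $\mathcal O(n)\into \mathcal O_s(n)$ is a cofibration, for all $n\geq 0$. At successor stages this is exactly the pseudo-cofibrant analogue of Corollary \ref{yi} above, applied with $\mathcal O_{s-1}$ in the role of $\mathcal O$. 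At a limit ordinal $\lambda$, the composite $\mathcal O(n)\r \mathcal O_\lambda(n)$ is the transfinite colimit of cofibrations, hence a cofibration, and the pseudo-cofibrancy of $\mathcal O_\lambda(n)$ then follows from Lemma \ref{cascajo} applied to this cofibration, since $\mathcal O(n)$ is pseudo-cofibrant by hypothesis. Consequently $\phi(n)$, being this transfinite composition at $s=\alpha$, is a cofibration.

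For a general cofibration $\phi$ in $\operad{\C V}$, the usual retract argument reduces to the cellular case: $\phi$ is a retract of a relative $\mathcal F(I_{\mathbb N})$-cell complex, so $\phi(n)$ is a retract of a cofibration in $\C V$, and therefore a cofibration. The final assertion that $\mathcal P(n)$ is pseudo-cofibrant is then a direct application of Lemma \ref{cascajo} to $\phi(n)\colon \mathcal O(n)\into \mathcal P(n)$. There is no real obstacle in the argument; the only point requiring slight care is the limit step of the transfinite induction, where pseudo-cofibrancy must be deduced for the colimit rather than just for the stages already attached, and this is precisely what Lemma \ref{cascajo} provides by propagating pseudo-cofibrancy along the cofibration from the original $\mathcal O(n)$.
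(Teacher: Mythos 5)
Your proof is correct and is exactly the argument the paper intends: the appendix leaves this corollary to the reader as the pseudo-cofibrant analogue of Corollary \ref{kisin}, replacing Lemma \ref{sequicof} by Lemma \ref{sequicof10} and propagating pseudo-cofibrancy along levelwise cofibrations via Lemma \ref{cascajo}. Your explicit transfinite induction (including the limit step) and the retract argument are just the careful spelling-out of that sketch, with no gaps.
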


\begin{cor}\label{tech1.5bis2}
Let $\C V$ be a symmetric monoidal model category. 
If $\mathcal{O}$ is cofibrant in $\operad{\C V}$ then $\mathcal{O}(1)$ is $\unit$-cofibrant and $\mathcal{O}(n)$ is cofibrant in $\C{V}$ for all $n\neq 1$.
\end{cor}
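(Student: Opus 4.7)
The plan is to imitate the proof of Corollary \ref{tech1.5bis}, substituting the preceding generalized corollary (the pseudo-cofibrant version of Corollary \ref{kisin}) for Corollary \ref{kisin} itself. First I would recall that the initial object of $\operad{\C V}$ is the operad $\unit_\circ$ with $\unit_\circ(1)=\unit$ and $\unit_\circ(n)=\varnothing$ for $n\neq 1$; since $\mathcal O$ is cofibrant, the canonical morphism $\phi\colon\unit_\circ\into\mathcal O$ out of the initial object is a cofibration in $\operad{\C V}$.

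The key hypothesis check is that the source $\unit_\circ$ is levelwise pseudo-cofibrant, and this is where the machinery of Appendix \ref{pco} pays off: in arity one we have $\unit$, pseudo-cofibrant by Remark \ref{ejemplos} (2), and in every other arity we have $\varnothing$, which is cofibrant and hence pseudo-cofibrant by Remark \ref{ejemplos} (1). I would then apply the preceding corollary to $\phi$ to deduce that the map $\phi(n)\colon\unit_\circ(n)\into\mathcal O(n)$ is a cofibration in $\C V$ for every $n\geq 0$.

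The conclusion then follows by simply reading off the source of these cofibrations. For $n=1$ the map is a cofibration $\unit\into\mathcal O(1)$, which witnesses that $\mathcal O(1)$ is $\unit$-cofibrant by definition; for $n\neq 1$ the map is $\varnothing\into\mathcal O(n)$, so $\mathcal O(n)$ is cofibrant. I do not anticipate any real obstacle: all the delicate transfinite-induction content has already been absorbed into the preceding corollary, and the only new ingredient is the observation that, even without a cofibrant tensor unit, the initial operad $\unit_\circ$ remains levelwise pseudo-cofibrant, which is exactly the hypothesis required by the pseudo-cofibrant version of \ref{kisin}.
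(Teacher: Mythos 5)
Your proposal is correct and matches the paper's intended argument: the paper derives this corollary exactly as the cofibrant-unit version (Corollary \ref{tech1.5bis}), namely by noting that the initial operad $\unit_\circ$ is levelwise pseudo-cofibrant and applying the pseudo-cofibrant analogue of Corollary \ref{kisin} to the cofibration $\unit_\circ\into\mathcal O$, then reading off the cofibrations $\unit\into\mathcal O(1)$ and $\varnothing\into\mathcal O(n)$ for $n\neq 1$. No gaps.
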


Proposition \ref{paco} holds under weaker hypotheses. 

\begin{prop}\label{pacofinal}
Let $F\colon\C{V}\rightleftarrows\C{W}\colon G$ be a weak symmetric monoidal Quillen adjunction between symmetric monoidal model categories satisfying the strong unit axiom. Suppose that $F\dashv G$ satisfies the pseudo-cofibrant axiom and the $\unit$-cofibrant axiom. If $\mathcal{O}$ is cofibrant in $\operad{\C V}$ then the natural morphism $\chi_ {\mathcal{O}}$ in \eqref{chio} is a weak equivalence in $\C{W}^{\mathbb{N}}$. 
\end{prop}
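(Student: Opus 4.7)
The plan is to mimic the proof of Proposition \ref{paco} step by step, replacing every ingredient that relied on cofibrancy of the tensor units with its pseudo-cofibrant or $\unit$-cofibrant counterpart from Appendices \ref{pco} and \ref{ico}. Concretely: Corollary \ref{tech1.5bis2} will substitute for Corollary \ref{tech1.5bis}, Lemma \ref{sequicof10} for Lemma \ref{sequicof}, Lemma \ref{previo2} for Lemma \ref{previo}, the generalised cube lemma \ref{cube} for the classical one, and Lemma \ref{colimit} for the colimit-of-cofibrations argument.

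First I will handle the base case $\mathcal{O}=\unit_{\circ_{\C V}}$. In arities $n\neq 1$, $\chi_{\unit_{\circ_{\C V}}}$ is an identity on $\varnothing$; in arity $1$ it is the counit $F(\unit_{\C V})\to\unit_{\C W}$. Applying the $\unit$-cofibrant axiom to the (trivially) $\unit$-cofibrant object $\unit_{\C V}$ and choosing a cofibrant resolution $q\colon\tilde\unit_{\C V}\st{\sim}\onto\unit_{\C V}$, the morphism $F(q)$ is a weak equivalence; combined with Definition \ref{weak}(2) and 2-out-of-3, this yields that $\chi_{\unit_{\circ_{\C V}}}(1)$ is a weak equivalence.

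Next I will do the inductive step along a push-out by a free map $\mathcal{F}_{\C V}(f)$ with $f$ a cofibration in $\C V^{\mathbb N}$, assuming $\mathcal O$ cofibrant and $\chi_{\mathcal O}$ a weak equivalence. As in the original proof one reduces to the case in which $U$ is also levelwise ($\unit$-)cofibrant, now invoking Corollary \ref{tech1.5bis2}. Lemma \ref{sequicof10} then presents both $f'$ and $F^{\operatorname{oper}}(f')$ as transfinite composites of cofibrations given by push-outs along \eqref{monstruo2} and \eqref{monstruo3} respectively, and $\chi_{\mathcal P}$ is exhibited as the colimit of maps $\chi_{\mathcal O}^t$ obtained inductively from $\chi_{\mathcal O}^{t-1}$ by pushing out along a comparison square \eqref{monstruo2}$\to$\eqref{monstruo3}. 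The main obstacle is to show that this comparison square is a coproduct of weak equivalences: each summand factors through Lemma \ref{previo2} (whose ($\unit$-)cofibrancy hypotheses are exactly what Corollary \ref{tech1.5bis2} delivers for the sequences underlying $\mathcal O$, $U$, $V$, and $F^{\operatorname{oper}}(\mathcal O)$) followed by $\bigodot F(f(\langle T\rangle_v))\otimes\bigotimes\chi_{\mathcal O}(\langle T\rangle_w)$, which is a weak equivalence between pseudo-cofibrant objects by Lemma \ref{motivo} together with Corollary \ref{pseudowe1.2}. Since every object in the square is pseudo-cofibrant, the pseudo-cofibrant cube lemma \ref{cube} produces $\chi_{\mathcal O}^t$ weak equivalence from $\chi_{\mathcal O}^{t-1}$ weak equivalence.

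Finally I will take the transfinite colimit: all bonding morphisms are cofibrations and all starting objects are pseudo-cofibrant, so Lemma \ref{colimit} yields that $\chi_{\mathcal P}=\colim_t\chi_{\mathcal O}^t$ is a weak equivalence. A standard cellular induction together with the usual retract argument, using that the initial operad has levelwise $\unit$-cofibrant underlying sequence (so Lemmas \ref{cascajo} and \ref{sequicof10} ensure the successive underlying sequences remain pseudo-cofibrant, which is what Lemma \ref{colimit} requires), extends the conclusion to every cofibrant operad $\mathcal O$.
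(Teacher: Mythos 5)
Your proposal is correct and is exactly the proof the paper intends: the paper gives no separate argument for Proposition \ref{pacofinal}, stating only that it follows the proof of Proposition \ref{paco} with the appendix substitutions, and your substitutions (Corollary \ref{tech1.5bis2}, Lemmas \ref{sequicof10}, \ref{previo2}, \ref{motivo}, \ref{cube}, \ref{colimit}, plus the $\unit$-cofibrant axiom in the base case) are the right ones. The only cosmetic slip is calling the initial operad levelwise $\unit$-cofibrant (its components in arities $n\neq 1$ are $\varnothing$, hence cofibrant rather than $\unit$-cofibrant), which is harmless since pseudo-cofibrancy is all the argument needs.
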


\begin{cor}\label{casigualfinal}
Let $F\colon\C{V}\rightleftarrows\C{W}\colon G$ be as in the previous proposition. 
For any operad $\mathcal O$ in $\C V$, there are  natural isomorphisms in $\ho\C W$, $n\geq 0$, 
$$\mathbb{L}F(\mathcal O(n))\cong\mathbb{L}F^{\operatorname{oper}}(\mathcal O)(n).$$
\end{cor}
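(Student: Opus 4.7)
The plan is to derive the corollary from Proposition~\ref{pacofinal} by carefully identifying the derived functors on both sides via a single cofibrant replacement in $\operad{\C V}$. First I would choose a functorial cofibrant replacement $\pi\colon\widetilde{\mathcal O}\xrightarrow{\sim}\mathcal O$ in $\operad{\C V}$. Since fibrations and weak equivalences in $\operad{\C V}$ are defined levelwise, $\pi(n)\colon\widetilde{\mathcal O}(n)\xrightarrow{\sim}\mathcal O(n)$ is a weak equivalence in $\C V$ for every $n\geq 0$. By Corollary~\ref{tech1.5bis2}, the component $\widetilde{\mathcal O}(n)$ is cofibrant for $n\neq 1$ and only $\unit$-cofibrant for $n=1$.

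Next I would compute $\mathbb{L}F(\mathcal O(n))$ in two cases. For $n\neq 1$, the map $\pi(n)$ is already a cofibrant resolution of $\mathcal O(n)$, hence $\mathbb{L}F(\mathcal O(n))\cong F(\widetilde{\mathcal O}(n))$ in $\ho\C W$. For $n=1$, choose a cofibrant resolution $q\colon Y\xrightarrow{\sim}\widetilde{\mathcal O}(1)$ in $\C V$; then $\pi(1)\circ q\colon Y\xrightarrow{\sim}\mathcal O(1)$ is a cofibrant resolution, so $\mathbb{L}F(\mathcal O(1))\cong F(Y)$ in $\ho\C W$. Here the $\unit$-cofibrant axiom, applied to the $\unit$-cofibrant object $\widetilde{\mathcal O}(1)$, ensures that $F(q)$ is a weak equivalence, so $F(Y)\cong F(\widetilde{\mathcal O}(1))$ in $\ho\C W$ as well. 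Thus in all arities $\mathbb{L}F(\mathcal O(n))\cong F(\widetilde{\mathcal O}(n))$ naturally.

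On the other side, since $\widetilde{\mathcal O}$ is cofibrant in $\operad{\C V}$ and $F^{\operatorname{oper}}\dashv G$ is a Quillen pair by Proposition~\ref{qp}, one has $\mathbb{L}F^{\operatorname{oper}}(\mathcal O)\cong F^{\operatorname{oper}}(\widetilde{\mathcal O})$ in $\ho\operad{\C W}$, whence evaluating in arity $n$ gives $\mathbb{L}F^{\operatorname{oper}}(\mathcal O)(n)\cong F^{\operatorname{oper}}(\widetilde{\mathcal O})(n)$ in $\ho\C W$. Now Proposition~\ref{pacofinal} provides a weak equivalence $\chi_{\widetilde{\mathcal O}}\colon F(\widetilde{\mathcal O})\xrightarrow{\sim}F^{\operatorname{oper}}(\widetilde{\mathcal O})$ in $\C W^{\mathbb N}$, i.e.\ a levelwise weak equivalence $\chi_{\widetilde{\mathcal O}}(n)\colon F(\widetilde{\mathcal O}(n))\xrightarrow{\sim}F^{\operatorname{oper}}(\widetilde{\mathcal O})(n)$ in $\C W$. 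Composing the three identifications yields the desired isomorphism
$$\mathbb{L}F(\mathcal O(n))\cong F(\widetilde{\mathcal O}(n))\cong F^{\operatorname{oper}}(\widetilde{\mathcal O})(n)\cong \mathbb{L}F^{\operatorname{oper}}(\mathcal O)(n)$$
in $\ho\C W$.

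Naturality in $\mathcal O$ follows because the cofibrant replacement $\pi$ can be chosen functorially, the auxiliary resolution $q$ in arity $1$ introduces only a homotopy that is absorbed by passing to $\ho\C W$, and $\chi$ is natural in operads by construction. The only delicate step is the arity $n=1$ case: since the component of a cofibrant operad in that arity is $\unit$-cofibrant but need not be cofibrant when $\unit_{\C V}$ is not cofibrant, the usual identification of $\mathbb{L}F$ via applying $F$ to a cofibrant replacement requires the $\unit$-cofibrant axiom, precisely to bridge between $F(\widetilde{\mathcal O}(1))$ and $F$ of a genuine cofibrant resolution of $\mathcal O(1)$. This is where the hypotheses of Proposition~\ref{pacofinal} are used beyond the statement itself.
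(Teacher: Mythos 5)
Your proof is correct and follows essentially the same route the paper intends: Corollary~\ref{casigualfinal} is read off from Proposition~\ref{pacofinal} by applying $\chi$ to a cofibrant replacement $\widetilde{\mathcal O}\xrightarrow{\sim}\mathcal O$ in $\operad{\C V}$ and identifying both derived functors levelwise. Your explicit handling of arity $1$, where $\widetilde{\mathcal O}(1)$ is only $\unit$-cofibrant and the $\unit$-cofibrant axiom bridges $F(\widetilde{\mathcal O}(1))$ with $F$ of a genuine cofibrant resolution (cf.\ Corollary~\ref{tech1.5bis2} and Lemma~\ref{left}), is exactly the point the paper leaves implicit.
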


Proposition \ref{pacofinal} allows to prove the following generalization of Theorem \ref{oqe}.

\begin{thm}\label{oqefinal}
In the situation of the previous proposition, assume further that $F\dashv G$ is a Quillen equivalence.  Then the Quillen pair in Proposition \ref{qp}  is a Quillen equivalence
$$\xymatrix{\operad{\C{V}}\ar@<.5ex>[r]^-{F^{\operatorname{oper}}}&\operad{\C{W}}.\ar@<.5ex>[l]^-{G}}$$
\end{thm}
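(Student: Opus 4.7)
The proof plan will follow exactly the outline of the proof of Theorem \ref{oqe}, replacing each use of full cofibrancy (which came from the assumption that tensor units are cofibrant, via Corollary \ref{tech1.5bis}) by the weaker statement of Corollary \ref{tech1.5bis2}, and by invoking the unnumbered lemma stated just after Lemma \ref{motivo} whenever the standard Quillen equivalence criterion needs to be applied to an $\unit$-cofibrant object that is not known to be cofibrant.

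More precisely, let $\varphi\colon F^{\operatorname{oper}}(\mathcal{O})\to\mathcal{P}$ be a morphism in $\operad{\C{W}}$ with $\mathcal{P}$ fibrant and $\mathcal{O}$ cofibrant in $\operad{\C V}$; I have to show that $\varphi$ is a weak equivalence if and only if its adjoint $\varphi'\colon\mathcal{O}\to G(\mathcal{P})$ is. First I would factor the underlying morphism in $\C W^{\mathbb N}$ as
$$F(\mathcal{O})\xrightarrow{\chi_{\mathcal{O}}} F^{\operatorname{oper}}(\mathcal{O})\xrightarrow{\varphi}\mathcal{P},$$
and invoke Proposition \ref{pacofinal} (whose hypotheses are literally those of Theorem \ref{oqefinal}) to see that $\chi_{\mathcal{O}}$ is a levelwise weak equivalence. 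Since fibrations and weak equivalences in $\operad{\C W}$ are detected on underlying sequences, this reduces the problem to showing that, for each $n\geq 0$, the composite $\varphi(n)\chi_{\mathcal{O}}(n)\colon F(\mathcal{O}(n))\to \mathcal{P}(n)$ is a weak equivalence if and only if its adjoint along $F\dashv G$ is.

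For this last step I would apply Corollary \ref{tech1.5bis2} to the cofibrant operad $\mathcal{O}$: the component $\mathcal{O}(n)$ is cofibrant in $\C V$ for every $n\neq 1$, while $\mathcal{O}(1)$ is $\unit_{\C V}$-cofibrant. In each arity, $\mathcal{P}(n)$ is fibrant in $\C W$ because $\mathcal{P}$ is fibrant in $\operad{\C W}$. Hence for $n\neq 1$ the standard characterization of weak equivalences for a Quillen equivalence applies directly, while for $n=1$ one invokes the $\unit$-cofibrant variant stated in the appendix (the lemma just after Lemma \ref{motivo}), which applies because $F\dashv G$ is assumed to satisfy the $\unit$-cofibrant axiom and is a Quillen equivalence. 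In both cases the adjoint of $\varphi(n)\chi_{\mathcal{O}}(n)$ is the arity-$n$ component of $\varphi'$, completing the argument.

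The main obstacle is ensuring that every place where the original proof of Theorem \ref{oqe} silently used cofibrancy of a component $\mathcal{O}(n)$ can be replaced by either cofibrancy (for $n\neq 1$) or $\unit$-cofibrancy (for $n=1$); this is exactly the content of Corollary \ref{tech1.5bis2}, and the $\unit$-cofibrant version of the Quillen-equivalence adjunction lemma is precisely what was set up in Appendix \ref{ico} to handle that single exceptional arity. Once those two ingredients are in place, the proof is formally identical to that of Theorem \ref{oqe}.
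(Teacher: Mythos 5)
Your proposal is correct and is essentially the argument the paper intends: Appendix \ref{geno} explicitly says these proofs follow their analogues (here the proof of Theorem \ref{oqe}), with Proposition \ref{pacofinal} replacing Proposition \ref{paco}, Corollary \ref{tech1.5bis2} replacing Corollary \ref{tech1.5bis}, and the unnumbered lemma after Lemma \ref{motivo} supplying the Quillen-equivalence criterion in the single arity where one only has $\unit$-cofibrancy. Your arity-by-arity split ($n\neq 1$ cofibrant, $n=1$ $\unit$-cofibrant) is exactly the intended use of those appendix results, so nothing is missing.
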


Theorem \ref{lp} is also valid if we replace the cofibrancy hypotheses with pseudo-cofibrancy hypotheses, provided the strong unit axiom holds.

\begin{thm}\label{lpfinal}
Let $\C{V}$ be a symmetric monoidal model category satisfying the strong unit axiom. Consider a push-out diagram in $\operad{\C{V}}$ as follows,
$$\xymatrix{\mathcal{O}\ar@{ >->}[r]^-{\psi}\ar[d]_-{\varphi}^{\sim}\ar@{}[rd]|{\text{push}}&\mathcal{Q}\ar[d]^-{\varphi'}\\
\mathcal{P}\ar@{ >->}[r]_-{\psi'}&\mathcal{P}\cup_{\mathcal{O}}\mathcal{Q}}$$
If $\mathcal{O}(n)$ and $\mathcal{P}(n)$ are pseudo-cofibrant in $\C{V}$ for all $n\geq 0$, then $\varphi'$ is a weak equivalence. 
\end{thm}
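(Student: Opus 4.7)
The plan is to mirror the proof of Theorem \ref{lp} in Section \ref{lpprueba}, systematically replacing each tool that relies on genuine cofibrancy with its pseudo-cofibrant counterpart developed in Appendices \ref{pco} and \ref{ico}. The strong unit axiom is precisely what enables these substitutions.

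First, I would reduce to the case where $\psi$ is a pushout along a free map $\mathcal{F}(f)$ with $f\colon U\into V$ a cofibration in $\C{V}^{\mathbb{N}}$ having pseudo-cofibrant source $U(n)$ for all $n$. The standard transfinite induction and retract argument then pass this case to arbitrary relative $\mathcal{F}(I_{\mathbb{N}})$-cell complexes and then to arbitrary cofibrations. The pseudo-cofibrancy of $U$ can be arranged by replacing $U$ with $\mathcal{O}\cup_{U}V$ as in the proof of Proposition \ref{paco}; since $\mathcal{O}(n)$ is pseudo-cofibrant by hypothesis, Corollary \ref{cascajo2} shows this pushout is still levelwise pseudo-cofibrant.

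Next, I would invoke Remark \ref{pushfree} to decompose the underlying sequences of $\psi$ and $\psi'$ as transfinite compositions with bonding morphisms \eqref{monstruo4} and \eqref{monstruo5} respectively; these are cofibrations by Lemma \ref{sequicof10} (the pseudo-cofibrant version of Lemma \ref{sequicof}). Then $\varphi'$ is the colimit of a tower of maps $\varphi_t$, each built from $\varphi_{t-1}$ by taking pushouts of the horizontal arrows in a comparison square. The right leg of this square is a coproduct $\coprod_T \bigodot_{v\in I^{e}(T)} f(\val{T}{v}) \otimes \bigotimes_{w\in I^{o}(T)} \varphi(\val{T}{w})$. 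Each factor of this coproduct is a weak equivalence between pseudo-cofibrant objects: source and target are pseudo-cofibrant by iterated applications of Corollary \ref{soncof2}, and the map is a weak equivalence by iterated applications of Lemma \ref{pseudowe} (for which the strong unit axiom is essential). Lemma \ref{pseudowe1.5} then assembles the coproduct into a weak equivalence between pseudo-cofibrant objects.

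With these pieces in place, the pseudo-cofibrant cube lemma (Lemma \ref{cube}) propagates the weak equivalence from $\varphi_{t-1}$ to $\varphi_t$ at each stage, where pseudo-cofibrancy of $Q_t(n)$ and $R_t(n)$ is maintained inductively via Corollary \ref{cascajo2} applied to the bonding cofibrations. Starting from $\varphi_0 = \varphi$, which is a weak equivalence by hypothesis, transfinite induction produces weak equivalences at every stage, and Lemma \ref{colimit} (replacing \cite[Proposition 15.10.12 (1)]{hirschhorn}) concludes that $\varphi' = \colim_t \varphi_t$ is a weak equivalence. The main obstacle is bookkeeping: one must carefully verify that pseudo-cofibrancy propagates through every object appearing in the cube diagrams and their pushouts, so that the strong-unit-axiom results of Appendix \ref{pco} remain applicable at each inductive step. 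Corollary \ref{cascajo2} and Corollary \ref{soncof2} are the workhorses here, and they suffice precisely because the bonding morphisms in the transfinite composition are cofibrations with pseudo-cofibrant source.
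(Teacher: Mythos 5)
Your proposal is correct and is essentially the paper's own proof: Theorem \ref{lpfinal} is proved by rerunning the proof of Theorem \ref{lp} verbatim, substituting Lemma \ref{sequicof10} for Lemma \ref{sequicof}, Lemma \ref{cube} for the cube lemma of \cite{hmc}, Lemma \ref{colimit} for the Hirschhorn colimit result, and Lemma \ref{pseudowe}/Corollary \ref{pseudowe1.2} together with Lemma \ref{pseudowe1.5}, Corollary \ref{soncof2} and Corollary \ref{cascajo2} for the cofibrancy bookkeeping, exactly as you describe. Only note that in the reduction step the cofibration $f\colon U\into V$ is replaced by $\tilde f\colon\mathcal{O}\into\mathcal{O}\cup_{U}V$ (so the new source is $\mathcal{O}$, pseudo-cofibrant by hypothesis, and the new target is pseudo-cofibrant by Corollary \ref{cascajo2}), which is what your argument in fact uses.
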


The following proposition is essentially a corollary of the previous theorem.

\begin{prop}
Let $\C V$ be a symmetric monoidal model category satisfying the strong unit axiom. The full subcategory $\operadpc{\C V}\subset \operad{\C V}$ spanned by the operads $\mathcal O$ such that $\mathcal O(n)$ is pseudo-cofibrant for all $n\geq 0$ is a cofibration category.
\end{prop}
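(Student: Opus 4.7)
The plan is to follow the pattern of Proposition \ref{cofcat} verbatim, substituting the pseudo-cofibrant analogues of the key lemmas. The main substitutions are: in place of Corollary \ref{kisin} use the unnumbered corollary following Lemma \ref{sequicof10} (which says that a cofibration $\varphi\colon\mathcal O\into\mathcal P$ with $\mathcal O(n)$ pseudo-cofibrant for all $n$ has all components cofibrations, and hence $\mathcal P(n)$ pseudo-cofibrant); and in place of Theorem \ref{lp} use Theorem \ref{lpfinal}. The strong unit axiom on $\C V$ is exactly what is needed for the latter to apply.

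Concretely, the steps are as follows. First I would verify axiom (C1): since $\operadpc{\C V}$ is a full subcategory of the model category $\operad{\C V}$, the 2-out-of-3 property for weak equivalences and the closure of cofibrations under composition are inherited. Next, for the push-out axiom (C2)(a), given a span $\mathcal P\xleftarrow{\varphi}\mathcal O\overset{\psi}{\into}\mathcal Q$ in $\operadpc{\C V}$ with $\psi$ a cofibration, forming the push-out in $\operad{\C V}$ yields a cofibration $\psi'\colon\mathcal P\into\mathcal P\cup_{\mathcal O}\mathcal Q$; by the pseudo-cofibrant analogue of Corollary \ref{kisin}, each component of $\psi$ and $\psi'$ is a cofibration in $\C V$, so both $\mathcal Q$ and $\mathcal P\cup_{\mathcal O}\mathcal Q$ lie in $\operadpc{\C V}$. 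If in addition $\psi$ is a weak equivalence, so is $\psi'$ by the usual model category property.

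For axiom (C2)(b), which is the gluing lemma for weak equivalences, I would apply Theorem \ref{lpfinal} directly: it says precisely that push-out along a cofibration from an object in $\operadpc{\C V}$ to an object in $\operadpc{\C V}$ preserves weak equivalences. For the factorization axiom (C3), any morphism $\mathcal O\r\mathcal Q$ in $\operadpc{\C V}$ factors in $\operad{\C V}$ as a cofibration $\mathcal O\into\mathcal R$ followed by a trivial fibration $\mathcal R\st{\sim}\onto\mathcal Q$; by the pseudo-cofibrant analogue of Corollary \ref{kisin} applied to the cofibration, $\mathcal R$ again has pseudo-cofibrant components, so the factorization lives in $\operadpc{\C V}$. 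Finally for (C4), the initial operad $\unit_\circ$ belongs to $\operadpc{\C V}$ since $\unit_\circ(1)=\unit$ is pseudo-cofibrant by Remark \ref{ejemplos}(2) and $\unit_\circ(n)=\varnothing$ is cofibrant, hence pseudo-cofibrant, for $n\neq 1$; and in $\operad{\C V}$ every object is fibrant, so this transfers to $\operadpc{\C V}$.

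The one potentially subtle point — and really the only place the pseudo-cofibrant theory does serious work — is the verification of (C2)(b), since this is where one normally invokes left properness. The strong unit axiom on $\C V$ is used inside the proof of Theorem \ref{lpfinal} (via the pseudo-cofibrant cube lemma \ref{cube} and Lemmas \ref{pseudowe}, \ref{pseudowe1.5}) to compensate for the fact that the components $\mathcal O(n)$, $\mathcal P(n)$ are no longer honestly cofibrant. All other axioms reduce to essentially bookkeeping: one only needs to check that the constructions of push-outs, factorizations, and the initial object stay inside $\operadpc{\C V}$, and this is immediate from the pseudo-cofibrant analogues of Corollaries \ref{yi} and \ref{kisin} proved just before the statement.
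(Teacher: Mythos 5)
Your overall plan coincides with the paper's: this proposition is obtained exactly as Proposition \ref{cofcat}, with Corollary \ref{kisin} replaced by its pseudo-cofibrant analogue (the unnumbered corollary following Lemma \ref{sequicof10}) and Theorem \ref{lp} replaced by Theorem \ref{lpfinal}; your verifications of (C1), of both halves of the push-out axiom (C2), and of the factorization axiom (C3) are correct and follow that template.

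The one flawed step is your justification of (C4). It is not true that every object of $\operad{\C V}$ is fibrant: fibrations of operads are defined levelwise in $\C V$, so an operad is fibrant only if each component $\mathcal O(n)$ is fibrant in $\C V$, which fails in general (already for operads of simplicial sets whose components are not Kan). Fortunately the axiom only asks for a fibrant \emph{model}, and the repair uses nothing beyond what you have already invoked: factor the morphism from $\mathcal O$ to the terminal operad as a trivial cofibration $\mathcal O\st{\sim}\into R\mathcal O$ followed by a fibration. Then $R\mathcal O$ is fibrant in the model category, hence every trivial cofibration out of it admits a retraction, and its components are pseudo-cofibrant by the pseudo-cofibrant analogue of Corollary \ref{kisin}, so the fibrant model stays in $\operadpc{\C V}$ (which is full, so the retraction is a morphism there). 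Your remark about the initial operad is harmless but not what (C4) requires; Baues' axioms do not involve an initial object, and in any case $\unit_\circ$ has pseudo-cofibrant components, so no issue arises there.
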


In particular, the gluing lemma  also holds for $\operadpc{\C V}$ in these circumstances, compare Corollary \ref{gluing}.

\section{Generalized results for algebras}\label{gena}

In this appendix, we do for algebras what we have just done for operads in the previous appendix.

The first result is a version of Lemma \ref{sequicof2}. 

\begin{lem}\label{sequicof2final}
Let $\C C$ be a model $\C V$-algebra satisfying  the strong unit axiom. 
If $f$ is a cofibration in $\C{C}$, $\mathcal{O}$ is an operad in $\C{V}$ such that $z(\mathcal{O}(n))$ is pseudo-cofibrant for all $n\geq 0$, and $A$ is an $\mathcal O$-algebra with underlying pseudo-cofibrant object in $\C C$, then  \eqref{killo} is a cofibration in $\C{C}$.
\end{lem}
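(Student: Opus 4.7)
The plan is to argue summand by summand, adapting the proof of Lemma \ref{sequicof2} with pseudo-cofibrancy in place of cofibrancy. Since coproducts of cofibrations are closed in any model category, it will suffice to show that each summand $z(\mathcal{O}(n)) \otimes \bigodot_{S} f \otimes \bigotimes_{\{1,\dots,n\}\setminus S} A$ of \eqref{killo}, for fixed $n \geq 1$ and $S \subset \{1,\dots,n\}$ with $\card(S)=t$, is a cofibration in $\C C$.

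First I would observe that $\bigodot_S f$ is a cofibration in $\C C$ by iterated application of the push-out product axiom to the cofibration $f$ (this step does not require any cofibrancy on the source $U$ of $f$). Second, by hypothesis both $z(\mathcal O(n))$ and the underlying object of $A$ are pseudo-cofibrant, and by Remark \ref{ejemplos}(4) pseudo-cofibrant objects are closed under tensor products, so any tensor product of the form $z(\mathcal O(n))\otimes A^{\otimes k}$ (in any interleaved order) is pseudo-cofibrant. The final step is to invoke the defining property of pseudo-cofibrant objects: tensoring a cofibration with a pseudo-cofibrant object, on either side, yields a cofibration. Combining these three facts, each summand of \eqref{killo} is a cofibration, and the coproduct is then a cofibration.

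The main subtlety to address is the interleaving of tensor and push-out product factors in the non-symmetric case, as flagged in the discussion following \eqref{killo}: the expression $z(\mathcal O(n))\otimes \bigodot_S f \otimes \bigotimes_{\{1,\dots,n\}\setminus S}A$ is really shorthand for a morphism in which the $A$ factors occupy the positions outside $S$ in their natural ordering. I would handle this by inserting the $A$'s (and the $z(\mathcal O(n))$ factor) one at a time around $\bigodot_S f$ via left and right tensor products, each step preserving the cofibration property by the pseudo-cofibrancy of the inserted object. Note that the strong unit axiom, although included among the standing hypotheses of this appendix, is not directly used in this particular argument; only the defining property of pseudo-cofibrant objects and the push-out product axiom are invoked.
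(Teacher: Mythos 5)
Your overall strategy is exactly the paper's: the paper gives no written proof of this lemma (its model, Lemma \ref{sequicof2}, is declared a straightforward consequence of the push-out product axiom, and the appendix version is left to the reader with the pseudo-cofibrant machinery substituted for cofibrancy), and your three ingredients -- closure of cofibrations under coproducts, the push-out product axiom, and the defining property of pseudo-cofibrant objects together with Remark \ref{ejemplos}(4) -- are precisely what is needed; your observation that the strong unit axiom plays no role here is also correct.

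The one point to repair is your treatment of the interleaving. When $\C C$ is non-symmetric you cannot first form $\bigodot_S f$ and then ``insert'' the $A$'s around it by left and right tensor products: outer tensoring only produces morphisms of the form $A^{\otimes j}\otimes(\bigodot_S f)\otimes A^{\otimes k}$, whereas an element of $\{1,\dots,n\}\setminus S$ lying strictly between two elements of $S$ (say $n=3$, $S=\{1,3\}$) forces an interior factor, $f\odot(A\otimes f)$, which is not of that form, and $A$ is a genuine object of $\C C$, so the $\zeta$-isomorphisms of the $\C V$-algebra structure do not let you slide it past the other factors (they only apply to objects in the image of $z$). The fix is immediate: using associativity of $\odot$ in $\mor{\C C}$ and the identities $f\odot(\varnothing\r X)=f\otimes X$, $(\varnothing\r X)\odot f=X\otimes f$, build the summand factor by factor from one end, at each step applying either $X\otimes -$ (resp.\ $-\otimes X$) with $X$ a pseudo-cofibrant object ($A$, a tensor power of $A$, or finally $z(\mathcal O(n))$), which preserves cofibrations by pseudo-cofibrancy, or $f\odot -$ (resp.\ $-\odot f$), which preserves cofibrations by the push-out product axiom. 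The point is that the tensorings with $A$ must be interleaved with the push-out products with $f$ in the order dictated by $\{1,\dots,n\}$, rather than performed after all of them. With this adjustment your argument is complete and coincides with the intended proof.
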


In practical examples, $z(\mathcal{O}(n))$ is pseudo-cofibrant because $\mathcal{O}(n)$ is ($\unit$-)cofibrant, $n\geq 0$. Recall from Remark \ref{hastio} that the left Quillen functor $z\colon\C V\r\C C$ satisfies the pseudo-cofibrant axiom since it is strong.

We derive straightforward generalizations of \cite[Lemma 9.4 and Corollary 9.5]{htnso}.

\begin{cor}
Let $\C C$ be a model $\C V$-algebra satisfying  the strong unit axiom. 
Consider  an operad  $\mathcal O$ in ${\C V}$ such that $z(\mathcal{O}(n))$ is pseudo-cofibrant for all $n\geq 0$.
If $\phi\colon A\into B$ is a cofibration in $\algebra{\mathcal O}{\C C}$ and $A$ is pseudo-cofibrant in $\C C$ then  $\phi$ is also a cofibration in $\C C$.
\end{cor}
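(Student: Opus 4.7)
The plan is to mimic the proof of Corollary \ref{kisin} in the algebra setting, using Lemma \ref{sequicof2final} as the main technical tool and Corollary \ref{cascajo2} to propagate pseudo-cofibrancy. First I would reduce to the case of a relative $\mathcal{F}_{\mathcal O}(I')$-cell complex, since cofibrations in $\algebra{\mathcal O}{\C C}$ are retracts of such, and the class of cofibrations in $\C C$ is closed under retracts. So it suffices to show that whenever $A$ has pseudo-cofibrant underlying object and $\phi\colon A\into B$ is a transfinite composition of push-outs of free maps $\mathcal{F}_{\mathcal O}(f)$ with $f\in I'$, the underlying morphism of $\phi$ in $\C C$ is a cofibration.

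Consider a single step, a push-out in $\algebra{\mathcal O}{\C C}$
$$\xymatrix{\mathcal{F}_{\mathcal O}(U)\ar@{}[rd]|{\text{push}}\ar[r]^-{\mathcal{F}_{\mathcal O}(f)}\ar[d]&\mathcal{F}_{\mathcal O}(V)\ar[d]\\ A\ar[r]_{f'}&B}$$
with $f\in I'$. By Remark \ref{pushfree2}, the underlying morphism of $f'$ in $\C C$ is the transfinite composition of a sequence $A=B_0\to B_1\to\cdots\to B_{t-1}\xrightarrow{\varphi_t}B_t\to\cdots$, where $\varphi_t$ is a push-out along \eqref{killo}. I would now perform a transfinite induction on $t$ to show simultaneously that $B_t$ is pseudo-cofibrant in $\C C$ and that $B_{t-1}\to B_t$ is a cofibration in $\C C$. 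For the inductive step, Lemma \ref{sequicof2final} (applied with $A$ replaced by the pseudo-cofibrant $B_{t-1}$) shows that \eqref{killo} is a cofibration in $\C C$, so its push-out $\varphi_t$ is a cofibration, and then Corollary \ref{cascajo2} ensures that the push-out $B_t$ of the pseudo-cofibrant $B_{t-1}$ along this cofibration remains pseudo-cofibrant. At limit ordinals, pseudo-cofibrancy is preserved by transfinite compositions of cofibrations via Lemma \ref{cascajo}, and cofibrations in $\C C$ are closed under such compositions. Hence $f'\colon A\to B$ is a cofibration in $\C C$ with pseudo-cofibrant target.

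Having handled one attachment, an outer transfinite induction on the stages of a general $\mathcal{F}_{\mathcal O}(I')$-cell complex $A\to B$ shows that $B$ is pseudo-cofibrant in $\C C$ and $A\to B$ is a cofibration in $\C C$, because the underlying morphism is a transfinite composition of the cofibrations produced in the previous paragraph, each with pseudo-cofibrant source (this pseudo-cofibrancy of intermediate terms is exactly what feeds back into the hypothesis of Lemma \ref{sequicof2final} at the next stage). Finally, an arbitrary cofibration $\phi\colon A\into B$ in $\algebra{\mathcal O}{\C C}$ is a retract of a relative $\mathcal{F}_{\mathcal O}(I')$-cell complex $A\into B'$, and the retract of a cofibration in $\C C$ is a cofibration, so $\phi$ is a cofibration in $\C C$.

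The main obstacle is not any single step but rather the bookkeeping of pseudo-cofibrancy along the double transfinite induction: Lemma \ref{sequicof2final} requires the current algebra to be pseudo-cofibrant in order to conclude that the next attaching map \eqref{killo} is a cofibration, so one must carry pseudo-cofibrancy as an inductive invariant throughout, which is exactly what Corollary \ref{cascajo2} and Lemma \ref{cascajo} let us do. No cofibrancy assumption on the tensor unit of $\C V$ is needed, only the strong unit axiom on $\C C$ and pseudo-cofibrancy of $z(\mathcal O(n))$.
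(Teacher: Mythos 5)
Your proof is correct and follows essentially the argument the paper intends (the analogue of Corollaries \ref{yi} and \ref{kisin}, with Lemma \ref{sequicof2final} in place of Lemma \ref{sequicof} and Lemma \ref{cascajo}/Corollary \ref{cascajo2} propagating pseudo-cofibrancy). One small inaccuracy that does not affect the argument: within a single free cell attachment the attaching morphism \eqref{killo} involves the fixed algebra $A$ of that push-out, not the intermediate stage $B_{t-1}$, so Lemma \ref{sequicof2final} applies directly with the pseudo-cofibrant $A$; pseudo-cofibrancy of the intermediate algebras is only needed when passing to the next cell of the outer induction, exactly as you use it.
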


\begin{cor}
Let $\C C$ be a model $\C V$-algebra satisfying  the strong unit axiom. 
Consider  an operad  $\mathcal O$  in ${\C V}$ such that $z(\mathcal{O}(n))$ is pseudo-cofibrant for all $n\geq 0$. Then, any cofibrant $\mathcal O$-algebra in $\C C$ has an underlying pseudo-cofibrant object. Moreover, if in addition $\mathcal O(0)$ is cofibrant then any cofibrant $\mathcal O$-algebra in $\C C$ has an underlying cofibrant object.
\end{cor}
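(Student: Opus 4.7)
The plan is to argue by transfinite induction on the cell structure of a cofibrant $\mathcal O$-algebra, mimicking the original proof of \cite[Lemma 9.4]{htnso} but substituting Lemma \ref{sequicof2final} for its cofibrant analogue Lemma \ref{sequicof2}. Recall that every cofibrant $\mathcal O$-algebra is a retract of a relative $\mathcal F_{\mathcal O}(I')$-cell complex whose domain is the initial $\mathcal O$-algebra. The initial object of $\algebra{\mathcal O}{\C C}$ is $z(\mathcal O(0))$, which is pseudo-cofibrant by hypothesis, and cofibrant if $\mathcal O(0)$ is cofibrant, since the left Quillen functor $z$ preserves cofibrant objects.

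Next, I would verify the cell attachment step: given a push-out of $\mathcal O$-algebras as in \eqref{pusho2} with $f \in I'$ and $A$ pseudo-cofibrant in $\C C$, the algebra $B$ has pseudo-cofibrant underlying object. By Remark \ref{pushfree2}, the underlying morphism $f'\colon A \to B$ in $\C C$ is a transfinite composition of push-outs along the morphisms \eqref{killo}. Lemma \ref{sequicof2final} guarantees each such morphism is a cofibration, so at each stage one pushes out a pseudo-cofibrant object along a cofibration; by Corollary \ref{cascajo2} the result is pseudo-cofibrant, and by closure of cofibrations under transfinite composition combined with Lemma \ref{cascajo}, the terminal object $B$ is pseudo-cofibrant.

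For the moreover part, I need the sources of the maps \eqref{killo} to be cofibrant rather than merely pseudo-cofibrant. This calls for the push-out trick from the proof of Proposition \ref{paco}: if $U$ is not cofibrant, replace $f$ with its push-out $\tilde f\colon A \into A \cup_U V$, which has cofibrant source $A$ (cofibrant by the inductive hypothesis). With this reduction, the iterated push-out products and tensors in \eqref{killo} become cofibrations between cofibrant objects by the push-out product axiom, using that $z(\mathcal O(n))$ is cofibrant (because $\mathcal O(n)$ is cofibrant by Corollary \ref{tech1.5bis2} together with the hypothesis on $\mathcal O(0)$, and $z$ is left Quillen) and that $A$ is cofibrant. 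So each stage yields a cofibrant object, and transfinite composition of cofibrations starting from a cofibrant object is a cofibration with cofibrant target, whence $B$ is cofibrant.

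Transfinite induction on the length of the cell complex extends the attachment step to arbitrary $\mathcal F_{\mathcal O}(I')$-cell complexes, using again closure of cofibrations under transfinite composition together with Lemma \ref{cascajo} at successor stages and direct closure at limit stages. The retract step is immediate in both cases: cofibrant objects are trivially closed under retracts, and pseudo-cofibrancy passes to retracts because the defining condition that tensoring with the object preserves cofibrations is inherited by retracts in the arrow category, since cofibrations themselves are closed under retracts. The main obstacle is the push-out trick that replaces the generating cofibration $f$ by one with (pseudo-)cofibrant source $A$; this is essential since generating cofibrations need not have (pseudo-)cofibrant sources, and without this reduction Lemma \ref{sequicof2final} (or its cofibrant analogue) cannot be applied to yield the desired cofibrancy statements for \eqref{killo}.
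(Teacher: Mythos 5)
Your argument for the pseudo-cofibrancy statement is correct and is essentially the intended one: reduce to relative $\mathcal{F}_{\mathcal O}(I')$-cell complexes over the initial algebra $z(\mathcal O(0))$, use Remark \ref{pushfree2} to write each attachment as a transfinite composition of push-outs along \eqref{killo}, apply Lemma \ref{sequicof2final} to see these are cofibrations, and conclude with Lemma \ref{cascajo} (or Corollary \ref{cascajo2}), closure of cofibrations under transfinite composition, and closure of pseudo-cofibrancy under retracts. Equivalently, one may simply apply the corollary stated immediately before this one (the generalization of \cite[Corollary 9.5]{htnso}) to the cofibration of algebras $z(\mathcal O(0))\into A$.

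The ``moreover'' part, as written, contains a genuine flaw. You invoke Corollary \ref{tech1.5bis2} to conclude that $\mathcal O(n)$, hence $z(\mathcal O(n))$, is cofibrant; but that corollary is about operads that are cofibrant in $\operad{\C V}$, whereas here $\mathcal O$ is an arbitrary operad with $z(\mathcal O(n))$ pseudo-cofibrant and only $\mathcal O(0)$ assumed cofibrant, so nothing gives cofibrancy of $\mathcal O(n)$ for $n\geq 1$. Relatedly, your closing claim that the push-out trick of Proposition \ref{paco} is essential because Lemma \ref{sequicof2final} cannot otherwise be applied misreads that lemma: its hypotheses put pseudo-cofibrancy on $z(\mathcal O(n))$ and on the algebra $A$, not on the source of $f$, and indeed you applied it directly, without the trick, in your own attachment step. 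In fact no cofibrancy of the sources of \eqref{killo} is needed at all: each $\varphi_t\colon B_{t-1}\r B_t$ is a push-out of the cofibration \eqref{killo}, hence a cofibration in $\C C$, so if $B_{t-1}$ is cofibrant then so is $B_t$; the induction now starts at $z(\mathcal O(0))$, which is cofibrant because $z$ is a left Quillen functor and $\mathcal O(0)$ is cofibrant, and transfinite compositions and retracts preserve cofibrancy. (Even if you insisted on your route, the sources occurring in \eqref{killo} after the replacement of $f$ by $\tilde f$ would be cofibrant by Lemma \ref{soncof}, since tensoring a cofibrant object with a pseudo-cofibrant one is cofibrant; the unjustified cofibrancy of $z(\mathcal O(n))$ is never needed.) The push-out trick is required in the paper only for the homotopical statements, where weak equivalences between cofibrant objects and the cube lemma intervene, not for purely cofibration-theoretic statements such as this corollary.
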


The following theorem generalizes \cite[Theorem 1.3]{htnso} and Theorem \ref{coc} avobe.

\begin{thm}\label{cocfinal}
Let $\C C$ be a model $\C V$-algebra satisfying  the strong unit axiom. Assume $z\colon\C V\r\C C$ satisfies the $\unit$-cofibrant axiom. 
Consider a weak equivalence $\phi\colon \mathcal O\st{\sim}\r\mathcal P$ between operads in $\C V$ such that the objects $ \mathcal O(n)$ and $\mathcal P(n)$ are {($\unit$-)}cofibrant for all $n\geq 0$. Then the change of coefficients Quillen pair
$$\xymatrix{\algebra{\mathcal O}{\C{C}}\ar@<.5ex>[r]^-{\phi_{*}}&\algebra{\mathcal P}{\C{C}}\ar@<.5ex>[l]^-{\phi^{*}}}$$
is a Quillen equivalence. 
\end{thm}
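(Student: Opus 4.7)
The strategy is to mimic the proof of Theorem \ref{coc} (equivalently \cite[Theorem 1.3]{htnso}), replacing every appeal to a property of cofibrant objects by the corresponding pseudo-cofibrant version established in Appendices \ref{pco} and \ref{ico}. Since fibrations and weak equivalences in $\algebra{\mathcal O}{\C C}$ and $\algebra{\mathcal P}{\C C}$ are created in $\C C$, and $\phi^*$ is the identity on underlying objects, it suffices to show that the unit of the adjunction $\eta_A\colon A\to \phi^*\phi_*(A)$ is a weak equivalence in $\C C$ for every cofibrant $\mathcal O$-algebra $A$.

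The plan is to prove this by cell induction. Using the usual retract argument, one reduces to the case where $A$ is a relative $\mathcal F_{\mathcal O}(I')$-cell complex built from the initial $\mathcal O$-algebra $z(\mathcal O(0))$. For the base case, the initial $\mathcal P$-algebra is $z(\mathcal P(0))$, and $\eta_{z(\mathcal O(0))}$ is obtained by applying $z$ to $\phi(0)\colon \mathcal O(0)\to\mathcal P(0)$, which is a weak equivalence between ($\unit$-)cofibrant objects; by Lemma \ref{left} (using the $\unit$-cofibrant axiom on $z$), its image under $z$ is a weak equivalence. For the inductive step, suppose $A$ is obtained from a cofibrant $\mathcal O$-algebra $A'$ with $\eta_{A'}$ known to be a weak equivalence, by a pushout along a free map $\mathcal F_{\mathcal O}(f)$ with $f\colon U\into V$ a cofibration in $\C C$; as in the proof of Proposition \ref{paco}, we may assume $U$ is cofibrant by replacing $f$ with the induced cofibration $\tilde f\colon A'\into A'\cup_U V$. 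The underlying morphism of $A'\into A$ then decomposes as the transfinite composition of cofibrations $A'=B_0\into B_1\into\cdots$ where $B_{t-1}\into B_t$ is a pushout along the cofibration \eqref{killo} (which is a cofibration by Lemma \ref{sequicof2final}, whose hypotheses are met thanks to the ($\unit$-)cofibrancy of $\mathcal O(n)$ and the pseudo-cofibrancy of the underlying object of $A'$ coming from the previous corollary of this appendix). A parallel decomposition holds for $\phi_*(A)$ using $\mathcal P(n)$ in place of $\mathcal O(n)$.

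The comparison map $\eta_A$ is the colimit over $t$ of compatible maps $\eta^t\colon B_t\to B_t^{\mathcal P}$. At stage $t$, one writes $\eta^t$ as the map between pushouts of the two parallel attaching cospans; the right-hand square of this comparison is a coproduct of maps of the form
$$\bigodot_{S}f\otimes \bigotimes_{\{1,\dots,n\}\setminus S}\eta_{A'}\otimes z(\phi(n)),$$
which is a weak equivalence between pseudo-cofibrant objects, by Corollary \ref{pseudowe1.2} and Lemma \ref{pseudowe} together with the cofibrancy hypotheses on $f$, $\mathcal O(n)$, $\mathcal P(n)$, and the pseudo-cofibrancy of the underlying object of $A'$. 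Applying the pseudo-cofibrant cube lemma (Lemma \ref{cube}) shows that $\eta^t$ is a weak equivalence whenever $\eta^{t-1}$ is, and Lemma \ref{colimit} then gives that $\eta_A=\colim_t\eta^t$ is a weak equivalence. This closes the induction and proves the Quillen equivalence.

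The main obstacle is the systematic bookkeeping of pseudo-cofibrancy at every stage of the cellular filtration, in order to legitimately invoke the pseudo-cofibrant cube lemma and the pseudo-cofibrant versions of the closure properties from Appendix \ref{pco}; the strong unit axiom in $\C C$ and the $\unit$-cofibrant axiom on $z$ are precisely what is needed to replace cofibrancy by pseudo-cofibrancy throughout. Once this is done, the argument runs in parallel with the original proof of Theorem \ref{coc}, and the cube lemma takes the place where left properness was unnecessarily invoked in \cite[Lemma 9.6]{htnso}.
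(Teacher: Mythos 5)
Your proposal follows exactly the route the paper intends: Theorem \ref{cocfinal} is stated in the appendix without a separate proof, under the blanket declaration that such results are proved "along the same lines as their analogues", i.e.\ by repeating the proof of Theorem \ref{coc} (\cite[Theorem 1.3]{htnso}, with the cube lemma replacing left properness and the source-replacement trick of Proposition \ref{paco}) while substituting the pseudo-cofibrant/$\unit$-cofibrant machinery of Appendices \ref{pco} and \ref{ico} for cofibrancy, which is precisely your cellular induction, layer comparison, cube-lemma and colimit argument. One small correction: after the replacement trick the source of $\tilde f$ is in general only pseudo-cofibrant (in the $\unit$-cofibrant reading a cofibrant algebra has merely pseudo-cofibrant underlying object), not cofibrant as you claim—but this is all your subsequent argument actually uses, so nothing breaks.
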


Now, let us place ourselves in the situation described in the first  paragraph of Section \ref{malichon}. Assume further that $\C V$, $\C W$, $\C C$ and $\C D$ satisfy the strong unit axiom, and  $z_{\C C}$, $z_{\C D}$, $F$ and $\bar F$ satisfy the pseudo-cofibrant axiom and the $\unit$-cofibrant axiom.

\begin{lem}
The natural morphism $\tau(X)$ in \eqref{tau} is a weak equivalence if $X$ is ($\unit$-)cofibrant.
\end{lem}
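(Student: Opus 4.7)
The plan is to reduce to the already-known cofibrant case via a cofibrant resolution $p\colon\tilde X\st{\sim}\onto X$ of the $\unit_{\C V}$-cofibrant object $X$ and the naturality of $\tau$. The naturality square
$$\xymatrix@C=45pt{\bar F z_{\C C}(\tilde X)\ar[r]^-{\tau(\tilde X)}\ar[d]_-{\bar F z_{\C C}(p)}&z_{\C D}F(\tilde X)\ar[d]^-{z_{\C D}F(p)}\\\bar F z_{\C C}(X)\ar[r]^-{\tau(X)}&z_{\C D}F(X)}$$
has $\tau(\tilde X)$ a weak equivalence by the standing hypothesis on $\tau$, so by the 2-out-of-3 axiom $\tau(X)$ will be a weak equivalence if and only if both vertical morphisms are.

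For the left vertical, the strong monoidality of $z_{\C C}$ gives $z_{\C C}(\unit_{\C V})=\unit_{\C C}$, so the cofibration $\unit_{\C V}\into X$ is sent to a cofibration $\unit_{\C C}\into z_{\C C}(X)$, making $z_{\C C}(X)$ an $\unit_{\C C}$-cofibrant object; meanwhile $z_{\C C}(\tilde X)$ is cofibrant because $z_{\C C}$ is left Quillen. The $\unit$-cofibrant axiom for $z_{\C C}$ ensures that $z_{\C C}(p)$ is a weak equivalence, and then Lemma \ref{left} applied to $\bar F$ (which satisfies the $\unit$-cofibrant axiom) yields that $\bar F z_{\C C}(p)$ is a weak equivalence.

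For the right vertical, $F(p)$ is a weak equivalence by the $\unit$-cofibrant axiom for $F$, with $F(\tilde X)$ cofibrant and $F(X)$ pseudo-cofibrant by Lemma \ref{motivo}. The main obstacle is that $F(X)$ is only pseudo-cofibrant, not guaranteed $\unit_{\C W}$-cofibrant, so Lemma \ref{left} for $z_{\C D}$ does not apply to $F(p)$ directly. My plan to circumvent this is to tensor $F(p)$ with a cofibrant resolution $\tilde\unit_{\C W}\st{\sim}\onto\unit_{\C W}$ of the tensor unit of $\C W$: by Lemma \ref{pseudowe} the morphism $F(p)\otimes\tilde\unit_{\C W}$ is a weak equivalence, and by Lemma \ref{soncof} its source and target are cofibrant, so Lemma \ref{left} applied to $z_{\C D}$ together with the strong monoidality of $z_{\C D}$ produces a weak equivalence $z_{\C D}(F(p))\otimes z_{\C D}(\tilde\unit_{\C W})$ in $\C D$. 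Because $z_{\C D}$ preserves cofibrations and the $\unit$-cofibrant axiom for $z_{\C D}$ makes $z_{\C D}(q_{\C W})\colon z_{\C D}(\tilde\unit_{\C W})\to\unit_{\C D}$ a weak equivalence, $z_{\C D}(\tilde\unit_{\C W})$ is a cofibrant replacement of $\unit_{\C D}$, and Lemma \ref{chari} applied in $\C D$ will descend the above to $z_{\C D}(F(p))$ itself being a weak equivalence, provided its source and target are pseudo-cofibrant in $\C D$. The source $z_{\C D}(F(\tilde X))$ is cofibrant; pseudo-cofibrancy of the target reduces via Lemma \ref{cascajo} applied to the cofibration $z_{\C D}(F(\unit_{\C V}))\into z_{\C D}(F(X))$ to the pseudo-cofibrancy of $z_{\C D}(F(\unit_{\C V}))$, and this last point is the main technical step I expect to be the hard part.
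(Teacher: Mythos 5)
Your reduction is sound and follows the paper's general technique (the paper states this lemma without proof, so there is no official argument to compare against): the naturality square along a cofibrant resolution $p\colon\tilde X\st{\sim}\onto X$, the treatment of the left vertical via the $\unit$-cofibrant axiom for $z_{\C C}$ and Lemma \ref{left} for $\bar F$, and the identification of $F(p)$ as a weak equivalence with cofibrant source and pseudo-cofibrant target are all correct. But as you yourself flag, the proposal is not a proof: everything hinges on the pseudo-cofibrancy of $z_{\C D}F(\unit_{\C V})$ in $\C D$ (equivalently, via Lemma \ref{cascajo}, of $z_{\C D}F(X)$), and this is left unestablished. That step is not a routine verification from the standing hypotheses of Appendix \ref{gena}: the pseudo-cofibrant axiom for $z_{\C D}$ is vacuous, since $z_{\C D}$ is strong monoidal and only tells you that $z_{\C D}(\unit_{\C W})\cong\unit_{\C D}$ is pseudo-cofibrant, and the paper's own remark in Appendix \ref{ico} warns that left Quillen functors, even strong monoidal ones, need not preserve pseudo-cofibrant objects unless, say, they create the model structure on the target, which is not assumed for a model $\C V$-algebra. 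So the object $z_{\C D}F(\unit_{\C V})$ sits exactly in the blind spot the appendices are designed around, and your route through Lemma \ref{chari} cannot close without an extra input.

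Note also that the Lemma \ref{chari} detour asks for more than you need. Since the top of the square and the left vertical are weak equivalences, by 2-out-of-3 the lemma is exactly equivalent to $z_{\C D}F(p)$ being a weak equivalence, i.e.\ to $z_{\C D}$ preserving the weak equivalence $F(p)$ whose target is only pseudo-cofibrant. If $F(\unit_{\C V})$ were $\unit_{\C W}$-cofibrant (for instance when the colax counit $F(\unit_{\C V})\r\unit_{\C W}$ is an isomorphism, cf.\ Remark \ref{hastio}), then $F(X)$ would be $\unit_{\C W}$-cofibrant and Lemma \ref{left} applied to $z_{\C D}$ would finish immediately, with no pseudo-cofibrancy question about $z_{\C D}F(\unit_{\C V})$ at all; but that hypothesis is likewise not among the stated axioms. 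So the gap you point to is genuine, and to complete the argument you must either prove that $z_{\C D}F(\unit_{\C V})$ is pseudo-cofibrant (or that the composite $z_{\C D}F$ satisfies the $\unit$-cofibrant axiom) from the given data, or identify the additional hypothesis on $F$ or $z_{\C D}$ under which the lemma is being asserted.
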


Proposition \ref{paco2} extends as follows.

\begin{prop}\label{paco2final}
Suppose $\bar F\dashv \bar G$ is a weak monoidal Quillen adjunction and $\mathcal O$ is a cofibrant operad in $\C V$. Then if $A$ is a cofibrant $\mathcal O$-algebra in $\C C$, the natural morphism $\chi_{A}$ in \eqref{chia} is a weak equivalence in $\C D$.
\end{prop}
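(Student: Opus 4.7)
The plan is to mimic the proof of Proposition \ref{paco2} step by step, systematically replacing each appeal to a ``cofibrant'' result with its pseudo-cofibrant counterpart from Appendices \ref{pco} and \ref{ico}. First I would handle the base case $A=z_{\C C}(\mathcal O(0))$, which is the initial $\mathcal O$-algebra. Exactly as before, $\bar F_{\mathcal O}(A)=z_{\C D}(F^{\operatorname{oper}}(\mathcal O)(0))$ and $\chi_A$ factors as
$$\bar F z_{\C C}(\mathcal O(0))\xrightarrow{\tau(\mathcal O(0))} z_{\C D}F(\mathcal O(0))\xrightarrow{z_{\C D}(\chi_{\mathcal O}(0))} z_{\C D}(F^{\operatorname{oper}}(\mathcal O)(0)).$$
By Corollary \ref{tech1.5bis2}, $\mathcal O(0)$ is cofibrant, so $\tau(\mathcal O(0))$ is a weak equivalence by our hypothesis on $\tau$, and $\chi_{\mathcal O}(0)$ is a weak equivalence by Proposition \ref{pacofinal}. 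Since its source and target are cofibrant and $z_{\C D}$ satisfies the $\unit$-cofibrant axiom, Lemma \ref{left} shows that $z_{\C D}(\chi_{\mathcal O}(0))$ is a weak equivalence too.

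Next I would carry out the inductive step over an $\mathcal{F}_{\mathcal O}(I')$-cell attachment: given a push-out along $\mathcal{F}_{\mathcal O}(f)$ with $f\colon U\into V$ a cofibration in $\C C$, reduce as in the proof of Proposition \ref{paco2} to the case where $U$ is pseudo-cofibrant in $\C C$ (using the analog of \cite[Lemma 9.4]{htnso} in the pseudo-cofibrant setting). Then, following Remark \ref{pushfree2}, the morphism $\bar F(f')$ (resp. $\bar F_{\mathcal O}(f')$) is a transfinite composition of push-outs along the explicit cofibrations \eqref{killo2} and \eqref{killo3}; these are cofibrations by Lemma \ref{sequicof2final}, which applies because $z_{\C C}(\mathcal O(n))$, $z_{\C D}(F^{\operatorname{oper}}(\mathcal O)(n))$, $\bar F(A)$ and $\bar F_{\mathcal O}(A)$ are all pseudo-cofibrant (combining Corollary \ref{tech1.5bis2}, Lemma \ref{motivo}, and the pseudo-cofibrant axiom for $z_{\C C}$, $z_{\C D}$, $F$, $\bar F$).

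The key step is then to show that, at each stage $t$, the natural map between the source attaching diagrams is a weak equivalence in $\mor{\C D}$. Each summand decomposes, as in the original proof, as the composite of the comultiplication of $\bar F$, followed by $\tau(\mathcal O(n))\otimes 1$, followed by $z_{\C D}(\chi_{\mathcal O}(n))\otimes 1$. The first is a weak equivalence by the generalized Lemma \ref{previo2}, the second by hypothesis on $\tau$ together with Corollary \ref{tech1.5bis2}, and the third by Proposition \ref{pacofinal} combined with Lemma \ref{left}; to conclude that each summand is a weak equivalence and that the coproduct is too, I would invoke Corollary \ref{pseudowe1.2} and Lemma \ref{pseudowe1.5}, after checking via Corollary \ref{soncof2.5} and Lemma \ref{unos} that all objects appearing are pseudo-cofibrant. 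Then the pseudo-cofibrant cube lemma (Lemma \ref{cube}) propagates the weak equivalence $\chi_A^{t-1}$ to $\chi_A^t$, and Lemma \ref{colimit} passes to the transfinite colimit to yield that $\chi_B$ is a weak equivalence between pseudo-cofibrant objects.

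Finally, the standard inductive and retract arguments give the statement for every cofibrant $\mathcal O$-algebra $A$. The main obstacle is purely bookkeeping: one must continuously verify that every object entering a tensor, coproduct, or push-out construction is pseudo-cofibrant, so that the strong unit axiom and the appendix lemmas apply; in particular the objects underlying cofibrant operads and cofibrant algebras are not themselves cofibrant in general, only pseudo-cofibrant, and one must chase this through every application of the cube lemma, the colimit lemma, and Lemma \ref{previo2}.
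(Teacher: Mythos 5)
Your overall strategy is exactly the one the paper intends: Proposition \ref{paco2final} is stated without an explicit proof precisely because the argument of Proposition \ref{paco2} is to be repeated with the lemmas of Appendices \ref{pco} and \ref{ico} replacing the cofibrancy arguments, and your outline (base case at the initial algebra, reduction trick, cell-attachment induction via \eqref{killo2} and \eqref{killo3}, cube lemma, colimit lemma, retract argument) follows that template faithfully.

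There is, however, one concrete gap in your bookkeeping, and it sits at the crux of the generalization: you reduce only to the case where $U$ is pseudo-cofibrant and you record only that the underlying object of $A$ is pseudo-cofibrant, and then you invoke Lemma \ref{previo2} (and Lemma \ref{left}) on each summand. But Lemma \ref{previo2} requires the cofibrations in the push-out product to have ($\unit$-)cofibrant source and target and the tensor factors to be ($\unit$-)cofibrant; pseudo-cofibrancy cannot suffice there, because $\bar F$ is only controlled on $\unit$-cofibrant objects --- that is the entire reason the $\unit$-cofibrant axiom was introduced, left Quillen functors not preserving pseudo-cofibrant objects or weak equivalences between them. The same slip shows in your appeal to Corollary \ref{soncof2.5} and Lemma \ref{unos} to check ``pseudo-cofibrancy'': those results concern $\unit$-cofibrant objects, which is the property you actually need. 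The repair is in the paper: since $\mathcal O$ is a cofibrant operad, $\mathcal O(0)$ is cofibrant by Corollary \ref{tech1.5bis2}, and $z_{\C C}(\mathcal O(n))$ is cofibrant for $n\neq 1$ and $\unit_{\C C}$-cofibrant for $n=1$ ($z_{\C C}$ being strong monoidal and left Quillen); hence, by the second corollary following Lemma \ref{sequicof2final}, every cofibrant $\mathcal O$-algebra has an honestly cofibrant underlying object in $\C C$, so the reduction trick of Proposition \ref{paco} replaces $f$ by a cofibration between cofibrant objects and all tensor factors become ($\unit$-)cofibrant, exactly as Lemma \ref{previo2} demands. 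One residual point still needs care: $F(\mathcal O(1))$ is only pseudo-cofibrant (Lemma \ref{motivo}), not $\unit_{\C W}$-cofibrant, so to see that $z_{\C D}(\chi_{\mathcal O}(1))\otimes 1$ is a weak equivalence you cannot quote Lemma \ref{left} verbatim; you must use the tensor-with-$\tilde\unit$ device of Lemma \ref{chari}, as explained after Corollary \ref{pseudowe1.2}. So: right route, but the pseudo-cofibrant versus ($\unit$-)cofibrant distinction has to be tracked precisely, since it is exactly where the generalized statement could break.
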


\begin{cor}\label{casigual2.1final}
Let $\bar F\dashv \bar G$ and $\mathcal O$ be as in the previous proposition. For any $\mathcal O$-algebra $A$ in $\C C$ there is a natural isomorphism in $\ho\C D$,
\begin{align*}
\mathbb{L}\bar F(A)&\cong\mathbb{L}\bar F_{\mathcal{O}}(A).
\end{align*}
\end{cor}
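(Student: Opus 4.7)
The plan is to reduce to Proposition~\ref{paco2final} by means of a cofibrant replacement in $\algebra{\mathcal O}{\C C}$. The crux is to verify that, under the present hypotheses, such a replacement simultaneously computes $\mathbb L\bar F$ on the underlying object in $\C C$ and $\mathbb L\bar F_{\mathcal O}$ on the algebra.

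Concretely, I would pick a (functorial) cofibrant replacement $q\colon \tilde A\st{\sim}\onto A$ in $\algebra{\mathcal O}{\C C}$. Since $\mathcal O$ is cofibrant as an operad, Corollary~\ref{tech1.5bis2} gives that $\mathcal O(n)$ is cofibrant in $\C V$ for every $n\neq 1$ and $\unit$-cofibrant for $n=1$; in particular $\mathcal O(0)$ is cofibrant. As $z_{\C C}$ is a strong monoidal left Quillen functor, it sends each $\mathcal O(n)$ to a pseudo-cofibrant object and sends $\mathcal O(0)$ to a cofibrant object of $\C C$. The cofibrancy corollary stated immediately after Lemma~\ref{sequicof2final} (the second part, which assumes $\mathcal O(0)$ cofibrant) then guarantees that the object of $\C C$ underlying $\tilde A$ is cofibrant. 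Consequently $q$ is a cofibrant replacement of $A$ in $\C C$, so $\bar F(\tilde A)$ represents $\mathbb L\bar F(A)$ in $\ho\C D$. Similarly $\bar F_{\mathcal O}(\tilde A)$ represents $\mathbb L\bar F_{\mathcal O}(A)$ in $\ho\algebra{F^{\operatorname{oper}}(\mathcal O)}{\C D}$, and hence also in $\ho\C D$ after the forgetful functor, which descends to homotopy categories because weak equivalences of algebras are detected on underlying objects.

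Proposition~\ref{paco2final}, applied to the cofibrant $\mathcal O$-algebra $\tilde A$, now furnishes a weak equivalence $\chi_{\tilde A}\colon \bar F(\tilde A)\st{\sim}\r \bar F_{\mathcal O}(\tilde A)$ in $\C D$. Composing the resulting chain of isomorphisms
\[\mathbb L\bar F(A)\cong \bar F(\tilde A)\cong \bar F_{\mathcal O}(\tilde A)\cong \mathbb L\bar F_{\mathcal O}(A)\]
in $\ho\C D$ yields the required isomorphism. Naturality in $A$ is inherited from the naturality of $\chi$ combined with the functoriality of cofibrant replacement (which is available because all the model structures involved are cofibrantly generated with presentable-source generators, so the small object argument produces a functorial replacement).

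The main obstacle in this plan is the verification that $\tilde A$ has a cofibrant, not merely pseudo-cofibrant, underlying object in $\C C$: without cofibrancy of $\unit$, cofibrant algebras are in general only pseudo-cofibrant, and a pseudo-cofibrant replacement of $A$ need not compute $\mathbb L\bar F(A)$. The hypothesis that $\mathcal O$ be cofibrant as an operad is what rescues us, since it forces $\mathcal O(0)$ to be cofibrant and thereby turns $\tilde A$ into a genuine cofibrant replacement of $A$ in $\C C$.
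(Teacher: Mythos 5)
Your proposal is correct and follows essentially the same route the paper intends: the corollary is deduced from Proposition \ref{paco2final} by taking a cofibrant replacement $\tilde A$ in $\algebra{\mathcal O}{\C C}$, with the key observation that $\mathcal O(0)$ is cofibrant (Corollary \ref{tech1.5bis2}) so that, by the second part of the corollary following Lemma \ref{sequicof2final}, $\tilde A$ has an honestly cofibrant underlying object and therefore computes both $\mathbb L\bar F$ and $\mathbb L\bar F_{\mathcal O}$. Your identification of this underlying-cofibrancy point as the crux, rather than settling for pseudo-cofibrancy, is exactly the right verification.
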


Proposition \ref{paco2final} allows to prove the following generalization of Theorem \ref{qea2}.

\begin{thm}\label{qea2final}
In the conditions of the previous proposition, suppose further that $\bar F\dashv \bar G$ is a Quillen equivalence.  Then the Quillen pair  $$\xymatrix{
\algebra{\mathcal O}{\C C}\ar@<.5ex>[r]^-{\bar F_{\mathcal O}}&\algebra{F^{\operatorname{oper}}(\mathcal O)}{\C D}
\ar@<.5ex>[l]^-{\bar G}
}$$
in Proposition \ref{qp2} is a Quillen equivalence. 
\end{thm}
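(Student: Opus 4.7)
\textbf{Proof plan for Theorem \ref{qea2final}.} The plan is to follow the same pattern as the proof of Theorem \ref{oqefinal}, substituting Proposition \ref{paco2final} for Proposition \ref{pacofinal} wherever the latter was used. Concretely, let $\varphi\colon \bar F_{\mathcal O}(A)\to B$ be a morphism in $\algebra{F^{\operatorname{oper}}(\mathcal O)}{\C D}$ with $B$ fibrant and $A$ a cofibrant $\mathcal O$-algebra in $\C C$. I must show that $\varphi$ is a weak equivalence if and only if its adjoint $\varphi'\colon A\to \bar G(B)$ along $\bar F_{\mathcal O}\dashv \bar G$ is a weak equivalence. Recall that weak equivalences of algebras are detected on underlying objects.

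The first step is to factor $\varphi$ through $\bar F(A)$ via the natural comparison $\chi_A$ of \eqref{chia}:
$$\bar F(A)\xrightarrow{\ \chi_A\ }\bar F_{\mathcal O}(A)\xrightarrow{\ \varphi\ }B.$$
By Proposition \ref{paco2final}, $\chi_A$ is a weak equivalence in $\C D$, since $A$ is a cofibrant $\mathcal O$-algebra and all the hypotheses of that proposition are in force. Therefore $\varphi$ is a weak equivalence if and only if $\varphi\chi_A$ is. By naturality of adjunctions, the morphism in $\C C$ adjoint to $\varphi\chi_A$ along $\bar F\dashv \bar G$ coincides with the underlying map of $\varphi'$. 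Hence it remains to prove that $\varphi\chi_A$ is a weak equivalence in $\C D$ if and only if this adjoint is a weak equivalence in $\C C$.

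For this I invoke the fact that $\bar F\dashv \bar G$ is a Quillen equivalence, which requires the source of $\varphi\chi_A$ to come from a cofibrant object of $\C C$ and the target to be fibrant in $\C D$. The target $B$ is fibrant by hypothesis. For the source: since $\mathcal O$ is a cofibrant operad, Corollary \ref{tech1.5bis2} gives that $\mathcal O(0)$ is cofibrant in $\C V$; combined with the second corollary in Appendix \ref{gena} (the generalization of \cite[Lemma 9.4]{htnso}), this implies that the underlying object of the cofibrant $\mathcal O$-algebra $A$ is cofibrant in $\C C$. Thus the Quillen equivalence condition applies directly and finishes the argument. The real substance of the theorem therefore lies in Proposition \ref{paco2final}; once that is in hand, the main obstacle here is simply the bookkeeping to ensure that the generalized cofibrancy results of Appendices \ref{pco}--\ref{gena} propagate $A$'s algebra-cofibrancy down to a \emph{genuine} (not merely pseudo-) cofibrancy of its underlying object, which is precisely what is needed to invoke the defining property of a Quillen equivalence.
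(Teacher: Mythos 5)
Your proposal is correct and follows essentially the same route the paper takes: it mirrors the proof of Theorem \ref{oqe} with Proposition \ref{paco2final} playing the role of Proposition \ref{pacofinal}, exactly as the paper's (omitted) argument for Theorem \ref{qea2final} is declared to do, including the key observation that $\mathcal O(0)$ is cofibrant for a cofibrant operad (Corollary \ref{tech1.5bis2}), so the generalized version of \cite[Lemma 9.4]{htnso} yields a genuinely cofibrant underlying object for $A$ and the Quillen-equivalence criterion applies directly.
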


Combining Theorems \ref{cocfinal} and \ref{qea2final} we can esily check that the following extension of Theorem \ref{qea3} holds. 

\begin{thm}\label{qea3final}
Suppose  $F\dashv G$ and $\bar F\dashv \bar G$ are weak (symmetric) monoidal Quillen equivalences, $\mathcal P$ is a fibrant operad in $\C W$, and $\mathcal P(n)$ and  $G(\mathcal P(n))$ are ($\unit$-)cofibrant for all $n\geq 0$. Then  the Quillen pair $$\xymatrix{
\algebra{G(\mathcal P)}{\C C}\ar@<.5ex>[r]^-{\bar F^{\mathcal P}}&\algebra{\mathcal P}{\C D}
\ar@<.5ex>[l]^-{\bar G}
}$$
 in Proposition \ref{qp3} is a Quillen equivalence. 
\end{thm}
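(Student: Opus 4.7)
The plan is to imitate closely the proof of Theorem \ref{qea3}, but replacing each reference to a result in the main body with its generalized counterpart from Appendix \ref{geno} and the current appendix. First, I would take a cofibrant resolution $\varphi\colon\mathcal{O}\st{\sim}\onto G(\mathcal{P})$ in $\operad{\C V}$ and let $\bar\varphi\colon F^{\operatorname{oper}}(\mathcal{O})\r\mathcal{P}$ denote its adjoint in $\operad{\C W}$. Since $F\dashv G$ is a weak (symmetric) monoidal Quillen equivalence satisfying the pseudo-cofibrant and $\unit$-cofibrant axioms, Theorem \ref{oqefinal} applies and gives that $F^{\operatorname{oper}}\dashv G$ is a Quillen equivalence, so $\bar\varphi$ is a weak equivalence. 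By Corollary \ref{tech1.5bis2}, the components $\mathcal{O}(n)$ and $F^{\operatorname{oper}}(\mathcal{O})(n)$ are cofibrant for $n\neq 1$ and $\unit$-cofibrant for $n=1$; together with the hypotheses on $\mathcal{P}$ and $G(\mathcal{P})$, this puts us in a situation where Theorem \ref{cocfinal} can be invoked.

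Next, I would arrange the following square of Quillen pairs
$$\xymatrix{
\algebra{\mathcal{O}}{\C C}\ar@<.5ex>[r]^-{\bar F_{\mathcal{O}}}\ar@<-.5ex>[d]_{\varphi_{*}}
\ar@{<-}@<.5ex>[d]^{\varphi^{*}}&
\algebra{F^{\operatorname{oper}}(\mathcal{O})}{\C D}
\ar@<.5ex>[l]^-{\bar G}
\ar@<-.5ex>[d]_{\bar\varphi_{*}}
\ar@{<-}@<.5ex>[d]^{\bar\varphi^{*}}
\\
\algebra{G(\mathcal{P})}{\C C}\ar@<.5ex>[r]^-{\bar F^{\mathcal P}}&\algebra{\mathcal{P}}{\C D}\ar@<.5ex>[l]^-{\bar G}
}$$
and check that the square of right adjoints commutes strictly, i.e.\ $\varphi^{*}\bar G=\bar\varphi^{*}\bar G$. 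This is a direct verification from the construction of the lifted functor $\bar G$ between algebra categories at the start of Section~\ref{malichon}, using that $\bar\varphi$ is the adjoint of $\varphi$ and that the natural transformation $\tau'$ intertwines the relevant monoidal data. By uniqueness of left adjoints, the square of left adjoints then commutes up to a natural isomorphism.

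Then I would verify that each of the three Quillen pairs other than the bottom one is a Quillen equivalence. The upper horizontal pair is a Quillen equivalence by Theorem \ref{qea2final}, whose hypotheses are precisely the present ones applied to the cofibrant operad $\mathcal{O}$. The two vertical change-of-coefficients pairs are Quillen equivalences by Theorem \ref{cocfinal}: on the left we use that $\varphi$ is a weak equivalence between operads whose components are ($\unit$-)cofibrant thanks to Corollary \ref{tech1.5bis2} and the hypothesis on $G(\mathcal{P})$; on the right we use that $\bar\varphi$ is a weak equivalence and that the components of $F^{\operatorname{oper}}(\mathcal{O})$ and of $\mathcal{P}$ are ($\unit$-)cofibrant by the same corollary and the hypothesis on $\mathcal{P}$. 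Both applications require the strong unit axiom in $\C C$ and $\C D$ and the $\unit$-cofibrant axiom for $z_{\C C}$, $z_{\C D}$, which are part of the standing assumptions of Appendix \ref{gena}.

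Once three of the four Quillen pairs in a commutative square (up to natural isomorphism) are Quillen equivalences, the fourth is as well, by the standard 2-out-of-3 argument at the level of derived adjunctions. Applying this to our square yields that $\bar F^{\mathcal{P}}\dashv\bar G$ is a Quillen equivalence, which is the claim. I do not anticipate a serious obstacle: the whole argument is a formal diagram chase, and all technical content has been pushed into Theorems \ref{oqefinal}, \ref{cocfinal}, and \ref{qea2final}. The only point that merits some care is the bookkeeping on ($\unit$-)cofibrancy of the components of $\mathcal{O}$ and $F^{\operatorname{oper}}(\mathcal{O})$ in arity $1$ versus $n\neq 1$, to check that Theorem \ref{cocfinal} genuinely applies on both sides.
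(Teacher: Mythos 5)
Your proposal is correct and follows essentially the same route the paper intends: the paper omits the proof, saying only that Theorems \ref{cocfinal} and \ref{qea2final} combine to extend the argument of Theorem \ref{qea3}, and your argument is exactly that proof with $\varphi\colon\mathcal{O}\st{\sim}\onto G(\mathcal{P})$, the adjoint $\bar\varphi$ made a weak equivalence via Theorem \ref{oqefinal}, the square of Quillen pairs, and Corollary \ref{tech1.5bis2} supplying the ($\unit$-)cofibrancy of the components of $\mathcal{O}$ and $F^{\operatorname{oper}}(\mathcal{O})$ needed for Theorem \ref{cocfinal}.
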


\begin{cor}\label{casigual2.2final}
In the situation of the previous theorem,  for any $G(\mathcal P)$-algebra $B$ in $\C C$ there is a natural isomorphism in $\ho\C D$, 
\begin{align*}
\mathbb{L}\bar F(B)&\cong\mathbb{L}\bar F^{\mathcal{P}}(B).
\end{align*}
\end{cor}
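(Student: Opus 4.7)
The plan is to follow verbatim the argument used for Corollary \ref{casigual2.2}, substituting Theorem \ref{qea3final} for Theorem \ref{qea3} and invoking the generalized corollaries of Appendix \ref{gena} in place of their analogues in the main text. Define $\chi^B\colon \bar F(B)\to\bar F^{\mathcal P}(B)$ to be the adjoint along $\bar F\dashv\bar G$ of the unit $B\to\bar G\bar F^{\mathcal P}(B)$ of the adjunction $\bar F^{\mathcal P}\dashv\bar G$. Since both $\bar F$ and the underlying object of $\bar F^{\mathcal P}$ send weak equivalences between sufficiently cofibrant $G(\mathcal P)$-algebras to weak equivalences in $\C D$, it suffices to prove that $\chi^B$ is a weak equivalence when $B$ is a cofibrant $G(\mathcal P)$-algebra.

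So fix such a cofibrant $B$ and choose a fibrant replacement $q\colon\bar F^{\mathcal P}(B)\st{\sim}\into C$ in $\algebra{\mathcal P}{\C D}$. By the 2-out-of-3 axiom, $\chi^B$ is a weak equivalence if and only if $q\chi^B$ is, and by construction the adjoint of $q\chi^B$ along $\bar F\dashv\bar G$ coincides with the adjoint of $q$ along $\bar F^{\mathcal P}\dashv\bar G$. By Theorem \ref{qea3final} the latter adjunction is a Quillen equivalence, so applying \cite[Proposition 1.3.13]{hmc} with $B$ cofibrant and $C$ fibrant shows that the adjoint of $q$ is a weak equivalence. Applying the same proposition in reverse to the Quillen equivalence $\bar F\dashv\bar G$ yields that $q\chi^B$ is a weak equivalence, whence so is $\chi^B$.

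The main technical point, and the one requiring the most care in the generalized setting, is verifying the hypotheses of \cite[Proposition 1.3.13]{hmc} for $\bar F\dashv\bar G$: namely, that the underlying object of the cofibrant algebra $B$ is cofibrant in $\C C$. This is provided by the second sentence of the corollary immediately following Lemma \ref{sequicof2final}, once we note that $G(\mathcal P(0))$ is cofibrant in $\C V$ under our conventions, the ``($\unit$-)cofibrant'' qualifier requiring only $\unit$-cofibrancy at arity $1$ (cf.~Corollary \ref{tech1.5bis2}) while at all other arities, in particular arity $0$, honest cofibrancy is assumed.
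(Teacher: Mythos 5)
Your overall skeleton is the right one, and it is the same as the paper's: define $\chi^B$, reduce to cofibrant $B$, factor through a fibrant replacement $q$, and play the two Quillen equivalences against each other. The application of \cite[Proposition 1.3.13]{hmc} to $\bar F^{\mathcal P}\dashv\bar G$ (via Theorem \ref{qea3final}) is also fine. The problem is your last paragraph. The hypothesis of Theorem \ref{qea3final} is that $\mathcal P(n)$ and $G(\mathcal P(n))$ are \emph{($\unit$-)cofibrant} for all $n\geq 0$; this parenthetical means ``cofibrant or merely $\unit$-cofibrant'' in every arity, not ``$\unit$-cofibrant only in arity $1$ and honestly cofibrant elsewhere''. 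Corollary \ref{tech1.5bis2} describes cofibrant operads, but $\mathcal P$ and $G(\mathcal P)$ are not assumed cofibrant here, and the motivating examples of the appendices are precisely operads like $\mathtt{uAss}$, all of whose components, including arity $0$, are only $\unit$-cofibrant because the tensor unit is not cofibrant (see the remark opening Appendix \ref{genainf}). So your claim that $G(\mathcal P(0))$ is cofibrant in $\C V$ is not available, the ``Moreover'' part of the corollary after Lemma \ref{sequicof2final} does not apply, and the underlying object of a cofibrant $G(\mathcal P)$-algebra $B$ is in general only pseudo-cofibrant (indeed $\unit_{\C C}$-cofibrant, since $z_{\C C}$ is strong monoidal and left Quillen, so it sends a cofibration $\unit_{\C V}\into G(\mathcal P(0))$ to a cofibration $\unit_{\C C}\into z_{\C C}(G(\mathcal P(0)))$, and cell algebras are built from the initial algebra by underlying cofibrations). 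Consequently \cite[Proposition 1.3.13]{hmc} cannot be invoked verbatim for $\bar F\dashv\bar G$.

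The missing ingredient is exactly the unnamed lemma in Appendix \ref{ico} (the one following Lemma \ref{motivo}): for a Quillen equivalence satisfying the $\unit$-cofibrant axiom, a morphism $\phi\colon F(X)\r Y$ with $Y$ fibrant and $X$ $\unit$-cofibrant is a weak equivalence if and only if its adjoint is. Since the standing assumptions of Appendix \ref{gena} include that $\bar F$ satisfies the $\unit$-cofibrant axiom, applying that lemma with $X$ the underlying object of $B$ and $Y=C$ replaces your appeal to Hovey's criterion and closes the argument; in the case where the components happen to be honestly cofibrant your original citation works, so the two cases together cover the hypothesis. For the same reason, your opening reduction (``$\bar F$ sends weak equivalences between sufficiently cofibrant algebras to weak equivalences'') should be justified by Lemma \ref{left}, i.e.\ again by the $\unit$-cofibrant axiom, since the underlying objects in sight are only ($\unit$-)cofibrant rather than cofibrant.
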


We can similarly check  the following extension of Theorem \ref{qea4}.

\begin{thm}\label{qea4final}
Suppose  $\C V=\C W$, $F=G=1_{\C V}$,  $\bar F\dashv \bar G$ is a weak monoidal Quillen equivalence, and $\mathcal O$ is an operad in $\C V$ such that $\mathcal O(n)$ is ($\unit$-)cofibrant for all $n\geq 0$. Then  $$\xymatrix{
\algebra{\mathcal O}{\C C}\ar@<.5ex>[r]^-{\bar F_{\mathcal P}=\bar F^{\mathcal P}}&\algebra{\mathcal O}{\C D}
\ar@<.5ex>[l]^-{\bar G}
}$$ is a Quillen equivalence. 
\end{thm}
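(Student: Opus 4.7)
The plan is to mimic the proof of Theorem \ref{qea4} in Section \ref{malichon}, simply substituting every invocation of a result from that section with its generalized analogue from Appendix \ref{gena}. Concretely, choose a cofibrant resolution $\varphi\colon\widetilde{\mathcal O}\st{\sim}\onto \mathcal O$ in $\operad{\C V}$ and consider the square of Quillen pairs
$$\xymatrix{
\algebra{\widetilde{\mathcal O}}{\C C}\ar@<.5ex>[r]^-{\bar F_{\widetilde{\mathcal O}}}\ar@<-.5ex>[d]_{\varphi_{*}}
\ar@{<-}@<.5ex>[d]^{\varphi^{*}}&
\algebra{\widetilde{\mathcal O}}{\C D}
\ar@<.5ex>[l]^-{\bar G}
\ar@<-.5ex>[d]_{\varphi_{*}}
\ar@{<-}@<.5ex>[d]^{\varphi^{*}}
\\
\algebra{\mathcal O}{\C C}\ar@<.5ex>[r]^-{\bar F_{{\mathcal O}}}&\algebra{\mathcal O}{\C D}\ar@<.5ex>[l]^-{\bar G}
}$$
which commutes on the nose for the right adjoints (since $\varphi^{*}\bar G=\bar G\varphi^{*}$), whence the square of left adjoints commutes up to natural isomorphism. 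The strategy is then to show that the two vertical pairs and the top horizontal pair are Quillen equivalences, so that the bottom pair must be one as well.

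For the vertical pairs, I would appeal to Theorem \ref{cocfinal} applied to the weak equivalence $\varphi\colon\widetilde{\mathcal O}\st{\sim}\r\mathcal O$. The hypothesis on $\mathcal O$ gives that each $\mathcal O(n)$ is ($\unit$-)cofibrant, while Corollary \ref{tech1.5bis2} supplies the same property for $\widetilde{\mathcal O}(n)$ because $\widetilde{\mathcal O}$ is cofibrant in $\operad{\C V}$. The strong unit axiom on $\C C$ and $\C D$ and the relevant $\unit$-cofibrant axioms for $z_{\C C}$, $z_{\C D}$ are part of our standing hypotheses in Appendix \ref{gena}.

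For the top horizontal pair, I would invoke Theorem \ref{qea2final}. Here the point is that since $F=1_{\C V}$ is strong monoidal and the identity, we have $F^{\operatorname{oper}}(\widetilde{\mathcal O})=\widetilde{\mathcal O}$, so the functor $\bar F_{\widetilde{\mathcal O}}$ in the upper row is precisely the one produced by Proposition \ref{qp2}; and $\widetilde{\mathcal O}$ is a cofibrant operad by construction, while $\bar F\dashv\bar G$ is a weak monoidal Quillen equivalence satisfying the pseudo-cofibrant and $\unit$-cofibrant axioms by assumption. Thus Theorem \ref{qea2final} applies and gives a Quillen equivalence on top.

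With three sides of the square being Quillen equivalences and the right adjoints commuting, the standard argument (apply $\mathbb R$ and $\mathbb L$ and use that an isomorphism in a commuting square of equivalences of categories forces the fourth map to be an equivalence) shows that $\bar F_{\mathcal O}\dashv \bar G$ is a Quillen equivalence. The only point requiring any care is making sure the hypotheses of Theorems \ref{cocfinal} and \ref{qea2final} are genuinely in force in the Appendix-\ref{gena} setting; once that verification is made, no new estimates are needed and the proof is purely formal.
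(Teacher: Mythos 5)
Your proposal is correct and is exactly the argument the paper intends: it redoes the proof of Theorem \ref{qea4} verbatim, replacing Theorem \ref{coc} by Theorem \ref{cocfinal}, Theorem \ref{qea2} by Theorem \ref{qea2final}, and Corollary \ref{tech1.5bis} by Corollary \ref{tech1.5bis2}, with the remaining hypotheses supplied by the standing assumptions of Appendix \ref{gena}. No gaps; the verification of hypotheses (in particular the ($\unit$-)cofibrancy of the components of the cofibrant resolution $\widetilde{\mathcal O}$ and the identification $F^{\operatorname{oper}}(\widetilde{\mathcal O})=\widetilde{\mathcal O}$ for $F=1_{\C V}$) is handled correctly.
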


\begin{cor}\label{casigual2.3final}
Under the assumptions of the previous theorem, for any $\mathcal O$-algebra $B$ in $\C C$ there is a natural isomorphism in $\ho\C D$, 
\begin{align*}
\mathbb{L}\bar F(B)&\cong\mathbb{L}\bar F_{\mathcal{O}}(B)=\mathbb{L}\bar F^{\mathcal{O}}(B).
\end{align*}
\end{cor}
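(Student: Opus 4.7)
The equality $\mathbb{L}\bar F_{\mathcal O}(B)=\mathbb{L}\bar F^{\mathcal O}(B)$ is immediate, since Theorem \ref{qea4final} identifies the functors $\bar F_{\mathcal O}$ and $\bar F^{\mathcal O}$ on the nose. For the natural isomorphism $\mathbb{L}\bar F(B)\cong\mathbb{L}\bar F_{\mathcal O}(B)$, I plan to follow the template of the proof of Corollary \ref{casigual2.2}. Let $\chi_B\colon \bar F(B)\r \bar F_{\mathcal O}(B)$ be the natural morphism in $\C D$ adjoint along $\bar F\dashv\bar G$ to the unit of $\bar F_{\mathcal O}\dashv\bar G$ at $B$. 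The aim is to prove that $\chi_B$ is a weak equivalence whenever $B$ is cofibrant in $\algebra{\mathcal O}{\C C}$, and to check that both its source and target then compute the two derived functors as objects of $\ho\C D$.

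For the latter I will first observe that the underlying object of any cofibrant $\mathcal O$-algebra is ($\unit_{\C C}$-)cofibrant in $\C C$. Indeed, since $\mathcal O(0)$ is ($\unit_{\C V}$-)cofibrant and $z_{\C C}$ is strong monoidal and left Quillen, the initial $\mathcal O$-algebra $z_{\C C}(\mathcal O(0))$ is ($\unit_{\C C}$-)cofibrant, and cellular attachments in $\algebra{\mathcal O}{\C C}$ are cofibrations at the underlying level by Lemma \ref{sequicof2final}, a property that preserves both cofibrancy and $\unit_{\C C}$-cofibrancy. Lemma \ref{left}, applied to $\bar F$ which satisfies the $\unit$-cofibrant axiom, then shows that $\bar F$ sends any cofibrant replacement of the underlying object of $B$ to a weak equivalence, so $\bar F(B)\cong\mathbb{L}\bar F(B)$ in $\ho\C D$. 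Similarly $\bar F_{\mathcal O}(B)\cong\mathbb{L}\bar F_{\mathcal O}(B)$, since $\bar F_{\mathcal O}$ is left Quillen and $B$ is cofibrant.

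To show that $\chi_B$ is a weak equivalence I will pick a fibrant replacement $q\colon \bar F_{\mathcal O}(B)\st{\sim}\into C$ in $\algebra{\mathcal O}{\C D}$, so that by the 2-out-of-3 axiom the task reduces to proving that $q\chi_B\colon \bar F(B)\r C$ is a weak equivalence in $\C D$. The adjoint of $q\chi_B$ along $\bar F\dashv\bar G$ coincides with the adjoint of $q$ along $\bar F_{\mathcal O}\dashv\bar G$, namely the composite $B\r\bar G\bar F_{\mathcal O}(B)\st{\bar G(q)}\r\bar G(C)$; Theorem \ref{qea4final} asserts that $\bar F_{\mathcal O}\dashv\bar G$ is a Quillen equivalence, so with $B$ cofibrant and $C$ fibrant this composite is a weak equivalence by \cite[Proposition 1.3.13(b)]{hmc}. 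Since $\bar F\dashv\bar G$ is itself a Quillen equivalence satisfying the $\unit$-cofibrant axiom, with $B$ being ($\unit_{\C C}$-)cofibrant and $C$ fibrant in $\C D$, the relevant Quillen equivalence criterion, either \cite[Proposition 1.3.13]{hmc} directly when $B$ is cofibrant, or its $\unit$-cofibrant refinement in Appendix \ref{ico} (the unnamed lemma just after Definition \ref{icofibrantax}) when $B$ is only $\unit_{\C C}$-cofibrant, will yield that $q\chi_B$ is itself a weak equivalence in $\C D$.

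The principal obstacle will be precisely this last appeal. Without the cofibrant tensor unit, cofibrant $\mathcal O$-algebras have only pseudo-cofibrant underlying objects in general, and the classical \cite[Proposition 1.3.13]{hmc} cannot be invoked for $\bar F\dashv\bar G$ directly; the ($\unit_{\C V}$-)cofibrancy of $\mathcal O(0)$ is precisely what upgrades this underlying pseudo-cofibrancy to ($\unit_{\C C}$-)cofibrancy, which is in turn the minimal condition accommodated by the refined criterion in Appendix \ref{ico}. Once $\chi_B$ is known to be a weak equivalence on cofibrant $\mathcal O$-algebras, the passage to arbitrary $\mathcal O$-algebras $B$ is routine via cofibrant replacement in $\algebra{\mathcal O}{\C C}$.
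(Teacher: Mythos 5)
Your proposal is correct and follows essentially the route the paper intends: it adapts the proof of Corollary \ref{casigual2.2} (fibrant replacement of $\bar F_{\mathcal O}(B)$, then the two adjunction criteria applied to $\bar F_{\mathcal O}\dashv\bar G$ via Theorem \ref{qea4final} and to $\bar F\dashv\bar G$), with Hovey's criterion replaced by the unlabeled $\unit$-cofibrant lemma of Appendix \ref{ico}, exactly as the appendix's blanket remark prescribes. The only wrinkle is that your cellular argument gives $\unit_{\C C}$-cofibrancy of the underlying object only for cellular cofibrant algebras ($\unit$-cofibrancy is not obviously closed under retracts, unlike pseudo-cofibrancy), but this is harmless since $\chi_B$ for a retract of a cellular algebra is a retract of the corresponding $\chi$, and weak equivalences are closed under retracts.
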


The following theorem generalizes Theorem \ref{lp}.

\begin{thm}\label{lp5final}
Let $\C C$ be a model $\C V$-algebra  satisfying the strong unit axiom. 
Consider an operad $\mathcal O$ in $\C V$ such that $z(\mathcal O(n))$ is pseudo-cofibrant for all $n\geq 0$, and a push-out diagram in $\algebra{\mathcal O}{\C{C}}$ as follows,
$$\xymatrix{A\ar@{ >->}[r]^-{\psi}\ar[d]_-{\varphi}^{\sim}\ar@{}[rd]|{\text{push}}&C\ar[d]^-{\varphi'}\\
B\ar@{ >->}[r]_-{\psi'}&B\cup_{A}C}$$
If the underlying objects of $A$ and $B$ are pseudo-cofibrant in $\C C$ then $\varphi'$ is a weak equivalence. 
\end{thm}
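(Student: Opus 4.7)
The plan is to imitate the proof of Theorem \ref{lp5} step by step, replacing each tool that required honest cofibrancy with the pseudo\-cofibrant analogue developed in Appendices \ref{pco} and \ref{ico}. First I reduce to the case where $\psi$ is obtained as a pushout of a free map
$$\xymatrix{\mathcal{F}_{\mathcal O}(U)\ar@{ >->}[r]^-{\mathcal{F}_{\mathcal O}(f)}\ar[d]_-{g}\ar@{}[rd]|-{\text{push}}&\mathcal{F}_{\mathcal O}(V)\ar[d]\\
A\ar@{ >->}[r]_-{\psi}&C}$$
for a cofibration $f\colon U\into V$ in $\C C$, using the standard retract argument at the end. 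As in the proof of Proposition \ref{paco2}, we may replace $U$ by a pseudo\-cofibrant object without loss of generality, since $A$ itself is pseudo\-cofibrant and the pushout along $U\r A$ of $f$ yields a cofibration with pseudo\-cofibrant source (apply Corollary \ref{cascajo2} to $A\l U\into V$).

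Next, I invoke Remark \ref{pushfree2} to filter the underlying maps of $\psi$ and $\psi'$ as transfinite compositions of pushouts along the morphisms \eqref{killo4} and \eqref{killo5}. By Lemma \ref{sequicof2final}, both \eqref{killo4} and \eqref{killo5} are cofibrations in $\C C$, since $z(\mathcal O(n))$ is pseudo\-cofibrant and so are the underlying objects of $A$ and $B$. Using Remark \ref{ejemplos}, Corollary \ref{cascajo2}, and Corollary \ref{soncof2}, the sources and targets of these morphisms are also pseudo\-cofibrant, and by Corollary \ref{cascajo2.5} (or rather its pseudo-cofibrant analogue) all the successive terms $C_t$ and $D_t$ in the filtration
$$A=C_0\into\cdots\into C_{t-1}\st{\psi_t}\into C_t\into\cdots,\qquad B=D_0\into\cdots\into D_{t-1}\st{\psi_t'}\into D_t\into\cdots$$
remain pseudo\-cofibrant.

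Now I carry out the inductive step. The morphism $\varphi_{t}$ is constructed by taking pushouts of horizontal arrows in
$$\xymatrix{C_{t-1}\ar[d]_{\varphi_{t-1}}&\bullet\ar[l]\ar@{ >->}[r]^{\eqref{killo4}}\ar[d]^{\sim}&\bullet\ar[d]^{\sim}\\
D_{t-1}&\bullet\ar[l]\ar@{ >->}[r]^{\eqref{killo5}}&\bullet}$$
where the right square is the coproduct over $n\geq 1$ and $S\subset\{1,\dots,n\}$ with $\card(S)=t$ of the maps
$$z(\mathcal O(n))\otimes\bigodot_{S}f\otimes\bigotimes_{\{1,\dots,n\}\setminus S}\varphi.$$
Each such factor is a tensor product of a weak equivalence $\varphi^{\otimes(n-t)}$ between pseudo\-cofibrant objects with further pseudo\-cofibrant factors, hence is a weak equivalence by Lemma \ref{pseudowe} and Corollary \ref{pseudowe1.2}, and coproducts of such weak equivalences are again weak equivalences between pseudo\-cofibrant objects by Lemma \ref{pseudowe1.5}. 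Since all objects in sight are pseudo\-cofibrant, I conclude by the pseudo\-cofibrant cube lemma (Lemma \ref{cube}) that if $\varphi_{t-1}$ is a weak equivalence then so is $\varphi_{t}$.

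Starting from $\varphi_0=\varphi$ a weak equivalence, induction shows $\varphi_{t}$ is a weak equivalence for every $t\geq 0$, and by Lemma \ref{colimit} the transfinite colimit $\varphi'=\colim_{t}\varphi_{t}$ is a weak equivalence (the hypotheses of that lemma hold because both filtrations consist of cofibrations between pseudo\-cofibrant objects and start at pseudo\-cofibrant objects). A standard inductive and retract argument as in the proof of Theorem \ref{lp5} extends the conclusion to arbitrary cofibrations $\psi$. The main technical point throughout is to keep track that pseudo\-cofibrancy is preserved at each stage of the filtration, so that the pseudo\-cofibrant cube lemma and Lemma \ref{colimit} apply; this is the only real difference from the proof of Theorem \ref{lp5}.
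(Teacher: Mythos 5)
Your proposal is correct and is exactly the adaptation the paper intends: the paper does not write out a proof of Theorem \ref{lp5final}, stating only that the arguments of Theorem \ref{lp5} carry over using the pseudo-cofibrant machinery of the appendices, and your writeup supplies precisely that — the free-cell filtration of Remark \ref{pushfree2}, Lemma \ref{sequicof2final} for the cofibrations \eqref{killo4} and \eqref{killo5}, Lemma \ref{pseudowe}, Corollary \ref{pseudowe1.2} and Lemma \ref{pseudowe1.5} for the comparison map, the pseudo-cofibrant cube lemma \ref{cube} for the inductive step, and Lemma \ref{colimit} in place of Hirschhorn's colimit result, with the reduction to a pseudo-cofibrant $U$ handled by the pushout trick from Proposition \ref{paco}. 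The only blemish is the citation of Corollary \ref{cascajo2.5} where Lemma \ref{cascajo} (or Corollary \ref{cascajo2}) is the relevant statement, which you yourself flag and which does not affect the argument.
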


The following proposition is essentially a corollary of the previous theorem.

\begin{prop}
Let $\C C$ be a model $\C V$-algebra sastifying the strong unit axiom and $\mathcal O$ an operad in $\C V$ such that $z(\mathcal O(n))$ is pseudo-cofibrant for all $n\geq 0$. The full subcategory $\algebrapc{\mathcal O}{\C C}\subset \algebra{\mathcal O}{\C C}$ spanned by the $\mathcal O$-algebras whose underlying object in $\C C$ is pseudo-cofibrant is a cofibration category.
\end{prop}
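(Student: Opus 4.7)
The plan is to mimic the proof of Proposition \ref{cofcat} verbatim, verifying Baues's axioms (C1)--(C4) by using Theorem \ref{lp5final} in place of Theorem \ref{lp} and the first corollary after Lemma \ref{sequicof2final} (pseudo-cofibrant analogue of Corollary \ref{kisin}) in place of Corollary \ref{kisin}. Let me denote the latter corollary by $(\star)$: if $\phi\colon A\into B$ is a cofibration in $\algebra{\mathcal O}{\C C}$ with $A$ pseudo-cofibrant in $\C C$, then $\phi$ is a cofibration in $\C C$, and in particular $B$ is pseudo-cofibrant in $\C C$ by Lemma \ref{cascajo}.

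First I would observe that (C1) is automatic: $\algebrapc{\mathcal O}{\C C}$ is a full subcategory of a model category, so isomorphisms are trivial cofibrations, and weak equivalences and cofibrations are each closed under composition, with 2-out-of-3 for weak equivalences. For (C2), given a push-out square
$$\xymatrix{A\ar@{ >->}[r]^-{\psi}\ar[d]_-{\varphi}\ar@{}[rd]|{\text{push}}&C\ar[d]^-{\varphi'}\\
B\ar@{ >->}[r]_-{\psi'}&B\cup_{A}C}$$
with $A,B,C\in\algebrapc{\mathcal O}{\C C}$ and $\psi$ a cofibration, $(\star)$ applied to $\psi$ ensures $C\in\algebrapc{\mathcal O}{\C C}$, and $(\star)$ applied to $\psi'$ (which is a cofibration by the model structure) ensures $B\cup_A C\in\algebrapc{\mathcal O}{\C C}$, so the push-out stays in the subcategory. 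That $\psi'$ is trivial whenever $\psi$ is, is immediate from the model structure on $\algebra{\mathcal O}{\C C}$. The remaining and most delicate part of (C2), namely that $\varphi'$ is a weak equivalence whenever $\varphi$ is, is precisely the content of Theorem \ref{lp5final}, whose hypotheses (pseudo-cofibrancy of the underlying objects of $A$ and $B$ and of all $z(\mathcal O(n))$, plus the strong unit axiom) are exactly what we are assuming.

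For (C3), any morphism $f\colon A\r B$ in $\algebrapc{\mathcal O}{\C C}$ factors in the ambient model category as a cofibration $A\into M$ followed by a trivial fibration $M\onto B$; by $(\star)$ the intermediate object $M$ is pseudo-cofibrant in $\C C$, so the factorization stays in $\algebrapc{\mathcal O}{\C C}$. For (C4), the axiom on fibrant models, every $A\in\algebrapc{\mathcal O}{\C C}$ admits a trivial cofibration $A\into RA$ with $RA$ fibrant via the model-category factorization of $A\r\ast$, and $RA$ remains in $\algebrapc{\mathcal O}{\C C}$ by $(\star)$.

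The main obstacle, as in Proposition \ref{cofcat}, is really (C2)(c) — the gluing/left-properness statement for weak equivalences pushed out along cofibrations — but this has already been handled by Theorem \ref{lp5final}, so no genuinely new work is required; the rest of the verification is bookkeeping that the subcategory is closed under the relevant constructions, which is exactly what $(\star)$ guarantees.
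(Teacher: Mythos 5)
Your proposal is correct and follows exactly the route the paper intends: it declares this proposition ``essentially a corollary'' of Theorem \ref{lp5final}, with the detailed verification patterned on Proposition \ref{cofcat} (and \ref{cofcat2}), i.e.\ checking Baues's axioms via the pseudo-cofibrant analogue of Corollary \ref{kisin} for closure of the subcategory and the left-properness theorem for the gluing part of (C2). Your bookkeeping for (C1), (C3), and (C4) matches the paper's appeal to ``well known properties of model categories,'' so no gap.
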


In particular, the gluing lemma  also holds for $\algebrapc{\mathcal O}{\C C}$ in these circumstances. 

\section{Generalized results for $A$-infinity algebras}\label{genainf}

We finish this paper by indicating how the results in Section \ref{ainf} extend when tensor units are not cofibrant. Notice that the (unital) associative operad consists of ($\unit$-)cofibrant objects in all arities.

\begin{thm}
Let $F\colon\C{V}\rightleftarrows\C{W}\colon G$ be a weak symmetric monoidal Quillen adjunction between symmetric monoidal model categories satisfying  the strong unit axiom. Assume $F\dashv G$ satisfies the pseudo-cofibrant axiom and the $\unit$-cofibrant axiom. 
Then the morphisms
\begin{align*}
\psi\colon F^{\operatorname{oper}}(\mathtt{uA}^{\C V}_{\infty})&\To \mathtt{uAss}^{\C W},&
\psi'\colon F^{\operatorname{oper}}(\mathtt{A}^{\C V}_{\infty})&\To \mathtt{Ass}^{\C W},
\end{align*}
adjoint to $\varphi q$ and $\varphi'q'$, respectively,  are weak equivalences in $\operad{\C W}$.
\end{thm}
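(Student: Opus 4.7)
The plan is to follow the proof strategy of Theorem~\ref{gordico} essentially verbatim, replacing each step that invoked cofibrancy of the tensor unit by its analogue under the pseudo-cofibrant, $\unit$-cofibrant and strong unit axioms. As in the original, I would exhibit the identity
$$\psi(n)\circ\chi_{\mathtt{uA}^{\C V}_{\infty}}(n) \;=\; \bigl(\text{counit}\colon F(\unit_{\C V})\to\unit_{\C W}\bigr)\circ F(q(n))$$
of morphisms $F(\mathtt{uA}^{\C V}_{\infty}(n))\to\unit_{\C W}$ in every arity $n\geq 0$, together with its non-unital counterpart for $\psi'$. By two-out-of-three it then suffices to verify that $\chi$, the counit of $F$, and $F(q(n))$ (resp.\ $F(q'(n))$) are all weak equivalences.

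The morphisms $\chi_{\mathtt{uA}^{\C V}_{\infty}}(n)$ and $\chi_{\mathtt{A}^{\C V}_{\infty}}(n)$ are weak equivalences by Proposition~\ref{pacofinal}, whose hypotheses match exactly those of the present theorem and apply because the $A$-infinity operads are cofibrant. The counit $F(\unit_{\C V})\to\unit_{\C W}$ is handled by a two-out-of-three argument: $\unit_{\C V}$ is trivially $\unit_{\C V}$-cofibrant (via its identity), so the $\unit$-cofibrant axiom gives that $F(\tilde\unit_{\C V})\to F(\unit_{\C V})$ is a weak equivalence for any cofibrant resolution $\tilde\unit_{\C V}\st{\sim}\onto\unit_{\C V}$, while Definition~\ref{weak}~(2) ensures that the full composite $F(\tilde\unit_{\C V})\to F(\unit_{\C V})\to\unit_{\C W}$ is a weak equivalence; composing these two facts yields the counit.

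The crux of the argument, and the main obstacle, is showing that $F(q(n))$ is a weak equivalence, because $q(n)\colon\mathtt{uA}^{\C V}_{\infty}(n)\to\unit_{\C V}$ need not have source and target in the same cofibrancy class under our weaker hypotheses. By Corollary~\ref{tech1.5bis2}, $\mathtt{uA}^{\C V}_{\infty}(1)$ is only $\unit$-cofibrant while $\mathtt{uA}^{\C V}_{\infty}(n)$ is cofibrant for $n\neq 1$, and $\unit_{\C V}$ is $\unit$-cofibrant. In arity~$1$ both objects are $\unit$-cofibrant, so Lemma~\ref{left} applies directly to $q(1)$. In arities $n\neq 1$ I would circumvent the mismatch by picking a cofibrant resolution $p\colon\tilde\unit_{\C V}\st{\sim}\onto\unit_{\C V}$ and lifting $q(n)$ along the trivial fibration $p$ (possible because the source is cofibrant) to a map $\tilde q(n)\colon\mathtt{uA}^{\C V}_{\infty}(n)\to\tilde\unit_{\C V}$ with $p\tilde q(n)=q(n)$; two-out-of-three then makes $\tilde q(n)$ a weak equivalence between cofibrant objects, whence $F(\tilde q(n))$ is a weak equivalence by standard Quillen theory, and $F(p)$ is a weak equivalence by the $\unit$-cofibrant axiom, giving $F(q(n))=F(p)\circ F(\tilde q(n))$. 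The non-unital version in arity~$0$ is easier since $\mathtt{A}^{\C V}_{\infty}(0)$ and $\mathtt{Ass}^{\C V}(0)=\varnothing$ are both cofibrant, and the classical preservation of weak equivalences between cofibrant objects by left Quillen functors suffices.
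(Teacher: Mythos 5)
Your proposal is correct and follows essentially the intended argument: the paper proves this theorem by repeating the proof of Theorem \ref{gordico}, replacing Proposition \ref{paco} and Corollary \ref{tech1.5bis} by Proposition \ref{pacofinal} and Corollary \ref{tech1.5bis2}, handling the counit exactly as you do via the $\unit$-cofibrant axiom, Definition \ref{weak} (2), and 2-out-of-3. Your explicit lifting of $q(n)$ through a cofibrant resolution of $\unit_{\C V}$ in arities $n\neq 1$ is sound but redundant, since Lemma \ref{left} already covers weak equivalences whose source and target are each either cofibrant or $\unit$-cofibrant (its proof is precisely your resolution-and-2-out-of-3 argument), so it applies to $q(n)$ in every arity.
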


In particular, Theorem \ref{invariance} and Corollary \ref{invariance2} hold under these weaker hypotheses.

\begin{thm}\label{invariancefinal}
Under the hypotheses of the previous theorem, there are isomorphisms in $\ho\operad{\C{W}}$,
\begin{align*}
\mathbb L F^{\operatorname{oper}}(\mathtt{Ass}^{\C{V}})&\cong \mathtt{Ass}^{\C{W}},&
\mathbb L F^{\operatorname{oper}}(\mathtt{uAss}^{\C{V}})&\cong \mathtt{uAss}^{\C{W}},
\end{align*}
such that the following square commutes,
$$\xymatrix@C=50pt{
\mathbb L F^{\operatorname{oper}}(\mathtt{Ass}^{\C{V}})\ar[r]^{\mathbb L F^{\operatorname{oper}}(\phi^{\C V})}\ar[d]_{\cong}&
\mathbb L F^{\operatorname{oper}}(\mathtt{uAss}^{\C{V}})\ar[d]^{\cong}\\
\mathtt{Ass}^{\C{W}}\ar[r]^{\phi^{\C W}}&
\mathtt{uAss}^{\C{W}}
}
$$
\end{thm}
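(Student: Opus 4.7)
The plan is to mimic the deduction of Theorem~\ref{invariance} from Theorem~\ref{gordico}, now relying on the generalized version of Theorem~\ref{gordico} stated just above (which provides the weak equivalences $\psi$ and $\psi'$ under the weaker cofibrancy assumptions).

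First, I would fix cofibrant resolutions of $(\mathtt{u})\mathtt{Ass}^{\C V}$ chosen as trivial fibrations $q\colon \mathtt{uA}^{\C V}_{\infty}\st{\sim}\onto \mathtt{uAss}^{\C V}$ and $q'\colon \mathtt{A}^{\C V}_{\infty}\st{\sim}\onto \mathtt{Ass}^{\C V}$ in the cofibrantly generated model category $\operad{\C V}$. Since $\mathtt{A}^{\C V}_{\infty}$ is cofibrant, I can lift $\phi^{\C V}q'$ across $q$ to a morphism $\phi^\infty\colon \mathtt{A}^{\C V}_{\infty}\To \mathtt{uA}^{\C V}_{\infty}$ satisfying $q\phi^\infty=\phi^{\C V}q'$. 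This $\phi^\infty$ represents $\mathbb{L}\phi^{\C V}$ in $\ho\operad{\C V}$, hence $F^{\operatorname{oper}}(\phi^\infty)$ represents $\mathbb{L}F^{\operatorname{oper}}(\phi^{\C V})$ in $\ho\operad{\C W}$. The previous theorem then supplies weak equivalences $\psi\colon F^{\operatorname{oper}}(\mathtt{uA}^{\C V}_{\infty})\st{\sim}\To \mathtt{uAss}^{\C W}$ and $\psi'\colon F^{\operatorname{oper}}(\mathtt{A}^{\C V}_{\infty})\st{\sim}\To \mathtt{Ass}^{\C W}$, which descend to the desired isomorphisms in $\ho\operad{\C W}$.

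The remaining task is to verify commutativity in $\operad{\C W}$ of the square
$$\xymatrix@C=40pt{F^{\operatorname{oper}}(\mathtt{A}^{\C V}_{\infty})\ar[r]^{F^{\operatorname{oper}}(\phi^\infty)}\ar[d]_{\psi'}&F^{\operatorname{oper}}(\mathtt{uA}^{\C V}_{\infty})\ar[d]^{\psi}\\
\mathtt{Ass}^{\C W}\ar[r]_{\phi^{\C W}}&\mathtt{uAss}^{\C W}}$$
Recalling that $\psi$ and $\psi'$ are defined as adjoints along $F^{\operatorname{oper}}\dashv G$ of $\varphi q$ and $\varphi'q'$ respectively, this reduces by adjunction to the commutativity in $\operad{\C V}$ of
$$\xymatrix@C=40pt{\mathtt{A}^{\C V}_{\infty}\ar[r]^{\phi^\infty}\ar[d]_{\varphi'q'}&\mathtt{uA}^{\C V}_{\infty}\ar[d]^{\varphi q}\\
G(\mathtt{Ass}^{\C W})\ar[r]_{G(\phi^{\C W})}&G(\mathtt{uAss}^{\C W})}$$
which factors through $\mathtt{Ass}^{\C V}\r \mathtt{uAss}^{\C V}$ via $q,q'$: the upper half commutes by the choice $q\phi^\infty=\phi^{\C V}q'$, and the lower half is precisely the commutative square relating $\phi^{\C V}, \phi^{\C W}, \varphi, \varphi'$ recorded in the discussion preceding Theorem~\ref{gordico} (which follows formally from naturality of the unit $\unit_{\C V}\To G(\unit_{\C W})$ of the lax symmetric monoidal functor $G$ and from the fact that $\varphi,\varphi'$ agree with it in each arity).

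I do not expect any real obstacle in this last step: the substantive work has been performed in the preceding theorem, where the pseudo-cofibrant axiom, the $\unit$-cofibrant axiom, and the strong unit axiom are jointly what allow $\psi,\psi'$ to remain weak equivalences when tensor units are non-cofibrant (through their role in the generalized Proposition~\ref{pacofinal}). The present deduction is purely formal and parallels the proof of Theorem~\ref{invariance} verbatim.
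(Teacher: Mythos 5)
Your proposal is correct and follows essentially the same route as the paper: the paper deduces Theorem \ref{invariancefinal} from the generalized version of Theorem \ref{gordico} exactly as Theorem \ref{invariance} is deduced from Theorem \ref{gordico}, namely via the weak equivalences $\psi,\psi'$ and the commutative square relating $\phi^{\C V}$, $G(\phi^{\C W})$, $\varphi$, $\varphi'$. Your additional details (choosing $q$ a trivial fibration, lifting $\phi^{\C V}q'$ to $\phi^\infty$, and transposing the square along $F^{\operatorname{oper}}\dashv G$) are precisely the steps the paper leaves implicit, and they go through verbatim since none of them uses cofibrancy of the tensor units.
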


\begin{cor}\label{invariance2final}
Under the hypotheses of the previous theorem, if in addition $F\dashv G$ is a Quillen equivalence, then there are isomorphisms in $\ho\operad{\C{V}}$,
\begin{align*}
 \mathtt{Ass}^{\C{V}}&\cong\mathbb R G(\mathtt{Ass}^{\C{W}}),&
\mathtt{uAss}^{\C{V}}&\cong \mathbb R G(\mathtt{uAss}^{\C{W}}),
\end{align*}
such that the following square commutes,
$$
\xymatrix@C=40pt{
\mathtt{Ass}^{\C{V}}\ar[r]^{\phi^{\C V}}\ar[d]_{\cong}&
\mathtt{uAss}^{\C{V}}\ar[d]^{\cong}\\
\mathbb R G(\mathtt{Ass}^{\C{W}})\ar[r]^{\mathbb R G(\phi^{\C W})}&
\mathbb R G(\mathtt{uAss}^{\C{W}})
}
$$
\end{cor}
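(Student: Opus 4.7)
The plan is to deduce Corollary \ref{invariance2final} from Theorem \ref{invariancefinal} by a purely formal adjunction argument, exactly as Corollary \ref{invariance2} was deduced from Theorem \ref{invariance} earlier in the paper.

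The first step is to invoke Theorem \ref{oqefinal}: under the standing hypotheses (strong unit axiom on $\C V$ and $\C W$, together with the pseudo-cofibrant and $\unit$-cofibrant axioms for $F\dashv G$), the assumption that $F\dashv G$ is a weak symmetric monoidal Quillen equivalence upgrades $F^{\operatorname{oper}}\dashv G$ to a Quillen equivalence between operad categories. Consequently, the derived adjunction
$$\mathbb L F^{\operatorname{oper}}\colon \ho\operad{\C V}\rightleftarrows\ho\operad{\C W}\colon\mathbb R G$$
is an adjoint equivalence of categories, so in particular its unit $\eta\colon \mathrm{Id}\Rightarrow \mathbb R G\circ\mathbb L F^{\operatorname{oper}}$ is a natural isomorphism.

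Next, I would produce the two displayed isomorphisms as composites. For the non-unital case, take
$$\mathtt{Ass}^{\C V}\xrightarrow{\;\eta_{\mathtt{Ass}^{\C V}}\;} \mathbb R G\bigl(\mathbb L F^{\operatorname{oper}}(\mathtt{Ass}^{\C V})\bigr)\xrightarrow{\;\mathbb R G(\cong)\;}\mathbb R G(\mathtt{Ass}^{\C W}),$$
where the second arrow is the image under $\mathbb R G$ of the isomorphism from Theorem \ref{invariancefinal}. The composite is a natural isomorphism in $\ho\operad{\C V}$, and the same recipe with $\mathtt{uAss}$ in place of $\mathtt{Ass}$ produces the unital isomorphism.

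Finally, the commutativity of the square is obtained by pasting two naturality squares. Naturality of $\eta$ evaluated at $\phi^{\C V}\colon\mathtt{Ass}^{\C V}\r\mathtt{uAss}^{\C V}$ gives a commutative square whose horizontal arrows are $\phi^{\C V}$ and $\mathbb R G(\mathbb L F^{\operatorname{oper}}(\phi^{\C V}))$. Then applying the functor $\mathbb R G$ to the commutative square of Theorem \ref{invariancefinal} identifies $\mathbb R G(\mathbb L F^{\operatorname{oper}}(\phi^{\C V}))$ with $\mathbb R G(\phi^{\C W})$ up to the chosen isomorphisms. Pasting the two squares produces the desired commutative square. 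No real obstacle is expected here; the heavy lifting has already been done in Theorems \ref{oqefinal} and \ref{invariancefinal}, and this corollary is essentially formal diagram chasing in the derived equivalence of operad categories.
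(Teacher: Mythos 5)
Your proposal is correct and is exactly the argument the paper intends: Corollary \ref{invariance2final} is deduced "by adjunction" from Theorem \ref{invariancefinal}, using that Theorem \ref{oqefinal} makes the derived pair $\mathbb L F^{\operatorname{oper}}\dashv\mathbb R G$ an adjoint equivalence, so the isomorphisms are the derived unit followed by $\mathbb R G$ of the isomorphisms of Theorem \ref{invariancefinal}, and the square commutes by naturality of the unit pasted with $\mathbb R G$ applied to the square of that theorem. This matches the paper's (implicit) proof, so there is nothing to add.
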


The following result is a consequence of Theorem  \ref{cocfinal}.

\begin{prop}\label{coc1.1final}
Let $\C C$ be a model $\C V$-algebra. Suppose $\C V$ and $\C C$ satisfy  the strong unit axiom. 
Assume further that $z\colon\C V\r\C C$ satisfies the pseudo-cofibrant axiom and the $\unit$-cofibrant axiom. 
Then, $q$ and $q'$
induce Quillen equivalences
$$\xymatrix{\algebra{\mathtt{uA}^{\C V}_{\infty}}{\C{C}}\ar@<.5ex>[r]^-{q_{*}}&\algebra{\mathtt{uAss}^{\C V}}{\C{C}},\ar@<.5ex>[l]^-{q^{*}}}\qquad
\xymatrix{\algebra{\mathtt{A}^{\C V}_{\infty}}{\C{C}}\ar@<.5ex>[r]^-{q_{*}'}&\algebra{\mathtt{Ass}^{\C V}}{\C{C}}.\ar@<.5ex>[l]^-{(q')^{*}}}$$
\end{prop}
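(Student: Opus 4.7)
The plan is to deduce this result as a direct corollary of Theorem \ref{cocfinal}, mirroring how the original Proposition \ref{coc1.1} in Section \ref{ainf} followed from Theorem \ref{coc}. It suffices to verify that the four hypotheses of Theorem \ref{cocfinal} hold when applied to the resolutions $q\colon\mathtt{uA}^{\C V}_{\infty}\st{\sim}\r\mathtt{uAss}^{\C V}$ and $q'\colon\mathtt{A}^{\C V}_{\infty}\st{\sim}\r\mathtt{Ass}^{\C V}$.

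Two of the hypotheses, namely that $\C C$ is a model $\C V$-algebra satisfying the strong unit axiom and that $z\colon\C V\r\C C$ satisfies the $\unit$-cofibrant axiom, are assumed in the statement. The remaining task is to check the ($\unit$-)cofibrancy of all components of the source and target operads. For the targets, $\mathtt{uAss}^{\C V}(n)=\unit_{\C V}$ for all $n\geq 0$, so each arity is tautologically $\unit$-cofibrant via the identity; for $\mathtt{Ass}^{\C V}$, the same holds in positive arities while $\mathtt{Ass}^{\C V}(0)=\varnothing$ is cofibrant. For the sources, $\mathtt{uA}^{\C V}_{\infty}$ and $\mathtt{A}^{\C V}_{\infty}$ are cofibrant in $\operad{\C V}$ by definition, and so Corollary \ref{tech1.5bis2} guarantees that arity $1$ is $\unit$-cofibrant and all other arities are cofibrant in $\C V$.

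With the hypotheses verified, Theorem \ref{cocfinal} yields that the two change-of-coefficient Quillen pairs are Quillen equivalences. There is no real obstacle here: the content lies entirely in Theorem \ref{cocfinal}, whose proof (as noted in the remark following Theorem \ref{coc}) already accommodates the weaker cofibrancy framework by using the cube lemma in place of left properness and by replacing the underlying cofibrant object argument via the ``tensor with $\tilde\unit$'' trick developed in Appendix \ref{pco}. The pseudo-cofibrant axiom for $z$, although included in the hypotheses for uniformity with the rest of Appendix \ref{genainf}, is not explicitly invoked at this stage; it is automatic anyway since $z$ is strong monoidal (Remark \ref{hastio}).
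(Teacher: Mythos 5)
Your proposal is correct and follows the paper's own route: Proposition \ref{coc1.1final} is deduced directly from Theorem \ref{cocfinal}, with the only content being the verification that $\mathtt{uAss}^{\C V}$, $\mathtt{Ass}^{\C V}$ have ($\unit$-)cofibrant components (arity-wise $\unit$ or $\varnothing$) and that the cofibrant resolutions do as well, via Corollary \ref{tech1.5bis2} — exactly the checks you make, and consistent with the paper's remark opening Appendix \ref{genainf}. Your closing observation that the pseudo-cofibrant axiom for $z$ is automatic because $z$ is strong monoidal (Remark \ref{hastio}) also matches the paper's own comment after Lemma \ref{sequicof2final}.
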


Let us place ourselves in the situation described in the first  paragraph of Section \ref{malichon}. Assume further that $\C V$, $\C W$, $\C C$ and $\C D$ satisfy  the strong unit axiom, and  $z_{\C C}$, $z_{\C D}$, $F$ and $\bar F$ satisfy the pseudo-cofibrant axiom and the $\unit$-cofibrant axiom.

\begin{cor}\label{qea3.1final}
Suppose  $F\dashv G$ and $\bar F\dashv \bar G$ are weak (symmetric) monoidal Quillen equivalences. Then  there are Quillen equivalences $$\xymatrix{
\algebra{\mathtt{uAss}^{\C V} }{\C C}\ar@<.5ex>[r]&\algebra{\mathtt{uAss}^{\C W} }{\C D}
\ar@<.5ex>[l]^-{\bar G},
}\qquad 
\xymatrix{
\algebra{\mathtt{Ass}^{\C V} }{\C C}\ar@<.5ex>[r]&\algebra{\mathtt{Ass}^{\C W} }{\C D}
\ar@<.5ex>[l]^-{\bar G}.
}$$ 
\end{cor}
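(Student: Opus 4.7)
The plan is to follow the proof of Corollary \ref{qea3.1} almost verbatim, replacing each invoked result with its strengthened counterpart from Appendices \ref{geno}--\ref{genainf}. Fix a cofibrant resolution $q_{\C V}\colon \mathtt{uA}^{\C V}_{\infty}\st{\sim}\onto \mathtt{uAss}^{\C V}$ in $\operad{\C V}$ and let $\psi\colon F^{\operatorname{oper}}(\mathtt{uA}^{\C V}_{\infty})\r \mathtt{uAss}^{\C W}$ be the weak equivalence of operads produced by Theorem \ref{invariancefinal}. I would then assemble the square of Quillen adjunctions
$$\xymatrix{
\algebra{\mathtt{uA}^{\C V}_{\infty} }{\C C}
\ar@<.5ex>[r]^-{\bar F_{\mathtt{uA}^{\C V}_{\infty}}}
\ar@<.5ex>[d]^{(q_{\C V})_{*}}\ar@<-.5ex>@{<-}[d]_{q_{\C V}^{*}}
&
\algebra{F^{\operatorname{oper}}(\mathtt{uA}^{\C V}_{\infty}) }{\C D}
\ar@<.5ex>[l]^-{\bar G}
\ar@<.5ex>[d]^{\psi_{*}}\ar@<-.5ex>@{<-}[d]_{\psi^{*}}
\\
\algebra{\mathtt{uAss}^{\C V} }{\C C}\ar@<.5ex>[r]&\algebra{\mathtt{uAss}^{\C W} }{\C D}
\ar@<.5ex>[l]^-{\bar G}
}$$
in which the bottom horizontal $\bar G$ is the usual functor on monoid categories under the identifications $\algebra{\mathtt{uAss}^{\C V}}{\C C}\cong\mon(\C C)$ and $\algebra{\mathtt{uAss}^{\C W}}{\C D}\cong\mon(\C D)$.

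I would then verify in turn that the three non-bottom Quillen pairs are Quillen equivalences. The left vertical is a Quillen equivalence by Proposition \ref{coc1.1final}, whose hypotheses (strong unit axiom on $\C V$ and $\C C$, together with the pseudo-cofibrant and $\unit$-cofibrant axioms for $z_{\C C}$) are among the standing assumptions of Appendix \ref{genainf}. For the right vertical I would invoke Theorem \ref{cocfinal} applied in $\C D$ to $\psi$: the components of $\mathtt{uAss}^{\C W}$ all coincide with $\unit_{\C W}$ and hence are $\unit$-cofibrant, while $F^{\operatorname{oper}}(\mathtt{uA}^{\C V}_{\infty})$ is cofibrant in $\operad{\C W}$ since $F^{\operatorname{oper}}$ is left Quillen, so Corollary \ref{tech1.5bis2} supplies the required ($\unit$-)cofibrancy of all its components. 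For the top horizontal I would apply Theorem \ref{qea2final} to the cofibrant operad $\mathtt{uA}^{\C V}_{\infty}$.

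To conclude, the square of right adjoints commutes up to natural isomorphism: on a $\mathtt{uAss}^{\C W}$-algebra $B$, both composites produce the same $\mathtt{uA}^{\C V}_{\infty}$-algebra in $\C C$ obtained from the underlying object of $\bar G(B)$ via the lax monoidal structures of $G$ and $\bar G$ together with $q_{\C V}$, which is essentially the definition of $\psi$. Passing to left adjoints, the diagram commutes up to natural isomorphism in that direction as well, and since three of its four sides are Quillen equivalences the fourth one is too, as one sees by taking derived functors and applying the 2-out-of-3 property for equivalences of categories. The non-unital case is handled by the same diagram with $\mathtt{A}^{\C V}_{\infty}$, $q'_{\C V}$, and $\psi'$ replacing $\mathtt{uA}^{\C V}_{\infty}$, $q_{\C V}$, and $\psi$; the only additional observation is that $\mathtt{Ass}^{\C W}(0)=\varnothing$ is cofibrant, so the hypotheses of Theorem \ref{cocfinal} are still met. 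The main obstacle I expect is purely organisational, namely keeping track of the coherences among $\chi$ from \eqref{chio}, $\tau$ from \eqref{tau}, the definition of $\psi$, and the units of the various adjunctions so that the square of right adjoints genuinely commutes; no new ingredient beyond Sections \ref{operads}, \ref{malichon}, and the preceding appendices should be required.
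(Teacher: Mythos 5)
Your proposal is correct and follows essentially the same route as the paper, which proves Corollary \ref{qea3.1} via exactly this square of Quillen adjunctions and obtains the appendix version by substituting Proposition \ref{coc1.1final}, Theorem \ref{cocfinal}, and Theorem \ref{qea2final} for their counterparts, just as you do. The only quibble is that the weak equivalence $\psi$ comes from the (unlabelled) generalization of Theorem \ref{gordico} at the start of Appendix \ref{genainf} rather than from Theorem \ref{invariancefinal} itself, but this does not affect the argument.
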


This shows that Corollaries \ref{invariance3} and  \ref{invariance4} are true provided $\C V$ and $\C W$ satisfy  the strong unit axiom and $F$ satisfies the pseudo-cofibrant axiom and the $\unit$-cofibrant axiom.


\providecommand{\bysame}{\leavevmode\hbox to3em{\hrulefill}\thinspace}
\providecommand{\MR}{\relax\ifhmode\unskip\space\fi MR }
\providecommand{\MRhref}[2]{%
  \href{http://www.ams.org/mathscinet-getitem?mr=#1}{#2}
}
\providecommand{\href}[2]{#2}

\end{document}